\newtheorem{mainthm}{Theorem}
\Crefname{mainthm}{Theorem}{Theorems}
\numberwithin{equation}{section}
\newtheorem{lemma}[equation]{Lemma}
\newtheorem{theorem}[equation]{Theorem}
\newtheorem{prop}[equation]{Proposition}
\newtheorem{cor}[equation]{Corollary}
\newtheorem{thm}[equation]{Theorem} 
\theoremstyle{definition}
\newtheorem{defn}[equation]{Definition}
\newtheorem{conjecture}[equation]{Conjecture}
\newtheorem{convention}[equation]{Convention}
\newenvironment{example}
    {\pushQED{\qed}\examplex}
  {\popQED\endexamplex}
\theoremstyle{remark}
\newenvironment{remark}
    {\pushQED{\qed}\remarkx}
  {\popQED\endremarkx}
\newtheorem*{case*}{Case}
\titleformat*{\section}{\normalsize \bfseries \filcenter}
\titleformat*{\subsection}{\normalsize \bfseries }
\newcommand{\Manoa}{M\=anoa}
\newcommand{\Hawaii}{Hawai\kern.05em`\kern.05em\relax i}
\renewcommand{\gets}{\longleftarrow}
\renewcommand{\geq}{\geqslant}
\renewcommand{\leq}{\leqslant}
\newcommand{\beq}{\begin{displaymath}}
\newcommand{\eeq}{\end{displaymath}}
\newcommand{\defi}[1]{\textsf{#1}} 
\renewcommand{\AA}{\ensuremath{\mathbb{A}}}
\newcommand{\GG}{\mathbb G}
\newcommand{\PP}{\mathbb P}
\newcommand{\QQ}{\mathbb Q}
\newcommand{\R}{\mathbb{R}}
\newcommand{\RR}{\ensuremath{\mathbb{R}}}
\newcommand{\ZZ}{\mathbb Z}
\newcommand{\bM}{\Phi_{\Cox,S}}
\newcommand{\cC}{\mathcal{C}}
\newcommand{\cD}{\mathcal D}
\newcommand{\cE}{\mathcal{E}}
\newcommand{\cF}{\ensuremath{\mathcal{F}}}
\newcommand{\cH}{\mathcal H}
\newcommand{\cO}{{\mathcal O}}
\newcommand\cP{\mathcal{P}}
\newcommand{\cT}{\ensuremath{\mathcal{T}}}
\newcommand{\cW}{\mathcal{W}}
\newcommand{\sX}{{\mathcal X}}
\newcommand{\cX}{\mathcal{X}}
\newcommand\cY{\mathcal{Y}}
\newcommand{\sY}{{\mathcal Y}}
\newcommand{\mff}{\mathfrak f}
\newcommand{\mfX}{\mathfrak X}
\newcommand{\tN}{\widetilde{N}}
\newcommand{\tX}{\widetilde{X}}
\newcommand\tsX{\widetilde{\sX}}
\newcommand{\tY}{\widetilde{Y}}
\newcommand{\tbeta}{\widetilde{\beta}}
\newcommand{\tsigma}{\widetilde{\sigma}}
\newcommand{\tSigma}{\widetilde{\Sigma}}
\newcommand{\ce}{\coloneqq}
\newcommand{\Cl}{\on{Cl}}
\newcommand{\Cox}{\operatorname{Cox}}
\newcommand{\End}{\on{End}}
\newcommand\HHL{\operatorname{H}}
\newcommand{\Hom}{\operatorname{Hom}}
\newcommand{\Id}{\mathrm{id}}
\newcommand{\leftexp}[2]{{\vphantom{#2}}^{#1}{#2}}
\newcommand\on{\operatorname}
 \newcommand{\Perf}{\on{Perf}}
\newcommand{\Pic}{\operatorname{Pic}}
\newcommand{\RHom}{\on{Hom}}
 \newcommand{\Spec}{\operatorname{Spec}}
\newcommand{\weezer}{\leftexp{=}{\kern-0.23em\operatorname{W}}^{\kern-0.21em =}}
\title{ \vspace{-2 em} \large \textbf{King's Conjecture and the Cox category}}
\date{}
\author{\normalsize Matthew R. Ballard, Christine Berkesch, Michael K.~Brown,
Lauren Cranton Heller, 
\linebreak Daniel Erman, David Favero, Sheel Ganatra, Andrew Hanlon, and Jesse
Huang}
\newcommand{\Addresses}{{
  \bigskip
  \footnotesize
\noindent M.R.~Ballard, \textsc{Department of Mathematics, University of South Carolina}\par\nopagebreak
  \noindent \textit{E-mail address}: \texttt{ballard@math.sc.edu}

  \medskip

  \noindent C.~Berkesch, \textsc{School of Mathematics, University of Minnesota}\par\nopagebreak
  \noindent \textit{E-mail address}: \texttt{cberkesc@umn.edu}

  \medskip

  \noindent M.~K.~Brown, \textsc{Department of Mathematics and Statistics, Auburn University}\par\nopagebreak
  \noindent \textit{E-mail address}: \texttt{mkb0096@auburn.edu}

  \medskip

    \noindent L.~Cranton~Heller, \textsc{Department of Mathematics, University of Nebraska}\par\nopagebreak
  \noindent \textit{E-mail address}: \texttt{lheller2@unl.edu}

  \medskip

  \noindent D.~Erman, \textsc{Department of Mathematics, University of \Hawaii \, at \Manoa}\par\nopagebreak
  \noindent \textit{E-mail address}: \texttt{erman@hawaii.edu}

  \medskip

  \noindent D.~Favero, \textsc{School of Mathematics, University of Minnesota}\par\nopagebreak
  \noindent \textit{E-mail address}: \texttt{favero@umn.edu}

  \medskip

  \noindent S.~Ganatra, \textsc{Department of Mathematics, University of Southern California}\par\nopagebreak
  \noindent \textit{E-mail address}: \texttt{sheel.ganatra@usc.edu}

  \medskip
  
  \noindent A.~Hanlon, \textsc{Department of Mathematics, University of Oregon}\par\nopagebreak
  \noindent \textit{E-mail address}: \texttt{ahanlon@uoregon.edu}

  \medskip

  \noindent J.~Huang, \textsc{Department of Pure Mathematics, University of Waterloo}\par\nopagebreak
  \noindent \textit{E-mail address}: \texttt{j654huang@uwaterloo.ca}
}}
\begin{document}

\maketitle

\begin{abstract}
We state and prove a realization of King's Conjecture for
a category glued from the derived categories of all of the toric varieties
arising from a given Cox ring. Our perspective extends ideas of Beilinson and
Bondal to all semiprojective toric varieties. 
\end{abstract}

\section{Introduction}\label{sec:intro}

Beilinson's work on the derived category of $\PP^n$~\cite{beilinson} and the
\defi{Beilinson collection} of line bundles $\cO_{\PP^n}(-n), \ldots,
\cO_{\PP^n}(-1), \cO_{\PP^n}$ broadly set the stage for 
exceptional collections, tilting bundles, and semiorthogonal
decompositions~\cites{bondal-associative, BK89}.
One attempt to generalize Beilinson’s result was King’s Conjecture \cite{king},
which proposed that, like $\PP^n$, every smooth projective toric variety has a
full strong exceptional collection of line bundles. This turned out to be
false~\cites{hille-perling, michalek11, efimov}, but nevertheless, it has
continued to inspire work on exceptional collections for toric
varieties~\cites{altmann-immaculate,
BFK_GIT,BallardEtAl2019ArithmeticToric,BallardEtAl2019ToricFrobenius,BallardEtAl2018arxivDerivedCategories,bernardi-tirabassi,borisov-hua,borisov-orlov-equivariant,borisov-wang,costa-miro-roig0,costa-miro-roig,costa-miro-roig2,dey-lason-michalek,Jerby1,Jerby2,Jerby3,kawamataI,kawamataII,lason-michalek,ohkawa2013frobenius,prabhu-naik,sanchez2023derived,uehara},
\emph{inter alia}.  Notably, Kawamata proved a weakening of King's conjecture by
dropping the line bundle and strong-ness conditions~\cite{kawamataI}. 

Our main result is a realization of King’s Conjecture.\footnote{Le roi est mort, vive le roi!}  The basic
idea originates from the correspondence between Cox rings and toric varieties,
which is generally not a bijection: several distinct toric varieties can
correspond to the same Cox ring.  We introduce a method for gluing the derived
categories of all toric varieties arising from a given Cox ring.  
For a toric variety $X$, we refer to 
this as the \defi{Cox category}, denoted $D_{\Cox}(X)$ (see Definition~\ref{defn:DCox}).  
The exceptional collection is the \defi{Bondal--Thomsen collection}, denoted $\Theta$, which is the natural generalization of the Beilinson collection, for a given Cox ring; see
Figure~\ref{fig:HirzSecondaryFan} and Definition~\ref{defn:BT}.
The following is our main result. 

\begin{mainthm}\label{thm:main1}
Let $X$ be a semiprojective toric variety.
The direct sum of the line bundles in $\Theta$ is a tilting object for $D_{\Cox}(X)$.
If $X$ is projective, then $\Theta$ forms a full strong exceptional collection of line bundles for $D_{\Cox}(X)$ under a natural ordering.
\end{mainthm}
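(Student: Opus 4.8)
My plan is to verify directly the two defining properties of a tilting object for $T := \bigoplus_{L\in\Theta} L$, regarded as an object of $D_{\Cox}(X)$: classical generation, and $\Hom_{D_{\Cox}(X)}(T,T[i]) = 0$ for $i\neq 0$ (compactness is automatic, $T$ being a finite sum of perfect complexes); the full strong exceptional collection in the projective case will then follow from a suitable ordering of $\Theta$. First, $T$ is well defined: each $L\in\Theta$ is built directly from the Cox ring $S$ of $X$, hence is canonically a line bundle on the Cox stack $\mfX := [\Spec S / G]$, where $G := \Hom(\Cl(X),\GG_m)$; so its restrictions to all of the toric varieties arising from $S$ are compatible with the gluing functors of Definition~\ref{defn:DCox}.

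For the vanishing of $\Ext^i_{D_{\Cox}(X)}(\cO(\alpha),\cO(\beta))$ for $i\neq 0$ and $\cO(\alpha),\cO(\beta)\in\Theta$, I would unwind the gluing construction of Definition~\ref{defn:DCox} to present the morphism complex $R\!\Hom_{D_{\Cox}(X)}(\cO(\alpha),\cO(\beta))$ as a homotopy limit, over the cones $\tau$ of the secondary fan, of the complexes $R\Gamma(X_\tau,\cO(\gamma))$, where $\gamma := \beta - \alpha$ and $X_\tau\subseteq\mfX$ is the open substack attached to $\tau$ (an honest toric variety when $\tau$ is maximal, the transition maps being the evident restrictions). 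Comparing each $X_\tau$ to $\mfX$ through the local cohomology triangle
\[
R\Gamma_{Z_\tau}(\mfX,\cO(\gamma)) \longrightarrow R\Gamma(\mfX,\cO(\gamma)) \longrightarrow R\Gamma(X_\tau,\cO(\gamma)), \qquad Z_\tau := \mfX\setminus X_\tau,
\]
and using that $R\Gamma(\mfX,\cO(\gamma)) = S_\gamma$ is concentrated in cohomological degree $0$ (the group $G$ is linearly reductive and $\Spec S$ is affine), the problem reduces to a statement about the unstable loci alone: that the homotopy limit over $\tau$ of the complexes $R\Gamma_{Z_\tau}(\mfX,\cO(\gamma))$ is acyclic — equivalently, that the \v{C}ech-type complex assembled from the $R\Gamma_{Z_\tau}$ along the nerve of the secondary fan is exact — for every $\gamma$ in the (bounded) set of differences of Bondal--Thomsen degrees. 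This is the combinatorial heart of the theorem, and I expect it to be the main obstacle: the individual $R\Gamma_{Z_\tau}$ do not vanish — their nonvanishing is exactly the failure of $\Theta$ to be strong on an individual toric variety, i.e.\ the failure of King's Conjecture — so the point is to show that, over the Bondal--Thomsen difference set, the contributions of the various unstable loci and the combinatorics of their intersections cancel in the homotopy limit. Granting this, the same computation yields $R\!\Hom_{D_{\Cox}(X)}(\cO(\alpha),\cO(\beta)) = S_\gamma$ in degree $0$, and in particular $\End_{D_{\Cox}(X)}(T) \cong \bigoplus_{L,L'\in\Theta} S_{[L']-[L]}$, the expected Cox endomorphism algebra.

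For generation I would bootstrap from the single-variety case by a conservativity argument. Every nonempty toric variety $X_\sigma$ attached to a maximal cone $\sigma$ of the secondary fan is semiprojective (being a GIT quotient of affine space), and $\Theta$ classically generates $\Db(X_\sigma)$ by the Bondal--Thomsen generation result recalled earlier; in particular a complex on $X_\sigma$ annihilated by $R\!\Hom(\Theta,-)$ is zero. So if $E\in D_{\Cox}(X)$ satisfies $R\!\Hom_{D_{\Cox}(X)}(T,E) = 0$, then restricting along each open immersion $X_\sigma\hookrightarrow\mfX$ gives $R\!\Hom_{\Db(X_\sigma)}(\Theta,E|_{X_\sigma}) = 0$, so $E|_{X_\sigma} = 0$ for every maximal $\sigma$; since every cone of the secondary fan is a face of a maximal one and the gluing functors are restrictions, $E = 0$. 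As $T$ is compact, this shows $T$ classically generates $D_{\Cox}(X)$, completing the proof that $T$ is a tilting object — and then $R\!\Hom_{D_{\Cox}(X)}(T,-)$ gives the associated derived equivalence with modules over $\End_{D_{\Cox}(X)}(T)$. This is the first assertion when $X$ is merely semiprojective.

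Finally, suppose $X$ is projective, so that its fan is complete and $\Eff(X)\subseteq\Cl(X)_{\R}$ — which for a toric variety is the cone generated by the degrees $\deg x_1,\dots,\deg x_n$ of the variables of $S$ — is a pointed cone. Since $\Eff(X)$ is pointed and the set of Bondal--Thomsen degrees is finite, there is a linear functional $\ell$ on $\Cl(X)_{\R}$ that is positive on $\Eff(X)\setminus\{0\}$ and injective on that finite set; order $\Theta = (L_1,\dots,L_r)$ so that $\ell([L_1]) < \cdots < \ell([L_r])$. For $j > i$ we then have $\ell([L_i] - [L_j]) < 0$, so $[L_i]-[L_j]\notin\Eff(X)$, so $S_{[L_i]-[L_j]} = 0$, and hence $R\!\Hom_{D_{\Cox}(X)}(L_j,L_i) = 0$ by the computation of the second paragraph, while $R\!\Hom_{D_{\Cox}(X)}(L_i,L_i) = \kk$ concentrated in degree $0$. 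Together with the strongness from the second paragraph and the generation from the third, this exhibits $\Theta$, in this order, as a full strong exceptional collection of line bundles for $D_{\Cox}(X)$.
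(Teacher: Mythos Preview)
Your proposal rests on a model of $D_{\Cox}(X)$ that does not match Definition~\ref{defn:DCox}. You treat objects of $D_{\Cox}$ as compatible systems on the open substacks $X_\tau\subset\mfX$, with morphisms computed by a homotopy limit over the secondary fan, and you claim that each $-d\in\Theta$ gives a single canonical object by restricting $S(-d)$ to every $X_\tau$. But $D_{\Cox}$ is defined as the thick subcategory of $D(\tsX)$ generated by the images of the $\pi_i^*$; there are no restriction functors to unwind, and Remark~\ref{rmk:caveats} explicitly warns that $D_{\Cox}$ is \emph{not} the derived category of the non-separated gluing $\mfX$. The object $\cO_{\Cox}(-d)$ is not canonical in your sense: by Definition~\ref{defn:ThetaCox} it is $\pi_i^*\cO_{\sX_i}(-d)$ for the \emph{specific} $i$ with $d\in\Gamma_i$, and the pullbacks $\pi_i^*\cO_{\sX_i}(-d)$ and $\pi_j^*\cO_{\sX_j}(-d)$ are genuinely different objects of $D(\tsX)$ in general (see the discussion after Example~\ref{ex:atiyahFlop}). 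Consequently your homotopy-limit formula for $\RHom$ is not available, and your generation argument---which needs that $E\in D_{\Cox}$ with $\RHom(T,E)=0$ restricts to zero on each $X_\sigma$---does not go through: there is no ``restriction to $X_\sigma$'' functor on $D_{\Cox}$, and the adjoint $\pi_{i*}$ only tests $E$ against those $\cO_{\sX_i}(-d)$ with $d\in\Gamma_i$, not all of $\Theta_{\sX_i}$.

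Even setting aside the framework mismatch, you acknowledge that the \v{C}ech-type acyclicity of the $R\Gamma_{Z_\tau}(\mfX,\cO(\gamma))$ over the nerve of the secondary fan is ``the combinatorial heart of the theorem'' and then grant it without proof. This is precisely where the content lies. The paper's route is quite different: it computes $\RHom_{D_{\Cox}}(\cO_{\Cox}(-d),\cO_{\Cox}(-d'))$ by adjunction for $\pi_i$ (with $d\in\Gamma_i$), reducing to $\RHom_{\sX_i}(\cO_{\sX_i}(-d),\pi_{i*}\pi_j^*\cO_{\sX_j}(-d'))$, and then invokes the $\Theta$-Transform Lemma~\ref{lem:PushPull} to identify $\pi_{i*}\pi_j^*\cO_{\sX_j}(-d')$ with $\cO_{\sX_i}(-d')$. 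The vanishing then follows from Demazure vanishing on the single stack $\sX_i$ (Lemma~\ref{lem:BTvanishingCohomology}), and generation (Proposition~\ref{prop:generation}) uses that $\Theta_{\tsX}$ generates $D(\tsX)$ together with the observation that the cones of the natural maps $\cO_{\Cox}(-d)\to\cO_{\tsX}(-\widetilde{d})$ lie in $\mathcal{Q}$. Your ordering argument in the projective case is essentially the paper's (Definition~\ref{defn:ordering}), but it only becomes usable once the $\RHom$ computation is in hand.
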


Our theorem also manifests a vision of Bondal.  In 2006, Bondal used the
collection of line bundles $\Theta$ to provide a novel perspective on derived
categories of toric varieties that was especially amenable to homological mirror
symmetry~\cite{Bondal}.\footnote{This same collection was previously described
by Thomsen as Frobenius summands in~\cite{Thomsen}.}  Bondal's
proposal has been immensely influential, inspiring the coherent-constructible
correspondence~\cites{FLTZ11, FLTZ12, FLTZ14} as well as many other results on
derived categories of toric varieties.

\cref{thm:main1} demonstrates that both King's conjecture and Bondal's proposal
always hold---without exception---as long as one synthesizes all of the toric geometry associated to a given Cox ring.
In combination with \cref{thm:resDiagCoxVague} below, our results show that Beilinson's work on $D(\PP^n)$--leveraging an explicit resolution of the diagonal to find a full strong exceptional collection--can be generalized to $D_{\Cox}(X)$ for any projective toric $X$.

These properties of the Cox category offer a novel and concrete instantiation of the  philosophy connecting derived categories and birational geometry.
We consider this analogous to how gluing affine varieties to form projective varieties can yield
simpler structures.

We construct $D_{\Cox}(X)$ using the secondary fan $\Sigma_{GKZ}$, which is the toric version of the Mori chamber decomposition in birational geometry. For each
cone $\Gamma \in \Sigma_{GKZ}$, there is a toric variety
$X_\Gamma$ as described in~\cite{CLSToricVarieties}*{Chapters 14--15}.
The maximal chambers of $\Sigma_{GKZ}$ correspond to simplicial toric varieties,
which we label as $X_1, \dots, X_r$.  Each $X_i$ comes with an irrelevant ideal
$B_i\subseteq S$, and the pair $(S,B_i)$ determines both the toric variety $X_i$
as a GIT quotient of $\Spec(S) - V(B_i)$ and a Deligne--Mumford toric stack
$\sX_i$ as the corresponding stack quotient. These are identical when $X_i$ is
smooth, but when $X_i$ is simplicial but not smooth, the stack $\sX_i$ remains 
smooth, making its derived category better behaved. Accordingly, our central
definition will involve these toric stacks.

\begin{defn}\label{defn:DCox}
Let $\sX_1, \dots, \sX_r$ be the toric stacks corresponding to the maximal chambers of $\Sigma_{GKZ}(X)$.
Let $\tsX$ be any smooth toric stack with proper birational toric morphisms $\pi_i\colon \tsX\to \sX_i$ for all $i$.  The \defi{Cox category} $D_{\Cox}(X)$ is  the full subcategory of $D(\tsX)$ generated by $\pi_i^*D(\sX_i)$ for $1\leq i\leq r$.\footnote{Throughout the paper, we adopt the convention that all functors are derived; for example, $\pi_i^*$ will denote $L\pi_i^*$, $\Hom$ will denote $\operatorname{RHom}$, and so on. See also \cref{conv:functors}.}  We will often refer to $D_{\Cox}(X)$ as simply $D_{\Cox}$. 
\end{defn}

While $D(\tsX)$ provides a convenient way to compare the $D(\sX_i)$, $D_{\Cox}$ itself is independent of the choice of $\tsX$,
and thus $D_{\Cox}(X)$ may be seen as an invariant of the Cox ring $S$.  
Because the functors $\pi_i^*$ are fully faithful, $D_{\Cox}$ 
contains a copy of each $D(\sX_i)$. Naturally, these copies 
overlap. For example, the structure sheaf $\cO_{\sX_i}$ pulls 
back to $\cO_{\tsX}$ for all $i$, meaning that $\cO_{\tsX}$ lies in 
$\pi_i^* D(\sX_i)$ for all $i$.  
How do the copies of $D(\sX_i)$ in $D_{\Cox}$ relate to one another?
The graphs of the birational maps $\sX_i \dashrightarrow \sX_j$ induce
Fourier--Mukai transforms  $\Phi_{ij}\colon D(\sX_i)\to D(\sX_j)$. For any
$\cE\in D(\sX_i)$, we refer to the set of $\Phi_{ij}(\cE)$ (for all $j$) as the
\defi{Fourier--Mukai transforms of $\cE$}.
Continuing with the metaphor that $D(\sX_i)$ is analogous to an ``affine
patch'' of $D_{\Cox}$, each Fourier--Mukai transform corresponds to a
transition function, and each $\pi_{i*}\colon D_{\Cox} \to D(\sX_i)$
represents restriction to the patch.

In Appendix \ref{sec:grothendieckconstr}, we make this
birational-gluing-via-integral-transforms
perspective precise (see \Cref{thm:cox_groth}) by providing an equivalent 
characterization of $D_{\Cox}$ as constructed from a lax functor whose 1-skeleton is the data of 
$(D(\sX_i),\Phi_{ij})$, using the Grothendieck construction \cite{SGA1}*{\S\.VI.8}. Just as a scheme is glued as a colimit of affine
schemes, $D_{\Cox}$ is the pretriangulated envelope of the lax colimit of
derived categories.

\begin{figure}
\centering
\begin{tikzpicture}[scale = .65]
\draw[step=1cm,gray,dashed,very thin] (0,0) grid (7,7);
\draw[fill=gray!60] (4,3)--(0,4.33)--(0,7)--(7,7)--(7,3)--(4,3);
\draw (4,3) -- (4,7);
\filldraw[black] (5,3) circle (3pt);
\filldraw[black] (5,3) circle (0pt) node[anchor=south]{\tiny $d_1$};
\filldraw[black] (3.5,5.5) circle (0pt) node[anchor=east]{{ $\PP(1,1,3)$}};
\filldraw[black] (4.8,5.5) circle (0pt) node[anchor=west]{{ $\cH_3$}};
\filldraw[black] (2,4) circle (3pt) node[anchor=south]{\tiny$d_2$};
\filldraw[black] (3,4) circle (3pt) node[anchor=south]{\tiny$d_3$};
\filldraw[black] (4,4) circle (3pt) node[anchor=south]{\tiny$d_4$};
\filldraw[black] (5,4) circle (3pt) node[anchor=south]{\tiny$d_5$};
\filldraw[black] (4,3) circle (3pt) node[anchor=north]{\tiny$d_0=(0,0)$};
\filldraw[black]  (1,-1) circle (0pt);
\end{tikzpicture}
\qquad
\begin{tikzpicture}[scale = .65]
\draw[step=1cm,gray,dashed,very thin] (0,0) grid (7,7);
\node[anchor=north] at (5,3){\tiny $\deg(x_0)=\deg(x_2)$};
\draw[black, ->, line width = .4mm] (4,3)--(5,3);
\draw[black, ->, line width = .4mm] (4,3)--(4,4);
\draw[black, ->, line width = .4mm] (4,3)--(1,4);
\node at (4,4.25){\tiny $\deg(x_3)$};
\node[anchor=south] at (1,4) {\tiny $\deg(x_1)$};
\filldraw[black]  (1,-1) circle (0pt);
\end{tikzpicture}
\qquad
\begin{tikzpicture}[scale = .65]
\draw[step=1cm,gray,dashed,very thin] (0,0) grid (7,7);
\filldraw[black] (4.3,5.2) circle (0pt) node[anchor=west]{{ $- Z$}};
\filldraw[black] (5,3) circle (3pt) node[anchor=north]{\tiny $d_1$};
\draw[black, line width = .75mm] (6,3)--(4,3)--(1,4);
\draw[black, dashed, line width = .75mm] (1,4) -- (1,5)--(3,5)--(6,4)--(6,3);
\draw[fill = gray, opacity = .3] (4,3)--(1,4) -- (1,5)--(3,5)--(6,4)--(6,3)--(4,3);
\filldraw[black] (2,4) circle (3pt) node[anchor=south]{\tiny$d_2$};
\filldraw[black] (3,4) circle (3pt) node[anchor=south]{\tiny$d_3$};
\filldraw[black] (4,4) circle (3pt) node[anchor=south]{\tiny$d_4$};
\filldraw[black] (5,4) circle (3pt) node[anchor=north]{\tiny$d_5$};
\filldraw[black] (4,3) circle (3pt) node[anchor=north]{\tiny$d_0$};
\filldraw[black]  (1,-1) circle (0pt);
\end{tikzpicture}
\caption{The secondary fan for a Hirzebruch surface $\cH_3$ has two maximal
chambers: one corresponding to $\cH_3$ and the other to $\PP(1,1,3)$.  The
Bondal--Thomsen collection $\Theta$ consists of the degrees in a half-open
zonotope $Z$ determined by the degrees of the variables.   In this
example, $\Theta$ consists of the $6$ degrees $-d_i$, with $d_i$ as marked in
$-Z$.}
\label{fig:HirzSecondaryFan}
\end{figure}
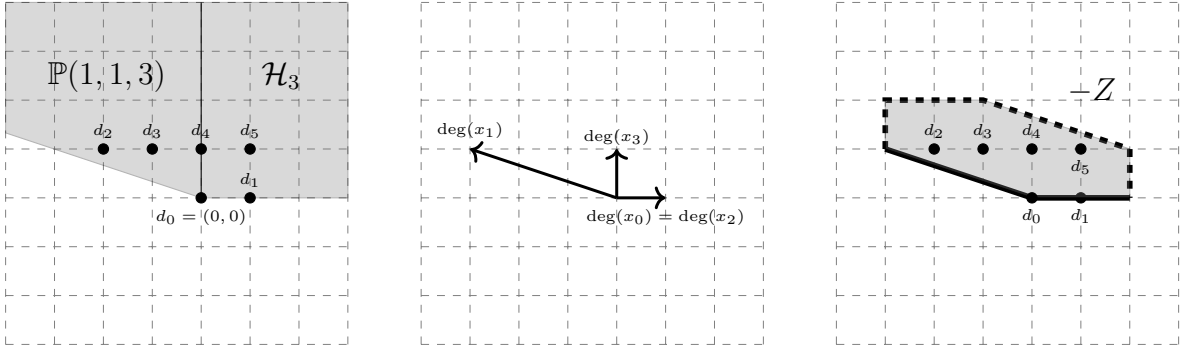

\begin{example}\label{ex:hirzGKZ}
Let $X=\cH_3$ be a Hirzebruch surface of type $3$. Its secondary fan has two maximal chambers (see Figure~\ref{fig:HirzSecondaryFan}): one corresponding to $\cH_3$, and the other to the weighted projective stack $\PP(1,1,3)$.
After choosing $\tsX$ (see \cref{subsec:constructionOfWX}), $D_{\Cox}$ is generated by pullbacks of line bundles from $\cH_3$ and $\PP(1,1,3)$.
Some of these ``glue,'' for example, the pullbacks of $\cO_{\cH_3}(0,d)$ and $\cO_{\PP(1,1,3)}(3d)$ agree for all $d\in \ZZ$.  For other bundles, the correspondence is more subtle. For instance, the Fourier--Mukai transform of the structure sheaf of the exceptional curve on $\cH_3$ is supported on the stacky point of $\PP(1,1,3)$.

The Bondal--Thomsen collection is illustrated in
Figure~\ref{fig:HirzSecondaryFan}. The corresponding bundles generate, but do
not form an exceptional collection, for either $D(\cH_3)$ or $D(\PP(1,1,3))$.
However, Theorem~\ref{thm:main1} implies that these become an exceptional
collection when $D(\cH_3)$ and $D(\PP(1,1,3))$ are glued to form $D_{\Cox}$. Note
that we must assign to each element $-d\in \Theta$ an appropriate object in
$D_{\Cox}$, and we do this according to where the image of $d$ lies in
$\Sigma_{GKZ}$. For instance, referring to Figure~\ref{fig:HirzSecondaryFan},
$d_5$ lies in the chamber for $\cH_3$; thus, we define $\cO_{\Cox}(-d_5)$ as
the pullback of $\cO_{\cH_3}(-d_5)$. Definition~\ref{defn:ThetaCox} provides
the general recipe.
\end{example}

Although it is generally {\em not} possible to choose some $\tsX$ such
that $D_{\Cox} = D(\tsX)$, $D_{\Cox}$ does behave in many ways like the
derived category of a mythical smooth toric stack associated to the ring $S$.
For instance, each $\pi_{i*}\colon D_{\Cox} \to D(\sX_i)$ is a categorical localization, just like
pushforwards via birational toric morphisms. We also have:
\begin{prop}\label{prop:dgThm}
For a semiprojective toric variety $X$, $D_{\Cox}(X)$ is 
self-dual, homologically smooth, 
and of Rouquier dimension equal to $\dim X$. Moreover, if $X$ is projective,
then $D_{\Cox}(X)$ is also proper.
\end{prop}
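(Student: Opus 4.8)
The plan is to extract all four statements from \cref{thm:main1} — which makes $D_{\Cox}(X)$ equivalent to $\Perf(A)$ for $A\ce\End_{D_{\Cox}(X)}(T)$ the (derived) endomorphism algebra of the tilting object $T=\bigoplus_{L\in\Theta}L$ — together with the embedding $D_{\Cox}(X)\subseteq D(\tsX)$ into the derived category of a smooth stack and the resolution of the diagonal of \cref{thm:resDiagCoxVague}. Self-duality is the softest point. Since $\tsX$ is smooth, $D(\tsX)=\Perf(\tsX)$ and the derived dual $(-)^{\vee}=\RHom_{\OO_{\tsX}}(-,\OO_{\tsX})$ is a contravariant autoequivalence of $D(\tsX)$ squaring to the identity; it commutes with the fully faithful functors $\pi_i^{*}$ on perfect complexes and preserves each $D(\sX_i)=\Perf(\sX_i)$ (the $\sX_i$ are smooth), hence carries each $\pi_i^{*}D(\sX_i)$ into itself and therefore preserves the subcategory they generate. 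Restricting gives the equivalence $D_{\Cox}(X)^{\mathrm{op}}\simeq D_{\Cox}(X)$.

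For properness, suppose $X$ is projective. Then the rays of $X$ positively span $N_{\RR}$, so the degree-zero part of the Cox ring $S$ is the ground field; consequently every $\sX_i$ is proper and one may take $\tsX$ proper (see \cref{subsec:constructionOfWX}), so $D(\tsX)$ is $\Hom$-finite and so is its full subcategory $D_{\Cox}(X)$. Equivalently, in this case $A$ is a genuine finite-dimensional algebra — the endomorphism algebra of the full strong exceptional collection $\Theta$ — so $D_{\Cox}(X)=\Perf(A)$ is proper.

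Homological smoothness splits into cases. When $X$ is projective, the ordering in \cref{thm:main1} exhibits $A$ as a directed finite-dimensional algebra ($\End$ of each line bundle is $k$ and $\Hom(L_i,L_j)=0$ for $i>j$), hence of finite global dimension, hence smooth, so $D_{\Cox}(X)=\Perf(A)=\Db(A)$ is homologically smooth; this is also visible directly from \cref{thm:resDiagCoxVague}, whose resolution of the diagonal exhibits the diagonal $A$-bimodule as perfect. For general semiprojective $X$ I would instead use the gluing description of \cref{thm:cox_groth}: $D_{\Cox}(X)$ is glued from the homologically smooth categories $D(\sX_i)$ (smooth Deligne--Mumford stacks) along the Fourier--Mukai functors $\Phi_{ij}=\pi_{j*}\pi_i^{*}$, whose kernels $R(\pi_i,\pi_j)_{*}\OO_{\tsX}$ lie in $\Db_{\coh}(\sX_i\times\sX_j)=\Perf(\sX_i\times\sX_j)\simeq\Perf(\sX_i)\otimes\Perf(\sX_j)$ and hence are perfect bimodules; an iterated gluing of homologically smooth categories along perfect bimodules remains homologically smooth.

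Finally, the Rouquier dimension. For the bound $\dim D_{\Cox}(X)\leq\dim X$, convolve the resolution of the diagonal of \cref{thm:resDiagCoxVague}: it has length $\dim X+1$, so exactly as in Beilinson's argument for $\PP^{n}$ every object of $D_{\Cox}(X)$ is obtained from $\bigoplus_{d\in\Theta}\cO_{\Cox}(-d)$ by at most $\dim X+1$ iterated cones. For the reverse inequality, each $\pi_{i*}\colon D_{\Cox}(X)\to D(\sX_i)$ is a Verdier localization, hence essentially surjective, so $\dim D_{\Cox}(X)\geq\dim D(\sX_i)$; restricting a generator further to the dense torus $(\GG_m)^{\dim X}\subseteq\sX_i$ is again essentially surjective, so $\dim D(\sX_i)\geq\dim\Db_{\coh}((\GG_m)^{\dim X})=\dim X$ by Rouquier's computation for smooth affine varieties. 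Hence $\dim D_{\Cox}(X)=\dim X$. Among these steps the only ones with real content are those imported from \cref{thm:main1} and \cref{thm:resDiagCoxVague}; the main obstacle internal to this argument is pinning the length of the diagonal resolution to exactly $\dim X+1$ for the sharp upper bound, and — in the non-projective case — verifying that the Grothendieck-construction model of $D_{\Cox}(X)$ is genuinely an iterated gluing along the perfect kernels $\Phi_{ij}$, so that the standard ``smoothness of gluings'' principle applies.
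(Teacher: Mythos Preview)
Your self-duality and properness arguments match the paper's exactly. The substantive divergence is in homological smoothness and Rouquier dimension, where the paper exploits a single structural fact you do not use: $D_{\Cox}$ is an \emph{admissible} subcategory of $D(\tsX)$, coming from the semiorthogonal decomposition $D(\tsX)=\langle \mathcal Q, D_{\Cox}\rangle$ established in \cref{cor:semiorthogonalDCox}. Since $\tsX$ is a smooth DM stack, $D(\tsX)$ is homologically smooth, and admissible subcategories of smooth categories are smooth; this handles both the projective and semiprojective cases in one line. Similarly, admissibility gives $\dim D_{\Cox}\le \dim D(\tsX)=\dim X$ (the latter equality by \cites{HHL,FH}), and the reverse inequality comes from the fully faithful $\pi_i^*$ with adjoint $\pi_{i\ast}$ together with $\dim D(\sX_i)=\dim X$, again by \cites{HHL,FH}.

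Your alternative routes are mostly sound but costlier. For smoothness in the projective case, the directed-algebra argument is fine; for the semiprojective case, however, your gluing argument is genuinely delicate in exactly the way you flag: the Grothendieck construction of \cref{thm:cox_groth} is a lax colimit over the full trivial groupoid on $\{1,\dots,r\}$, not an iterated upper-triangular gluing along bimodules, so the standard ``gluing preserves smoothness'' statement for semiorthogonal decompositions does not apply directly. One would need to reorganize the lax colimit into a sequence of bimodule gluings, which is not obvious here (and unnecessary given admissibility). For the Rouquier upper bound, your diagonal-resolution argument works, but note that the length $\dim X+1$ is a property of the HHL complex imported from \cite{HHL}, not stated explicitly in \cref{thm:resDiagCoxVague}; the paper again sidesteps this by inheriting the bound from $D(\tsX)$. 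For the lower bound, your detour through the torus is correct but superfluous: once you have that $\pi_{i\ast}$ is a localization, you may cite $\dim D(\sX_i)=\dim X$ from \cites{HHL,FH} directly.
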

\noindent We caution the reader, however, that certain natural categories can appear
similar to $D_{\Cox}$ but are not, in fact, equivalent; see
Remark~\ref{rmk:caveats} for a discussion of some subtleties.

\smallskip

As $D_{\Cox}$ incorporates information about
$\Sigma_{GKZ}$, theorems about $D_{\Cox}$ yield uniform results for those
varieties. For instance, a curious feature of the Hanlon--Hicks--Lazarev
resolution of the diagonal for $X$ from \cite{HHL} is that the construction only involves the rays of the fan of $X$, making no use whatsoever of  the higher
dimensional cones.  In other words, this resolution 
depends
solely on the Cox ring.  The following theorem explains this curiosity. 

\begin{thm}\label{thm:resDiagCoxVague}
 The Hanlon--Hicks--Lazarev resolution of the diagonal of $X$ lifts to a complex $\HHL$ in $D_{\Cox}$ with the following properties:
\vspace*{-2.25mm}
\begin{enumerate}[noitemsep]
\item $\HHL$ is a resolution of the diagonal for $D_{\Cox}$ in the sense
that the Fourier--Mukai transform $D_{\Cox}\to D_{\Cox}$ with kernel $\HHL$
is naturally isomorphic to the identity, and
\item  The derived pushforward $(\pi_i \times\pi_i)_{\ast}\HHL$ is homotopic
to the
Hanlon--Hicks--Lazarev resolution of the diagonal for $\sX_i$ for all $1
\leq i \leq r$.
\end{enumerate}
\end{thm}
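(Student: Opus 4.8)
The plan is to build $\HHL$ as a canonical lift of the Hanlon--Hicks--Lazarev (HHL) complex along the structural functors of $D_{\Cox}$, and then verify the two listed properties by pushing forward to the patches $\sX_i$ and invoking the faithful-detection of isomorphisms by the collection $\{\pi_{i*}\}$. First I would recall that the HHL resolution of the diagonal of a toric variety/stack is built from the rays of the fan — equivalently from the variables of the Cox ring $S$ — as a Koszul-type complex of line bundles indexed by (the degrees in) a zonotope; this is the same data for every $\sX_i$, and for the common smooth refinement $\tsX$. So there is a complex $\HHL$ of objects in $D(\tsX \times \tsX)$, visibly built from line bundles $\cO_{\tsX}(\mathbf a) \boxtimes \cO_{\tsX}(\mathbf b)$ with $(\mathbf a, \mathbf b)$ ranging over the combinatorial index set of \cite{HHL}; I would check that each term in fact lies in the external product $D_{\Cox} \boxtimes D_{\Cox} \subseteq D(\tsX \times \tsX)$ (using Definition~\ref{defn:ThetaCox} to assign each summand to the appropriate patch), so that the associated Fourier--Mukai kernel defines an endofunctor $\Phi_{\HHL}\colon D_{\Cox}\to D_{\Cox}$.

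For property (2): the maps $\pi_i\times\pi_i\colon \tsX\times\tsX\to\sX_i\times\sX_i$ are proper birational toric, and I would show $(\pi_i\times\pi_i)_*\HHL$ is homotopic to the HHL complex of $\sX_i$. The key point is that the HHL complex is functorial in the fan data under toric refinement: the terms of $\HHL$ are pullbacks $(\pi_i\times\pi_i)^*$ of the corresponding terms on $\sX_i\times\sX_i$ tensored with — or rather, exactly equal to — the analogous Koszul terms on $\sX_i$, because the zonotopal index set only sees the rays, which $\pi_i$ preserves. Then the projection formula, together with $R(\pi_i\times\pi_i)_*\cO_{\tsX\times\tsX}\simeq\cO_{\sX_i\times\sX_i}$ (proper birational toric morphism of stacks, rational singularities), gives $(\pi_i\times\pi_i)_*\HHL\simeq \HHL_{\sX_i}$ term by term, and the differentials match because they are the same combinatorial sign maps. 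Property (1) then follows from property (2) together with the detection principle: an endofunctor $\Phi$ of $D_{\Cox}$ is the identity once its composites $\pi_{i*}\circ\Phi\circ\pi_i^*$ (and the unit/counit compatibilities) recover $\Id_{D(\sX_i)}$, since the $\pi_i^* D(\sX_i)$ generate $D_{\Cox}$ and the $\pi_{i*}$ are jointly conservative. Concretely, I would compute $\pi_{i*}\Phi_{\HHL}\pi_i^*$ using base change along the fiber square, reducing it to the Fourier--Mukai transform on $\sX_i$ with kernel $(\pi_i\times\pi_i)_*\HHL=\HHL_{\sX_i}$, which is the identity of $D(\sX_i)$ by the original theorem of \cite{HHL}; a compatible family of such identifications (natural in $i$, i.e. respecting the gluing transforms $\Phi_{ij}$) then assembles to a natural isomorphism $\Phi_{\HHL}\simeq\Id_{D_{\Cox}}$.

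The main obstacle I expect is the bookkeeping in two places: first, checking that the external-product decomposition of each term of $\HHL$ is compatible with the patch-assignment of Definition~\ref{defn:ThetaCox} uniformly across all $i$ — i.e. that the single complex on $\tsX\times\tsX$ genuinely lands in $D_{\Cox}\boxtimes D_{\Cox}$ and not merely in $D(\tsX)\boxtimes D(\tsX)$; and second, promoting the term-by-term pushforward identification of property (2) to a genuine homotopy equivalence of complexes, i.e. controlling the higher cohomology so that $R(\pi_i\times\pi_i)_*$ commutes with the totalization of the Koszul complex and kills no differentials. Both should follow from the ampleness/nef positivity of the line bundles appearing in the zonotope (so that the relevant $R^{>0}(\pi_i\times\pi_i)_*$ vanish, as in the cohomology-and-base-change arguments underlying the original HHL construction), but making this precise for stacks $\sX_i$ that are only simplicial — hence for which $\tsX\to\sX_i$ may contract divisors — is where care is needed. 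Finally, assembling the patchwise identities into one natural isomorphism requires tracking compatibility with the Fourier--Mukai gluing data $\Phi_{ij}$, which is most cleanly handled through the Grothendieck-construction description of $D_{\Cox}$ from Appendix~\ref{sec:grothendieckconstr} (\Cref{thm:cox_groth}).
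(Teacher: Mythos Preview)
Your overall architecture---build on $\tsX\times\tsX$, push forward to each $\sX_i\times\sX_i$, and detect the identity via the $\pi_{i*}$---matches the paper's, but there is a genuine gap in your argument for property~(2), and it is precisely the gap the paper's main technical lemma is designed to fill.

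You claim that ``the terms of $\HHL$ are pullbacks $(\pi_i\times\pi_i)^*$ of the corresponding terms on $\sX_i\times\sX_i$'' and then invoke the projection formula. This is false for a given $i$. By Definition~\ref{defn:ThetaCox}, $\cO_{\Cox}(-d)$ is the pullback $\pi_j^*\cO_{\sX_j}(-d)$ from the \emph{one} $\sX_j$ whose chamber contains $d$; it is \emph{not} $\pi_i^*\cO_{\sX_i}(-d)$ for other $i$, and those two line bundles on $\tsX$ generally differ (see the discussion after Example~\ref{ex:atiyahFlop}: Bondal--Thomsen elements need not pull back to Bondal--Thomsen elements). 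So the projection formula alone cannot give $(\pi_i)_*\cO_{\Cox}(-d)=\cO_{\sX_i}(-d)$. What is needed is exactly the $\Theta$-Transform Lemma (\cref{lem:PushPull}): for $-d\in\Theta$ with $d$ in $\Gamma_j$, $\pi_{i*}\pi_j^*\cO_{\sX_j}(-d)=\cO_{\sX_i}(-d)$ for \emph{every} $i$. Your remark that ``both should follow from ampleness/nef positivity'' gestures in this direction but underestimates the difficulty; the paper devotes all of \S\ref{sec:PushPull} to this lemma, and the proof is not a routine vanishing argument.

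The paper also sidesteps your first obstacle entirely. Rather than building $\HHL$ on $\tsX\times\tsX$ and then checking membership in $D_{\Cox}\boxtimes D_{\Cox}$, it first proves \cref{thm:main1} (so $\Psi\colon K_\Theta(S)\to D_{\Cox}$ is an equivalence) and then \emph{defines} $\HHL_{\Cox,\Cox}\ce(\Psi\times\Psi)(\HHL_{S,S})$ where $\HHL_{S,S}$ is the algebraic HHL complex of $S\otimes S$-modules; this lands in $D_{\Cox}$ by construction. Property~(2) then reduces to showing $\pi_{i*}\circ\Psi\cong(-)|_{\sX_i}$, which is the $\Theta$-Transform Lemma plus \cref{cor:SmodHom}, combined with the uniformity statement \cref{prop:uniformHHL}. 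For property~(1), the paper does not use base change along a fiber square (it notes in the remark after \cref{cor:strongResDiag} that this route requires functoriality statements not available from \cite{HHL}); instead it checks $\Phi_{\Cox,\Cox}$ on the generators $\cO_{\Cox}(-d)$ via the commutative square of \cref{lem:PhiCoxCD} and \cref{lemma:checkpush}, and on morphisms by restricting to the open torus.
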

\noindent 
The Hanlon--Hicks--Lazarev resolutions are uniform across all
birational models precisely because they are the restrictions (via
$\pi_{i\ast}$) of a single resolution of the diagonal. 
A sharper statement is provided in \cref{sec:generation}. We note that an
assumption on the characteristic of the field appears in \cite{HHL}. However,
this is extraneous; see \cref{rmk:characteristic}.

A second example in this vein is \cref{thm:HHL_window}, which demonstrates how
the resolution above provides window categories in the derived category of
graded modules over the Cox ring uniformly across all chambers. These window
categories glue to the homotopy category of the graded modules $S(d)$ for $d \in
\Theta$, as described in \cref{thm:windows_glued}.

A third example arises from the theory of noncommutative or categorical resolutions.  Recall that $X$ is any semiprojective toric variety.  Let $\cT$ be the reflexive sheaf $\bigoplus_{-d\in\Theta} \cO_{X}(-d)$, and consider the underived endomorphism algebra $A_\Theta=\Hom_X^0(\cT,\cT)$.
\begin{thm}\label{thm:noncomm}
The endomorphism algebra $A_\Theta$ is a noncommutative resolution for $X$. The algebra $A_{\Theta}$ is uniform for any $X$ with the same Cox ring, that is, for any semiprojective toric variety whose fan has the same rays as $X$. 
\end{thm}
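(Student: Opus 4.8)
The plan is to identify $A_\Theta$ with the endomorphism algebra of the tilting object furnished by \cref{thm:main1}, and then read off the two defining features of a noncommutative resolution — finite global dimension and generic triviality — from \cref{thm:main1} and \cref{prop:dgThm}. The uniformity statement will drop out of an explicit description of $A_\Theta$. For that, note that each $\cO_X(-d)$ is a rank-one reflexive sheaf, so $\cHom_X(\cO_X(-d),\cO_X(-d'))\cong\cO_X(d-d')$ (both sheaves are reflexive and agree on the smooth locus), and therefore
\[
 A_\Theta \;=\; \bigoplus_{-d,\,-d'\in\Theta}\Hom_X^0\!\big(\cO_X(-d),\cO_X(-d')\big)\;=\;\bigoplus_{-d,\,-d'\in\Theta} H^0\!\big(X,\cO_X(d-d')\big)\;=\;\bigoplus_{-d,\,-d'\in\Theta} S_{d-d'},
\]
using the classical fact that $\Gamma_\ast\cO_X$ is the Cox ring $S$ for a semiprojective toric variety with no torus factors; the multiplication is composition of sheaf maps, i.e.\ multiplication in $S$. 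Thus $A_\Theta$ depends only on the $\Cl(S)$-graded ring $S$ and on the finite subset $\Theta\subset\Cl(S)$, which is itself determined by the degrees of the variables. Since two semiprojective toric varieties with the same rays have the same $S$ and the same $\Theta$, this already proves the uniformity claim.

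Next I would match $A_\Theta$ with $\End_{D_{\Cox}(X)}(\cT_{\Cox})$, where $\cT_{\Cox}=\bigoplus_{-d\in\Theta}\cO_{\Cox}(-d)$ is the tilting object of \cref{thm:main1}. Since $D_{\Cox}(X)\subset D(\tsX)$ is full, these morphism spaces are computed in $D(\tsX)$; and since each $\pi_i$ is a proper birational toric morphism from a smooth stack, $\pi_{i\ast}\cO_{\tsX}=\cO_{\sX_i}$ with vanishing higher direct images, so the projection formula lets one push a morphism space between pullbacks of line bundles from a single $\sX_i$ down to $H^\bullet(\sX_i,\cO_{\sX_i}(d-d'))=S_{d-d'}$ (concentrated in degree $0$, which is in any case part of \cref{thm:main1}). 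The point that needs care is that $\cO_{\Cox}(-d)$ and $\cO_{\Cox}(-d')$ are the chamber-specific bundles of \cref{defn:ThetaCox}, pulled back from possibly different models $\sX_i,\sX_j$, so one must check that this mixed morphism space is still $S_{d-d'}$; equivalently — and this is the cleanest route — one invokes \cref{thm:HHL_window} and \cref{thm:windows_glued}, by which $D_{\Cox}(X)$ is equivalent to the homotopy category built from the graded modules $S(d)$, $-d\in\Theta$, carrying $\cO_{\Cox}(-d)$ to $S(d)$; the endomorphism algebra of $\bigoplus_{-d\in\Theta} S(d)$ in that category is exactly $\bigoplus S_{d-d'}=A_\Theta$. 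Either way one obtains $A_\Theta\cong\End_{D_{\Cox}(X)}(\cT_{\Cox})$ as algebras.

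Now the homological properties follow formally. By \cref{thm:main1}, $\cT_{\Cox}$ is a tilting object, so $\Hom_{D_{\Cox}}(\cT_{\Cox},-)$ induces an equivalence $D_{\Cox}(X)\simeq\Perf(A_\Theta)$, together with its bounded-coherent version $\Db(A_\Theta)$. By \cref{prop:dgThm}, $D_{\Cox}(X)$ is homologically smooth, hence $A_\Theta$ is a homologically smooth $k$-algebra; since it is concentrated in degree $0$ this forces finite global dimension (a finite resolution of $A_\Theta$ over $A_\Theta^{\mathrm e}$ base-changes to a finite projective resolution of any module), and when $X$ is projective the additional properness in \cref{prop:dgThm} makes $A_\Theta$ finite dimensional. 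It remains to record generic triviality: $\cT=\bigoplus_{-d\in\Theta}\cO_X(-d)$ is reflexive, contains $\cO_X$ as a summand (as $0\in\Theta$), and restricts to a direct sum of line bundles on the smooth locus of $X$, so $\cHom_X(\cT,\cT)$ is Morita equivalent to $\cO_X$ there and $A_\Theta\otimes\kappa(X)$ is a matrix algebra over $\kappa(X)$. Module-finiteness over the Noetherian ring $H^0(X,\cO_X)$, finite global dimension, and this generic matrix structure together say precisely that $A_\Theta=\End_X(\cT)$ is a noncommutative resolution of $X$; composing pullback along a toric partial resolution $X_i\to X$ (the identity if $X$ is simplicial), pullback to $\sX_i$, and $\pi_i^\ast$ yields a fully faithful embedding $\Db(X)\hookrightarrow D_{\Cox}(X)\simeq\Db(A_\Theta)$, so $\Db(A_\Theta)$ is a categorical resolution of $\Db(X)$ in the usual sense.

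The step I expect to be the main obstacle is the identification $A_\Theta\cong\End_{D_{\Cox}(X)}(\cT_{\Cox})$: the morphism spaces in $D_{\Cox}(X)$ between Bondal--Thomsen bundles involve line bundles pulled back from different birational models, so this is not merely the classical cohomology computation on a single toric variety, and one must verify that transporting $\cT_{\Cox}$ back to $X$ really recovers the reflexive sheaves $\cO_X(-d)$ with multiplication given by $S$. This is exactly the assertion that $D_{\Cox}(X)$ is compatible with the Cox ring, and it is where the window results of the paper do the work; once it is granted, finite global dimension comes from \cref{prop:dgThm} through the tilting equivalence, generic triviality from reflexivity, and uniformity from the manifestly fan-independent formula $A_\Theta\cong\bigoplus_{-d,-d'\in\Theta}S_{d-d'}$.
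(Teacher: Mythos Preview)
Your approach is essentially the paper's, with two small deviations worth flagging. The paper packages the proof as \cref{thm:noncommMorePrecise}: it obtains the identification $A_\Theta\cong\End_{D_{\Cox}}(\cT_{\Cox})$ directly from \cref{cor:SmodHom} (which already says $\RHom(\cO_{\Cox}(-d),\cO_{\Cox}(-d'))\cong S_{d-d'}$ with composition equal to multiplication in $S$), rather than detouring through the window results you invoke---those results are themselves downstream of \cref{cor:SmodHom}. For the global dimension the paper simply cites \cite{FH}*{Theorem 2.14} to get the exact value $\dim X$, whereas you extract finiteness from the homological smoothness in \cref{prop:dgThm}; both are valid and yours is more self-contained. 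The uniformity argument is identical in spirit to \cref{lem:Atheta}.

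One genuine slip: your fully faithful embedding is stated for $\Db(X)$, but pullback along a resolution is only fully faithful on $\Perf(X)$ when $X$ is singular (this is why categorical resolutions in the sense of \cite{kuzentsov-lefschetz} are formulated with $\Perf(X)\to D$ and $D\to\Db(X)$ as a pseudoadjoint pair). The paper accordingly only claims $\Perf(X)\hookrightarrow D(A_\Theta)$ via $\cE\mapsto\Hom_{\Cox}(\cT_{\Cox},\pi_{\Cox}^\ast\cE)$; see also \cref{thm:categoricalResolutions}.
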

\noindent Noncommutative resolutions of affine schemes were introduced by Van den Bergh in~\cite{van-den-bergh-nccr}, and in the affine case, \cref{thm:noncomm} recovers the noncommutative resolutions of affine toric varieties given by \v{S}penko--Van den Bergh~\cite{spenko-van-den-bergh-inventiones}*{Proposition 1.3.6} and Faber--Muller--Smith~\cite{faber-muller-smith}.  For the non-affine case, one can either extend the Van den Bergh definition in the natural way or consider categorical resolutions in the sense of~\cite{kuzentsov-lefschetz} (see \cref{thm:noncommMorePrecise} below).  These resolutions are rarely crepant (see Remark~\ref{rmk:crepant}).  
On the other hand, the noncommutative algebra $A_\Theta$ is well understood as a quiver with relations.  In fact, it has a topological description as the path algebra of a quiver embedded in the mirror torus up to homotopy relations,  see \cites{Bondal, FH}.  Furthermore, minimal resolutions of modules over this algebra can also be interpreted topologically \cite{FS}.

A surprising feature of Theorem~\ref{thm:noncomm}---even when $X$ is smooth---is
that we are working with the {\em underived} endomorphism algebra $A_{\Theta}$.
In most cases, $\cT$ is not a tilting bundle for $X$ itself. Nevertheless, there is still a fully faithful functor into $D(A_\Theta)$ provided by $\mathcal E\mapsto \Hom_{D_{\Cox}}(\cT_{\Cox}, \pi^*\cE)$ (see \cref{subsec:noncomm}).

As a fourth example, $D_{\Cox}$ can be viewed from the perspective of multigraded $S$-modules.  Each $\sX_i$ corresponds to a pair $(S,B_i)$, where $B_i$ is an irrelevant ideal.  As $D_{\Cox}$ combines all of the $D(\sX_i)$, it should provide methods for using (complexes of) $S$-modules to {\em simultaneously} study all of the toric varieties from $\Sigma_{GKZ}$.  This is the content of \cref{subsec:BTmonads}, where we introduce and study analogues of Beilinson monads based on $\Theta$.

We  also obtain a sharpened version of Bondal--Thomsen generation.  
Bondal's~\cite{Bondal} inspired followup work 
proving that the Bondal--Thomsen collection $\Theta$ generates $D(X)$ (see~\cites{BallardEtAl2019ToricFrobenius,prabhu-naik,uehara} for partial results and~\cites{FH,HHL} for a full proof). We sharpen these results, showing that by incorporating data about a wall of $\Sigma_{GKZ}$, one can generate $D(X)$ with a smaller subset of $\Theta$. 
\subsection{Symplectic motivation}\label{subsec:symplectic}
While our results, methods, and applications are algebraic, 
some of the motivation for our definition of $D_{\Cox}$ comes from the symplectic side of mirror symmetry, as we  now explain.
There are at least two approaches to homological mirror symmetry for toric varieties in the literature: one is to pass through a microlocal sheaf category initiated in \cites{FLTZ11, FLTZ12, FLTZ14} and completed in \cite{Kuw20}, and the other is to work directly with Floer theory and Fukaya categories as in \cites{abouzaid2006homogeneous, abouzaid2009morse,HHfunctoriality}.
These are tied together by the main result of \cite{GPS}, which implies that both categories on the symplectic side can be modeled with a partially wrapped Fukaya category.
Namely, there is an (appropriately derived) equivalence of categories between each $D(\sX_i)$ and a partially wrapped Fukaya category $\cW(T^*T^{\dim \sX_i}, \mff_i)$, where $\mff_i$ is a closed subset of the boundary at infinity introduced in \cite{FLTZ11}.
Objects in this Fukaya category are supported on exact cylindrical at infinity Lagrangian submanifolds that do not intersect the stop $\mff_i$ at infinity, and morphisms involve Lagrangian Floer cohomology of certain large perturbations of these submanifolds that never cross $\mff_{i}$.\footnote{While Fukaya categories are $A_\infty$-categories, this is not relevant for this motivational discussion.}

Algebraic constructions are often more rigid than analogous constructions in topology and geometry, and this is the case here; there is flexibility on the symplectic side that is not obviously present on the algebraic side.
For example, one can define a wrapped Fukaya category by simply taking the union of all of the stop data:  $\cW(T^*T^{\dim \sX_i}, \bigcup_i \mff_i)$, see Figure~\ref{fig:stop}.
Working out examples suggested that this category had nice properties, and it led us to ask: is there a natural algebraic analogue of this ``union of stop data'' category?

\begin{figure}[h]
\begin{subfigure}{2.1in}
    \centering

\begin{tikzpicture}[scale=.85]
\draw[fill=gray, fill opacity=0.1] (0,0) rectangle (4,4);
\coordinate (Start1) at (0,0);
\coordinate (End1) at (4,1.33);
\coordinate (Start2) at (0,1.33);
\coordinate (End2) at (4,2.66);
\coordinate (Start3) at (0,2.66);
\coordinate (End3) at (4,4);
\foreach \startpoint/\endpoint in {Start1/End1, Start2/End2, Start3/End3} {
    \draw (\startpoint) -- (\endpoint);
    \foreach \i in {1,...,20} {
        \draw[color=blue] ($ (\startpoint)!\i/20!(\endpoint) $) -- ++(-1/15,3/15);
    }
}
\foreach \i in {1,...,20} {
  \draw[color=blue] (\i*0.2,0) -- ++(0,0.2);
  \draw[color=blue] (\i*0.2,0) -- ++(0,-0.2);
}
\foreach \i in {1,...,20} {
  \draw[color=blue] (\i*0.2,4) -- ++(0,-0.2);
  \draw[color=blue] (\i*0.2,4) -- ++(0,0.2);
}
\foreach \i in {1,...,20} {
  \draw[color=blue] (0,\i*0.2) -- ++(0.2,0);
}
\foreach \i in {1,...,20} {
  \draw[color=blue] (4,\i*0.2) -- ++(0.2,0);
}
\def\numlines{10} 
\def\maxangle{360} 
\def\linelength{0.4cm} 


\foreach \point in {Start1} {
\foreach \counter in {0, ..., \numlines} {
    \def\angle{\maxangle/\numlines*\counter}
    \draw[color=blue] (\point) -- (\angle:\linelength);
}
}
\end{tikzpicture}
\captionsetup{margin=0in,width=2in,font=small,justification=centering}
   \caption{$\cH_3$ }
    \label{fig:FF3}
\end{subfigure}
\begin{subfigure}{2.1in}
    \centering
        \begin{tikzpicture}[scale=.85]
\draw[fill=gray, fill opacity=0.1] (0,0) rectangle (4,4);

\coordinate (Start1) at (0,0);
\coordinate (End1) at (4,1.33);
\coordinate (Start2) at (0,1.33);
\coordinate (End2) at (4,2.66);
\coordinate (Start3) at (0,2.66);
\coordinate (End3) at (4,4);

\foreach \startpoint/\endpoint in {Start1/End1, Start2/End2, Start3/End3} {
    \draw (\startpoint) -- (\endpoint);
    \foreach \i in {1,...,20} {
        \draw[color=blue] ($ (\startpoint)!\i/20!(\endpoint) $) -- ++(-1/15,3/15);
    }
}

\foreach \i in {1,...,20} {
  \draw[color=blue] (\i*0.2,0) -- ++(0,0.2);
  \draw[color=blue] (\i*0.2,0) -- ++(0,-0.2);
}
\foreach \i in {1,...,20} {
  \draw[color=blue] (\i*0.2,4) -- ++(0,-0.2);
  \draw[color=blue] (\i*0.2,4) -- ++(0,0.2);
}
\foreach \i in {1,...,20} {
  \draw[color=blue] (0,\i*0.2) -- ++(0.2,0);
}
\foreach \i in {1,...,20} {
  \draw[color=blue] (4,\i*0.2) -- ++(0.2,0);
}
\def\maxangle{360}
\def\subdivision{10}
\def\linelength{0.4cm} 

\foreach \point in {Start1} {
\foreach \counter in {0, ..., 20} {
    \draw[color=blue] (\point) -- (\maxangle/\subdivision*\counter:\linelength);
} }

\foreach \point in {Start2, Start3} {
\foreach \counter in {0, ..., 4} {
    \draw[color=red] (\point) -- ++(\maxangle/\subdivision*\counter:\linelength);
}
}

\end{tikzpicture}
\captionsetup{margin=0in,width=2in,font=small,justification=centering}
    \caption{$D_{\Cox}$}
    \label{fig:glued}
\end{subfigure}
\begin{subfigure}{2.1in}
\centering
\begin{tikzpicture}[scale=.85]
\draw[fill=gray, fill opacity=0.1] (0,0) rectangle (4,4);

\coordinate (Start1) at (0,0);
\coordinate (End1) at (4,1.33);
\coordinate (Start2) at (0,1.33);
\coordinate (End2) at (4,2.66);
\coordinate (Start3) at (0,2.66);
\coordinate (End3) at (4,4);

\foreach \startpoint/\endpoint in {Start1/End1, Start2/End2, Start3/End3} {
    \draw (\startpoint) -- (\endpoint);
    \foreach \i in {1,...,20} {
        \draw[color=red] ($ (\startpoint)!\i/20!(\endpoint) $) -- ++(-1/15,3/15);
    }
}

\foreach \i in {1,...,20} {
  \draw[color=red] (\i*0.2,0) -- ++(0,-0.2);
}
\foreach \i in {1,...,20} {
  \draw[color=red] (\i*0.2,4) -- ++(0,-0.2);
}
\foreach \i in {1,...,20} {
  \draw[color=red] (0,\i*0.2) -- ++(0.2,0);
}
\foreach \i in {1,...,20} {
  \draw[color=red] (4,\i*0.2) -- ++(0.2,0);
}
\def\maxangle{360}
\def\subdivision{10}
\def\linelength{0.4cm} 

\foreach \point in {Start1} {
\foreach \counter in {0, ..., 20} {
    \draw[color=red] (\point) -- (\maxangle/\subdivision*\counter:\linelength);
} }

\foreach \point in {Start2, Start3} {
\foreach \counter in {0, ..., 4} {
    \draw[color=red] (\point) -- ++(\maxangle/\subdivision*\counter:\linelength);
}
}
\end{tikzpicture}
\captionsetup{margin=0in,width=2in,font=small,justification=centering}
    \caption{$\PP(1,1,3)$ }
    \label{fig:PP113}
\end{subfigure}
\caption{The pictures at left and right indicate the boundary conditions for Lagrangians in the cotangent bundle of a torus, drawn as partial conormals to a stratification of the torus, in the correspondence between derived categories of toric varieties and wrapped Fukaya categories.  The union of this ``stop data'' yields a Fukaya category with no obvious algebraic analogue.  This motivated our definition of the Cox category. }
\label{fig:stop}
\end{figure}
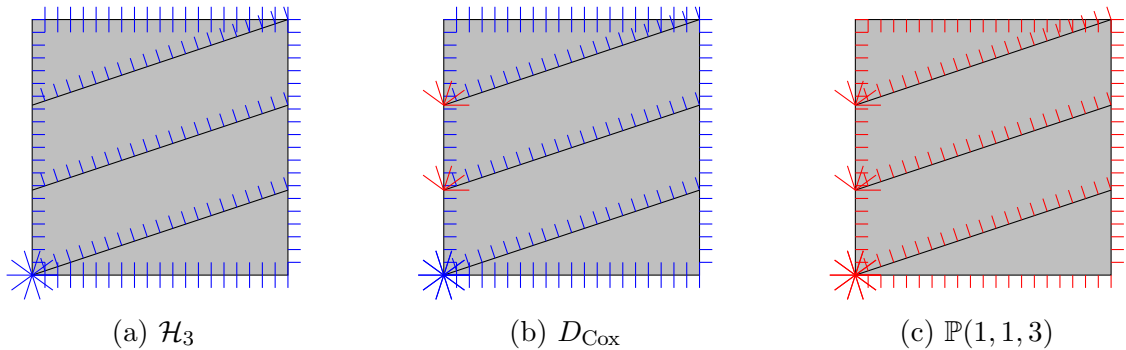

Thus, our definition of $D_{\Cox}$ was motivated by a desire to mimic a manipulation on the symplectic side (though even with that motivation in mind, finding the corresponding algebraic construction is not immediate). 
In addition, some of our arguments were initially inspired by techniques developed in \cite{ganatra2023sectorial}.
We also note that \cite{bai2023rouquier} was a first instance of applying flexible constructions in wrapped Fukaya categories to problems of algebraic origin.
But we emphasize: the symplectic connection is purely motivational, and the results and methods in this work are algebraic.

A parallel motivation came from the commutative algebra
perspective, though with a less clear road map.  Each category $D(\sX_i)$
corresponds to a derived category of graded $S$-modules, modulo the modules
supported on an irrelevant ideal that depends on the particular $\sX_i$.  One
should seek out a category that maintains the geometric content of the
$D(\sX_i)$ but depends only on the Cox ring $S$ and not the additional
data of an irrelevant ideal. \cref{rmk:caveats} addresses some of the challenges in realizing this approach. 

\subsection[Outline of the proof of Theorem A]{Outline of the proof of \cref{thm:main1}}\label{subsec:methods}

After we establish basic facts about $D_{\Cox}$,
the proof of our main result amounts to two major steps.  
The first is to study how the Bondal--Thomsen collection behaves with respect to the Fourier--Mukai transforms $\Phi_{ij}\colon D(\sX_i)\to D(\sX_j)$, as introduced shortly after Definition~\ref{defn:DCox}.

\begin{lemma}[$\Theta$-Transform Lemma]\label{lem:PushPull}
Let $-d\in \Theta_X$ be an element whose image in $\Sigma_{GKZ}$ lies in the chamber corresponding to $\sX_i$.  For any $j$,  $\Phi_{ij}(\cO_{\sX_i}(-d)) = \cO_{\sX_j}(-d)$.
\end{lemma}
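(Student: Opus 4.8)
The plan is to realize the Fourier--Mukai functor as an honest push--pull and then reduce everything to the behavior of support functions. Since each $\pi_i\colon\tsX\to\sX_i$ is a proper birational morphism of toric stacks, $R\pi_{i\ast}\cO_{\tsX}=\cO_{\sX_i}$, and $(\pi_i,\pi_j)\colon\tsX\to\sX_i\times\sX_j$ exhibits $\tsX$ as a toric resolution of the closure $\Gamma_{ij}$ of the graph of $\sX_i\dashrightarrow\sX_j$; since $R(\pi_i,\pi_j)_\ast\cO_{\tsX}$ is the structure sheaf of $\Gamma_{ij}$, the projection formula identifies $\Phi_{ij}$ with $R\pi_{j\ast}\circ L\pi_i^{\ast}$. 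As $\cO_{\sX_i}(-d)$ is a line bundle, $L\pi_i^{\ast}\cO_{\sX_i}(-d)=\cO_{\tsX}(\pi_i^{\ast}(-D))$ for any torus-invariant divisor $D=\sum_k a_k D_k$ of class $d$ (all Weil divisors on the simplicial stack $\sX_i$ are Cartier). So the statement reduces to showing $R\pi_{j\ast}\cO_{\tsX}(\pi_i^{\ast}(-D))\cong\cO_{\sX_j}(-d)$.

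Two consequences of the hypotheses drive this. First, $d$ lying in the chamber $\gamma_i$ of $\Sigma_{GKZ}$ corresponding to $\sX_i$ forces $\cO_{\sX_i}(D)$ to be nef, hence the support function $\psi$ of $D$ --- linear on each cone of $\Sigma_i$, with $\psi(v_k)=-a_k$ on the ray generators --- is concave on $|\Sigma_i|=|\tilde\Sigma|=|\Sigma_j|$; equivalently, the support function of $\pi_i^{\ast}(-D)$, namely $-\psi$ regarded with respect to $\tilde\Sigma$, is convex. Second, $-d\in\Theta$ means $d$ lies in the half-open zonotope generated by the classes $[D_k]$, and this caps intersection numbers: for every toric curve $C\subseteq\tsX$ contracted by $\pi_j$ one has $\pi_i^{\ast}(-D)\cdot C\geq -1$ --- the intersection is $0$ when $C$ is also $\pi_i$-exceptional, and otherwise equals $-D\cdot\pi_{i\ast}C$, which is $\leq 0$ by nefness and $\geq -1$ because membership in the zonotope forces $D\cdot\pi_{i\ast}C\leq 1$.

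With these in hand the conclusion is a cone-by-cone computation over $\Sigma_j$. Convexity of $-\psi$ shows that, for $\sigma\in\Sigma_j$, the weight-$m$ part of $\pi_{j\ast}\cO_{\tsX}(\pi_i^{\ast}(-D))$ is nonzero precisely when $\langle m,\cdot\rangle\geq-\psi$ holds on all of $\sigma$, and --- again by convexity --- it suffices to test this on the extremal rays $v_k$ of $\sigma$, where $-\psi(v_k)=a_k\in\ZZ$; these are exactly the inequalities cutting out $\Gamma(U_\sigma,\cO_{\sX_j}(-d))$, whence $\pi_{j\ast}\cO_{\tsX}(\pi_i^{\ast}(-D))\cong\cO_{\sX_j}(-d)$. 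The vanishing $R^{q}\pi_{j\ast}\cO_{\tsX}(\pi_i^{\ast}(-D))=0$ for $q>0$ then follows from the combinatorial formula for higher direct images of toric line bundles together with the degree bound of the previous paragraph: the restriction of $\cO_{\tsX}(\pi_i^{\ast}(-D))$ to each scheme-theoretic fiber of $\pi_j$ has degree $\geq -1$ along every contracted ruling, which is exactly the condition guaranteeing that higher cohomology vanishes. Combining, $\Phi_{ij}(\cO_{\sX_i}(-d))=R\pi_{j\ast}\cO_{\tsX}(\pi_i^{\ast}(-D))\cong\cO_{\sX_j}(-d)$.

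I expect the hard part to be the second paragraph: extracting the sharp bound $D\cdot\pi_{i\ast}C\leq 1$ from $-d\in\Theta$ and converting it into the higher-direct-image vanishing require the precise combinatorics of the secondary fan and of toric resolutions. It is genuinely here --- rather than in the nefness input --- that membership in the zonotope, as opposed to merely $d\in\gamma_i$, is indispensable: for a nef class lying deep inside $\gamma_i$ the lemma fails, as some $R^{q}\pi_{j\ast}$ with $q>0$ becomes nonzero (already for a Hirzebruch surface and its flip to a weighted projective stack). The remaining ingredients --- the Fourier--Mukai kernel identification and the $H^0$ computation from convexity --- are standard toric bookkeeping.
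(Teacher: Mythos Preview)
Your reduction of $\Phi_{ij}$ to $R\pi_{j\ast}L\pi_i^\ast$ and the convexity argument for $R^0\pi_{j\ast}$ are essentially on track and match the paper's direct computation. The gap is in the higher vanishing, and it is not a missing detail: the intersection bound you assert is simply false.

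Take $X=\Bl_p\PP^3$, whose secondary fan has two maximal chambers, for $\sX_i=\Bl_p\PP^3$ and $\sX_j=\PP^3$; here one may take $\tsX=\Bl_p\PP^3$, $\pi_i=\Id$, and $\pi_j$ the blowdown. With $\theta=(\tfrac14,\tfrac14,\tfrac14)\in M_\RR$ one computes $d=d(\theta)=3H-2E$, which is nef on $\Bl_p\PP^3$ and has $-d\in\Theta$. Let $\ell\subset E\cong\PP^2$ be a toric line. Then $\ell$ is contracted by $\pi_j$ and not by $\pi_i$, and
\[
\pi_i^\ast(-D)\cdot\ell \;=\; (-3H+2E)\cdot\ell \;=\; -3\cdot 0 + 2\cdot(-1) \;=\; -2,
\]
contradicting your bound $\geq -1$. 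Your heuristic ``zonotope membership forces $D\cdot\pi_{i\ast}C\leq 1$'' implicitly assumes at most two rays $\rho$ satisfy $D_\rho\cdot C>0$; for lines in a $\PP^2$-type exceptional locus there are three. The lemma nonetheless holds in this example---$\cO(-3H+2E)|_E\cong\cO_{\PP^2}(-2)$ is acyclic, so $R^{>0}\pi_{j\ast}=0$---but not via the mechanism you propose.

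Even if the bound did hold, ``degree $\geq -1$ on every contracted curve implies $R^{>0}\pi_\ast=0$'' is not a standard theorem. It is correct for $\PP^1$-fibers, but fibers of toric birational morphisms can be higher-dimensional toric varieties, and establishing the requisite acyclicity on such fibers is essentially the content of the lemma itself.

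The paper's route is entirely different and never touches intersection numbers. After the $R^0$ computation, it tests $\pi_{j\ast}\pi_i^\ast\cO(-d)$ against all nef bundles $\cO_{\sX_j}(A)$, reducing to showing that $H^p\big(\tsX,\,\pi_j^\ast\cO(A)\otimes\pi_i^\ast\cO(-d)\big)$ agrees with $H^p\big(\sX_j,\cO(A-d)\big)$. Both $\pi_j^\ast A$ and $\pi_i^\ast d$ are nef on $\tsX$, so the polytope-difference formula of Altmann et al.\ identifies the graded pieces with $\widetilde H^{p-1}\big(P_d\setminus(P_A-m)\big)$; the crux is then that this set is star-shaped about $-\theta$, hence contractible. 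It is the point $\theta$ realizing $-d\in\Theta$---not any curve-class inequality---that drives the vanishing. (The paper also records, in a remark, an alternative due to Borisov: use the Alexander-dual description of toric cohomology from \cite{borisov-hua}, in which the relevant set is convex because the support function of $d$ is concave.)
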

This is a subtle property, which fails if one alters the hypotheses even
slightly.  
The challenge has nothing to do with stackiness, as the statement seems equally subtle when both $\sX_i$ and $\sX_j$ are smooth toric varieties.
It does not appear to easily follow from known vanishing results from toric or birational geometry.
\cref{lem:PushPull} captures a unique feature of the Bondal--Thomsen collection and its behavior under birational Fourier--Mukai transforms and so provides the foundation for our understanding of $D_{\Cox}$.  

The $\Theta$-Transform Lemma almost immediately implies that the objects $\cO_{\Cox}(-d)$ form a strong exceptional collection when $X$ is projective, and that they sum to a tilting bundle more generally.
Namely, we must compute $\RHom(\cO_{\Cox}(-d), \cO_{\Cox}(-d'))$ for two elements $-d,-d' \in \Theta$.
By adjunction and the $\Theta$-Transform Lemma, this reduces to computing
\[ \RHom(\cO_{\Cox}(-d), \cO_{\Cox}(-d'))
\cong \RHom_{\sX_i}(\cO_{\sX_i}(-d), \cO_{\sX_i}(-d'))
\cong H^*(\sX_i, \cO_{\sX_i}(d-d')).
\]
The element $d$ lies in the nef cone of $\sX_i$ and since $-d'$ is an element of $\Theta$, it can be written as a toric $\QQ$-divisor with all coefficients in $(-1,0]$.  The vanishing of the higher cohomology groups then follows from a stacky version of Demazure Vanishing for $\QQ$-divisors (\cref{thm:demazure}).  

\smallskip

Fullness of the Bondal--Thomsen collection for each $\sX_i$ was proven in \cite{HHL}*{Corollary D} and \cite{FH}; the second step is to extend those results to show that $\Theta$ generates $D_{\Cox}$.  For this, we combine the fact that $\Theta_{\tsX}$ generates $D(\tsX)$ with \cref{lem:PushPull} and explicit computations regarding the behavior of Bondal--Thomsen bundles with respect to pushforward.  

\subsection{Related results}\label{subsec:relatedResults}
We briefly discuss how our construction and resxults relate to some previous constructions and results in the literature. 
As noted in \cref{subsec:symplectic}, our construction was partially motivated by the work \cites{ganatra2020covariantly,ganatra2023sectorial,GPS} and other aspects of partially wrapped Fukaya categories.

The construction in \cite{CastravetTevelevI} on $D(\overline{M}_{0,n})$ is
similar to the Cox category, as they use pullbacks $\pi\colon \overline{M}_{0,n}
\to \overline{M}_{0,m}$ to provide a full $S_n$-stable exceptional collection, and they develop a general inclusion/exclusion principle for admissible categories \cite{CastravetTevelevI}*{Lemma 3.6}.  
However, their results are not
applicable here, as Tor-independence fails to hold in our setting.

There is a natural link between the elements of $\Theta$ and the collection
introduced by \v{S}penko--Van den Bergh in \cite{spenko-van-den-bergh-inventiones}, and utilized by
Halpern-Leistner--Sam in \cite{HLS}, of the dominant integral points in a
generic perturbation of a half zonotope for a quasi-symmetric representation of
a reductive linear algebraic group. However, in the toric setting, neither
result easily implies the other.  \v{S}penko--Van den Bergh also study
pullbacks of special vector bundles under Kirwan resolutions, which can dominate
all GIT quotients, in \cite{SVDB-KirwanRes}. Their criteria only works
for smaller collections than $\Theta$. For example,
for an Atiyah flop as in Example~\ref{ex:atiyahFlop}, it applies to $R, R(1)$
but not to $R(-1), R, R(1)$.

Window categories also provide a framework for embedding derived categories of toric varieties into an ambient category, specifically, the derived category $D(S)$ of $\Cl(X)$-graded $S$-modules.  These window categories have been widely used in the study of derived categories of (local) GIT quotients. We show in Section~\ref{sec:windows} that $D_{\Cox}$ is equivalent to the 
gluing of natural window categories associated to each 
GKZ chamber. 

Previous literature on King's conjecture includes the cases where $X$ is smooth, toric, and Fano of dimension at most 4 \cites{borisov-hua,bernardi-tirabassi,uehara,prabhu-naik},  where $X$ is smooth of Picard rank at most 2 \cite{costa-miro-roig0}, where $X$ is in some particular families of Picard rank 3 \cites{dey-lason-michalek,lason-michalek}, and more~\cites{costa-miro-roig,costa-miro-roig2,borisov-wang}.  
There are also many more papers inspired by King's Conjecture as mentioned in the first paragraph of the introduction.

The idea of gluing together all the toric varieties in the secondary fan has appeared previously in the context of hypertoric varieties and (topological) cohomology rings \cite{proudfoot2011all}. Although these ideas align philosophically with our definition of the Cox category, we do not currently know how to make a precise mathematical connection.

In a broader context, our results fit into distinct streams of
efforts to better understand derived categories of toric varieties---including
the special role of $\Theta$---in algebraic geometry, commutative algebra, and
homological mirror symmetry. In algebraic geometry, a number of works
investigated the Rouquier dimension of derived categories of toric varieties and
whether the Bondal--Thomsen collection generates $D(X)$. Within commutative
algebra, our results fit into the recent burst of activity on extending
homological results from a standard graded polynomial ring to a multigraded
setting. Homological mirror symmetry for toric varieties is particularly
well-studied due to the central role of toric varieties in the subject. For
instance, large complex structure limit degenerations of Calabi--Yau varieties
can be glued from toric varieties along toric strata. Our work builds on and
extends concrete descriptions of derived categories of toric varieties arising
from work on homological mirror symmetry.

\subsection{Organization}
This paper is organized as follows. Section \ref{sec:background} establishes notation and provides preliminary material. 
Namely, \cref{subsec:basics} reviews the secondary fan of a toric variety; \cref{subsec:cohomology} transfers standard techniques for computing cohomology of toric varieties to the stacks of interest in this paper; \cref{subsec:BTdefinitions} defines and recalls fundamental properties of the Bondal--Thomsen collection; and \cref{subsec:BTGKZ} spells out how we identify the Bondal--Thomsen bundles of $X$ with those of the other spaces in the secondary fan.

Sections \ref{sec:CoxCategory}--\ref{sec:generation} cover the main results.  
Specifically, \cref{subsec:constructionOfWX} gives a construction of a common simplicial refinement of the $\sX_i$ that is then used in \cref{subsec:CoxDefinitions} to define the Cox category $D_{\Cox}$ and give some first properties.
\cref{sec:PushPull} addresses the $\Theta$-Transform Lemma (\cref{lem:PushPull}).
In particular, \cref{subsec:BTforDCox} defines the Bondal--Thomsen collection in the Cox category. 
\cref{subsec:examples} gives a few examples of the Cox category and the $\Theta$-Transform Lemma.
The proof of \cref{lem:PushPull} is confined to \cref{subsec:proofofthetatrans}, and \cref{subsec:thetacomputations} deduces some consequences of that lemma.
Next, \cref{subsec:generation} proves that the Bondal--Thomsen collection generates the Cox category.
Finally, \cref{sec:proofsMainResults} ties all the pieces together into a proof of the main result.

\cref{sec:diagonal} upgrades the generation statement to an explicit short resolution of the diagonal for the Cox category. 
\cref{sec:applications} covers a number of applications and extensions of our
main results, as well as connections with related topics, including window
categories (\cref{sec:windows}), noncommutative resolutions
(\cref{subsec:noncomm}), Bondal--Thomsen monads (\cref{subsec:BTmonads}), and
sharpened generation (\cref{sec:sharpenedBondal}).
Finally, in Appendix \ref{sec:grothendieckconstr}, we show that the Cox category can also be obtained via the Grothendieck construction.

\section{Preliminaries}\label{sec:background}
This section serves two purposes.  First, we establish our notation.  Second, we collect facts about toric varieties and stacks that are needed to establish our results.  In many cases, we need a version of a statement that is not clearly stated elsewhere in the literature.  For instance, standard references like~\cite{CLSToricVarieties} do not cover toric stacks, and so we provide independent proofs of the necessary cohomological vanishing theorems.  The subsections of this section are largely modular and can be read independently on an as-needed basis.

\cref{subsec:basics} recalls some fundamentals of the birational geometry of toric varieties and the secondary fan, following~\cite{CLSToricVarieties}*{Chapters 14--15}. In \cref{subsec:cohomology}, we summarize how to compute cohomology on the toric stacks of interest. \cref{subsec:BTdefinitions} defines the Bondal--Thomsen collection and collects some of its basic properties.  
In \cref{subsec:BTGKZ}, we discuss how to realize modules over the Cox ring as sheaves on all the varieties appearing in the secondary fan, paying particular attention to elements of the Bondal--Thomsen collection.

\smallskip

Throughout, we work over an arbitrary algebraically closed field $\Bbbk$.
As in Theorem~\ref{thm:main1}, $X$ will always denote a semiprojective, normal toric variety, though we remark that our main results about the Cox category $D_{\Cox}(X)$ depend only on the Cox ring.  Since the Cox ring of $X$ is unchanged if one replaces the fan of $X$ by a simplicial refinement with the same rays as in~\cite{CLSToricVarieties}*{Proposition 11.1.7}, little is lost if one focuses on the case where $X$ is simplicial.
We use $N$ to denote a lattice and $M$ its dual lattice. Given a fan $\Sigma$ on $N_\RR = N \otimes_\ZZ \RR$, we use $X_\Sigma$ to denote the corresponding toric variety and $T$ to denote the algebraic torus $N \otimes \GG_m$.
We write $\Sigma(k)$ for the set of $k$-dimensional cones in $\Sigma$.
For any $\rho \in \Sigma(1)$, let $u_\rho$ be the primitive generator of the ray and $D_\rho$ the corresponding torus invariant divisor.

Before going further into the preliminaries, we also record the following conventions.

\begin{convention} \label{conv:functors}
We will denote bounded derived categories as $D(-)$ and homotopy categories 
$K(-)$. For example, we will use
$D(X)$ to denote the bounded derived category of coherent sheaves on $X$.
Similarly, categories of modules will always be bounded and graded.  For
example, the Cox ring $S$ of a toric variety $X$ is naturally $\Cl(X)$-graded,
and we will write $D(S)$ for the bounded derived category of finitely generated,
$\Cl(X)$-graded $S$-modules.
For a graded ring $S$ and a subset $\Omega$ of degrees, 
$K_\Omega(S)$ will denote the homotopy category of complexes whose components 
are summands of free modules concentrated in $\Omega$. 

Unless otherwise specified, all functors are derived. For example, we use
$\pi_*$ for the derived pushforward $\pi_*$ along a morphism $\pi$ and
$R^0\pi_*$ for the underived pushforward.
For subcategories $\mathcal A, \mathcal B$, of a triangulated category, we will
write $\langle \mathcal A, \mathcal B \rangle$ both for the smallest thick
subcategory generated by $\mathcal A$ and $\mathcal B$ and for a possible
semiorthogonal decomposition.
\end{convention}

\subsection{The birational geometry of toric varieties}\label{subsec:basics}
The birational geometry of a semiprojective toric variety $X$ is described by its secondary fan $\Sigma_{GKZ}(X)$ and by an associated GIT problem.
This is covered in detail in~\cite{CLSToricVarieties}*{Chapters 14--15}.
This subsection will establish our notation regarding the secondary fan and provide a terse summary of the main ideas and results that we will need.

The Cox ring of $X$ is $S\ce\bigoplus_{d\in \Cl(X)} H^0(X,\cO_X(d))$, and this is a polynomial ring $S=\Bbbk[x_\rho \mid \rho \in \Sigma_X(1)]$.
The ring $S$ comes with a natural grading by the class group, $\Cl(X)$.
The $\Cl(X)$-grading induces an action of a group $G$ on the affine space $\Spec(S)$ \cite{CLSToricVarieties}*{Section 5.1}, and this action leads to a GIT framework for describing the birational geometry of $X$ as detailed in \cite{CLSToricVarieties}*{Chapter 14}.
Here, we focus on a more combinatorial description.

The secondary fan $\Sigma_{GKZ}=\Sigma_{GKZ}(X)$ is a fan that is supported on the effective cone of $X$.  For each cone $\Gamma \in \Sigma_{GKZ}$, there is an associated toric variety, and it is convenient to describe these varieties in terms of \defi{generalized fans}, meaning a set of cones satisfying all properties of a fan except that the cones are not required to be strongly convex~\cite{CLSToricVarieties}*{Section 6.2}.

Every cone $\Gamma \in \Sigma_{GKZ}$ is the image in $\Cl(X)_\R$ of a subspace of $\RR^{\Sigma(1)}$ of the form
\begin{equation*}
\left\{ a \in \RR^{\Sigma(1)}
\ \bigg\vert\
\begin{array}{l} \text{there is a concave\footnotemark} \text{ support function } F \text{ on } \overline{\Sigma}_\Gamma \text{ such that} \\ F(u_\rho) = -a_\rho \text{ for all } \rho \not\in I_\Gamma \text{ and } F(u_\rho) \geq -a_\rho  \text{ for all } \rho \in \Sigma(1)  \end{array} \right\}
\end{equation*}
 for some $I_\Gamma \subset \Sigma(1)$
and generalized fan $\overline{\Sigma}_\Gamma$ in $N_\RR$.\footnotetext{We take the opposite convention of \cite{CLSToricVarieties} and say that the function $-x^2$ is concave.}\footnote{We have changed the notation from \cite{CLSToricVarieties} from $I_\emptyset$ to $I_\Gamma$ so that this set keeps track of the cone $\Gamma$.}
The associated $X_\Gamma$ is the toric variety of the generalized fan $\overline{\Sigma}_\Gamma$; see \cite{CLSToricVarieties}*{Section 6.2}.
In particular, if $L_\Gamma$ is the lineality space\footnote{The lineality space of a generalized fan is the minimal cone in the generalized fan, which must be a subspace and a face of all other cones. It is denoted by $\sigma_0$ in \cite{CLSToricVarieties}.} of $\overline{\Sigma}_\Gamma$, the fan of $X_\Gamma$ is
$\Sigma_\Gamma = \{ \sigma/L_\Gamma
\mid \sigma \in \overline{\Sigma}_\Gamma \}$,
which is a fan on $(N_\Gamma)_\RR$, where $N_\Gamma \ce N/N \cap L_\Gamma$.

The toric varieties $X_\Gamma$ are always \defi{semiprojective}.
By \cite{CLSToricVarieties}*{Proposition 7.2.9}, one of several equivalent conditions for a toric variety to be semiprojective is that it is the toric variety associated to a full-dimensional lattice polyhedron.
In terms of the fan, this characterization means that the support $|\Sigma_\Gamma|$ is full-dimensional and convex and that there exists a strictly convex piecewise linear function on $\Sigma_\Gamma$.

We enumerate the maximal cones of $\Sigma_{GKZ}$ as $\Gamma_1, \hdots, \Gamma_r$ with some arbitrary ordering. By \cite{CLSToricVarieties}*{Proposition 14.4.9}, the corresponding $\Sigma_1, \hdots, \Sigma_r$ on $N_\RR$ are honest (not generalized) fans that are simplicial, allowing for the following definition.

\begin{defn}[Cox construction] \label{defn:cox}
    For each maximal chamber $\Gamma_i$ of $\Sigma_{GKZ}$, the associated fan $\Sigma_i$ determines an irrelevant ideal $B_i\subseteq S$ (see~\cite{CLSToricVarieties}*{Chapter 14.5}).  We
    define $\sX_i$ as the smooth toric Deligne--Mumford stack $\sX_i \ce [\Spec(S) - V(B_i)/G]$.  Equivalently, $\sX_i$ can be defined in terms of a stacky fan (see \cref{subsec:cohomology} for a review), given by the fan  $\Sigma_i$ and the homomorphism $\beta\colon \ZZ^{\Sigma_i(1)} \to N$ satisfying $\beta(e_\rho) = u_\rho$ for all $\rho \in \Sigma_i(1)$.
\end{defn}

\begin{example}
\label{ex:P1} If $\Sigma$ is the fan for $\PP^1$, then Definition~\ref{defn:cox} yields that $\sX = [\AA^2 -\{0\} / \GG_m]$, which is isomorphic to $\PP^1$ because $\PP^1 = X_\Sigma$ is smooth.
If $X_{\Sigma}$ is simplicial but not smooth, then this toric stack from the Cox construction will be a smooth Deligne--Mumford stack~\cite{BorisovChenSmith2004}.
\end{example}

\begin{remark}\label{rmk:whyStacks}
    As is common in the investigation of derived categories of toric varieties (for example,~\cites{AKO,BFK_GIT,borisov-horja,BETate}), we focus on the stack $\sX_i$, instead of on the variety $X_i$, because the derived category of the stack is better behaved.  For instance, the stack is smooth while the variety may not be.
    As a consequence, when $X_i$ is projective, (an enhancement of) $D(\sX_i)$ is a smooth, proper dg-category, whereas $D(X_i)$ fails to be proper.

    In more concrete terms, each fan $\Sigma_i$ determines a corresponding irrelevant ideal $B_i\subseteq S$ (see~\cite{CLSToricVarieties}*{Chapter 14.5}), and $D(\sX_i)$ has a simple algebraic description.  
    Since $\sX_i$ equals the quotient stack $[\Spec(S) - V(B_i)/G]$, the derived category $D(\sX_i)$ equals the derived category of finitely generated, $\Cl(X)$-graded $S$-modules modulo the subcategory generated by $B_i$-torsion modules.
    In the smooth case, $\sX_i$ and $X_i$ are equal, and so the derived categories are equal as well.  In the simplicial case, the derived categories remain closely related.  While one needs to distinguish between Cartier and non-Cartier elements of $\Cl(X_i)$, one can still use $\sX_i$ to  effectively study most homological questions about $X_i$. 
\end{remark}

The secondary fan $\Sigma_{GKZ}$ also parametrizes toric morphisms among the corresponding varieties.
Specifically, if $\Gamma \subseteq \Gamma'$ are cones in $\Sigma_{GKZ}$, then there is an induced toric morphism $\pi\colon X_{\Gamma'}\to X_{\Gamma}$ coming from the fact that the generalized fan $\overline{\Sigma}_{\Gamma'}$ is a refinement of $\overline{\Sigma}_\Gamma$.

\begin{remark}
    It is natural to ask why we don't make \cref{defn:cox} for arbitrary cones of $\Sigma_{GKZ}$.  The short answer is that the lower dimensional cones of $\Sigma_{GKZ}$ can correspond to non-simplicial toric varieties, and the related stacks are more subtle in those cases.
\end{remark}

\subsection{Cohomology on toric Deligne--Mumford stacks} \label{subsec:cohomology}

The goal of this section is to take a number of standard results (e.g., Demazure Vanishing) and techniques (e.g., the use of support functions) about sheaf cohomology of line bundles, and to extend them from toric varieties to toric Deligne--Mumford stacks, as will be required for our main results.  
Instead of providing the most general version of each statement, we focus on versions that we will use.
We first briefly recall notation regarding such toric stacks. 

\begin{defn}\label{def:BCStoricStack}
In $\ZZ^{\Sigma(1)}$, we denote the standard basis vectors by $e_\rho$.  
Given a homomorphism $\beta\colon \ZZ^{\Sigma(1)} \to N$ with $\beta(e_\rho) = b_\rho u_\rho$ for some $b_\rho > 0$, let $\sX_{\Sigma,\beta}$ be the corresponding quotient stack of $X_{\widehat{\Sigma}}$ as defined in \cite{BorisovChenSmith2004}*{Section 2} (see also \cites{fantechi2010smooth,GS}).
By \cite{BorisovChenSmith2004}*{Proposition 3.2}, $\sX_{\Sigma, \beta}$ is a smooth Deligne--Mumford stack when $\Sigma$ is simplicial.
\end{defn}

Now fix a simplicial fan $\Sigma$ on $N_\RR$ and $\beta\colon \ZZ^{\Sigma(1)} \to N$ with $\beta(e_\rho) = b_\rho u_\rho$ for some $b_\rho>0$.
Let $\sX = \sX_{\Sigma, \beta}$ be the associated toric stack, and let $\beta^*\colon M \to \ZZ^{\Sigma(1)}$ be the dual map.
There is an exact sequence
\begin{equation} \label{eq:classgroup}
    M \to \ZZ^{\Sigma(1)} \to \Cl(\sX) \to 0, 
\end{equation}
where $\Cl(\sX)$ is the class group of $\sX$ and the first map is given by
\[ 
m \mapsto 
\sum_{\rho \in \Sigma(1)} \langle \beta^*m, e_\rho \rangle D_\rho  
= \sum_{\rho \in \Sigma(1)} \langle m, \beta(e_\rho) \rangle D_\rho  
= \sum_{\rho \in \Sigma(1)} b_\rho \langle m, u_\rho \rangle D_\rho.
\]
As in the previous equation, we will often conflate elements $a \in \ZZ^{\Sigma(1)}$ with torus invariant divisors $\sum a_\rho D_\rho$.
Going further, there is a commutative diagram
\begin{equation}\label{eqn:betastar}
    \begin{tikzcd}
M \arrow[r] \arrow[dr] & \ZZ^{\Sigma(1)} \arrow[r] & \Cl(\sX)  \\
 & \ZZ^{\Sigma (1)} \arrow[u, "\beta^*"] \arrow[r] & \Cl(X), \arrow[u]
\end{tikzcd}
\end{equation}
where $X =X_{\Sigma}$ is the toric variety of the fan $\Sigma$ and
\[ \beta^*\left( \sum_{\rho \in \Sigma(1)} a_{\rho} D_{\rho} \right) = \sum_{\rho \in \Sigma(1)} b_\rho a_{\rho} D_\rho. \]
This diagram allows us to compare the class group of $X$ with $\sX$, but there is even more.
In particular, since $\Sigma$ is simplicial, there is a fan $\widehat{\Sigma}$ in $\RR^{\Sigma(1)}$ given by the preimage of $\Sigma$ under $\beta_\RR$.  
Then, $\beta_\RR$ induces a combinatorial equivalence (cf. \cite{Kuw20}*{Condition 1.1})
between these fans and restricts to a homeomorphism on $|\widehat{\Sigma}|$, which allows us to understand cohomology on $\sX$ as follows.
For any $\widehat{D} = \sum a_\rho D_\rho$ on $\sX$, taking into account the equivariant structure, \cite{CLSToricVarieties}*{Theorem 9.1.3(a)} implies that
\begin{align*}
H^p ( \cO_{\sX} (\widehat{D}) ) 
&= \bigoplus_{m \in M} \widetilde{H}^{p-1}(V_{\widehat{D},m}^{\sX}), 
\\
\text{where}\quad  
V_{\widehat{D},m}^{\sX} 
&= \bigcup_{\sigma \in \Sigma} \mathrm{conv}\left(e_\rho \mid \rho \in \sigma(1) \text{ and } \langle m, \beta(e_\rho) \rangle < - a_\rho \right).  
\intertext{
However, $V_{\widehat{D},m}^{\sX}$ is homeomorphic via $\beta_\RR$ to
}
\beta_\RR \left(V_{\widehat{D},m}^{\sX} \right) 
&= \bigcup_{\sigma \in \Sigma} \mathrm{conv}\left(b_\rho u_\rho \mid \rho \in \sigma(1) \text{ and } b_\rho \langle m, u_\rho \rangle < - e_\rho \right).
\end{align*}
If $\widehat{D} = \beta^*D$, then $\beta_\RR\left(V_{\widehat{D},m}^{\sX}\right)$ is homeomorphic to $V_{D,m}^{X}$.
Applying \cite{CLSToricVarieties}*{Theorem 9.1.3(a)} again, the reduced cohomology of $V_{D,m}^{X}$ computes the cohomology in equivariant grading $m$ of $D$.
Therefore, the following holds.

\begin{prop} \label{prop:semicohomology}
    If $D$ is a torus-invariant Weil divisor on $X_\Sigma$ and $\beta^*$ is as in \eqref{eqn:betastar}, then 
    $H^p(\cO_{\sX_{\Sigma, \beta}} (\beta^* D)) \cong H^p (\cO_{X_\Sigma}(D))$
    for all $p$.
    \hfill $\square$
\end{prop}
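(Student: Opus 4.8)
The plan is to reduce the statement to a purely combinatorial comparison of two topological spaces attached to the fan $\Sigma$, using the description of sheaf cohomology of toric divisors recalled in the discussion preceding the proposition. The stacky form of \cite{CLSToricVarieties}*{Theorem 9.1.3(a)} expresses both sides as direct sums over the character lattice,
\[
H^p\bigl(\cO_{\sX_{\Sigma,\beta}}(\beta^* D)\bigr) = \bigoplus_{m\in M}\widetilde H^{p-1}\bigl(V_{\beta^* D, m}^{\sX}\bigr),
\qquad
H^p\bigl(\cO_{X_\Sigma}(D)\bigr) = \bigoplus_{m\in M}\widetilde H^{p-1}\bigl(V_{D, m}^{X}\bigr),
\]
so it suffices to produce, for every $m \in M$, a homeomorphism $V_{\beta^* D, m}^{\sX} \cong V_{D, m}^{X}$; in fact I would produce a single homeomorphism of the ambient supports $|\widehat\Sigma| \to |\Sigma|$ carrying one family of subcomplexes to the other, which makes the identification manifestly natural in both $m$ and $p$.

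First I would transport the left-hand complex into $N_\RR$ along $\beta_\RR$. Because $\Sigma$ is simplicial, for each $\sigma \in \Sigma$ the cone $\widehat\sigma = \operatorname{Cone}(e_\rho \mid \rho \in \sigma(1))$ of $\widehat\Sigma$ is mapped linearly and bijectively onto $\sigma = \operatorname{Cone}(b_\rho u_\rho \mid \rho \in \sigma(1)) = \operatorname{Cone}(u_\rho \mid \rho\in\sigma(1))$, and these maps agree on common faces; hence $\beta_\RR$ restricts to a homeomorphism $|\widehat\Sigma| \xrightarrow{\ \sim\ } |\Sigma|$ (this is the combinatorial equivalence alluded to before the proposition). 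Since $\beta_\RR$ is linear on each $\widehat\sigma$, it carries the convex hull of a subset of $\{e_\rho\}$ onto the convex hull of the corresponding subset of $\{b_\rho u_\rho\}$, so it identifies $V_{\widehat D, m}^{\sX}$ with $\bigcup_{\sigma}\operatorname{conv}\bigl(b_\rho u_\rho \mid \rho \in \sigma(1),\ \langle m, \beta(e_\rho)\rangle < -a_\rho\bigr)$ for $\widehat D = \sum_\rho a_\rho D_\rho$.

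Next I would specialize to $\widehat D = \beta^* D$. Writing $D = \sum_\rho a_\rho D_\rho$, the formula for $\beta^*$ in \eqref{eqn:betastar} gives $\beta^* D = \sum_\rho b_\rho a_\rho D_\rho$, while $\langle m, \beta(e_\rho)\rangle = b_\rho\langle m, u_\rho\rangle$. Since each $b_\rho > 0$, the inequality $b_\rho\langle m,u_\rho\rangle < -b_\rho a_\rho$ determining which generators $b_\rho u_\rho$ enter $\beta_\RR\bigl(V_{\beta^* D,m}^{\sX}\bigr)$ is equivalent to $\langle m, u_\rho\rangle < -a_\rho$, which is precisely the inequality cutting out $V_{D,m}^{X}$. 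Thus $\beta_\RR\bigl(V_{\beta^* D,m}^{\sX}\bigr)$ and $V_{D,m}^{X}$ are indexed by identical collections of subsets $\{\rho\in\sigma(1) : \langle m,u_\rho\rangle < -a_\rho\}$, differing only in that the former is spanned by the vectors $b_\rho u_\rho$ and the latter by $u_\rho$. The radial rescaling $r\colon |\Sigma|\to|\Sigma|$ defined as the linear extension over each cone of $u_\rho \mapsto b_\rho u_\rho$ is a well-defined homeomorphism (again by simpliciality of $\Sigma$) taking $V_{D,m}^{X}$ onto $\beta_\RR\bigl(V_{\beta^* D,m}^{\sX}\bigr)$; composing with $\beta_\RR^{-1}$ gives the desired homeomorphism $V_{D,m}^{X}\cong V_{\beta^* D,m}^{\sX}$, and passing to reduced cohomology and summing over $m\in M$ finishes the proof.

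The only genuinely delicate input is the first one: setting up the stacky analogue of \cite{CLSToricVarieties}*{Theorem 9.1.3(a)} with the correct equivariant bookkeeping, i.e.\ checking that $H^p\bigl(\cO_{\sX_{\Sigma,\beta}}(\widehat D)\bigr)$ decomposes over $m \in M$ with graded pieces computed by the reduced cohomology of $V_{\widehat D, m}^{\sX}$. Granting that formula, the remainder is elementary: the two subsequent steps are just the observations that $\beta_\RR$ is a cone-wise linear homeomorphism $|\widehat\Sigma|\to|\Sigma|$ and that a positive rescaling of the ray generators of a simplicial fan induces a homeomorphism of its support preserving the relevant subcomplexes. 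Accordingly, most of the write-up should be devoted to the stacky version of Theorem 9.1.3 and its equivariant grading, with the combinatorial comparison kept brief.
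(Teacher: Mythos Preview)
Your proposal is correct and follows essentially the same approach as the paper: the argument is precisely the discussion immediately preceding the proposition, which applies the stacky form of \cite{CLSToricVarieties}*{Theorem 9.1.3(a)}, transports $V_{\beta^*D,m}^{\sX}$ into $N_\RR$ via the cone-wise linear homeomorphism $\beta_\RR$, and then observes that for $\widehat D = \beta^*D$ the resulting complex is homeomorphic to $V_{D,m}^{X}$. Your explicit radial rescaling $u_\rho \mapsto b_\rho u_\rho$ is a slightly more detailed justification of the last homeomorphism than the paper gives, but the proofs are otherwise identical.
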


In fact, \cref{prop:semicohomology} also follows from the general theory of good moduli morphisms of toric stacks in \cite{GS}.
We refer to \cite{GS}*{Section 6.1} for the relevant definitions regarding good moduli morphisms.
There is a natural toric morphism $\sX_{\Sigma, \beta} \to X_\Sigma$ and the following proposition immediately follows from \cite{GS}*{Corollary 6.5} and implies \cref{prop:semicohomology}.

\begin{prop} \label{prop:semigoodmoduli}
    The natural toric morphism $\sX_{\Sigma, \beta}\to X_{\Sigma}$ is a good moduli morphism.
\end{prop}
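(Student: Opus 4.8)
The plan is to verify directly that the natural toric morphism $p\colon \sX_{\Sigma,\beta}\to X_\Sigma$ satisfies the definition of a good moduli morphism, by reducing to the affine/local case and then citing the criterion from \cite{GS}. First I would recall that $\sX_{\Sigma,\beta}$ is the quotient stack $[X_{\widehat\Sigma}/G_\beta]$ where $G_\beta = \Hom(\coker\beta^*, \GG_m)$ acts on the toric variety $X_{\widehat\Sigma}$ of the pulled-back fan, and that $X_\Sigma$ is recovered as the corresponding GIT/categorical quotient. The morphism $p$ is induced by $\beta_\RR$, which by the discussion preceding the proposition gives a combinatorial equivalence $\widehat\Sigma \to \Sigma$ and a homeomorphism on supports; in particular $p$ identifies the coarse moduli space of $\sX_{\Sigma,\beta}$ with $X_\Sigma$.

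The key steps, in order: (1) Since the property of being a good moduli morphism is Zariski-local on the target, cover $X_\Sigma$ by the affine toric charts $U_\sigma = \Spec(\Bbbk[\sigma^\vee\cap M])$ for $\sigma\in\Sigma$, and observe that $p^{-1}(U_\sigma)$ is the corresponding open substack $\sX_{\sigma,\beta}$, which by simpliciality of $\sigma$ is a quotient $[\Spec R_\sigma / G_{\sigma}]$ of an affine space by a finite abelian group times a torus (this is the standard local structure of simplicial toric DM stacks from \cite{BorisovChenSmith2004}). (2) For such an affine quotient stack $[\Spec R/G]$ with $G$ linearly reductive (here $G$ is finite abelian times a torus, hence linearly reductive in any characteristic), the structure morphism to $\Spec(R^G)$ is a good moduli morphism essentially by definition — $\QCoh$ pushforward is exact because taking $G$-invariants is exact for linearly reductive $G$, and $R^G$ is the ring of global functions. (3) Finally, identify $\Spec(R_\sigma^{G_\sigma})$ with $U_\sigma$: this is precisely the statement that the coarse space of the local stacky chart is the affine toric variety of $\sigma$, which follows from the combinatorial equivalence of $\widehat\Sigma$ and $\Sigma$ (the invariant monomials correspond to $\sigma^\vee\cap M$). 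Alternatively, and more cleanly, one invokes \cite{GS}*{Corollary 6.5} directly, which characterizes when a toric morphism of toric stacks is a good moduli morphism in terms of the underlying map of stacky fans; one then just checks that $(\Sigma,\beta)\to(\Sigma,\mathrm{id})$ meets that criterion, which is immediate since the fans agree and $\beta$ only rescales ray generators.

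The main obstacle is bookkeeping rather than mathematical depth: one must be careful that the group $G$ (equivalently the relevant isotropy groups) is linearly reductive \emph{over an arbitrary algebraically closed field}, including positive characteristic, since the finite part of $G$ could have order divisible by the characteristic — this is exactly the kind of subtlety flagged in \cref{rmk:characteristic}. However, the DM-stack hypothesis is not actually needed for the good-moduli statement; one only needs $G$ diagonalizable, and diagonalizable group schemes are linearly reductive in every characteristic, so the argument goes through unchanged. A secondary point of care is making sure the local charts $p^{-1}(U_\sigma)$ really are of quotient-of-affine form with the invariant ring equal to $\Bbbk[\sigma^\vee\cap M]$; this is where the combinatorial equivalence from $\beta_\RR$ does the work, and it is cleanest to simply defer to \cite{GS}*{Section 6.1 and Corollary 6.5} for the general toric formalism rather than reprove it by hand.
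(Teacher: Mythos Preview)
Your proposal is correct and aligns with the paper's approach: the paper gives no proof beyond stating that the proposition ``immediately follows from \cite{GS}*{Corollary 6.5},'' which is exactly the clean route you identify at the end of your step (3). Your additional direct local argument via linearly reductive diagonalizable groups is sound but goes beyond what the paper records.
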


\cref{prop:semigoodmoduli} is particularly useful in our case of interest (\cref{defn:cox}), where every divisor on $\sX$ is in the image of $\beta^*$ as $b_\rho = 1$ for all $\rho$. It also allows classical vanishing results on toric varieties such as Demazure Vanishing to be extended.

\begin{thm}[Demazure Vanishing] \label{thm:demazure}
If $\sX = \sX_{\Sigma, \beta}$ is a smooth toric Deligne--Mumford stack, $X_\Sigma$ is semiprojective, and $D$ is a nef $\QQ$-Cartier divisor on $X_{\Sigma}$, then
$H^p( \cO_{\sX}(\beta^* \lfloor D \rfloor )) = 0$
for all $p > 0$.
\end{thm}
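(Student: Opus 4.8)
The plan is to reduce the stacky statement to the classical Demazure Vanishing theorem for toric varieties via the good moduli morphism. First I would invoke \cref{prop:semigoodmoduli}: the natural toric morphism $\phi\colon \sX = \sX_{\Sigma,\beta}\to X_\Sigma$ is a good moduli morphism. Since good moduli morphisms are cohomologically affine, $R^q\phi_* = 0$ for all $q>0$, and $\phi_*\cO_{\sX} = \cO_{X_\Sigma}$. The key point, however, is not just that $\phi$ is cohomologically affine but that the specific divisor $\beta^*\lfloor D\rfloor$ on $\sX$ pushes forward to a controllable sheaf on $X_\Sigma$. The cleanest route is to bypass pushforward entirely and instead use the explicit cohomology computation set up in this section: by \cref{prop:semicohomology}, for a torus-invariant Weil divisor $D'$ on $X_\Sigma$ one has $H^p(\cO_{\sX}(\beta^*D')) \cong H^p(\cO_{X_\Sigma}(D'))$ for all $p$. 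So it suffices to show $H^p(\cO_{X_\Sigma}(\lfloor D\rfloor)) = 0$ for all $p>0$.

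Thus the second step is purely a statement about the toric variety $X_\Sigma$: if $X_\Sigma$ is semiprojective and $D$ is a nef $\QQ$-Cartier divisor, then $H^p(\cO_{X_\Sigma}(\lfloor D\rfloor)) = 0$ for $p>0$. This is a mild generalization of the standard Demazure Vanishing theorem (which is usually stated for a nef Cartier divisor, e.g.\ \cite{CLSToricVarieties}*{Theorem 9.2.3 and Theorem 9.3.5}), extended in two ways: to $\QQ$-Cartier divisors via rounding, and to the semiprojective (rather than projective or complete) setting. For the $\QQ$-Cartier rounding: on each maximal cone $\sigma$, $D$ is represented by some $m_\sigma \in M_\QQ$, and concavity of the support function $\varphi_D$ (equivalently, nefness) is preserved after taking $\lfloor\,\cdot\,\rfloor$ in the sense needed for the combinatorial cohomology vanishing. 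Concretely, one runs the computation $H^p(\cO_{X_\Sigma}(\lfloor D\rfloor)) = \bigoplus_{m\in M}\widetilde H^{p-1}(V^X_{\lfloor D\rfloor, m})$ from \cite{CLSToricVarieties}*{Theorem 9.1.3}, and checks that each $V^X_{\lfloor D\rfloor, m}$ is either empty or contractible, using that the support function of a nef divisor is concave and that rounding a concave piecewise-linear function preserves the property that the relevant sublevel sets are star-shaped. The semiprojective hypothesis is exactly what guarantees the support $|\Sigma|$ is convex and full-dimensional with a strictly convex piecewise-linear function (as recalled in \cref{subsec:basics}), so the argument from the complete case goes through verbatim on the convex support.

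I expect the main obstacle to be the $\QQ$-Cartier rounding step in the toric-variety statement: one must verify that replacing the concave support function $\varphi_D$ of the nef $\QQ$-Cartier divisor $D$ by the integral divisor $\lfloor D\rfloor$ does not destroy the contractibility (star-shapedness with respect to a common apex) of the sets $V^X_{\lfloor D\rfloor, m}$. The cleanest formulation is: for each $m$, the set $\{u \in |\Sigma| : \langle m, u\rangle < \varphi_{\lfloor D\rfloor}(u)\}$ is star-shaped, which follows because $\varphi_{\lfloor D\rfloor} \geq \varphi_D - (\text{something})$ controlled cone-by-cone, combined with concavity of $\varphi_D$; this is essentially the argument in \cite{CLSToricVarieties}*{proof of Theorem 9.3.5} adapted to allow non-integral $m_\sigma$. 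Everything else — the reduction through \cref{prop:semicohomology}, and the extension from complete to semiprojective support — is routine given the machinery already assembled in \cref{subsec:cohomology} and \cref{subsec:basics}.
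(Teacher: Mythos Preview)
Your proposal is correct and matches the paper's approach exactly: the paper does not even write out a proof for \cref{thm:demazure}, instead remarking just before the statement that \cref{prop:semigoodmoduli} (equivalently \cref{prop:semicohomology}) ``allows classical vanishing results on toric varieties such as Demazure Vanishing to be extended,'' and later citing \cite{CLSToricVarieties}*{Theorem 9.3.5} for the $\QQ$-divisor version on the variety. Your two-step reduction---apply \cref{prop:semicohomology} to pass to $X_\Sigma$, then invoke classical Demazure Vanishing for $\QQ$-Cartier divisors on a semiprojective toric variety---is precisely what the paper has in mind, and the extra detail you supply on the rounding and semiprojective points is more than the paper itself provides.
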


\subsection{The Bondal--Thomsen collection}\label{subsec:BTdefinitions}
We now define $\Theta_{\sX}$, within the class group $\Cl(\sX)$ of a toric stack $\sX = \sX_{\Sigma,\beta}$.  We will also prove some essential facts about cohomology and pushforward of elements of $\Theta_{\sX}$.

\begin{defn}\label{defn:BT}
    The \defi{Bondal--Thomsen collection} for $\sX$ is the set $\Theta_{\sX}$ of degrees $-d\in \Cl(\sX)$ that are equivalent to, for some $\theta \in M_\RR$, 
    \begin{equation} \label{eq:BT}
        \sum_{\rho \in \Sigma(1) } \lfloor \langle - \theta, \beta(e_\rho) \rangle \rfloor D_\rho. 
    \end{equation}
\end{defn}

At times, we will denote the class in $\Theta_\sX \subseteq \Cl(\sX)$ corresponding to $\theta \in M_\RR$ by $-d(\theta)$.
The element $-d(\theta)$ depends only on the image of $\theta$ in the torus $M_\RR/M$.
There is also a Bondal--Thomsen collection $\Theta_X \subset \Cl(X)$ for the toric variety $X = X_\Sigma$.
Namely, $\Theta_X$ is the set of all classes described by \eqref{eq:BT} with $\beta(e_\rho)$ replaced by $u_\rho$.

\begin{example}\label{ex:BTonPn}
The set $\Theta_{\PP^n}$ is precisely the Beilinson collection of line bundles $\cO_{\PP^n}(-d)$ for $0\leq d \leq n$.  This is perhaps most easily seen using the equivalent characterizations of \cref{prop:zonotheta} below.
\end{example}

\begin{remark}
    Our naming convention for $\Theta_{\sX}$ differs from that of \cite{HHL}, where it is called the Thomsen collection.
    We have done this in order to recognize that this set first appears in \cite{Thomsen} to describe the Frobenius pushforward of the structure sheaf on a toric variety (see \cref{rem:frobenius}) but also that $\Theta_{\sX}$ was described precisely by \eqref{eq:BT} in \cite{Bondal}, which first indicated the role of $\Theta_{\sX}$ in derived categories.
\end{remark}

There are various other equivalent ways in which the set $\Theta_\sX$ can be described.  It is easiest to geometrically visualize $\Theta_{\sX}$ and relate it to the secondary fan by viewing it as corresponding to lattice points in a zonotope in $\Cl(\sX)_\RR$ following \cite{achinger2015characterization}.

\begin{defn}\label{defn:zonotope}
We define the (partial) \defi{zonotope} $Z_{\sX}$ associated to $\sX$ to be the image of $(-1, 0]^{\Sigma(1)}$ under the map $\RR^{\Sigma(1)} \to \Cl(\sX)_\RR$ induced by \eqref{eq:classgroup}.
\end{defn}

The following proposition summarizes equivalent characterizations of the Bondal--Thomsen collection that we will use (cf. \cite{achinger2015characterization} and \cite{HHL}*{Section 5}).

\begin{prop} \label{prop:zonotheta}
The following subsets of $\Cl(\sX)$ are equal: 
\vspace*{-.5mm}
\begin{enumerate}[noitemsep]
\vspace*{-2mm}
   \item The Bondal--Thomsen collection $\Theta_\sX$.
   \item The collection of Weil divisors in $\Cl(\sX)$ linearly equivalent    in $\Cl(\sX)_\QQ$
to divisors of the form
   $
   \sum_{\rho \in \Sigma(1)} a_\rho D_\rho
   $
   with $-1 < a_\rho \leq 0$.
   \item Elements whose image in $\Cl(\sX)_{\QQ}$ is a lattice point of the zonotope $Z_\sX$.
   \hfill$\square$
\end{enumerate}
\end{prop}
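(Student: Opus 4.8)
\textbf{Proof plan for Proposition~\ref{prop:zonotheta}.}

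The plan is to prove the chain of equalities $(1) = (2) = (3)$ directly from the definitions, using the exact sequence~\eqref{eq:classgroup} as the common translation device. The key observation is that all three descriptions are really statements about the image of a certain box in $\RR^{\Sigma(1)}$ under the surjection $q\colon \RR^{\Sigma(1)} \to \Cl(\sX)_\RR$, so the content is a bookkeeping argument about floors and about which lattice points of $\Cl(\sX)$ lift to the box.

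First I would establish $(1) = (2)$. Given $\theta \in M_\RR$, the divisor in~\eqref{eq:BT} is $\sum_\rho \lfloor \langle -\theta, \beta(e_\rho)\rangle\rfloor D_\rho$. Writing $a_\rho \ce \langle -\theta, \beta(e_\rho)\rangle - \lfloor \langle -\theta,\beta(e_\rho)\rangle\rfloor \in [0,1)$ — that is, the fractional part — one has $\sum_\rho \lfloor\langle -\theta,\beta(e_\rho)\rangle\rfloor D_\rho = \sum_\rho \langle -\theta,\beta(e_\rho)\rangle D_\rho - \sum_\rho a_\rho D_\rho$. By the description of the first map in~\eqref{eq:classgroup}, the divisor $\sum_\rho \langle -\theta,\beta(e_\rho)\rangle D_\rho$ is exactly the image of $-\theta$ under $M_\RR \to \RR^{\Sigma(1)}$, hence is linearly equivalent to $0$ in $\Cl(\sX)_\QQ$ (it is in the image of the first map). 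Therefore the class of~\eqref{eq:BT} equals the class of $-\sum_\rho a_\rho D_\rho = \sum_\rho(-a_\rho)D_\rho$ with each $-a_\rho \in (-1,0]$, giving the inclusion $(1) \subseteq (2)$. For the reverse inclusion, start with $\sum_\rho a_\rho D_\rho$, $a_\rho \in (-1,0]$, representing a class in $\Cl(\sX)$; I need to produce $\theta \in M_\RR$ so that~\eqref{eq:BT} represents the same class. The point is that two integral divisors represent the same class in $\Cl(\sX)$ iff they differ by an element in the image of $M$; since a genuine Weil divisor class is being represented, one can adjust by such an element — i.e.\ choose $\theta$ with $\langle -\theta, u_\rho\rangle$ prescribed modulo $\ZZ$ appropriately using surjectivity of $M_\RR \to \RR^{\Sigma(1)}/\ZZ^{\Sigma(1)}$ onto the relevant quotient — so that $\lfloor \langle -\theta,\beta(e_\rho)\rangle\rfloor$ lands in the right class. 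Here one has to be a little careful that $M_\RR$ surjects onto enough of the torus $\RR^{\Sigma(1)}/(\ZZ^{\Sigma(1)} + \im M)$; this is automatic since $\Cl(\sX)$ is the quotient and we only demanded that the class (not the divisor) match.

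Next, $(2) = (3)$ is essentially immediate from Definition~\ref{defn:zonotope}: the zonotope $Z_\sX$ is the image of $(-1,0]^{\Sigma(1)}$ under $q$, so a class in $\Cl(\sX)$ lies in $Z_\sX$ (as a subset of $\Cl(\sX)_\RR$, hence a fortiori the condition ``its image in $\Cl(\sX)_\QQ$ is a lattice point of $Z_\sX$'') precisely when it has a representative $\sum_\rho a_\rho D_\rho$ with $a_\rho \in (-1,0]$, which is the condition in $(2)$. The only subtlety is matching the phrase ``lattice point of the zonotope'' with ``element of $\Cl(\sX)$ lying in $Z_\sX$'': a lattice point of $Z_\sX$ means a point of $Z_\sX$ that is in the image of the integral lattice $\ZZ^{\Sigma(1)} \to \Cl(\sX)$, i.e.\ in (the image of) $\Cl(\sX)$ inside $\Cl(\sX)_\RR$, modulo torsion — so one records that these are the same after passing to $\Cl(\sX)_\QQ$ as stated. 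I would cite \cite{achinger2015characterization} and \cite{HHL}*{Section 5} for the standard form of this equivalence and only spell out the dictionary.

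The main obstacle is the reverse inclusion $(2) \subseteq (1)$: given an arbitrary box representative one must realize the \emph{same class} via the specific floor formula~\eqref{eq:BT}, and this requires knowing that the map $M_\RR \to \RR^{\Sigma(1)}$ hits every coset one needs modulo $\ZZ^{\Sigma(1)} + \im(M)$ — i.e.\ understanding precisely the interplay between the real torus $\RR^{\Sigma(1)}/\ZZ^{\Sigma(1)}$, the sublattice $\im(M)$, and torsion in $\Cl(\sX)$. Everything else is formal manipulation of~\eqref{eq:classgroup}. A clean way to organize this is to phrase the whole argument in terms of the commutative square relating $\RR^{\Sigma(1)}$, its integral points, $M_\RR$, and $\Cl(\sX)_\RR$, reducing each inclusion to a diagram chase; I would present it that way rather than tracking floors by hand.
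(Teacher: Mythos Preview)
Your approach is correct, and in fact the paper does not give a proof at all: the proposition is stated with a terminal $\square$ and a parenthetical citation to \cite{achinger2015characterization} and \cite{HHL}*{Section~5}, treating it as known. So there is nothing to compare against; your argument stands on its own.

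One remark: the part you flag as the main obstacle, the inclusion $(2)\subseteq(1)$, is actually simpler than you suggest. You do not need any delicate statement about surjectivity onto a quotient torus or about torsion. Concretely: pick any integral representative $D=(d_\rho)\in\ZZ^{\Sigma(1)}$ for the class $d$. Condition~(2) says $D$ and $A=(a_\rho)\in(-1,0]^{\Sigma(1)}$ have the same image in $\Cl(\sX)_\RR$, so by exactness of the sequence $M_\RR\to\RR^{\Sigma(1)}\to\Cl(\sX)_\RR\to 0$ (which follows from right-exactness of $-\otimes_\ZZ\RR$ applied to~\eqref{eq:classgroup}) there exists $\theta\in M_\RR$ with $d_\rho-a_\rho=\langle-\theta,\beta(e_\rho)\rangle$ for all $\rho$. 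Since $d_\rho\in\ZZ$ and $a_\rho\in(-1,0]$, this forces $\lfloor\langle-\theta,\beta(e_\rho)\rangle\rfloor=d_\rho$, so the divisor~\eqref{eq:BT} is exactly $D$, not merely in its class. No diagram chase or torsion analysis is needed.
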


\begin{example}\label{ex:HirzZonotope}
In our running example of the Hirzebruch surface $\cH_3$, the zonotope $Z_{\cH_3}$ is given by the partially-open polytope depicted below. The Bondal--Thomsen collection consists of the degrees corresponding to $-d_0, \dots, -d_5$, with the $d_i$ as illustrated in Figure~\ref{fig:HirzSecondaryFan}. 
\[
\begin{tikzpicture}[scale = .8]
\draw[step=1cm,gray,very thin] (4,1) grid (-3,-3);
\draw[gray, line width = .5mm] (2.91,-.97)--(0,0)--(-1.97,0);
\draw[gray, dashed, line width = .5mm] (3,-1) -- (3,-2)--(1,-2)--(-2,-1)--(-2,0);
\draw[fill = gray, opacity = .3] (0,0) --(3,-1) -- (3,-2)--(1,-2)--(-2,-1)--(-2,-0)--(0,0);
\draw[thick, gray] (3,-1) circle (3pt);
\draw[thick, gray] (-2,0) circle (3pt);
\draw[thick,gray] (3,-1) circle (3pt);
\filldraw[black, very thick]  (-1,1) circle (0pt);
\filldraw[black, very thick]  (0,0) circle (2pt);
\draw[black, very thick]  (0,0) circle (2pt);
\filldraw[black] (0,0) circle (0pt) node[anchor=south]{$(0,0)$};
\end{tikzpicture}
\qedhere
\]
\end{example}

\begin{defn} \label{defn:nef} We will call a divisor $A = \sum a_\rho D_\rho$ on $\sX$ \defi{numerically effective (nef)} if there is a concave piecewise linear function $F$ on $\Sigma$ such that $F(\beta(e_\rho)) = - a_\rho$ for all $\rho \in \Sigma(1)$.
\end{defn}

Note that in \cref{defn:nef} $F$ may not take integral values on the primitive generators of $\Sigma$.
However, any nef divisor $D$ on $X_{\Sigma}$ pulls back to a nef divisor on $\sX_{\Sigma, \beta}$.
We will often use the following fact about vanishing of higher cohomology for Bondal--Thomsen line bundles. The corresponding statement for toric varieties is a corollary of Demazure Vanishing for $\QQ$-divisors (see, for example, \cite{CLSToricVarieties}*{Theorem 9.3.5}).

\begin{lemma}\label{lem:BTvanishingCohomology}
Suppose that $|\Sigma|$ is convex,\footnote{We put the convexity assumption here for clarity as it is used in the proof.  As all toric varieties in this paper are assumed to be semiprojective, this assumption is always satisfied for us.}  $\Sigma$ is simplicial, and $A = \sum a_\rho D_\rho$ is a nef divisor on $\sX = \sX_{\Sigma, \beta}$. Then for any $-d\in \Theta_{\sX}$ and any $p>0$, $H^p(\sX, \cO_{\sX}(A-d))=0$.
\end{lemma}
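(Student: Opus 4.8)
The plan is to reduce the stacky statement to the known toric one via the good moduli morphism of \cref{prop:semigoodmoduli}, after first expressing $A-d$ as a pullback along $\beta^*$. The first step is bookkeeping: since we are in the setting $b_\rho = 1$ (our cohomology computations all factor through $\beta^*$, and in any case $\Theta_\sX$ is defined so that its elements are classes of integral divisors supported on the $D_\rho$), I would write $-d$ as a class of $\sum_\rho a'_\rho D_\rho$ with $-1 < a'_\rho \le 0$ using \cref{prop:zonotheta}(2), and write $A = \sum_\rho a_\rho D_\rho$ as a nef divisor in the sense of \cref{defn:nef}, so that $A - d$ is represented by $\sum_\rho (a_\rho + a'_\rho) D_\rho$. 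The key point is that this divisor, viewed on the underlying toric variety $X_\Sigma$ (here I should be careful: it may only be $\QQ$-Cartier, not Cartier, but that is exactly the generality \cref{thm:demazure} is stated in), is of the form $\lfloor D \rfloor$ for a nef $\QQ$-Cartier divisor $D$ on $X_\Sigma$.

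To see this, I would produce the $\QQ$-divisor $D$ directly from the concave support data. By \cref{defn:nef}, nefness of $A$ gives a concave piecewise linear $F$ on $\Sigma$ with $F(u_\rho) = -a_\rho$. On the other side, an element of $\Theta_X$ corresponds, via \cref{defn:BT}, to a $\theta \in M_\RR$ with $a'_\rho = \lfloor \langle -\theta, u_\rho\rangle \rfloor$; thus $\sum_\rho \langle -\theta, u_\rho\rangle D_\rho$ is a $\QQ$-Cartier (indeed torus-translate of a principal) divisor whose round-down is $\sum a'_\rho D_\rho$. Setting $D$ to be the $\QQ$-Cartier divisor whose support function is $F$ shifted by $\theta$, i.e. the divisor with support function $u_\rho \mapsto -a_\rho + \langle -\theta, u_\rho \rangle$, one checks $D$ is still nef (sum of a nef function and a linear function is concave, and on $|\Sigma|$ convex this is what nef means by \cite{CLSToricVarieties}*{Theorem 6.1.7} suitably interpreted) and that $\lfloor D \rfloor = \sum_\rho \lfloor -a_\rho + \langle -\theta, u_\rho\rangle \rfloor D_\rho$. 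Since $a_\rho \in \ZZ$, $\lfloor -a_\rho + \langle -\theta, u_\rho\rangle \rfloor = -a_\rho + \lfloor \langle -\theta, u_\rho\rangle\rfloor = -a_\rho + a'_\rho$, so $\lfloor D \rfloor$ represents $A - d$ up to adding a principal divisor (the ambiguity in the choice of representative of $-d$), which does not affect the class in $\Cl(X)$ or $\Cl(\sX)$. Now \cref{thm:demazure} applied to $\sX = \sX_{\Sigma,\beta}$ and this nef $\QQ$-Cartier $D$ gives $H^p(\sX, \cO_\sX(\beta^* \lfloor D \rfloor)) = 0$ for $p > 0$, and $\beta^*\lfloor D\rfloor$ represents $A - d$ in $\Cl(\sX)$ because $b_\rho = 1$. (Alternatively, one can skip \cref{thm:demazure} and instead invoke \cref{prop:semigoodmoduli} / \cref{prop:semicohomology} to reduce $H^p(\sX, \cO_\sX(A-d))$ to $H^p(X_\Sigma, \cO_{X_\Sigma}(\lfloor D\rfloor))$, then cite \cite{CLSToricVarieties}*{Theorem 9.3.5}.)

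The main obstacle I anticipate is the precise matching of integral representatives: an element of $\Theta_\sX$ is only defined as a \emph{class} in $\Cl(\sX)$, and the natural representative from \eqref{eq:BT} uses $\beta(e_\rho)$, which here equals $u_\rho$, so I must make sure the "extra" principal divisor introduced when passing from the class $-d$ to the explicit divisor $\sum_\rho \lfloor\langle-\theta,u_\rho\rangle\rfloor D_\rho$, and then to $\beta^*\lfloor D\rfloor$, is genuinely principal on $X_\Sigma$ (so that it lifts to a principal divisor on $\sX$, using exactness of \eqref{eq:classgroup} and the compatibility diagram \eqref{eqn:betastar}). This is where the convexity/simpliciality hypotheses really get used: they guarantee $\overline\Sigma$-support functions behave well and that the round-down operation commutes with adding the integral nef part $-\sum a_\rho D_\rho$. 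Once that identification is clean, the vanishing is immediate from \cref{thm:demazure}.
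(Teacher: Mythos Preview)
Your strategy---reduce to classical Demazure Vanishing on the coarse space via \cref{thm:demazure} or \cref{prop:semicohomology}---is genuinely different from the paper's, which gives a direct argument: it unpacks $H^p(\cO_\sX(A-d))$ via the formula from \cite{CLSToricVarieties}*{Theorem 9.1.3(a)}, transports the relevant simplicial sets through the homeomorphism $\beta_\RR$, and then exhibits an explicit deformation retraction onto the convex set $\{u \in |\Sigma| : \langle m-\theta, u\rangle < F(u)\}$.

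Your reduction works when $b_\rho = 1$ for all $\rho$ (modulo some sign confusion: you identify the support-function value $-a_\rho + \langle -\theta, u_\rho\rangle$ with the divisor coefficient of $\lfloor D\rfloor$; the correct divisor is $D = \sum_\rho (a_\rho - \langle\theta,u_\rho\rangle) D_\rho$, whose support function is $F + \langle\theta,\cdot\rangle$). But the lemma is stated and used for arbitrary $\beta$ with $\beta(e_\rho) = b_\rho u_\rho$, $b_\rho > 0$---see its invocation in the proof of \cref{prop:pushforwardBTcoarse}, and note that the stack $\tsX$ of \cref{def:tsX} has $b_\rho = c_\rho$ typically $> 1$. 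Your justification ``since we are in the setting $b_\rho = 1$'' is simply not part of the hypotheses.

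This is a genuine gap, not just a missing remark. When $b_\rho > 1$, the class $A - d$ need not lie in the image of $\beta^*\colon \Cl(X_\Sigma) \to \Cl(\sX)$, so neither \cref{thm:demazure} nor \cref{prop:semicohomology} applies. Concretely, $A - d$ is represented by $\sum_\rho (a_\rho + \lfloor -b_\rho\langle\theta,u_\rho\rangle\rfloor) D_\rho$, and for this to be linearly equivalent to $\beta^* E$ you would need $b_\rho \mid (a_\rho + \lfloor -b_\rho\langle\theta,u_\rho\rangle\rfloor)$ for every $\rho$; already $b_\rho = 2$, $a_\rho = 1$, $\theta = 0$ fails. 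The paper's direct topological proof avoids this entirely because it never tries to descend the line bundle to $X_\Sigma$; it only uses that $\beta_\RR$ is a homeomorphism between the combinatorial models for cohomology.
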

\begin{proof}
    Choose $\theta \in M_\RR$ so that $-d = -d(\theta)$.
    Applying \cite{CLSToricVarieties}*{Theorem 9.1.3(a)} as in \cref{subsec:cohomology}, 
    \[
        H^p(\cO_{\sX}(A-d)) = \bigoplus_{m \in M} \widetilde{H}^{p-1}(V_{A- \theta,m}), 
    \]
\vspace*{-3mm}
    \begin{align*}
    \text{where}\quad 
        V_{A -\theta, M} &= \bigcup_{\sigma \in \Sigma} \mathrm{conv}\{ e_\rho \mid \rho \in \sigma(1) \text{ and } \langle m, \beta(e_\rho) \rangle < \lceil \langle \theta, \beta(e_\rho) \rangle \rceil - a_\rho \} \\
        &= \bigcup_{\sigma \in \Sigma} \mathrm{conv}\{ e_\rho \mid \rho \in \sigma(1) \text{ and } \langle m, \beta(e_\rho) \rangle < \langle \theta, \beta(e_\rho) \rangle + F(\beta(e_\rho))\},
    \end{align*}
    using that $\langle m, \beta(e_\rho) \rangle, a_\rho$ are in $\ZZ$ and letting $F$ be a concave support function for $A$. Then, $\beta_\RR$ maps $V_{A- \theta, M}$ homeomorphically (see \cref{subsec:cohomology}) onto the set
    \[ 
    \bigcup_{\sigma \in \Sigma} \mathrm{conv} \{ \beta(e_\rho) \mid \rho \in \sigma(1) \text{ and } \langle m, u_\rho \rangle < \langle \theta, u_\rho \rangle + F(u_\rho)) \}, 
    \]
    which, since the inequality $\langle m, u_\rho \rangle < \langle \theta, u_\rho \rangle + F(u_\rho)$ is invariant under positive scaling, is homotopic to
    \begin{equation} \label{eq:homotope}
    \left( \bigcup \{ \sigma \in \Sigma \mid \langle m - \theta, u \rangle < F(u) \text{ for all nonzero } u \in \sigma \right) \setminus \{0\}.
    \end{equation}
    Now, we observe that \eqref{eq:homotope} is a deformation retract of
    \[ 
    V_{F-\theta, m} = \{ u \in |\Sigma| \mid \langle m - \theta, u \rangle < F(u) \}, 
    \]
    which is a convex set since $F$ is concave.
    Namely, for any $u \in |\Sigma|$, there is a minimal cone in $\Sigma$ containing $u$ which has a maximal face $\sigma_u$ such that $\sigma_u\setminus\{0\} \in V_{F-\theta, m}$.
    Moreover, $\sigma_u$ must be nonempty if $u \in V_{F-\theta,m}$.
    Let $P_{\sigma_u}$ be the orthogonal projection onto $\sigma_u$ for some chosen inner product on $N_\RR$.
    Then, a deformation retraction $H\colon V_{F-\theta, m} \times [0,1] \to V_{F-\theta, m}$ is given by
    $H(u, t) =  P_{\sigma_u}(u) + (1-t)\left(u - P_{\sigma_u}(u)\right)$. 
    Putting this all together, $V_{A-\theta,m}$ is contractible when it is nonempty, yielding the desired result.
\end{proof}

\begin{remark} \label{rem:frobenius}
    On a toric stack in any characteristic, there is a toric Frobenius endomorphism $F_\ell$ induced by the linear map of fans given by multiplication by $\ell$.
    In \cite{Thomsen}, it is shown that $\Theta_X$ is the set of all possible summands of $(F_\ell)_* \cO_X$ for a smooth toric variety $X$ (see also \cites{bogvad1998splitting,achinger2015characterization}).
    Thomsen's result was extended to toric Deligne--Mumford stacks in \cite{ohkawa2013frobenius}.
    Yet another standard proof of 
    \cref{lem:BTvanishingCohomology} for varieties uses that $H^p(\cO_{\sX}(A-d))$ includes
    into $H^p\left(\cO_{\sX}(A) \otimes (F_\ell)_* \cO_{\sX}\right) \cong
    H^p\left(\cO_{\sX}(\ell A)\right) = 0$.
\end{remark}

We will need several results on how elements of $\Theta$ push forward.
These results are closely related, and they all follow from a more general statement about pushforwards under appropriately defined birational toric morphisms of toric stacks, but we have chosen to isolate the instances we use for clarity.
First, $\Theta$ pushes forward in the naive way from $\sX$ to $X$.

\begin{prop}[Coarse moduli pushforwards of $\Theta$] \label{prop:pushforwardBTcoarse}
   If $\Sigma$ is simplicial and $\pi\colon \sX_{\Sigma, \beta} \to X_\Sigma$ is the natural coarse moduli space map, then for all $\theta \in M_\RR$,
    \[ 
    \pi_* \cO_{\sX_{\Sigma,\beta}} (-d(\theta)) = \cO_{X_\Sigma}(-d(\theta)). 
    \]
\end{prop}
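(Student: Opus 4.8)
The plan is to reduce the statement to a cohomological computation on the fibers of $\pi$ (in the stacky sense), using the explicit combinatorial description of $-d(\theta)$ and the fact that $\pi$ is a good moduli morphism (\cref{prop:semigoodmoduli}). The key point is that $-d(\theta)$ is represented by the divisor $\sum_\rho \lfloor \langle -\theta, \beta(e_\rho)\rangle\rfloor D_\rho$ on $\sX$, and we want to show its pushforward is $\cO_{X_\Sigma}(-d(\theta))$, where on the coarse space $-d(\theta)$ means the same formula with $\beta(e_\rho)=u_\rho$ replaced by $b_\rho u_\rho$; but in the case of interest (\cref{defn:cox}) all $b_\rho=1$, so the two formulas literally agree and the statement is simply that $R\pi_*$ of this line bundle is concentrated in degree $0$ and equals $\cO_{X_\Sigma}$ of the same divisor. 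Actually for the general statement one must be slightly careful: $-d(\theta)$ on $X_\Sigma$ is $\sum_\rho \lfloor b_\rho\langle -\theta, u_\rho\rangle\rfloor D_\rho$ while on $\sX$ it is $\sum_\rho \lfloor \langle -\theta,\beta(e_\rho)\rangle\rfloor D_\rho = \sum_\rho \lfloor b_\rho\langle -\theta, u_\rho\rangle\rfloor D_\rho$ under the identification $\beta(e_\rho)=b_\rho u_\rho$ — so these match on the nose, and what needs proof is the pushforward statement.

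First I would note that since $\pi$ is a good moduli morphism, $R\pi_* = R^0\pi_*$ is exact on coherent sheaves, so it suffices to compute $R^0\pi_*\cO_{\sX}(-d(\theta))$. Because everything is torus-equivariant, I would compute this one $M$-graded piece at a time. Concretely, $R^0\pi_*$ of an equivariant line bundle can be computed locally on the affine toric charts $U_\sigma \subset X_\Sigma$ (for $\sigma\in\Sigma$), where it is given by $\Gamma$ of the corresponding line bundle on the quotient stack $[\Spec(S_\sigma)/G_\sigma]$; the global sections over such a chart, in $M$-degree $m$, are nonzero (and one-dimensional) precisely when $\langle m, \beta(e_\rho)\rangle \geq \langle \theta, \beta(e_\rho)\rangle$ — equivalently $\langle m, u_\rho\rangle \geq \langle \theta, u_\rho\rangle$ — for all $\rho\in\sigma(1)$, and this is exactly the condition for $m$ to be a section of $\cO_{X_\Sigma}(-d(\theta))$ over $U_\sigma$ (using that $\lfloor b_\rho\langle -\theta,u_\rho\rangle\rfloor + \langle m, b_\rho u_\rho\rangle \geq 0$ iff $\langle m - \theta, u_\rho\rangle \geq 0$ when $\langle m, b_\rho u_\rho\rangle\in\ZZ$). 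Thus the two sheaves have the same equivariant sections on every chart, and by gluing (and equivariance, since they are both reflexive of rank one and agree on the torus) they are isomorphic, and the isomorphism is $\pi$-linear.

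I expect the main obstacle to be bookkeeping the rounding: verifying cleanly that the local section condition $\lfloor \langle -\theta,\beta(e_\rho)\rangle\rfloor + \langle m,\beta(e_\rho)\rangle \geq 0$ is equivalent to $\langle m-\theta, u_\rho\rangle\geq 0$ requires using $\langle m, \beta(e_\rho)\rangle\in\ZZ$ together with the definition of the floor, and one must be slightly attentive to whether the relevant pairings land in $\ZZ$ (they do, since $\beta(e_\rho)=b_\rho u_\rho$ with $b_\rho\in\ZZ_{>0}$). A secondary technical point is justifying that $R\pi_*$ can be computed chart-by-chart and that the glued isomorphism is canonical; both follow from $\pi$ being an fpqc-locally-trivial good moduli morphism with affine fibers, so $R^i\pi_*$ vanishes for $i>0$ by \cite{GS}*{Corollary 6.5} (equivalently \cref{prop:semigoodmoduli}), reducing everything to the affine-chart computation above. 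Alternatively, and perhaps more cleanly, one can invoke \cref{prop:semicohomology} together with the projection formula and a Čech-type argument, but the direct chart computation is the most transparent.
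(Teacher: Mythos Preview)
Your approach is essentially the paper's: compute $R^0\pi_*$ chart by chart, using integrality of $\langle m,\beta(e_\rho)\rangle$ to replace the ceiling condition by $\langle m,u_\rho\rangle \geq \langle \theta,u_\rho\rangle$, and check this matches the section condition for $\cO_{X_\Sigma}(-d(\theta))$ on $U_\sigma$. The only difference is in killing higher direct images: you invoke the good moduli property (\cref{prop:semigoodmoduli}), whereas the paper applies \cref{lem:BTvanishingCohomology} with $A=0$ on each chart; both work, and yours is arguably cleaner since it does not depend on the particular shape of $-d(\theta)$.

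One correction to your exposition: in your first paragraph you write that $-d(\theta)$ on $X_\Sigma$ is $\sum_\rho \lfloor b_\rho\langle -\theta,u_\rho\rangle\rfloor D_\rho$, but on the coarse variety the Bondal--Thomsen element is $\sum_\rho \lfloor \langle -\theta,u_\rho\rangle\rfloor D_\rho$ (using the primitive generators $u_\rho$, not $\beta(e_\rho)=b_\rho u_\rho$). So the two divisor representatives do \emph{not} match on the nose when $b_\rho\neq 1$; what makes the proposition work is that both section conditions reduce to the same real inequality $\langle m-\theta,u_\rho\rangle\geq 0$ after the rounding step. Your second-paragraph computation gets this right, so the confusion is purely in the setup and does not affect the argument.
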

\begin{proof}
    The higher direct images all vanish as a consequence of \cref{lem:BTvanishingCohomology} with $A= 0$.
    Thus we must compute only $R^0\pi_* \cO_{\sX}(-d(\theta))$. 
    For any $\sigma \in \Sigma$, $\pi^{-1}(U_\sigma) = \sX_{\sigma, \beta|_{\ZZ^{\sigma(1)}}}$.
    Therefore 
    \begin{align*}
        H^0(U_\sigma, R^0\pi_* \cO_{\sX_{\sigma,\beta}} (-d(\theta)) ) &= H^0 \left( \sX_{\sigma, \beta|_{\ZZ^{\sigma(1)}}}, \cO_{\sX} (-d(\theta)) \right) \\
        &= \Bbbk\left\langle m \in M \mid \langle m, \beta(e_\rho) \rangle \geq \lceil \langle \theta, \beta(e_\rho) \rangle \rceil \text{ for all } \rho \in \sigma(1) \right\rangle \\
        &= \Bbbk\left\langle m \in M \mid\langle m, \beta(e_\rho) \rangle \geq  \langle \theta, \beta(e_\rho) \rangle \text{ for all } \rho \in \sigma(1) \right\rangle \\
        &= \Bbbk\left\langle m \in M \mid \langle m, u_\rho \rangle \geq  \langle \theta, u_\rho \rangle  \text{ for all } \rho \in \sigma(1) \right\rangle,
    \end{align*}
    using that $\langle m , \beta(e_\rho) \rangle$ is always an integer and that $\beta(e_\rho)$ is a positive multiple of $u_\rho$.
    Thus this calculation shows that for all $\sigma \in \Sigma$, 
    \[ 
    H^0(U_\sigma, R^0\pi_* \cO_{\sX_{\sigma,\beta}} (-d(\theta))) = H^0(U_\sigma,  \cO_{U_\sigma} (-d(\theta)))
    \]
    in a fashion that respects restriction, which is given by inclusion of polytopes.
\end{proof}

We also need to understand the pushforwards of Bondal--Thomsen elements under toric morphisms induced by refinements of fans.
We start with toric varieties, but due to their natural appearance in \cref{subsec:basics}, we do so in the context of generalized fans.
Note that if $\overline{\Sigma}$ is a generalized fan with lineality space $L$, then the character lattice of $X_{\overline{\Sigma}}$ is
\[
L^\perp \cap M = \{ m \in M \mid \langle m, u \rangle = 0 \text{ for all } u \in L \},
\]
and each $d\in-\Theta_{X_{\overline{\Sigma}}}$ can be represented via some $\theta \in L^\perp$ as
\begin{equation} \label{eq:BTgenfan}
    -d(\theta) := \sum_{\rho \in \overline{\Sigma}(1) } \lfloor \langle - \theta, u_\rho \rangle \rfloor D_\rho.
\end{equation}
Let $\overline{\Sigma}(1)$ denote the set of cones that correspond to rays in the quotient fan, and for each $\rho \in \overline{\Sigma}(1)$, let $u_\rho$ denote the primitive generator of the quotient ray.

\begin{prop}[Pushforwards of $\Theta$ along refinements]\label{prop:pushforwardBTvariety}
Suppose that $\Sigma'$ is a fan on $N_\RR$ refining a generalized fan $\overline{\Sigma}$ on $N_\RR$.
Let $X' = X_{\Sigma'}$, $X = X_{\overline{\Sigma}}$, and let $\pi\colon X' \to X$ be the induced toric morphism.
If $L$ is the lineality space of $\overline{\Sigma}$ and $\theta \in L^\perp + M$, then
\[ \pi_* \cO_{X'} (-d(\theta)) = \cO_{X}(-d(\theta)),\]
and the derived pushforward of $\cO_{X'}(-d(\theta))$ is zero for all $\theta \not\in L^\perp + M$.
\end{prop}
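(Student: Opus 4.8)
The plan is to compute $R^q\pi_\ast\cO_{X'}(-d(\theta))$ for all $q$ by working locally on $X$. Cover $X=X_{\overline\Sigma}$ by the affine opens $U_\sigma=\Spec\Bbbk[\sigma^\vee\cap M]$, $\sigma\in\overline\Sigma$, so that $\pi^{-1}(U_\sigma)=X_{\Sigma'|_\sigma}$ is the toric variety of the subdivision $\Sigma'|_\sigma=\{\tau\in\Sigma'\mid\tau\subseteq\sigma\}$ of $\sigma$; its support $\sigma$ is convex and its rays generate $\sigma$. Since $U_\sigma$ is affine, $H^0(U_\sigma,R^q\pi_\ast\cO_{X'}(-d(\theta)))=H^q(X_{\Sigma'|_\sigma},\cO_{X'}(-d(\theta)))$, so it suffices to compute these cohomology groups together with their restriction maps for faces $\sigma'\subseteq\sigma$. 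Two harmless reductions: replacing $\theta$ by $\theta+m$ with $m\in M$ only twists $\cO_{X'}(-d(\theta))$ by $\operatorname{div}(\chi^m)$, so for the first identity we may take $\theta\in L^\perp$; and, if one wishes to invoke \cref{lem:BTvanishingCohomology} verbatim, one may first pass to a simplicial refinement of $\Sigma'$ with the same rays and chase the Leray spectral sequence.

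Higher direct images vanish: running the argument that proves \cref{lem:BTvanishingCohomology} with the zero support function $F\equiv 0$ (legitimate since $|\Sigma'|_\sigma|=\sigma$ is convex), the combinatorial formula for $H^q(X_{\Sigma'|_\sigma},\cO_{X'}(-d(\theta)))$ presents each graded piece as a reduced cohomology group of a set homotopy equivalent to the convex set $\{u\in\sigma\mid\langle m-\theta,u\rangle<0\}$, which is contractible (or empty). Hence $H^q(X_{\Sigma'|_\sigma},\cO_{X'}(-d(\theta)))=0$ for $q>0$ and every $\sigma$, so $R^q\pi_\ast\cO_{X'}(-d(\theta))=0$ for $q>0$; note this is insensitive to whether $\theta\in L^\perp+M$.

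For $R^0\pi_\ast$: the degree-$m$ part of $H^0(X_{\Sigma'|_\sigma},\cO_{X'}(-d(\theta)))$ is spanned by $\chi^m$ when $\langle m,u_\rho\rangle\ge\lceil\langle\theta,u_\rho\rangle\rceil$ for every ray $\rho$ of $\Sigma'|_\sigma$, and is $0$ otherwise. As $\langle m,u_\rho\rangle\in\ZZ$, this is equivalent to $\langle m-\theta,u_\rho\rangle\ge 0$, and since those rays generate $\sigma$, to $\langle m-\theta,u\rangle\ge 0$ for all $u\in\sigma$. Because $L\subseteq\sigma$ is a linear subspace, this already forces $m-\theta\in L^\perp$. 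When $\theta\in L^\perp$, the surviving $m$ are exactly those in $M\cap L^\perp$ with $\langle m-\theta,u\rangle\ge 0$ on $\sigma$ --- precisely the monomial basis of $\Gamma(U_\sigma,\cO_X(-d(\theta)))$ --- and this identification is compatible with the restriction maps (inclusions of index sets for $\sigma'\subseteq\sigma$), so it glues to $R^0\pi_\ast\cO_{X'}(-d(\theta))\cong\cO_X(-d(\theta))$. When instead $\theta\notin L^\perp+M$, no $m\in M$ satisfies $m-\theta\in L^\perp$ (that would put $\theta$ in $m+L^\perp\subseteq M+L^\perp$), so $H^0(X_{\Sigma'|_\sigma},\cO_{X'}(-d(\theta)))=0$ for every $\sigma$, and with the higher vanishing we get $R\pi_\ast\cO_{X'}(-d(\theta))=0$.

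The content is really in the two places where $L\subseteq\sigma$ is used --- forcing $m-\theta\in L^\perp$, which is what makes the generalized-fan target $X=X_{\overline\Sigma}$ (with its smaller character lattice $M\cap L^\perp$) come out on the nose --- and in upgrading a cohomology-group computation to a statement about the sheaf $R^0\pi_\ast$, which requires tracking the restriction maps rather than just dimensions. The only technical nuisance is that $-d(\theta)$ on $X'$ is typically not nef, so higher vanishing cannot be read off a vanishing theorem for that divisor directly; this is sidestepped because the proof of \cref{lem:BTvanishingCohomology} already handles $A-d(\theta)$ for nef $A$, and here $A=0$ does the job.
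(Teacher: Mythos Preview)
Your proof is correct and follows essentially the same approach as the paper: compute locally over the affine charts $U_\sigma$, kill higher direct images by invoking the argument of \cref{lem:BTvanishingCohomology} with $A=0$ on the convex support $\sigma$, and identify $R^0\pi_\ast$ by matching monomial bases and restriction maps. Your organization of the $R^0$ step---first passing from the ray conditions to $\langle m-\theta,u\rangle\ge 0$ on all of $\sigma$ and then using $L\subseteq\sigma$ to force $m-\theta\in L^\perp$---is a slight repackaging of the paper's case analysis, but the content is identical.
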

\begin{proof}
    For any $\sigma \in \overline{\Sigma}$, there is a fan $\sigma' \subset \Sigma'$ refining $\sigma$.
    The higher direct images vanish as a consequence of the analogue of \cref{lem:BTvanishingCohomology} with $A=0$ for toric varieties applied to the $X_{\sigma'}$.
    To compute $R^0\pi_* \cO_{X'}(-d(\theta))$, 
    \begin{align}
        H^0(U_\sigma, R^0\pi_* \cO_{X'} (-d(\theta))) &= H^0(X_{\sigma'}, \cO_{X'} (-d(\theta))) \notag \\
        &= \Bbbk \left\langle m \in M \mid \langle m, u_\rho \rangle \geq \lceil \langle \theta, u_\rho \rangle \rceil \text{ for all } \rho \in \sigma'(1) \right\rangle \notag \\
        &= \Bbbk \left \langle m \in M \mid \langle m, u_\rho \rangle \geq  \langle \theta, u_\rho \rangle  \text{ for all } \rho \in \sigma'(1) \right\rangle \label{eq: sections} 
    \end{align}
using that $\langle m, u_\rho \rangle$ is always an integer.

First, suppose that $\theta \not \in L^\perp +M$. 
That is, for any $m$, there exists $u \in L$ such that
$\langle \theta -m, u \rangle > 0$. 
In addition, $\theta \not \in L^\perp +M$ implies that $L \neq \{ 0 \}$; moreover, $\RR_{\geq 0}\cdot \sigma'(1)$ contains $L$. 
Thus, $u$ can be written as a nonnegative combination of $u_\rho$ with $\rho \in \sigma'(1)$, and the fact that $\langle \theta -m, u \rangle > 0$ implies that it is not possible to have $\langle m , u_\rho \rangle \geq \langle \theta,u_\rho \rangle$ for all $\rho \in \sigma'(1)$. 
We conclude that $\pi_* \cO_{X'} (-d(\theta))$ has no local sections on an affine cover.

Now we turn to the case that $\theta \in L^\perp + M$.
Since adding an element of $M$ does not affect the isomorphism class of the line bundle, assume that $\theta \in L^\perp$. 
If $m \not \in L^\perp$, then there exists $u \in L$ such that $\langle m, u \rangle < 0$, and by the same argument as above, it is impossible to have $\langle m, u_\rho \rangle \geq \langle \theta, u_\rho \rangle$ for all $\rho \in \sigma'(1)$. 
Thus, the nonzero sections on $U_\sigma$ must correspond to $m$ that lie in $L^\perp$.  Continuing from \eqref{eq: sections}, 
 \begin{align*}
    H^0(U_\sigma, R^0\pi_* \cO_{X'} (-d(\theta))) &= \Bbbk \left \langle m \in L^\perp \cap M \mid \langle m, u_\rho \rangle \geq  \langle \theta, u_\rho \rangle  \text{ for all } \rho \in \sigma'(1) \right \rangle \\
    & = \Bbbk \left \langle m \in L^\perp \cap M \mid \langle m, u_\rho \rangle \geq  \langle \theta, u_\rho \rangle  \text{ for all } \rho \in \sigma(1) \right \rangle, 
\end{align*}
where the second equality follows from the fact that the image of every primitive generator of $\sigma'(1)$ in the quotient by $L$ is a linear combination of primitive generators of $\sigma(1)$.
That is, our calculation shows that
$H^0(U_\sigma, R^0\pi_* \cO_{X'} (-d(\theta))) 
= H^0(U_\sigma, \cO_{X} (-d(\theta)))$ 
for all $\sigma \in \Sigma$, and in a fashion that respects restriction, which is given by inclusion of polytopes.
\end{proof}

We end this subsection with one more result on pushforwards of Bondal--Thomsen elements for the toric stacks under consideration.
We need an appropriate notion of refinement.

\begin{defn} \label{defn:refinement}
    Let $(\Sigma', \beta')$ and $(\Sigma, \beta)$ be stacky fans on $N_\RR$ in the sense of \cref{def:BCStoricStack}. 
    We say that $(\Sigma', \beta')$ is a \defi{stacky refinement} of $(\Sigma, \beta)$ if $\Sigma'$ is a refinement of $\Sigma$, and there is a homomorphism $\Phi\colon \ZZ^{\Sigma'(1)} \to \ZZ^{\Sigma(1)}$
    such that $\beta \circ \Phi = \beta'$.
\end{defn}

A stacky refinement induces a toric morphism $\sX_{\Sigma',\beta'} \to \sX_{\Sigma, \beta}$ by \cite{BorisovChenSmith2004}*{Remark 4.5} or by viewing a stacky refinement as a special case of a morphism of stacky fans in the sense of \cite{GS}.
Pushforwards of Bondal--Thomsen elements along a stacky refinement behave analogously to the previous two cases.  The proof of the following uses the same argument as in Propositions~\ref{prop:pushforwardBTcoarse} and~\ref{prop:pushforwardBTvariety}, and is omitted.

\begin{prop}[Pushforwards of $\Theta$ along stacky refinements] \label{prop:pushforwardBTstack}
Let $(\Sigma', \beta')$ be a stacky refinement of $(\Sigma, \beta)$ inducing a toric morphism $\pi\colon \sX' \to \sX$ of smooth toric DM stacks, then for all $\theta \in M_\RR$, 
        \[ 
        \hspace*{5.7cm} 
        \pi_* \cO_{\sX'} (-d(\theta)) = \cO_{\sX}(-d(\theta)).
        \hspace*{5.3cm} 
        \square
        \]
\end{prop}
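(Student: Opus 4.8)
The plan is to mimic, essentially verbatim, the computation appearing in the proofs of Propositions~\ref{prop:pushforwardBTcoarse} and~\ref{prop:pushforwardBTvariety}, now carried out on an affine toric chart of $\sX$. First I would fix $\theta \in M_\RR$ and invoke the analogue of \cref{lem:BTvanishingCohomology} with $A = 0$ applied to the charts of $\sX'$ to conclude that the higher direct images $R^{p}\pi_*\cO_{\sX'}(-d(\theta))$ vanish for $p>0$; here the relevant convexity hypothesis holds on each affine chart since cones are convex, and $\Sigma'$ is simplicial because $(\Sigma',\beta')$ is a stacky fan. Thus only $R^0\pi_*$ remains to be identified. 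Note that, unlike in \cref{prop:pushforwardBTvariety}, there is no lineality space to worry about: both $\sX$ and $\sX'$ have the same character lattice $M$, and a stacky refinement has $\Sigma'$ refining $\Sigma$ with no collapsing of the ambient lattice, so we expect $\pi_*\cO_{\sX'}(-d(\theta)) = \cO_{\sX}(-d(\theta))$ for \emph{all} $\theta$, with no exceptional vanishing case.

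For the computation of global sections, I would work over an affine chart $U_\sigma$ of $\sX$ for $\sigma \in \Sigma$. Because $\Sigma'$ refines $\Sigma$, the preimage $\pi^{-1}(U_\sigma)$ is the toric (sub)stack associated to the stacky subfan of $(\Sigma',\beta')$ supported on $\sigma$; call its rays $\sigma'(1)$. Using \cite{CLSToricVarieties}*{Theorem 9.1.3(a)} (in its stacky form, as set up in \cref{subsec:cohomology}) together with the description of $-d(\theta)$ from \eqref{eq:BT}, one obtains
\begin{align*}
H^0\bigl(U_\sigma, R^0\pi_*\cO_{\sX'}(-d(\theta))\bigr)
&= \Bbbk\bigl\langle m \in M \mid \langle m, \beta'(e_{\rho}) \rangle \geq \lceil \langle \theta, \beta'(e_{\rho}) \rangle \rceil \text{ for all } \rho \in \sigma'(1) \bigr\rangle \\
&= \Bbbk\bigl\langle m \in M \mid \langle m, \beta'(e_{\rho}) \rangle \geq \langle \theta, \beta'(e_{\rho}) \rangle \text{ for all } \rho \in \sigma'(1) \bigr\rangle \\
&= \Bbbk\bigl\langle m \in M \mid \langle m, u_{\rho} \rangle \geq \langle \theta, u_{\rho} \rangle \text{ for all } \rho \in \sigma(1) \bigr\rangle,
\end{align*}
where the middle equality uses that $\langle m, \beta'(e_\rho)\rangle \in \ZZ$, and the last equality uses that each $\beta'(e_\rho)$ is a positive multiple of the corresponding primitive ray generator of $\Sigma'$, which in turn lies in $\sigma$ and hence is a nonnegative rational combination of the primitive generators $u_\rho$ of $\sigma(1)$ (together with the converse, that each generator of $\sigma(1)$ is a face of $\sigma$ and so its ray survives in the refinement). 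This exhibits the stated module as $H^0(U_\sigma, \cO_{\sX}(-d(\theta)))$, and the identifications are manifestly compatible with the restriction maps coming from inclusions $U_\tau \subseteq U_\sigma$, which on sections are inclusions of the corresponding sets of lattice points. Gluing over the affine cover gives $\pi_*\cO_{\sX'}(-d(\theta)) = \cO_{\sX}(-d(\theta))$.

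The main obstacle is really just bookkeeping: one must be careful that the homomorphism $\Phi\colon \ZZ^{\Sigma'(1)} \to \ZZ^{\Sigma(1)}$ with $\beta \circ \Phi = \beta'$ does what is needed to translate between the ray data of $\Sigma'$ and $\Sigma$ on each chart, and that the cohomology-of-line-bundles machinery of \cref{subsec:cohomology} is genuinely available for the non-complete (merely semiprojective, indeed affine on charts) stacks $\pi^{-1}(U_\sigma)$. Since \cite{CLSToricVarieties}*{Theorem 9.1.3(a)} and the homeomorphism-via-$\beta_\RR$ argument of \cref{subsec:cohomology} apply locally, and the convexity hypothesis of \cref{lem:BTvanishingCohomology} is automatic on a single cone, there is no real difficulty here — which is exactly why the authors state that the proof ``uses the same argument as in Propositions~\ref{prop:pushforwardBTcoarse} and~\ref{prop:pushforwardBTvariety}, and is omitted.''
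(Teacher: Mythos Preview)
Your proposal is correct and follows exactly the approach the paper intends: the paper explicitly omits the proof, stating it ``uses the same argument as in Propositions~\ref{prop:pushforwardBTcoarse} and~\ref{prop:pushforwardBTvariety},'' and your write-up is precisely that argument carried out chartwise with the appropriate stacky bookkeeping. The only small polish would be to make explicit the final comparison with $H^0(U_\sigma,\cO_{\sX}(-d(\theta)))$ via $\beta(e_\rho)=b_\rho u_\rho$ (as in \cref{prop:pushforwardBTcoarse}), but this is already implicit in what you wrote.
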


\subsection{The Bondal--Thomsen collection and the GKZ Fan}\label{subsec:BTGKZ}
Recall our standard assumptions that $S$ is the Cox ring of $X$ and the chambers of $\Sigma_{GKZ}(X)$ correspond to $\sX_1, \dots, \sX_r$.  
By \eqref{eq:classgroup}, for each $i$, there is a natural map $\Cl(X)\to \Cl(\sX_i)$.  The map is surjective in general, and it is an isomorphism precisely when $\sX_i$ has the same rays as $X$.  Moreover, under this map, the Bondal--Thomsen collection $\Theta \ce \Theta_X$ surjects onto $\Theta_{\sX_i}$.  
Thus, given any $-d\in \Theta$, there is a natural corresponding element $-d\in\Theta_{\sX_i}$.  
We will generally omit the map of class groups from our notation.  
For instance, by a minor abuse of notation, we will write ``given $-d\in \Theta_X$, consider the bundle $\cO_{\sX_i}(-d)$.''

We next consider the relationship between an $S$-module and the corresponding sheaves on the $\sX_i$. 
In \cref{subsec:BTmonads}, we will go further and  consider sheaves on the varieties $X_{\Gamma}$ corresponding to cones $\Gamma \in \Sigma_{GKZ}(X)$, so we now summarize this construction.  If $P$ is a finitely generated $S$-module and $\Gamma$ is any cone in
$\Sigma_{GKZ}(X)$, then there is a natural coherent sheaf on $X_\Gamma$, which we denote by $P|_{X_{\Gamma}}$ and define as follows.\footnote{While this
sheaf is often denoted $\widetilde{P}$, we will want to distinguish between the sheaves on
the various $X_\Gamma$, which is why we use the notation $P|_{X_\Gamma}$.}   
We will largely mimic the construction and notation from \cite{CLSToricVarieties}*{Section 5.3}, but with two wrinkles due to the fact that we work with generalized fans.  
First, there may be some contracted rays which do not lie in any cone of the generalized fan $\overline{\Sigma}_\Gamma$ of $X_\Gamma$, and second, the generalized fan  may have a lineality space $L\subseteq N$. 
Write $\Sigma_{\Gamma}$ for the corresponding genuine fan of $X_\Gamma$.  
For a cone $\overline{\sigma} \in \overline{\Sigma}_{\Gamma}$, let $\sigma$ denote the corresponding cone in $\Sigma_{\Gamma}$.

Let $\overline{\sigma}$ be a cone in $\overline{\Sigma}_\Gamma$.  
We define 
$x^{\overline{\sigma}^{\mathsf{c}}}$ 
to be the product of the variables $x_\rho$, where $\rho\notin \overline{\sigma}$.  
Note that, if $x_\rho$ corresponds to a ray that has been contracted, then $x_\rho$ will also always divide  $x^{\overline{\sigma}^{\mathsf{c}}}$. 
And if there is a lineality space, then the corresponding variables will divide every $x^{\overline{\sigma}^{\mathsf{c}}}$.  
Define $P|_{X_{\Gamma}}$ as the sheaf whose sections on $U_\sigma \subseteq X_{\Gamma}$ are $P[(x^{\overline{\sigma}^{\mathsf{c}}})^{-1}]_0$.

To first see that when $P=S$, this yields $S|_{X_{\Gamma}} = \cO_{X_{\Gamma}}$, 
monomials in $S[(x^{\overline{\sigma}^{\mathsf{c}}})^{-1}]$ correspond to Laurent monomials with positive exponents on $x_\rho$ for $\rho \in \overline{\sigma}$.  In the short exact sequence
\[
0\to M\to \ZZ^{\Sigma(1)}\to \Cl(X)\to 0,
\]
the map $m\mapsto \prod_\rho x_{\rho}^{\langle m, u_\rho\rangle}$ induces a natural bijection between $m\in \overline{\sigma}^\vee \cap M$ and the monomials in $S[(x^{\overline{\sigma}^{\mathsf{c}}})^{-1}]_0$.  
If $L$ is the lineality space of the generalized fan, then there is also a natural bijection between $\overline{\sigma}^\vee \cap M$ and $\sigma^\vee\cap L^\perp$, since $\overline{\sigma}^\vee \subseteq L^\perp$. 
Since the semigroup $\sigma^\vee\cap L^\perp$ corresponds to the monomials in $H^0(U_{{\sigma}}, \cO_{X_{\Gamma}})$, 
\[
S[(x^{\overline{\sigma}^{\mathsf{c}}})^{-1}]_0 \cong H^0(U_{{\sigma}}, \cO_{X_{\Gamma}}). 
\]

For a finitely generated module $P$, one can then follow \cite{CLSToricVarieties}*{Proposition 5.3.6}, with minor adjustments as in the argument above for $S$, to conclude that $P|_{X_{\Gamma}}$ is a coherent sheaf on $X_{\Gamma}$.  
We record the following for later use.
\begin{lemma}\label{lem:ThetaRestrict}
If $-d=-d(\theta)\in \Theta$, then
\[
S(-d)|_{X_{\Gamma}} =
    \begin{cases}
         \cO_{X_{\Gamma}}(-d) &\text{ if } \theta\in L^\perp + M,\\
         0 & \text{ else}.
    \end{cases}
    \qedhere
\]
\end{lemma}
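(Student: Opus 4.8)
The plan is to compute the sections of $S(-d)|_{X_\Gamma}$ on the affine cover $\{U_\sigma : \sigma\in\Sigma_\Gamma\}$ by running the construction of $P|_{X_\Gamma}$ for the free module $P=S(-d)$, and then to match the answer cone by cone with the standard local description of $\cO_{X_\Gamma}(-d)$, in the same spirit as the identification $S|_{X_\Gamma}=\cO_{X_\Gamma}$ established in the paragraphs preceding the statement. By construction, $\Gamma(U_\sigma, S(-d)|_{X_\Gamma})$ is the degree-$(-d)$ graded piece of $S[(x^{\overline{\sigma}^{\mathsf{c}}})^{-1}]$, with $\Bbbk$-basis the Laurent monomials $\prod_{\rho}x_\rho^{c_\rho}$ having $c_\rho\geq 0$ for $\rho\in\overline{\sigma}(1)$ and $\big[\sum_\rho c_\rho D_\rho\big]=-d$ in $\Cl(X)$.

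First I would rewrite this basis in terms of $M$. Using $-d=-d(\theta)$ together with the expression \eqref{eq:BT} exhibiting $-d(\theta)$ as the class of $\sum_\rho\lfloor\langle-\theta,u_\rho\rangle\rfloor D_\rho$, the exact sequence $0\to M\to\ZZ^{\Sigma(1)}\to\Cl(X)\to 0$ (so two torus-invariant divisors are linearly equivalent exactly when they differ by $\operatorname{div}(\chi^m)=\sum_\rho\langle m,u_\rho\rangle D_\rho$), and the identity $\lfloor\langle-\theta,u_\rho\rangle\rfloor+\langle m,u_\rho\rangle=\lfloor\langle m-\theta,u_\rho\rangle\rfloor$, the degree-$(-d)$ monomials are exactly $\prod_\rho x_\rho^{\lfloor\langle m-\theta,u_\rho\rangle\rfloor}$ for $m\in M$ with $\lfloor\langle m-\theta,u_\rho\rangle\rfloor\geq 0$ for all $\rho\in\overline{\sigma}(1)$, and distinct $m$ give distinct monomials since $M\hookrightarrow\ZZ^{\Sigma(1)}$. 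Since $\lfloor t\rfloor\geq 0\iff t\geq 0$, this yields
\[
\Gamma(U_\sigma, S(-d)|_{X_\Gamma})\ \cong\ \Bbbk\big\langle\, m\in M \ :\ \langle m-\theta, u_\rho\rangle\geq 0 \text{ for all } \rho\in\overline{\sigma}(1)\,\big\rangle,
\]
compatibly with the restriction maps, which are the evident inclusions of these polyhedral regions as $\sigma$ ranges over $\Sigma_\Gamma$. This is exactly the analogue for $S(-d)$ of the displayed local sections in the proof of \cref{prop:pushforwardBTvariety}.

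From here I would run the two-case analysis of that proof almost verbatim, with $\overline{\sigma}(1)$ in place of $\sigma'(1)$ and $L$ the lineality space of $\overline{\Sigma}_\Gamma$. If $\theta\notin L^\perp+M$, then for each $m\in M$ there is $u\in L$ with $\langle\theta-m,u\rangle>0$; using the fact—implicit in the identification $S|_{X_\Gamma}=\cO_{X_\Gamma}$—that each cone of the GKZ generalized fan $\overline{\Sigma}_\Gamma$ is generated by the rays $u_\rho$, $\rho\in\overline{\sigma}(1)$, it contains, one has $L\subseteq\RR_{\geq 0}\{u_\rho:\rho\in\overline{\sigma}(1)\}$, so $u=\sum_\rho\lambda_\rho u_\rho$ with $\lambda_\rho\geq 0$, contradicting $\langle m-\theta,u_\rho\rangle\geq 0$ for all those $\rho$; hence the sections vanish on every $U_\sigma$ and $S(-d)|_{X_\Gamma}=0$. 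If instead $\theta\in L^\perp+M$, I would replace $\theta$ by a representative in $L^\perp$ (changing neither $-d(\theta)$ nor, after a compatible shift of $m$, the section modules), observe by the same argument that every $m$ occurring above lies in $L^\perp\cap M$, and compare with the standard formula $\Gamma(U_\sigma,\cO_{X_\Gamma}(-d))=\Bbbk\langle m\in L^\perp\cap M : \langle m-\theta,u_\rho\rangle\geq 0 \ \forall\,\rho\in\overline{\sigma}(1)\rangle$ coming from the representative \eqref{eq:BTgenfan}; checking that these identifications respect restriction then gives $S(-d)|_{X_\Gamma}=\cO_{X_\Gamma}(-d)$.

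I expect the only real obstacle to be bookkeeping: carefully tracking the contracted rays in $I_\Gamma$ and the lineality space of $\overline{\Sigma}_\Gamma$ through the Cox-ring localization, exactly as in the $P=S$ discussion above and in the proofs of \cref{prop:pushforwardBTcoarse} and \cref{prop:pushforwardBTvariety}. Alternatively, one can phrase the whole argument as a reduction to \cref{prop:pushforwardBTvariety} by choosing a maximal chamber $\Gamma_i\supseteq\Gamma$ whose honest simplicial fan $\Sigma_i$ refines $\overline{\Sigma}_\Gamma$, using $S(-d)|_{X_{\Sigma_i}}=\cO_{X_{\Sigma_i}}(-d)$ and the identification of $S(-d)|_{X_\Gamma}$ with $\pi_*\big(S(-d)|_{X_{\Sigma_i}}\big)$ along the induced morphism $\pi\colon X_{\Sigma_i}\to X_\Gamma$.
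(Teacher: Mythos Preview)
Your proposal is correct and follows essentially the same approach as the paper's own proof. Both compute $\Gamma(U_\sigma, S(-d)|_{X_\Gamma})$ as the degree $-d$ piece of the localization $S[(x^{\overline{\sigma}^{\mathsf{c}}})^{-1}]$, parametrize its monomial basis by lattice points $m\in M$ satisfying $\langle m-\theta,u_\rho\rangle\geq 0$ for the relevant rays, and then invoke the two-case argument from the proof of \cref{prop:pushforwardBTvariety} verbatim; the paper packages the parametrization via the bijection $x^\alpha\leftrightarrow x^\alpha/x_\theta$ with $x_\theta=\prod_\rho x_\rho^{\lfloor-\langle\theta,u_\rho\rangle\rfloor}$, which is exactly your explicit exponent identity $\lfloor\langle-\theta,u_\rho\rangle\rfloor+\langle m,u_\rho\rangle=\lfloor\langle m-\theta,u_\rho\rangle\rfloor$.
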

\begin{proof}
Fix the Laurent monomial $x_{\theta}\ce\prod x_{\rho}^{\lfloor-\langle \theta, u_\rho\rangle \rfloor}$.  
Then 
\begin{align}\label{eq:usigma-monomials}
H^0\left(U_\sigma, S(-d(\theta))\right) 
= S(-d(\theta))[(x^{\overline{\sigma}^{\mathsf{c}}})^{-1}]_{0}\cong S[(x^{\overline{\sigma}^{\mathsf{c}}})^{-1}]_{-d(\theta)}, 
\end{align}
with monomial generators in bijection with the set of Laurent monomials $x^{\alpha}$ in $S$ of degree $-d(\theta)$ such that the exponent of $x_\rho$ is nonnegative for all $\rho \in \sigma(1)$; equivalently, by the correspondence $x^{\alpha}\leftrightarrow x^{\alpha}/x_{\theta}$, these sections correspond to Laurent monomials in $S$ where the exponent of $x_\rho$ is at least the exponent of $x_\theta$ for all $\rho \in \sigma(1)$.  By definition of $x_{\theta}$, this is precisely the set of $m\in M$ from \eqref{eq: sections}. 
By the argument in the proof of \cref{prop:pushforwardBTvariety}, \eqref{eq:usigma-monomials} is empty unless $\theta \in L^\perp + M$, and this proves that $S(-d)|_{X_{\Gamma}}=0$ when $\theta \notin L^\perp +M$. When $\theta \in L^\perp + M$, the arguments in \cref{prop:pushforwardBTvariety} show this can be written as
$
\Bbbk \left \langle m \in L^\perp\cap M \mid \langle m, u_\rho \rangle \geq  \langle \theta, u_\rho \rangle  \text{ for all } \rho \in \sigma(1) \right \rangle.
$ 
These are precisely the global sections of $\cO_{X_{\Gamma}}(-d(\theta))$ on each $U_{\sigma}$, and again, just as in the proof of \cref{prop:pushforwardBTvariety}, the restrictions are the same. 
Therefore we conclude that $S(-d)|_{X_{\Gamma}} \cong \cO_{X_{\Gamma}}(-d)$, as desired. 
\end{proof}

\section{The Cox category}\label{sec:CoxCategory}
In this section, we define the Cox category and establish some of its core properties.

\subsection[Construction of a common resolution]{Construction of $\widetilde{\sX}$}\label{subsec:constructionOfWX}
In this subsection, we construct an explicit smooth toric DM stack $\tsX$ satisfying the necessary requirements of Definition~\ref{defn:DCox}.  The underlying coarse moduli space is simply the toric variety associated to a common simplicial refinement of the fans $\Sigma_1, ..., \Sigma_r$.  However, in order for $\tsX$ to lie over all the associated DM stacks, we need to ``refine'' the stacky structure as well.
Recall that $X_i$ is the toric variety associated to $\sX_i$ with fan $\Sigma_i$.
Let $\Lambda$ be a simplicial fan refining  all of the $\Sigma_i$, $X_\Lambda$ be the corresponding toric variety, and $\sX_\Lambda$ be the toric DM stack obtained from $\Lambda$ via \Cref{defn:cox}.

It may not be the case that $\sX_\Lambda$ admits a morphism to each $\sX_i$.
At the level of stacky fans, the obstacle is that when we lift the map of fans to a map of stacky fans, we might end up sending the generator $u_\rho$ of a ray to a non-lattice point.  We return to the running example of the Hirzebruch surface to illustrate this.

\begin{example}
\label{ex:runningHirzebruch}
We wish to find a toric DM stack $\tsX$ that maps to both $\PP(1,1,3)$ and $\cH_3$.
Since the fan for $\cH_3$ is a refinement of the fan for the weighted projective variety $\PP_v(1,1,3)$, the common refinement would just give back $\cH_3$. 
 However, $\cH_3$  does not map to the stack $\PP(1,1,3)$.  Nevertheless, as there are only 2 chambers in $\Sigma_{GKZ}$ in this example, it is not difficult to see what to do.  Let $\tsX$ be the fiber product (in the category of stacks) of $\PP(1,1,3)$ and $\cH_3$ over the $\PP_v(1,1,3)$.  This turns out to be a root stack of order 3 over the $(-3)$-curve.  It can be obtained as the toric stack coming from the map which triples this toric divisor, namely the map
$\beta\colon \ZZ^4 \to N$ given by $\beta(e_i)=u_i$ for $i\ne 1$ and $\beta(e_1)=3u_1$.
\end{example}

The general case is resolved in a similar manner, by appropriately rescaling the terms in the map $\beta_{\Lambda}$.  Algebraically, this corresponds to regrading some of the variables of the Cox ring; geometrically, it corresponds to taking root stacks of components of the toric boundary divisor.
These rescalings on the rays are chosen as follows. Since each $X_i$ is simplicial, any ray $\rho \in \Lambda$ lies in a unique minimal simplicial cone $\sigma_{\rho i} \in \Sigma_i$ and hence there is a relation
\begin{equation} \label{eq:constructionrelation}
a_{\rho i} u_\rho = \sum_{\tau \in \sigma_{\rho i}(1)} a_{\tau} u_\tau,
\end{equation}
where $a_{\rho i}, a_{\tau} \in \ZZ_{>0}$ and $\gcd(a_{\rho i}, a_{\tau}) = 1$.  Let $c_\rho \ce \operatorname{lcm}(a_{\rho i})$.

\begin{defn}
\label{def:tsX}
Following \cref{def:BCStoricStack}, 
let $\tsX$ be the smooth toric DM stack associated to 
$
(N, \Lambda, \beta_{\Lambda}),
$
where $\beta_{\Lambda}(e_\rho) \ce c_\rho u_\rho$.
\end{defn}

\begin{defn}
\label{def:alphai}
For each $i$, define a linear map
$\alpha_i\colon \ZZ^{\Lambda(1)}  \to \tN$, 
where 
$e_\rho  \mapsto c_\rho u_\rho$. 
Note that, since 
$
c_{\rho}u_\rho = \frac{c_\rho}{a_{\rho i}} \sum_{\tau \in \sigma_{\rho i}(1)} a_{\tau} u_\tau, 
$
the map $\alpha_i$ now satisfies the second condition of \cite{BorisovChenSmith2004}*{Remark~4.5}.  As the first condition was satisfied without this rescaling, we conclude that each $\alpha_i$ determines a morphism of stacky fans.
\end{defn}

\begin{prop}\label{prop:constructionX}
The toric stack $\tsX$ is a smooth DM stack with a simplicial underlying fan, and the maps $\pi_i\colon \tsX\to \sX_i$ are proper and birational with $\pi_{i*}\cO_{\tsX} = \cO_{\sX_i}$ for all $i$.
\end{prop}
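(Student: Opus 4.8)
The plan is to check the two assertions about $\tsX$ and the three assertions about $\pi_i$ in turn, deferring the only cohomological input to \cref{prop:pushforwardBTstack}.

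\emph{Smoothness and the underlying fan.} By construction $\tsX=\sX_{\Lambda,\beta_\Lambda}$ is a toric stack in the sense of \cref{def:BCStoricStack}, with $\beta_\Lambda(e_\rho)=c_\rho u_\rho$ and $c_\rho=\operatorname{lcm}_i(a_{\rho i})>0$. Since $\Lambda$ is simplicial, \cite{BorisovChenSmith2004}*{Proposition~3.2} gives at once that $\tsX$ is a smooth Deligne--Mumford stack, and its underlying fan is $\Lambda$ by \cref{def:tsX}, hence simplicial. The morphisms $\pi_i\colon\tsX\to\sX_i$ have already been produced in \cref{def:alphai}: the map $\alpha_i$ meets the hypotheses of \cite{BorisovChenSmith2004}*{Remark~4.5}---this is exactly what the rescaling by $c_\rho$ buys us---so it determines a toric morphism covering the fan refinement $\Lambda\to\Sigma_i$.

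\emph{Properness and birationality.} Because $\Lambda$ refines $\Sigma_i$, the two fans have the same full-dimensional convex support and the underlying lattice map is the identity on $N_\RR$, so the toric morphism of coarse moduli spaces $X_\Lambda\to X_i$ is proper by the standard criterion \cite{CLSToricVarieties}*{Theorem~3.4.11}. The good moduli (coarse moduli) morphisms $\tsX\to X_\Lambda$ and $\sX_i\to X_i$ are proper and separated by \cref{prop:semigoodmoduli}, so $\tsX\to X_i$ is proper; since this factors through the separated morphism $\sX_i\to X_i$, the cancellation property forces $\pi_i$ itself to be proper. Birationality follows because the construction of $\beta_\Lambda$ and $\alpha_i$ only rescales along rays---equivalently, only modifies the stacky structure along toric boundary divisors, a codimension-one locus---so $\pi_i$ restricts to an isomorphism over the dense open torus orbit.

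\emph{The pushforward identity.} The crux is to recognize that $(\Lambda,\beta_\Lambda)$ is a \emph{stacky refinement} of $(\Sigma_i,\beta_i)$ in the sense of \cref{defn:refinement}. Indeed, the assignment $e_\rho\mapsto\sum_{\tau\in\sigma_{\rho i}(1)}\tfrac{c_\rho a_\tau}{a_{\rho i}}\,e_\tau$ defines a homomorphism $\Phi\colon\ZZ^{\Lambda(1)}\to\ZZ^{\Sigma_i(1)}$ with integer entries (as $a_{\rho i}\mid c_\rho$), and $\beta_i\circ\Phi=\beta_\Lambda$ by \eqref{eq:constructionrelation}; this $\Phi$ is nothing but the map $\alpha_i$, so it induces the very morphism $\pi_i$. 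Applying \cref{prop:pushforwardBTstack} with $\theta=0$, and using $\cO_{\tsX}=\cO_{\tsX}(-d(0))$ and $\cO_{\sX_i}=\cO_{\sX_i}(-d(0))$, yields $\pi_{i*}\cO_{\tsX}=\cO_{\sX_i}$ on the nose; since $\pi_{i*}$ denotes the derived functor, this simultaneously records the vanishing of all higher direct images.

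\emph{Main obstacle.} The only genuinely delicate point is confirming that the rescaled data really is a stacky refinement---that $\Phi=\alpha_i$ lands in $\ZZ^{\Sigma_i(1)}$ and is compatible with the fans, so that \cref{defn:refinement} and hence \cref{prop:pushforwardBTstack} apply---and, relatedly, that $\pi_i$ is genuinely the identity over the torus orbit (the generic stabilizers of $\tsX$ and $\sX_i$ should be matched by $\alpha_i$). Once this is in place, the one nontrivial cohomological fact, namely $\pi_{i*}\cO=\cO$ together with higher vanishing, is supplied verbatim by \cref{prop:pushforwardBTstack} and ultimately by the Demazure-type vanishing of \cref{lem:BTvanishingCohomology}; the remaining assertions are routine consequences of \cite{BorisovChenSmith2004} and standard facts about coarse moduli spaces and toric morphisms of fans.
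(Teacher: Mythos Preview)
Your proof is correct and follows essentially the same approach as the paper: smoothness via \cite{BorisovChenSmith2004}, properness via the commutative square with coarse moduli maps and the refinement $X_\Lambda\to X_i$ together with a cancellation argument, birationality via the open torus, and the pushforward identity via \cref{prop:pushforwardBTstack} with $\theta=0$ after checking that $(\Lambda,\beta_\Lambda)$ is a stacky refinement of $(\Sigma_i,\beta_i)$. One small citation quibble: properness of the coarse moduli morphisms is not asserted by \cref{prop:semigoodmoduli} (which only identifies them as good moduli morphisms); the paper invokes the Keel--Mori theorem for this, as you should as well.
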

\begin{proof}
The morphism is birational because it induces an isomorphism on the open dense torus $T$.  For properness, consider the commutative diagram
\vspace*{-2mm}
\[
\begin{tikzcd}
\tsX \ar[d, "{\pi_i}"] \ar[r, "a"] & X_\Lambda \ar[d, "r"] \\
\sX_i \ar[r, "b"] & X_i,
\end{tikzcd}
\]
where $a,b$ are the coarse moduli maps, and $r$ is the map of toric varieties coming from the refinement of fans.  Now  $a,b$ are proper by the Keel--Mori theorem \cite{KM95}.  Furthermore, $r$ is a proper morphism of toric varieties because it comes from a refinement of fans~\cite{CLSToricVarieties}*{Theorem 3.4.11}.  It follows that $\pi_i$ is proper by~\cite{stacks-project}*{Lemma 0CPT}.
By construction, the maps $\pi_i$ are induced by stacky refinements of fans.
Therefore  \cref{prop:pushforwardBTstack} (with $\theta = 0$) implies that $\pi_{i*} \cO_{\tsX} = \cO_{\sX_i}$ for all $i$.
\end{proof}

We record a more general version of this proposition for refinements of a broader class of stacky fans.  It will be used in \cref{cor:DCoxIndOfX} and may be of independent interest.

\begin{prop} \label{prop:stackyrefinementsexist}
If  $(N,\Sigma_1, \beta_1)$ and $(N, \Sigma_2, \beta_2)$ are a pair of stacky fans (as in \cref{def:BCStoricStack}) with the same support, then there exists $(N,\widetilde{\Sigma}, \widetilde{\beta})$ that is a stacky refinement of both, in the sense of \cref{defn:refinement}, such that $\widetilde{\Sigma}$ is simplicial.
\end{prop}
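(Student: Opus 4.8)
The plan is to construct $\widetilde{\Sigma}$ by first refining the two fans purely combinatorially and then rescaling ray generators, mimicking the construction of $\widetilde{\sX}$ in \cref{subsec:constructionOfWX}. First, since $\Sigma_1$ and $\Sigma_2$ have the same support, their common refinement $\Lambda \ce \{\sigma_1 \cap \sigma_2 \mid \sigma_1 \in \Sigma_1,\ \sigma_2 \in \Sigma_2\}$ is a genuine fan on $N_\RR$ refining both. Next, pick any simplicial refinement $\widetilde{\Sigma}$ of $\Lambda$ (these exist by standard toric subdivision arguments, e.g.\ repeated star subdivisions; one may even take $\widetilde{\Sigma}(1) = \Lambda(1)$, cf.\ \cite{CLSToricVarieties}*{Proposition 11.1.7}). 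Then $\widetilde{\Sigma}$ is simplicial and refines each $\Sigma_i$, so it remains only to produce a compatible homomorphism $\widetilde{\beta}\colon \ZZ^{\widetilde{\Sigma}(1)} \to N$ of the form in \cref{def:BCStoricStack} together with homomorphisms $\Phi_i\colon \ZZ^{\widetilde{\Sigma}(1)} \to \ZZ^{\Sigma_i(1)}$ satisfying $\beta_i \circ \Phi_i = \widetilde{\beta}$.

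To define these, fix a ray $\rho \in \widetilde{\Sigma}(1)$ with primitive generator $u_\rho$. For each $i$, let $\sigma_{\rho,i}$ be the minimal cone of $\Sigma_i$ containing $\rho$; since $b_{\tau,i} > 0$ for all $\tau$, we have $\sigma_{\rho,i} = \sum_{\tau \in \sigma_{\rho,i}(1)} \RR_{\geq 0}\,\beta_i(e_\tau)$, so we may fix an expression $u_\rho = \sum_{\tau \in \sigma_{\rho,i}(1)} q_{\tau,i}\,\beta_i(e_\tau)$ with $q_{\tau,i} \in \QQ_{\geq 0}$ (these coefficients are unique when $\sigma_{\rho,i}$ is simplicial, as in \eqref{eq:constructionrelation}, and otherwise we just pick one). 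Let $\widetilde{b}_\rho$ be a positive integer for which every product $\widetilde{b}_\rho\, q_{\tau,i}$ is an integer, for all $\tau$ and both $i = 1, 2$ — for instance the least common multiple of the denominators of all the $q_{\tau,i}$. Set $\widetilde{\beta}(e_\rho) \ce \widetilde{b}_\rho\, u_\rho$, which has the shape required by \cref{def:BCStoricStack}, and set $\Phi_i(e_\rho) \ce \sum_{\tau \in \sigma_{\rho,i}(1)} \widetilde{b}_\rho\, q_{\tau,i}\, e_\tau \in \ZZ^{\Sigma_i(1)}$. By construction $\beta_i\bigl(\Phi_i(e_\rho)\bigr) = \widetilde{b}_\rho \sum_{\tau} q_{\tau,i}\,\beta_i(e_\tau) = \widetilde{b}_\rho\, u_\rho = \widetilde{\beta}(e_\rho)$, so after extending all maps linearly we obtain $\beta_i \circ \Phi_i = \widetilde{\beta}$. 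Since in addition $\widetilde{\Sigma}$ refines $\Sigma_i$ and each $\Phi_i$ carries $e_\rho$ to a nonnegative combination of the $e_\tau$ with $\tau$ a ray of the minimal cone of $\Sigma_i$ through $\rho$, the data $(\widetilde{\Sigma}, \widetilde{\beta})$ is a stacky refinement of $(\Sigma_i, \beta_i)$ in the sense of \cref{defn:refinement} for both $i$, and the induced morphisms of stacky fans satisfy the hypotheses of \cite{BorisovChenSmith2004}*{Remark~4.5}. As $\widetilde{\Sigma}$ is simplicial, this gives the claim.

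The fan-level steps — the common refinement and its simplicial subdivision — are routine, so the only real content is the stacky rescaling. The point of care is that a \emph{single} dilation factor $\widetilde{b}_\rho$ on each ray must clear denominators simultaneously for $\Sigma_1$ and $\Sigma_2$ (handled by taking a common multiple), and that, unlike in the construction of \cref{def:alphai} where the $\Sigma_i$ are simplicial and the relation \eqref{eq:constructionrelation} is canonical, for a non-simplicial cone of $\Sigma_i$ the expression of $u_\rho$ in the cone's ray generators is only \emph{some} nonnegative rational combination; this non-uniqueness is harmless, since \cref{defn:refinement} asks merely for the existence of an integral lift $\Phi_i$ with $\beta_i \circ \Phi_i = \widetilde{\beta}$.
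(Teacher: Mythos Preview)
Your proof is correct and follows essentially the same approach as the paper: take a simplicial common refinement of the underlying fans, then rescale each ray generator by a positive integer clearing the denominators needed to lift into $\ZZ^{\Sigma_i(1)}$ for both $i$. The only cosmetic difference is that the paper first refines each $\Sigma_i$ to a simplicial fan with the same rays (via \cite{CLSToricVarieties}*{Proposition 11.1.7}) so that the analogue of \eqref{eq:constructionrelation} has unique coefficients, whereas you handle possibly non-simplicial $\Sigma_i$ directly by choosing some nonnegative rational expression---both routes work.
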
 
\begin{proof} First, refine $\Sigma_1$ and $\Sigma_2$ to simplicial fans by applying \cite{CLSToricVarieties}*{Proposition 11.1.7}. 
From there, the construction and proof are essentially identical to \cref{def:tsX} and \cref{prop:constructionX}, but with $u_\rho$ and $u_\tau$ replaced by $\beta_i(e_\rho)$ and $\beta_{i}(e_\tau)$ in \eqref{eq:constructionrelation}.
\end{proof}

\subsection{The Cox category}\label{subsec:CoxDefinitions}
In this subsection, we establish some basic properties about $D_{\Cox}$; 
see Appendix \ref{sec:grothendieckconstr} for an alternative colimit description. The following is well-known for varieties, and the same essential proof goes through in our context.

\begin{lemma}\label{lem:fullyFaithful}
If $\alpha\colon \sX' \to \sX$ is a proper, birational morphism of smooth toric Deligne--Mumford stacks such that $\alpha_* \cO_{\sX'} = \cO_{\sX}$, then $\alpha^*\colon D(\sX) \to D(\sX')$ is fully faithful.
It follows that the functors $\pi_i^*\colon D(\sX_i)\to D(\tsX)$ are fully faithful for all $i$.
\end{lemma}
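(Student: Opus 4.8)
The plan is to run the standard ``projection formula plus adjunction'' argument; it uses the hypotheses on $\alpha$ only through the identity $\alpha_*\cO_{\sX'}=\cO_{\sX}$, and it goes through verbatim for smooth Deligne--Mumford stacks. First I would note that, since $\sX$ and $\sX'$ are smooth and $\alpha$ is proper, $L\alpha^*$ has finite Tor-dimension and $R\alpha_*$ has finite cohomological dimension, so $(\alpha^*,\alpha_*)$ is an adjoint pair of functors between the bounded derived categories $D(\sX)$ and $D(\sX')$. To prove $\alpha^*$ fully faithful it then suffices to show that the unit $\eta_\cG\colon \cG\to\alpha_*\alpha^*\cG$ is an isomorphism for every $\cG\in D(\sX)$: for $\cF,\cG\in D(\sX)$ the adjunction isomorphism $\RHom_{\sX'}(\alpha^*\cF,\alpha^*\cG)\cong\RHom_{\sX}(\cF,\alpha_*\alpha^*\cG)$ identifies the map $\RHom_{\sX}(\cF,\cG)\to\RHom_{\sX'}(\alpha^*\cF,\alpha^*\cG)$ induced by $\alpha^*$ with post-composition by $\eta_\cG$, which is an isomorphism for all $\cF$ once $\eta_\cG$ is.

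Next I would verify that $\eta_\cG$ is an isomorphism via the projection formula. For any $\cG\in D(\sX)$ one has
\[
\alpha_*\alpha^*\cG \;=\; \alpha_*\!\left(\cO_{\sX'}\Lotimes\alpha^*\cG\right) \;\cong\; \left(\alpha_*\cO_{\sX'}\right)\Lotimes\cG \;=\; \cO_{\sX}\Lotimes\cG \;=\; \cG,
\]
where the middle isomorphism is the projection formula for the proper morphism $\alpha$ and the third equality is the hypothesis $\alpha_*\cO_{\sX'}=\cO_{\sX}$. It remains to check that this composite is inverse to $\eta_\cG$, which is the usual compatibility of the projection-formula isomorphism with the adjunction unit, applied with the unit $\cO_{\sX}\to\alpha_*\cO_{\sX'}$ (an isomorphism by hypothesis). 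This gives the first assertion.

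For the final statement, \cref{prop:constructionX} shows that each $\pi_i\colon\tsX\to\sX_i$ is a proper, birational morphism of smooth toric DM stacks with $\pi_{i*}\cO_{\tsX}=\cO_{\sX_i}$, so the first part applies and each $\pi_i^*$ is fully faithful. The only step requiring genuine care is the invocation of the projection formula and of the adjunction $(\alpha^*,\alpha_*)$ on bounded derived categories of toric Deligne--Mumford stacks, together with the compatibility of the projection-formula isomorphism with the unit of adjunction; these are all standard and are where one would cite the relevant foundational references, but they are the main (and only) technical point. Everything else in the argument is formal.
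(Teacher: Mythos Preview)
Your proposal is correct and follows essentially the same approach as the paper: apply the projection formula to identify $\alpha_*\alpha^*\cG$ with $\cG\otimes\alpha_*\cO_{\sX'}\cong\cG$, conclude the unit is an isomorphism, hence $\alpha^*$ is fully faithful, and then invoke \cref{prop:constructionX} for the $\pi_i$. Your write-up is more detailed about the foundational points (boundedness, compatibility of the projection formula with the unit), but the argument is the same.
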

\begin{proof}
For any $\cF \in D(\sX)$, 
$\alpha_{\ast}\alpha^\ast \mathcal F \cong \mathcal F \otimes
\alpha_{\ast} \cO_{\sX'}$ by the projection formula, which can be applied as $\cF$ is quasi-isomorphic to a bounded complex of locally free sheaves since $\sX$ is smooth.
Hence the unit map $\Id \to \alpha_\ast \alpha^\ast$ is an
isomorphism.
Consequently, $\alpha^\ast$ is fully faithful.   The application to the $\pi_i$ from \cref{prop:constructionX} is immediate.
\end{proof}

With the standard notation for thick subcategories generated by a 
collection of of subcategories, Definition~\ref{defn:DCox} can be rewritten as
\begin{displaymath} D_{\Cox,\tsX} \ce \langle
\pi_1^\ast D(\sX_1), \ldots, \pi_r^\ast D(\sX_r) \rangle,
\end{displaymath} 
which is the smallest thick subcategory generated by the pullback categories from
each chamber.  For the moment, we include the choice of $\tsX$ in the notation for $D_{\Cox}$ though, as we will confirm in a moment, the category $D_{\Cox}$ does not depend on the choice of $\tsX$. First, we show the following:

\begin{lemma} \label{lem:DCox_under_refinement}
Let $\alpha\colon \tsX'\to \tsX$ be a proper birational map of smooth toric
stacks induced by a stacky refinement of fans.  Write
$\rho_i=\pi_i\circ \alpha \colon \tsX^\prime \to \sX_i$.  Then $\alpha^*$ is an
equivalence of categories between $\langle \pi_1^*D(\sX_1), \dots,
\pi_r^*D(\sX_r)\rangle$ and $\langle \rho_1^*D(\sX_1),  \dots,
\rho_r^*D(\sX_r)\rangle$.  
\end{lemma}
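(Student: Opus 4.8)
The plan is to deduce this from \cref{lem:fullyFaithful} together with the idempotent completeness of bounded derived categories. Write $\cC \ce \langle \pi_1^*D(\sX_1),\dots,\pi_r^*D(\sX_r)\rangle \subseteq D(\tsX)$ and $\cC' \ce \langle \rho_1^*D(\sX_1),\dots,\rho_r^*D(\sX_r)\rangle \subseteq D(\tsX')$, so that the claim is exactly that $\alpha^*$ restricts to an equivalence $\cC \xrightarrow{\ \sim\ } \cC'$.

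First I would check that $\alpha^*\colon D(\tsX)\to D(\tsX')$ is fully faithful. Since $\alpha$ is a proper birational map of smooth toric stacks induced by a stacky refinement of fans, \cref{prop:pushforwardBTstack} applied with $\theta = 0$ gives $\alpha_*\cO_{\tsX'} = \cO_{\tsX}$, and then \cref{lem:fullyFaithful} applies verbatim. In particular the restriction of $\alpha^*$ to $\cC$ is fully faithful. Moreover this restriction lands inside $\cC'$: indeed $\alpha^*\circ\pi_i^* = (\pi_i\circ\alpha)^* = \rho_i^*$, so $\alpha^*$ carries each $\pi_i^*D(\sX_i)$ into $\rho_i^*D(\sX_i)$, and since $\alpha^*$ is exact and $\cC$ is the thick subcategory generated by $\bigcup_i \pi_i^*D(\sX_i)$, its image lies in the thick subcategory generated by $\bigcup_i \rho_i^*D(\sX_i)$, namely $\cC'$. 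Thus $\alpha^*$ induces a fully faithful exact functor $\cC\to\cC'$.

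It then remains to prove that $\alpha^*|_\cC\colon\cC\to\cC'$ is essentially surjective, and for this it suffices to show that its essential image $\mathcal I$ is a thick subcategory of $D(\tsX')$: indeed $\mathcal I$ already contains every $\rho_i^*D(\sX_i) = \alpha^*(\pi_i^*D(\sX_i))$, so thickness forces $\mathcal I\supseteq\cC'$, which combined with the inclusion $\mathcal I\subseteq\cC'$ of the previous paragraph gives $\mathcal I=\cC'$. To see that $\mathcal I$ is thick, note that $\cC$, being a thick subcategory of $D(\tsX)$, is idempotent complete, because $D(\tsX)=D^{\mathrm b}(\Coh\tsX)$ is idempotent complete (it is the bounded derived category of an abelian category). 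Hence if $Y\in D(\tsX')$ is a direct summand of some $\alpha^*A$ with $A\in\cC$, the corresponding idempotent of $\alpha^*A$ transports, under the full faithfulness of $\alpha^*$, to an idempotent of $A$, which splits in $\cC$; applying the exact functor $\alpha^*$ to this splitting exhibits $Y$ as $\alpha^*$ of a summand of $A$ lying in $\cC$, so $Y\in\mathcal I$. Therefore $\mathcal I$ is thick, $\mathcal I=\cC'$, and $\alpha^*|_\cC$ is an equivalence.

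The only step with any real content is this last thickness verification, and it is a standard argument once the idempotent completeness of $D(\tsX)$ is in hand; I do not anticipate a genuine obstacle. Everything else is a formal consequence of \cref{lem:fullyFaithful} and \cref{prop:pushforwardBTstack}, and it is this lemma that justifies the remark immediately following that $D_{\Cox}$ is independent of the choice of $\tsX$ (via \cref{prop:stackyrefinementsexist}, which produces a common stacky refinement of any two candidate $\tsX$'s).
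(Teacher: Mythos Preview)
Your proof is correct and follows the same approach as the paper: full faithfulness via \cref{lem:fullyFaithful} (with $\alpha_*\cO_{\tsX'}=\cO_{\tsX}$ coming from \cref{prop:pushforwardBTstack} at $\theta=0$), together with the observation that $\alpha^*$ carries the generating set $\pi_i^*D(\sX_i)$ onto the generating set $\rho_i^*D(\sX_i)$. The paper's proof is a two-line sketch that takes the thickness of the essential image for granted; your final paragraph simply spells out this standard step via idempotent completeness of $\cC\subseteq D(\tsX)$, which is a reasonable expansion of what the paper leaves implicit.
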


\begin{proof} 
The functor $\alpha^*$ is fully faithful by
\cref{lem:fullyFaithful}.  It induces an equivalence since it essentially
surjects onto the generating set. 
\end{proof}

\begin{cor} \label{cor:DCoxIndOfX}
Given two such $\tsX^\prime$ and $\tsX$, there is a natural equivalence 
$D_{\Cox,\tsX^\prime} \simeq D_{\Cox,\tsX}$. 
\end{cor}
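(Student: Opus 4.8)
The plan is to realize $D_{\Cox,\tsX'}$ and $D_{\Cox,\tsX}$ as the same subcategory inside a third, common refinement. Given the two stacky fans underlying $\tsX'$ and $\tsX$ — both of which have the same support (the common support of the $\Sigma_i$, equivalently $|\Sigma_X|$) and both of which refine all the $\Sigma_i$ in the relevant sense — I would invoke \cref{prop:stackyrefinementsexist} to produce a smooth toric DM stack $\tsX''$ which is a stacky refinement of both $\tsX'$ and $\tsX$, with simplicial underlying fan. Let $\alpha'\colon \tsX''\to \tsX'$ and $\alpha\colon \tsX''\to\tsX$ be the induced proper birational morphisms coming from stacky refinements, and note that post-composing with the structure maps gives proper birational morphisms $\rho_i = \pi_i'\circ\alpha' = \pi_i\circ\alpha\colon \tsX''\to\sX_i$ to each $\sX_i$ (these agree because everything is a toric morphism induced by the identity on the torus $T$, so the two composites $\tsX''\to\sX_i$ coincide).

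**The key step** is then just two applications of \cref{lem:DCox_under_refinement}: applied to $\alpha'\colon\tsX''\to\tsX'$ it gives an equivalence $\alpha'^*\colon D_{\Cox,\tsX'} \xrightarrow{\ \sim\ } \langle \rho_1^*D(\sX_1),\dots,\rho_r^*D(\sX_r)\rangle$, and applied to $\alpha\colon\tsX''\to\tsX$ it gives an equivalence $\alpha^*\colon D_{\Cox,\tsX} \xrightarrow{\ \sim\ } \langle \rho_1^*D(\sX_1),\dots,\rho_r^*D(\sX_r)\rangle$ onto the \emph{same} subcategory of $D(\tsX'')$. Composing, $(\alpha^*)^{-1}\circ\alpha'^*$ furnishes the desired natural equivalence $D_{\Cox,\tsX'}\simeq D_{\Cox,\tsX}$.

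**The one genuine obstacle** is confirming that the hypotheses of \cref{prop:stackyrefinementsexist} and \cref{lem:DCox_under_refinement} are actually met — namely, that $\tsX'$ and $\tsX$ really do arise from stacky fans with the same support, and that the common refinement $\tsX''$ maps to $\tsX'$ and $\tsX$ \emph{via stacky refinements of fans} (not merely via arbitrary proper birational toric morphisms), so that \cref{lem:DCox_under_refinement} applies. Both $\tsX'$ and $\tsX$ are built as in \cref{def:tsX} from simplicial refinements $\Lambda', \Lambda$ of the $\Sigma_i$ together with rescaling data on the rays; their supports both equal the common support $|\Sigma_i|$, so \cref{prop:stackyrefinementsexist} applies directly. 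That proposition produces $\tsX''$ as a stacky refinement of each, which is exactly the input \cref{lem:DCox_under_refinement} requires; and the needed compatibility $\pi_i'\circ\alpha' = \pi_i\circ\alpha$ holds because all morphisms in sight are toric and birational, hence determined by their (identical) restriction to $T$. Once these compatibilities are in place the argument is purely formal. Naturality of the resulting equivalence follows from the naturality of pullback and the fact that the identifications are canonical given the (unique up to canonical isomorphism) common refinement; one could alternatively phrase this more invariantly via the colimit description in Appendix~\ref{sec:grothendieckconstr}, but the two-step refinement argument above suffices.
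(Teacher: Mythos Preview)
Your proposal is correct and follows essentially the same approach as the paper's proof: invoke \cref{prop:stackyrefinementsexist} to produce a common stacky refinement, then apply \cref{lem:DCox_under_refinement} twice to obtain a chain of equivalences. One minor inaccuracy: you assert that $\tsX$ and $\tsX'$ are necessarily built as in \cref{def:tsX}, but Definition~\ref{defn:DCox} allows any smooth toric DM stack with proper birational toric morphisms to the $\sX_i$; nonetheless any such stack still arises from a stacky fan with support $|\Sigma_i|$, so \cref{prop:stackyrefinementsexist} applies and your argument goes through unchanged.
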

\begin{proof}
By \cref{prop:stackyrefinementsexist}, there exists $\widetilde{\sY}$ with $\pi\colon \widetilde{\sY} \to \tsX$ and $\pi'\colon \widetilde{\sY} \to \tsX'$ both birational and proper. 
Then apply \cref{lem:DCox_under_refinement} to obtain the chain of equivalences.
\end{proof}

In view of \cref{cor:DCoxIndOfX}, we will drop any mention of $\tsX$ from 
the notation for the Cox category.   See also Appendix~\ref{sec:grothendieckconstr}.

\begin{defn}\label{defn:kernelCategory}
Let $\mathcal Q\subseteq D(\tsX)$ be the subcategory of elements $\cE$, where $\pi_{i*}\cE=0$ in $D(\sX_i)$ for all $1\leq i \leq r$.
\end{defn}
We prove in \cref{cor:semiorthogonalDCox} that $D(\tsX)$ has a
semiorthogonal decomposition involving $\mathcal Q$ and $D_{\Cox}$.
For now, we simply observe the following weaker results:

\begin{lemma} \label{lem:semiwarmup} The category $\mathcal Q$ is the right
orthogonal to $D_{\Cox}$. \end{lemma}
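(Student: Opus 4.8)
The plan is to show the two inclusions $\mathcal Q \subseteq D_{\Cox}^{\perp}$ and $D_{\Cox}^{\perp} \subseteq \mathcal Q$, where $D_{\Cox}^{\perp}$ denotes the right orthogonal taken inside $D(\tsX)$. Both directions will rest on the adjunction $(\pi_i^*, \pi_{i*})$ together with the fact that $D_{\Cox}$ is generated as a thick subcategory by $\bigcup_i \pi_i^* D(\sX_i)$, so that an object is right-orthogonal to $D_{\Cox}$ if and only if it is right-orthogonal to every $\pi_i^* \cE$ with $\cE \in D(\sX_i)$ and every $i$ (thick subcategories are closed under shifts and cones, and $\RHom(-, \cG)$ sends triangles to triangles, so orthogonality to a generating set implies orthogonality to the whole thick subcategory).

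For the first inclusion, suppose $\cG \in \mathcal Q$, so $\pi_{i*}\cG = 0$ for all $i$. Then for any $\cE \in D(\sX_i)$ the adjunction gives
\[
\RHom_{D(\tsX)}(\pi_i^* \cE, \cG) \cong \RHom_{D(\sX_i)}(\cE, \pi_{i*}\cG) = \RHom_{D(\sX_i)}(\cE, 0) = 0,
\]
so $\cG$ is right-orthogonal to every object of $\pi_i^* D(\sX_i)$, hence to all of $D_{\Cox}$; thus $\cG \in D_{\Cox}^{\perp}$. Conversely, suppose $\cG \in D_{\Cox}^{\perp}$. Fix $i$ and an arbitrary $\cE \in D(\sX_i)$. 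Applying the same adjunction in reverse,
\[
\RHom_{D(\sX_i)}(\cE, \pi_{i*}\cG) \cong \RHom_{D(\tsX)}(\pi_i^* \cE, \cG) = 0,
\]
where the vanishing holds because $\pi_i^* \cE \in D_{\Cox}$. Since $\cE$ was arbitrary and $D(\sX_i)$ has enough objects to detect vanishing (e.g.\ taking $\cE$ to range over a generating set, or simply $\cE = \pi_{i*}\cG$ itself, which forces $\RHom(\pi_{i*}\cG, \pi_{i*}\cG) = 0$ and hence $\pi_{i*}\cG = 0$), we conclude $\pi_{i*}\cG = 0$ for every $i$, i.e.\ $\cG \in \mathcal Q$.

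The only genuinely delicate point is making sure the projection formula / adjunction used here is valid at the derived level on these smooth toric DM stacks, and that $\pi_{i*}$ really is the right adjoint of $\pi_i^*$ as derived functors — but this is exactly the setup already in force (Convention~\ref{conv:functors}, all functors derived) and the $\pi_i$ are proper birational morphisms of smooth toric DM stacks by \cref{prop:constructionX}, so standard Grothendieck duality / adjunction applies. I do not anticipate a real obstacle; the argument is a formal consequence of adjunction plus the description of $D_{\Cox}$ as the thick subcategory generated by the $\pi_i^* D(\sX_i)$. The only thing to be slightly careful about when writing it up is phrasing the reduction "orthogonal to a generating set $\Rightarrow$ orthogonal to the thick subcategory" cleanly, since that is where the word "thick" (closure under summands, shifts, cones) is actually used.
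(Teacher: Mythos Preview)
Your proof is correct and follows essentially the same approach as the paper: both directions are deduced from the adjunction $(\pi_i^*,\pi_{i*})$ together with the fact that $D_{\Cox}$ is generated by the $\pi_i^*D(\sX_i)$. The paper's argument is slightly terser but otherwise identical.
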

\begin{proof} 
Consider an element in $D_{\Cox}$ of the form
$\pi_i^*\cE$ for some $i$, and an arbitrary object $\cF \in \mathcal Q$. 
By adjunction, $\RHom_{\tsX}(\pi_i^*\cE, \mathcal F) =
\RHom_{\sX_i}(\cE,\pi_{i*}\mathcal F) = 0$ because  $\pi_{i*}\cF=0$.  Since $D_{\Cox}$ is generated by the elements of the form $\pi_i^*\cE$ for all $i$, this implies one direction.

Conversely, if $\Hom_{\tsX}(\pi_i^*\cE, \mathcal F) = 0$ for all $i$ and $\cE$, then
$\Hom_{\sX_i}(\cE, $${\pi_i}_*\mathcal F) = 0$ for all $i$ and $\cE$, which implies that
${\pi_i}_*\mathcal F = 0$ for all $i$. 
Thus, $\mathcal F \in \mathcal Q$.
\end{proof}

\begin{lemma}\label{lemma:checkpush} 
Let $\mathcal E, \mathcal F \in D_{\Cox}$
and $\phi \in \Hom_{\tsX}(\mathcal E,\mathcal F)$. If $\pi_{i*}(\phi)$
induces a quasi-isomorphism for all $i$, then $\phi$ induces a quasi-isomorphism
in $D_{\Cox}$. 
\end{lemma}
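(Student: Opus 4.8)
The plan is to pass to the cone and use the orthogonality already established in \cref{lem:semiwarmup}. Set $\mathcal C \ce \cone(\phi)$, formed in $D(\tsX)$, so that there is a distinguished triangle $\mathcal E \xrightarrow{\phi} \mathcal F \to \mathcal C \to \mathcal E[1]$. Since $D_{\Cox}$ is a thick subcategory of $D(\tsX)$ and $\mathcal E, \mathcal F \in D_{\Cox}$, we get $\mathcal C \in D_{\Cox}$.

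Next I would observe that each $\pi_{i*}$ is a triangulated (derived) functor, hence carries the above triangle to a distinguished triangle $\pi_{i*}\mathcal E \xrightarrow{\pi_{i*}\phi} \pi_{i*}\mathcal F \to \pi_{i*}\mathcal C \to \pi_{i*}\mathcal E[1]$, exhibiting $\pi_{i*}\mathcal C \cong \cone(\pi_{i*}\phi)$. By hypothesis $\pi_{i*}\phi$ is a quasi-isomorphism for every $i$, so $\pi_{i*}\mathcal C = 0$ for all $i$; that is, $\mathcal C \in \mathcal Q$ in the sense of \cref{defn:kernelCategory}.

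Finally, by \cref{lem:semiwarmup}, $\mathcal Q$ is the right orthogonal to $D_{\Cox}$, so $\mathcal C$ lies in $D_{\Cox} \cap D_{\Cox}^{\perp}$. In particular $\Hom_{\tsX}(\mathcal C, \mathcal C) = 0$, forcing $\mathrm{id}_{\mathcal C} = 0$ and hence $\mathcal C \cong 0$. Therefore $\phi$ has vanishing cone, i.e.\ it is an isomorphism in $D(\tsX)$, and since $D_{\Cox}$ is a full subcategory this means $\phi$ is an isomorphism in $D_{\Cox}$.

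I do not anticipate a genuine obstacle here; the only points requiring a word of justification are that $D_{\Cox}$, being thick, is closed under cones, and that derived pushforward commutes with cone formation, both of which are standard. The argument is essentially the same ``cone-and-orthogonality'' trick used to show a functor is conservative once one knows its kernel is orthogonal to its image.
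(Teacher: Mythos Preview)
Your proposal is correct and follows essentially the same approach as the paper: form the cone of $\phi$, observe it lies in $D_{\Cox}\cap\mathcal Q$, and use \cref{lem:semiwarmup} to conclude that the identity on the cone must vanish. The paper's proof is just a slightly terser version of the same argument.
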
 

\begin{proof}
The cone $C(\phi) = [\mathcal E \overset{\phi}{\to} \mathcal F]$ lies in
$D_{\Cox}$, and our assumption is that $\pi_{i*} C(\phi)=0$ for all~$i$.  Thus,
$C(\phi)\in
D_{\Cox} \cap \mathcal Q$.  If $C(\phi)$ were nonzero, then the identity morphism
would give a nonzero map between an object in $D_{\Cox}$ and one in $\mathcal
Q$, which is impossible by Lemma~\ref{lem:semiwarmup}. \end{proof}

Recall that $(-)^\vee \ce \mathcal Hom(-, \mathcal O_{\widetilde{\sX}})$
is a contravariant auto-equivalence of $D(\tsX)$.

\begin{lemma} \label{lem:self_dual}
The duality functor $(-)^\vee \colon D(\tsX)^{\operatorname{op}} 
\to D(\tsX)$ preserves $D_{\Cox}$. Hence it induces a 
contravariant autoequivalence of $D_{\Cox}$.
\end{lemma}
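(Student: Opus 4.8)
The plan is to reduce to the generating subcategories and use that pullback commutes with the duality functor. First recall that since $\tsX$ is a smooth Deligne--Mumford stack, every object of $D(\tsX)$ is perfect, so $(-)^\vee = \cHom(-,\cO_{\tsX})$ is a well-defined contravariant exact functor: it carries distinguished triangles to distinguished triangles, commutes with shifts, preserves direct summands, and is an involution up to natural isomorphism (hence an auto-equivalence of $D(\tsX)^{\operatorname{op}}$). Consequently, for any object $\cF\in D(\tsX)$, the subcategory $\langle \cF\rangle^\vee$ equals $\langle \cF^\vee\rangle$, and more generally $(-)^\vee$ sends the thick subcategory generated by a collection $\{\cC_\lambda\}$ to the thick subcategory generated by $\{\cC_\lambda^\vee\}$. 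Since $D_{\Cox}=\langle \pi_1^*D(\sX_1),\dots,\pi_r^*D(\sX_r)\rangle$, it therefore suffices to prove that $(\pi_i^*\cE)^\vee\in D_{\Cox}$ for every $i$ and every $\cE\in D(\sX_i)$.

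Next I would establish the key compatibility $(\pi_i^*\cE)^\vee\cong \pi_i^*(\cE^\vee)$. Because $\sX_i$ is smooth, $\cE$ is represented by a bounded complex of vector bundles, so $\cHom(\cE,\cO_{\sX_i})$ is computed naively and $L\pi_i^*$ commutes with it:
\[
(\pi_i^*\cE)^\vee = \cHom\big(L\pi_i^*\cE,\ \cO_{\tsX}\big)\cong L\pi_i^*\,\cHom\big(\cE,\ \cO_{\sX_i}\big) = \pi_i^*(\cE^\vee),
\]
using $L\pi_i^*\cO_{\sX_i}=\cO_{\tsX}$. Thus $(\pi_i^*\cE)^\vee\cong\pi_i^*(\cE^\vee)\in\pi_i^*D(\sX_i)\subseteq D_{\Cox}$, which is exactly what was needed. (Recall that throughout the paper $\pi_i^*$ denotes the derived pullback, so $L\pi_i^*=\pi_i^*$.)

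Combining the two steps, $(-)^\vee$ maps the generating set of $D_{\Cox}$ into $D_{\Cox}$, hence maps the thick subcategory $D_{\Cox}$ into itself; applying $(-)^\vee$ again (and using that it is an involution up to natural isomorphism) gives the reverse inclusion, so $(-)^\vee$ restricts to a contravariant autoequivalence of $D_{\Cox}$. I expect the only mild subtlety — the ``hard part,'' such as it is — to be confirming the projection-type identity $L\pi_i^*\cHom(\cE,\cO_{\sX_i})\cong\cHom(L\pi_i^*\cE,\cO_{\tsX})$ in the stacky setting; but this is standard since all complexes in sight are perfect, and it parallels the use of the projection formula already made in the proof of \cref{lem:fullyFaithful}.
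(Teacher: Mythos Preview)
Your proof is correct and follows essentially the same approach as the paper's: both reduce to showing $(\pi_i^\ast D(\sX_i))^\vee = \pi_i^\ast D(\sX_i)$ by using that duality commutes with pullback of perfect complexes and that $(-)^\vee$ respects thick closures. The paper's version is simply more terse, writing the chain $\langle (\pi_i^\ast D(\sX_i))^\vee\rangle = \langle \pi_i^\ast D(\sX_i)^\vee\rangle = \langle \pi_i^\ast D(\sX_i)\rangle$ without spelling out the justifications you give.
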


\begin{proof}
If $\mathcal E \in D_{\Cox}$, then 
\begin{align*}
\mathcal E^\vee & \in \langle (\pi_1^\ast D(\sX_1))^\vee, \ldots, 
(\pi_r^\ast D(\sX_r))^\vee \rangle 
\\
& \qquad 
\quad
= \langle \pi_1^\ast D(\sX_1)^\vee, \ldots, 
\pi_r^\ast D(\sX_r)^\vee \rangle 
= \langle \pi_1^\ast D(\sX_1), \ldots, 
\pi_r^\ast D(\sX_r) \rangle = D_{\Cox}. 
\qedhere
\end{align*} 
\end{proof}

\begin{remark}
The duality from \cref{lem:self_dual} a priori depends on the choice of 
$\tsX$. However, the equivalence between the different versions of $D_{\Cox}$ 
is given by pullback along $\alpha\colon \tsX^\prime \to \tsX$, which commutes with
$(-)^\vee$. 
\end{remark}

\section[The Theta-transform Lemma]{The $\Theta$-Transform Lemma}\label{sec:PushPull}
The main goal of this section is to prove \cref{lem:PushPull}, the $\Theta$-Transform Lemma. 
Recall that $X$ is a semiprojective toric variety with fan $\Sigma$ and we are using an arbitrary indexing $\Gamma_i$ of the chambers in $\Sigma_{GKZ} = \Sigma_{GKZ}(X)$ with corresponding toric stacks $\sX_i$ as in \cref{defn:cox}.
Further, let $\tsX$ be any toric DM stack with birational toric morphisms $\pi_{i} \colon \tsX \to \sX_i$ induced by generalized stacky refinements of fans for all $i$, as shown to exist in \cref{subsec:constructionOfWX}.

Before proving \cref{lem:PushPull} in \cref{subsec:proofofthetatrans}, we first precisely define the Bondal--Thomsen collection in $D_{\Cox}$ in \cref{subsec:BTforDCox}, with various examples in \cref{subsec:examples}. 
Finally, in \cref{subsec:thetacomputations}, we deduce some consequences of the $\Theta$-Transform Lemma.

\subsection[The Bondal--Thomsen collection for the Cox category]{The Bondal--Thomsen collection for the Cox category} \label{subsec:BTforDCox}

We now define a line bundle $\cO_{\Cox}(-d)$ in $D_{\Cox}$ for each $-d \in \Theta \ce \Theta_X$.
Here we face a subtle question: which of the  $\cO_{\sX_j}(-d)$  should we pullback in order to define a representative of $-d$ in  $D_{\Cox}$?  We use the secondary fan as our guide.

\begin{defn}\label{defn:ThetaCox}
Choose $-d \in \Theta$.  Since $d$ is an effective degree, its image in $\Cl(X)_\RR$ lies in a maximal cone $\Gamma_i$ of $\Sigma_{GKZ}$, which corresponds to some $\sX_i$.  We define $\cO_{\Cox}(-d)\ce\pi_i^*\cO_{\sX_i}(-d)$.
\end{defn}
For example, in the case of $\cH_3$ as in Figure~\ref{fig:HirzSecondaryFan}, we would pull back $-d_1$ and $-d_5$ from $\cH_3$, and $-d_2$ and $-d_3$ from $\PP(1,1,3)$; for $-d_0$ and $-d_4$, either choice will work.

For future use, we note that given \cref{defn:ThetaCox}, \cref{lem:PushPull} is equivalent to the statement that $\pi_{j \ast}\cO_{\Cox}(-d) = \cO_{\sX_j}(-d)$ for all $j$ and all $-d \in \Theta$. 
The $\cO_{\Cox}(-d)$ are well-defined by the following result.
\begin{prop} \label{prop:welldefined}
    If the image of $d$ in $\Cl(X)_\RR$ lies in $\Gamma_i \cap \Gamma_j$, then $\pi_i^*\cO_{\sX_i}(-d) = \pi_j^* \cO_{\sX_j}(-d)$.
\end{prop}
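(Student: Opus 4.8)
The plan is to show both $\pi_i^*\cO_{\sX_i}(-d)$ and $\pi_j^*\cO_{\sX_j}(-d)$ agree with the sheaf $S(-d)|_{\tsX}$ obtained from the Cox ring module $S(-d)$, where by $S(-d)|_{\tsX}$ I mean the analogue for stacks of the construction in \cref{subsec:BTGKZ} (i.e.\ the sheafification of the $\Cl(\tsX)$-graded $S$-module $S(-d)$ — or more precisely its pullback to $\tsX$ of the sheaf on the coarse space $X_\Lambda$, compatibly with the stacky structure). First I would recall that $\cO_{\sX_i}(-d)$ is, by construction, the sheaf associated to the module $S(-d)$ on $\sX_i$; since $\pi_i$ is induced by a stacky refinement of fans, pullback along $\pi_i$ is compatible with sheafification of $S$-modules, so $\pi_i^* \cO_{\sX_i}(-d)$ is canonically $S(-d)|_{\tsX}$. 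The key subtlety is that the grading group $\Cl(\tsX)$ is larger than $\Cl(\sX_i)$ (because of the root-stack rescalings $c_\rho$ in \cref{def:tsX}), so one must check that the degree in $\Cl(\tsX)$ to which $-d\in\Theta$ is sent via the natural map does not depend on whether one factors through $\Cl(\sX_i)$ or $\Cl(\sX_j)$; but both are induced by the surjection $\ZZ^{\Sigma_X(1)}\to \Cl(\tsX)$ from \eqref{eq:classgroup}, which dominates all of them, so this is automatic. This gives a canonical identification $\pi_i^*\cO_{\sX_i}(-d) \cong S(-d)|_{\tsX} \cong \pi_j^*\cO_{\sX_j}(-d)$.

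A cleaner and perhaps more robust route, which I would actually pursue, uses the hypothesis that $d$ lies in $\Gamma_i\cap\Gamma_j$ directly. Apply \cref{prop:stackyrefinementsexist} to the cone $\Gamma_i\cap\Gamma_j \in \Sigma_{GKZ}$: both $\sX_i$ and $\sX_j$ admit birational toric morphisms to the (possibly non-simplicial, but we can take a simplicial stacky model via the proposition) stacky toric model $\sX_{\Gamma_i\cap\Gamma_j}$ attached to the generalized fan of $X_{\Gamma_i\cap\Gamma_j}$ — these are the morphisms $\sX_i\to X_{\Gamma_i\cap\Gamma_j}\leftarrow \sX_j$ parametrized by inclusion of cones in $\Sigma_{GKZ}$, as recalled at the end of \cref{subsec:basics}. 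Write $q_i\colon\sX_i\to \sX_\Gamma$ and $q_j\colon\sX_j\to\sX_\Gamma$ for (stacky enhancements of) these contractions, where $\Gamma \ce \Gamma_i\cap\Gamma_j$. Because $-d\in\Theta$ has image in $\Gamma$, it is a Bondal--Thomsen class that pulls back from $\sX_\Gamma$: by \cref{lem:ThetaRestrict} (together with the fact that $\theta\in L_\Gamma^\perp + M$ exactly when the image of $d$ lies in $\Gamma$ — this is the content of the ``$L^\perp+M$'' dichotomy, and I would spell out the equivalence ``$d\in\Gamma \iff \theta\in L_\Gamma^\perp+M$'' as the one calculational lemma), we have $q_i^*\cO_{\sX_\Gamma}(-d) = \cO_{\sX_i}(-d)$ and $q_j^*\cO_{\sX_\Gamma}(-d) = \cO_{\sX_j}(-d)$.

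Now $\tsX$ maps to $\sX_i$ and to $\sX_j$, and (after possibly passing to a common stacky refinement of $\tsX$, using \cref{prop:stackyrefinementsexist} and the independence established in \cref{cor:DCoxIndOfX}) the composites $\tsX\to\sX_i\to\sX_\Gamma$ and $\tsX\to\sX_j\to\sX_\Gamma$ are both the canonical map $\tsX\to\sX_\Gamma$ — this is just the commutativity of the maps of fans/stacky fans refining the generalized fan $\overline{\Sigma}_\Gamma$, since everything is a toric morphism and toric morphisms are determined by their effect on fans. Therefore
\[
\pi_i^*\cO_{\sX_i}(-d) = \pi_i^* q_i^* \cO_{\sX_\Gamma}(-d) = (q_i\pi_i)^*\cO_{\sX_\Gamma}(-d) = (q_j\pi_j)^*\cO_{\sX_\Gamma}(-d) = \pi_j^* q_j^*\cO_{\sX_\Gamma}(-d) = \pi_j^*\cO_{\sX_j}(-d),
\]
which is the claim. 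The main obstacle I anticipate is the bookkeeping around generalized/stacky fans with nontrivial lineality space $L_\Gamma$: one must make sure that $\cO_{\sX_\Gamma}(-d)$ is well-defined on the (possibly stacky) toric variety of $\overline{\Sigma}_\Gamma$ for an effective class $d$ lying exactly on the wall $\Gamma$, and that the identities $q_i^*\cO_{\sX_\Gamma}(-d)=\cO_{\sX_i}(-d)$ genuinely follow from \cref{lem:ThetaRestrict} rather than just its toric-variety shadow — i.e.\ I need the stacky analogue of that lemma, which follows from \cref{prop:semicohomology}/\cref{prop:semigoodmoduli} and the arguments of \cref{prop:pushforwardBTstack} in the same way the variety case is proved. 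Everything else is formal manipulation of pullbacks along toric morphisms.
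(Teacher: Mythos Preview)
Your first approach has a genuine gap: it never uses the hypothesis that $d\in\Gamma_i\cap\Gamma_j$, and if the argument worked it would show $\pi_i^*\cO_{\sX_i}(-d)=\pi_j^*\cO_{\sX_j}(-d)$ for \emph{every} class $-d$. That is false --- in the Atiyah flop of Example~\ref{ex:atiyahFlop}, no object of $D_{\Cox}$ pushes forward to both $\cO_{Y_+}(-2)$ and $\cO_{Y_-}(-2)$ (see Remark~\ref{rmk:caveats}), precisely because the two pullbacks disagree. The slip is in ``pullback along $\pi_i$ is compatible with sheafification of $S$-modules'': the pullback of a divisor class along $\pi_i$ is computed via its support function on $\Sigma_i$, and the same Weil class has different support functions on different fans. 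There is no canonical map $\Cl(X)\to\Cl(\tsX)$ independent of $i$ through which both pullbacks factor.

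Your second approach is in the right spirit --- morally, $d\in\Gamma_i\cap\Gamma_j$ says $d$ is nef on $X_{\Gamma_i\cap\Gamma_j}$ and hence both $\cO_{\sX_i}(-d)$ and $\cO_{\sX_j}(-d)$ ``come from below''. But the implementation has real obstacles: $X_\Gamma$ need not be simplicial, so there is no clean stack $\sX_\Gamma$; the class $-d$ need not be Cartier there, so $q_i^*\cO_{X_\Gamma}(-d)$ requires care; and Lemma~\ref{lem:ThetaRestrict} computes a \emph{restriction} $S(-d)|_{X_\Gamma}$, not a \emph{pullback} from $X_\Gamma$, so it does not give $q_i^*\cO_{\sX_\Gamma}(-d)=\cO_{\sX_i}(-d)$. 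Untangling all of this amounts to comparing support functions anyway.

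The paper's proof does exactly that, directly and without any $\Theta$ hypothesis. Pick $a\in\ZZ^{\Sigma(1)}$ lifting $d$ and let $F_i$, $F_j$ be the support functions of $d$ on $\Sigma_i$, $\Sigma_j$. The condition $d\in\Gamma_i$ says $F_i$ is concave with $F_i(u_\rho)\geq -a_\rho$ for all $\rho\in\Sigma(1)$; writing any $u\in|\Sigma|$ as a nonnegative combination of ray generators of a cone of $\Sigma_j$ and applying concavity of $F_i$ gives $F_i(u)\geq F_j(u)$. By symmetry $F_i=F_j$, and since pullback of a line bundle is determined by its support function, $\pi_i^*\cO_{\sX_i}(-d)=\pi_j^*\cO_{\sX_j}(-d)$.
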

\begin{proof}
    Choose $a \in \ZZ^{\Sigma(1)}$ projecting to $d$ in $\Cl(X)$.
    Let $F_i$ be the support function of $d$ on $\Sigma_i$ corresponding to this choice.
    Since the image of $d$ in $\Cl(X)_\RR$ is in $\Gamma_i$, $F_i(u_\rho) \geq -a_\rho$ for all $\rho \in \Sigma (1)$, and $F_i$ is concave on $\Sigma_i$.
    Let $\sigma \in \Sigma_j$.
    Any $u \in \sigma$ can be written as $u = \sum_{\rho \in \sigma(1)} c_\rho u_\rho$ for some $c_\rho \in \RR_{>0}$.
    Using the properties of $F_i$, 
    \[
    F_i(u) = F_i \left( \sum_{\rho \in \sigma(1)} c_\rho u_\rho \right) \\
        \geq \sum_{\rho \in \sigma(1)} c_\rho F_i(u_\rho) \\
        \geq - \sum_{\rho \in \sigma(1)} c_\rho a_\rho \\
        = F_j(u).
    \]
That is, $F_i(u) \geq F_j(u)$ for all $u \in |\Sigma_j| = |\Sigma_i|$. Also, since the image of $d$ in $\Cl(X)_\RR$ lies in $\Gamma_j$, the opposite inequality holds as well, so $F_i = F_j$.
As the pullback of a line bundle is determined by the support function, we obtain the desired result.
\end{proof}

Theorem~\ref{thm:main1} also requires ordering the line bundles in $\Theta$ when $X$ is projective:

\begin{defn}\label{defn:ordering}
When $X$ is projective, the set $\Theta$ is equipped with a partial ordering such that $-d \le -d'$ if and only if $d - d'$ is effective. We choose any refinement of this partial ordering to a total ordering $-d_1, \dots, -d_t$.
\end{defn}

Note that $-d_j - (-d_i)$ is not effective if $i > j$ for any choice of ordering arising from \cref{defn:ordering}.

\subsection{Examples}\label{subsec:examples}
Here we collect some examples to  illustrate properties of $D_{\Cox}$  and the $\Theta$-Transform Lemma.

\begin{example}\label{ex:singleMaximal}
If $\Sigma_{GKZ}$ has a single maximal chamber corresponding to $\sX$, so that the nef cone equals the effective cone, then $D_{\Cox}=D(\sX)$ and Theorem~\ref{thm:main1} implies that $\Theta$ is a full strong exceptional collection for $D(\sX)$ itself.
This happens precisely when $\sX$ is $\PP^n$, a weighted projective stack, or a product and/or finite quotient of these~\cite{fujino2009smooth}*{Proposition 5.3}.  In these cases, $\Theta$ is the standard full strong exceptional collection, e.g., on $\PP^1 \times \PP(1,1,2)$, $\Theta$ is the collection $(0,0), (0,-1), (0,-2), (0,-3),(-1,0), (-1,-1), (-1,-2)$ and $(-1,-3)$.  

More generally, if $\Theta$ lies entirely in the chamber for $\sX$, so that all elements of $\Theta$ are nef, then $D_{\Cox}=D(\sX)$.    
For example, if $X=\cH_1$ is a Hirzebruch of type $1$, then there are two chambers corresponding to $\cH_1$ and $\PP^2$, but the Bondal--Thomsen collection $\Theta$ lies entirely in the chamber corresponding to $\cH_1$ and so $D_{\Cox}=D(\cH_1)$ in this case.
\end{example}

\begin{example}\label{ex:hirzStrongBT}
Revisiting Example~\ref{ex:hirzGKZ} and Figure~\ref{fig:HirzSecondaryFan},  
write $\pi_1\colon \tsX\to \cH_3$ and $\pi_2\colon \tsX \to \PP(1,1,3)$.
By Definition~\ref{defn:ThetaCox}, $\cO_{\Cox}(-1,0) = \pi_1^* \cO_{\cH_3}(-1,0)$.  Since $d=(-2,1)$ lies in the chamber corresponding to $\PP(1,1,3)$, we must use the fact from \cref{subsec:BTGKZ} that there is a natural corresponding element in $\Theta_{\PP(1,1,3)}$.  In this case, the map $\alpha\colon \ZZ^2\to \Cl(\PP(1,1,3))$ is given by $(a,b) \mapsto a+3b$ and thus $\cO_{\Cox}(2,-1) = \pi_2^* \cO_{\PP(1,1,3)}(\alpha(2,-1))=\pi_2^* \cO_{\PP(1,1,3)}(-1)$.  
Here is a sample Hom computation: 
\begin{align*}
\RHom(\cO_{\Cox}(-1,0), \cO_{\Cox}(2,-1))
&=\RHom(\pi_1^*\cO_{\cH_3}(-1,0), \pi_2^*\cO_{\PP}(2,-1)) & \text{by definition,}\\
&=\RHom(\cO_{\cH_3}(-1,0), \pi_{1*}\pi_2^*\cO_{\PP}(2,-1)) & \text{by adjunction,}\\
&=\RHom(\cO_{\cH_3}(-1,0), \cO_{\cH_3}(2,-1)) & \text{by Lemma~\ref{lem:PushPull},}\\
&=\RHom(\cO_{\cH_3}, \cO_{\cH_3}(3,-1))=0 & \text{by direct computation.}
\end{align*}
Note that the inverse computation is nonzero:
\begin{align*}
\RHom(\cO_{\Cox}(2,-1),  \cO_{\Cox}(-1&,0)) \\
\qquad 
&=\RHom(\pi_2^*\cO_{\PP(1,1,3)}(-1), \pi_1^*\cO_{\cH_3}(-1,0))&\text{by definition,}\\
&=\RHom(\cO_{\PP(1,1,3)}(-1), \pi_{1*}\pi_2^*\cO_{\cH_3}(-1,0)) & \text{by adjunction,}\\
&=\RHom(\cO_{\PP(1,1,3)}(-1), \cO_{\PP(1,1,3)}(-1)) & \text{by \cref{lem:PushPull},} \\
&= \Bbbk^1 & \text{by direct computation.}
\end{align*}
Through similar computations for the other pairs, one can directly confirm that $\Theta$ forms a strong, exceptional collection.  Fullness requires an additional argument.
\end{example}

 \begin{example}[Atiyah Flop]\label{ex:atiyahFlop}
Consider the Cox ring $S=\Bbbk[x_0,x_1,y_0,y_1]$ with $\deg(x_i)=1$ and $\deg(y_j)=-1$.  The secondary fan has two chambers: $\RR_{\geq 0}$ and $\RR_{\leq 0}$.
The first corresponds to $Y_+ =[\Spec(S) - V(x_0,x_1) / \GG_m]$, and  the second to $Y_- =[\Spec(S) - V(y_0,y_1) / \GG_m]$. The origin corresponds to $Y_0=\Spec(S_0)$ where $S_0=\Bbbk[x_0y_0, x_0y_1, x_1y_0, x_1y_1]$.  
For $\tY$, we choose the blowup of $Y_+$ at the line $V(y_0,y_1)$ (which is isomorphic to the blowup of $Y_-$ at $V(x_0,x_1)$).  We write $\pi_+\colon \tY\to Y_+$ and similarly for $\pi_-$.
The Cox category is the full subcategory of $D(\tY)$ generated by $\pi_+^* D(Y_+)$ and $\pi_-^* D(Y_-)$.  

The Bondal--Thomsen collection $\Theta$ consists of the degree $-1,0,1$ divisors.  By \Cref{defn:ThetaCox}, we define $\cO_{\Cox}(-1)\ce \pi_+^* \cO_{Y_+}(-1)$ and $\cO_{\Cox}(1)\ce \pi_-^* \cO_{Y_-}(1)$.  The pullbacks of $\cO_{Y_+}$ and $\cO_{Y-}$ coincide and are equal to $\cO_{\Cox}$.  \Cref{thm:main1} implies that $\cO_{\Cox}(-1)\oplus \cO_{\Cox}\oplus \cO_{\Cox}(1)$ is a tilting bundle.

The $\Theta$-Transform Lemma implies that the Fourier--Mukai transform of $\cO_{Y_+}(d)$ from $Y_+$ to $Y_-$ will equal $\cO_{Y_-}(d)$ when $d=0$ or $d=-1$.
A direct computation confirms that these are the {\em only} values of $d$ where this holds.  If $d > 0$ then the degree zero cohomology of the Fourier--Mukai transform will be the ideal sheaf of $(y_0,y_1)^d$, and if $d\leq -2$ then the degree one cohomology will be nonzero.
\end{example}

The example above demonstrates some of the delicacy in the $\Theta$-Transform Lemma.  If a Fourier--Mukai transform $\Phi_{ij}$ is applied to a line bundle $L$ that is too positive, then the $R^0$ term of that Fourier--Mukai transform would be a twist of an ideal sheaf, not necessarily a line bundle.
On the other hand, if the line bundle $L$ is too negative, then there is a risk of acquiring nontrivial higher direct images.
The hypotheses of the lemma are calibrated so as to navigate between these: the fact that the image of $d$ lies in the chamber of $\Gamma_i$ ensures that the $R^0$-term is correct; the fact that $-d$ came from $\Theta$ means it is not so negative as to acquire higher direct images.  The latter is captured in our argument by the existence of $\theta \in M_\RR$ defining $-d \in \Theta$.  
In a certain sense, the support function of $-d$ is approximated by the linear function given by pairing with $\theta$.

One part of Example~\ref{ex:atiyahFlop} is, however, a bit misleading in that $\pi^*_+\cO_{Y^+}(-1)$ is an element of the Bondal--Thomsen collection on $\tY$.  If this were always true, then it would directly imply that the higher pushforwards vanish by \cref{prop:pushforwardBTstack}.
However, Bondal--Thomsen elements are not always sent to Bondal--Thomsen elements under pullback.  (For instance, this can fail in the case of the Hirzebruch surface $\tsX\to \cH_3$.) Thus in general, one must prove the vanishing of higher pushforwards using a different mechanism.

\begin{example}\label{ex:Bl2points}
Let $X$ be the blowup of $\PP^n$ at the points $[1\mathbin:0\mathbin:\cdots \mathbin:0]$ and $[0\mathbin:0\mathbin: \cdots :0\mathbin:1]$.
In~\cite{michalek11} it was shown that $X$ provides a
counterexample to King's Conjecture for $n > 20$.
The Cox ring $S$ of $X$ can be written as $S=\Bbbk[x_0, \dots, x_{n+2}]$ with degrees
$(1, 0, 1),(1, 0, 0), \dots, (1, 0, 0), (1, 1, 0),$   $(0, 1, 0),$ and $(0, 0, 1)$.  The class $(1,0,0)$ corresponds to the hyperplane class from $\PP^n$, and $(0,1,0)$ and $(0,0,1)$ correspond to the two exceptional divisors. The GKZ fan of $X$ has $5$ maximal chambers as illustrated in Figure~\ref{fig:Bl2Pts}. For $n=3$, the chambers correspond to $\PP^3$, the blowup of $\PP^3$ at one of the two points, $X$ itself, and another variety $X'$ that is a $\PP^1$-bundle over a Hirzebruch surface $\cH_1$ of type $1$.  Along the boundary, there is a point, a $\PP^1$, two copies of $\PP^2$, and two copies of $\cH_1$.  Along the internal edge between $X$ and $X'$ is the contraction $X_0$ for the corresponding flop.  The distinct varieties that arise are labeled in \Cref{fig:Bl2Pts}.

The Bondal--Thomsen collection consists of $4n-3$ elements:
\[
\Theta_X = \left\{\begin{matrix}(0,0,0), \dots, (-n-2,0,0), (-1,-1,0), \dots, (-n-1,-1,0), 
\\
(-1,0,-1), \dots, (-n-1,0,-1),
(-1,-1,-1), \dots, (-n,-1,-1),
\end{matrix}\right\}.
\]
Each element lies in the chamber associated to either $X$ or $X'$ and so in this case, $D_{\Cox}=\langle \pi_1^* D(X'), \pi_2^* D(X)\rangle$.
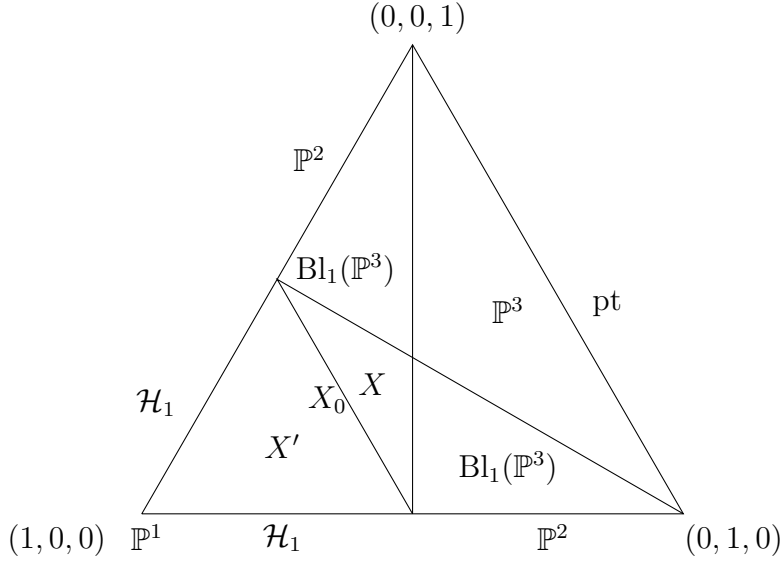
\begin{figure}
    \centering
\begin{tikzpicture}[scale = 1.2]
\draw[black] (0,0) -- (6,0)--(3,5.2)--(0,0);
\draw[black] (3,0)--(3,1.733)--(1.5,2.6)--(3,0);
%
\draw[black] (3,1.733)--(3,5.2);
\draw[black] (6,0)--(3,1.733);
\filldraw[black] (-1,0) circle (0pt) node[anchor=north]{{ $(1,0,0)$}};
\filldraw[black] (6.5,0) circle (0pt) node[anchor=north]{{ $(0,1,0)$}};
\filldraw[black] (3,5.2) circle (0pt) node[anchor=south]{{ $(0,0,1)$}};
\filldraw[black] (2.5,1.2) circle (0pt) node[anchor=south]{{ $X$}};
\filldraw[black] (2.0,1.0) circle (0pt) node[anchor=south]{{ $X_0$}};
\filldraw[black] (1.5,.5) circle (0pt) node[anchor=south]{{ $X'$}};
\filldraw[black] (4,.2) circle (0pt) node[anchor=south]{{ $\operatorname{Bl}_1(\PP^3)$}};
\filldraw[black] (2.2,2.4) circle (0pt) node[anchor=south]{{ $\operatorname{Bl}_1(\PP^3)$}};
\filldraw[black] (4,2.0) circle (0pt) node[anchor=south]{{ $\PP^3$}};
\filldraw[black] (.1,1.5) circle (0pt) node[anchor=north]{{ $\cH_1$}};
\filldraw[black] (1.8,4.2) circle (0pt) node[anchor=north]{{ $\PP^2$}};
\filldraw[black] (5.1,2.6) circle (0pt) node[anchor=north]{{ $\text{pt}$}};
\filldraw[black] (1.5,0) circle (0pt) node[anchor=north]{{ $\cH_1$}};
\filldraw[black] (4.5,0) circle (0pt) node[anchor=north]{{ $\PP^2$}};
\filldraw[black] (0,0) circle (0pt) node[anchor=north]{{ $\PP^1$}};
\end{tikzpicture}
    \caption{The GKZ fan from Example~\ref{ex:Bl2points} has five maximal  chambers.  When $n=3$, we label each chamber and face with the corresponding variety.}
    \label{fig:Bl2Pts}
\end{figure}
\end{example}

\begin{remark}\label{rmk:caveats}
We caution the reader
that there are some categories which might appear similar to $D_{\Cox}$ but are in fact not equivalent.  
For instance, again consider the Hirzebruch surface $\cH_3$.  
The Cox ring is $S=\Bbbk[x_0,x_1,x_2,x_3]$ and the
irrelevant ideal is $B_1\ce(x_0,x_2)\cap (x_1,x_3)$.  Thus $D(\cH_3) = D(S) /
D_{B_1}$ where $D_{B_1}$ is the bounded derived category of finitely generated
$S$-modules whose homology is $B_1$-torsion.  
If $B_2\ce(x_1) \cap
(x_0,x_2,x_3)$, then similarly $D(\PP(1,1,3)) = D(S) / D_{B_2}$.
However, it is not that case $D_{\Cox}$ equals $D(S) / D_{B_1+B_2}$.  In
fact,  $D(S) / D_{B_1+B_2}$ is the derived category of the non-separated
stack one obtains by gluing $\cH_3$ and $\PP(1,1,3)$ along their birational
locus; it is not even a proper dg-category.

Similarly, there is generally not some $\tsX$ with $D_{\Cox}=D(\tsX)$.  This
is perhaps easiest to see for the Atiyah Flop example: with notation as in
\cref{ex:atiyahFlop}, $D_{\Cox}\subseteq D(\tY)$. 
But $K_0(D_{\Cox}) = \ZZ^3$ by Theorem~\ref{thm:main1}, whereas $K_0(D(\tY))=\ZZ^4$
by~\cite{borisov-horja}.  

We also note that $D_{\Cox}$ is not a monoidal category in the ``obvious way;'' it is not closed with respect to the induced monoidal structure from $D(\tsX)$.
For instance, in \cref{ex:atiyahFlop}, tensor products of the line bundles
$\pi_+^*\cO_{Y_+}(-1)$ and $\pi_-^*\cO_{Y_-}(1)$ generate all of the line
bundles on $\tY$.  Since $D(\tY)$ is strictly larger than $D_{\Cox}$, it follows that 
that $D_{\Cox}$ is not closed with respect to this monoidal structure, 
as it would be if it were the category supported along subvariety or the 
pullback from some minimal rational resolution.

One must also be careful in passing from $S$-modules
to elements of $D_{\Cox}$.  For instance, in \cref{ex:atiyahFlop}, there is no
element of $D_{\Cox}$ that pushes forward to both $\cO_{Y_+}(-2)$ and
$\cO_{Y_-}(-2)$.  Said another way, it is not the case that $D_{\Cox}$ can
always be expressed as the Verdier quotient of $D(S)$ along a torsion
subcategory. 
\end{remark}

\subsection[Proof of the Theta-transform Lemma]{Proof of the $\Theta$-Transform Lemma} \label{subsec:proofofthetatrans}

Recall that for any $1\leq i,j\leq r$, there is a commutative diagram as below, where the horizontal arrow is a birational map.  
\[
\xymatrix{
&\tsX\ar[rd]^-{\pi_i}\ar[ld]_-{\pi_j}\\
\sX_i\ar@{<-->}[rr]&&\sX_j.
}
\]
Our goal is to understand how elements of the Bondal--Thomsen collection behave with respect to these birational maps, and the associated Fourier--Mukai transforms.
For convenience, we prove the following restatement of \cref{lem:PushPull}.  

\begin{lemma} \label{lem:thetatransform2}
If $-d \in\Theta$ and has image in $\Gamma_i$, then
       $\pi_{j\ast} \pi_i^* \cO_{\sX_i}(-d) = \cO_{\sX_j} (-d)$ for all $j$.
\end{lemma}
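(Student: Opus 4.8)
The plan is to compute the derived pushforward $\pi_{j*}\cF$ of $\cF := \cO_{\Cox}(-d) = \pi_i^{*}\cO_{\sX_i}(-d)$ directly, by reducing to an affine-local toric cohomology calculation on $\sX_j$ and then running a polyhedral/topological argument in the spirit of the proof of \cref{lem:BTvanishingCohomology}, with the mismatch between the fans $\Sigma_i$ and $\Sigma_j$ absorbed by the defining property of $\Theta$. Since $\pi_{j*}\cO_{\tsX} = \cO_{\sX_j}$ (\cref{prop:constructionX}) and the assertion is local on the target, it is enough to fix a cone $\sigma\in\Sigma_j$ and show, for the smooth toric DM stack $W := \pi_j^{-1}(U_\sigma)\subseteq\tsX$ (whose fan is $\Lambda_\sigma := \{\tau\in\Lambda : \tau\subseteq\sigma\}$, with inherited stacky structure), that $H^{p}(W,\cF|_{W}) = 0$ for $p > 0$ while $H^{0}(W,\cF|_{W})$ agrees, compatibly with restriction, with $H^{0}(U_\sigma,\cO_{\sX_j}(-d))$.

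To set up the combinatorics, write $-d = -d(\theta)$. The bundle $\cF$ has the same toric support function as $\cO_{\sX_i}(-d)$; call it $\psi$, so that $\psi$ is linear on the cones of $\Sigma_i$ with $\psi(u_\rho) = \lceil\langle\theta,u_\rho\rangle\rceil$ at each ray $\rho\in\Sigma_i(1)$. Because the image of $d$ lies in the chamber $\Gamma_i$, the bundle $\cO_{\sX_i}(d)$ is nef, exactly as in the proof of \cref{prop:welldefined}, so $-\psi$ is concave, i.e.\ $\psi$ is convex. Hence $\psi(u)\ge\langle\theta,u\rangle$ everywhere on the common fan support, and on the simplicial cone $\sigma$ the convex function $\psi$ is dominated by the linear interpolant of its values at the rays of $\sigma$. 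Feeding $\cF|_{W}$ into the combinatorial cohomology description for toric DM stacks from \cref{subsec:cohomology} (the stacky lift of \cite{CLSToricVarieties}*{Theorem~9.1.3(a)}), and then applying the $\beta_\RR$-homeomorphism and the positive-scaling/deformation-retraction reductions used in the proof of \cref{lem:BTvanishingCohomology}, one obtains $H^{p}(W,\cF|_{W}) = \bigoplus_{m\in M}\widetilde{H}^{p-1}(V_m)$, where $V_m$ is homotopy equivalent to $\{u\in\sigma : \langle m,u\rangle < \psi(u)\}$, and $H^{0}(W,\cF|_{W})$ is spanned by those $\chi^{m}$ with $\langle m,u\rangle\ge\psi(u)$ for all $u\in\sigma$.

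The degree-zero identification is then a squeeze. If $\langle m,u\rangle\ge\psi(u)$ on $\sigma$, then $\psi\ge\langle\theta,\cdot\rangle$ forces $m-\theta\in\sigma^{\vee}$; conversely, for $m\in M\cap(\theta+\sigma^{\vee})$ the integrality of $\langle m,u_\rho\rangle$ upgrades $\langle m,u_\rho\rangle\ge\langle\theta,u_\rho\rangle$ to $\langle m,u_\rho\rangle\ge\lceil\langle\theta,u_\rho\rangle\rceil\ge\psi(u_\rho)$ at the rays of $\sigma$, hence on all of $\sigma$ by convexity of $\psi$. So $H^{0}(W,\cF|_{W})$ is spanned by $\chi^{m}$, $m\in M\cap(\theta+\sigma^{\vee})$, which is precisely $H^{0}(U_\sigma,\cO_{\sX_j}(-d))$, and the restriction maps match, being inclusions of lattice polytopes as in Propositions~\ref{prop:pushforwardBTvariety} and~\ref{prop:pushforwardBTstack}. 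This identifies $R^{0}\pi_{j*}\cF$ with $\cO_{\sX_j}(-d)$.

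The vanishing $H^{p}(W,\cF|_{W})=0$ for $p>0$ is where the real work lies, and I expect it to be the main obstacle. One must show each $V_m$ is empty or contractible; for $m\in M\cap(\theta+\sigma^{\vee})$ the paragraph above gives $V_m=\emptyset$. For the remaining $m$ the difficulty is genuine: there is no nef summand to supply concavity (contrast \cref{lem:BTvanishingCohomology}), $\psi$ is merely convex, and $\{u\in\sigma:\langle m,u\rangle<\psi(u)\}$ is the complement in $\sigma$ of a convex subcone, which need not be convex or even connected for an arbitrary convex $\psi$. The Bondal--Thomsen hypothesis is what rescues this: $\psi$ is pinched between $\langle\theta,\cdot\rangle$ and, on $\sigma$, the linear interpolant $\ell_\sigma$ of its ceiling values at $\sigma$'s rays, so $V_m$ is trapped between the two honestly convex slices $\{u\in\sigma:\langle m-\theta,u\rangle<0\}$ and $\{u\in\sigma:\langle m,u\rangle<\ell_\sigma(u)\}$, which differ only by a bounded affine slab; I would use this to build an explicit deformation retraction of $V_m$ onto the inner slice when it is nonempty, extending the retraction argument from the proof of \cref{lem:BTvanishingCohomology}. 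The technical heart, and the step I expect to be hardest, is the associated bookkeeping: reconciling the fan $\Sigma_i$, along which $\psi$ is linear and stays within distance $1$ of $\langle\theta,\cdot\rangle$ at the rays, with the fan $\Sigma_j$, along which the target bundle is linear, and keeping the estimate intact on the rays of $\Lambda$ interior to cones of $\Sigma_i$, where the precise rescalings $c_\rho$ in the construction of $\tsX$ (\cref{def:tsX}) are meant to do exactly this job. It is precisely this step that collapses if $-d$ is not Bondal--Thomsen, or if $d$ is allowed to leave $\Gamma_i$ (cf.\ \cref{ex:atiyahFlop}).
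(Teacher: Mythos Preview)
Your approach is different from the paper's. The paper first establishes $R^0\pi_{j*}\pi_i^*\cO_{\sX_i}(-d)=\cO_{\sX_j}(-d)$ by essentially your squeeze argument (this is \cref{lem:pushforward0}), but then handles the higher vanishing \emph{globally}: it tests $\pi_{j*}\cF$ against every nef line bundle on $\sX_j$ (\cref{lem:yonedatheta}), rewrites the resulting cohomology via a stacky polytope-difference formula (\cref{thm:abkw}, following \cite{altmann-immaculate}*{Theorem~3.6}) as $\widetilde H^{p-1}(P_d\setminus(P_A-m))$, and shows this set is star-shaped about $-\theta$ (\cref{lem:stronglystar}). Your affine-local strategy is, however, exactly the alternative route the authors later describe in \cref{rem:borisov}, attributed to Borisov.

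There is a genuine gap in your higher-vanishing step. You propose to show each $V_m=\{u\in\sigma:\langle m,u\rangle<\psi(u)\}$ is contractible by retracting it onto the half-cone $\{u\in\sigma:\langle m-\theta,u\rangle<0\}$, using the squeeze $\langle\theta,\cdot\rangle\le\psi\le\ell_\sigma$. But $V_m$ is the complement in $\sigma$ of the convex cone $V_m^c=\{u\in\sigma:\langle m,u\rangle\ge\psi(u)\}$, and such complements carry no evident convexity; your slab estimate alone does not produce a retraction. In \cref{rem:borisov} the authors record that they were \emph{unable to find a direct argument} of precisely this kind. The fix that does work, and which completes your local argument, is to pass to the Alexander-dual cohomology description of \cite{borisov-hua}*{Proposition~4.1}, which computes the same $M$-graded pieces in terms of $V_m^c$ rather than $V_m$: since $\psi$ is convex, $V_m^c$ is convex, and contractibility follows immediately. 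With that substitution your plan becomes a complete proof along the lines of \cref{rem:borisov}.
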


As discussed after \cref{ex:atiyahFlop}, the hypotheses provide a delicate balance. Since $d$ has image in $\Gamma_i$, $\cO_{\sX_i}(-d)$ is anti-nef, and this factors into our proof that $R^0\pi_{j*}$ is the right line bundle. The fact that $-d\in \Theta$ implies that it is ``close to $0$,'' and so it is not so negative as to acquire nonzero higher direct images.

Our proof involves testing the object $\pi_{j\ast} \pi_i^* \cO_{\sX_i}(-d)$ against nef line bundles. 
This allows us to reduce to a cohomology computation for line bundles that can be expressed as a difference of nef line bundles, meaning those of the form $D_1 -D_2$, where both $D_1$ and $D_2$ are nef. 
(Recall that nef line bundles on a toric DM stack are defined in \cref{defn:nef}.) 
Then \cite{altmann-immaculate}*{Theorem III.6} can be used to analyze such divisors in terms of their polytopes.
To reduce to a set of cohomology computations, we  rely on the following lemma.

\begin{lemma} \label{lem:yonedatheta}
Suppose that $\pi\colon \sX' \to \sX$ is a proper morphism of smooth toric DM stacks with quasi-projective coarse moduli spaces. Let $\cF$ be a coherent sheaf on
$\sX'$.
If
    \begin{equation} \label{eq:testnef}
        H^p(\sX,\pi_* \cF \otimes \cO_{\sX}(A) ) \cong 
        H^p(\sX, (R^0\pi_* \cF) \otimes \cO_{\sX}(A))
    \end{equation}
for all $p$ and every nef toric divisor $A$, then $\pi_* \cF = R^0\pi_*
\cF$. 
\end{lemma}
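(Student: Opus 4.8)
The plan is to show $\pi_*\cF=R^0\pi_*\cF$ by contradiction: if some higher direct image $R^q\pi_*\cF$ ($q\ge 1$) were nonzero, then testing against a single sufficiently positive nef line bundle would produce a top-degree hypercohomology class on the $\pi_*\cF$ side of \eqref{eq:testnef} that is invisible on the $R^0\pi_*\cF$ side. First I would record that, since $\cF$ is a coherent sheaf and $\pi$ is proper, the (derived) pushforward $\pi_*\cF$ is a bounded complex whose cohomology sheaves $R^q\pi_*\cF$ vanish for $q<0$. Canonical truncation then furnishes a distinguished triangle $R^0\pi_*\cF \to \pi_*\cF \to Q \to R^0\pi_*\cF[1]$ with $Q\ce\tau_{\geq 1}\pi_*\cF$, so that the desired conclusion is equivalent to $Q=0$. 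I would assume $Q\neq 0$ and let $n\geq 1$ be maximal with $\cG\ce R^n\pi_*\cF\neq 0$, a nonzero coherent sheaf on $\sX$.

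Next I would choose the test divisor. Because $\sX$ is a smooth toric DM stack whose coarse moduli space is semiprojective — in particular projective over its affinization — there is an integral nef class $A_0$ on $\sX$ that is ample relative to that affinization; one may take $A_0$ coming from a strictly convex support function in the sense of \cref{defn:nef}. For such an $A_0$, relative Serre vanishing over the affine base, together with relative global generation, gives that for $m\gg 0$ one has $H^p(\sX,\cH\otimes\cO_{\sX}(mA_0))=0$ for all $p\geq 1$ and all coherent $\cH$, while $H^0(\sX,\cH\otimes\cO_{\sX}(mA_0))\neq 0$ whenever $\cH\neq 0$. (Alternatively, one can argue combinatorially as in the proof of \cref{lem:BTvanishingCohomology}: for $D$ deep in the nef cone the sets $V_{D,m}^{\sX}$ of \cref{subsec:cohomology} are contractible, yielding the vanishing for line bundles, which then propagates to coherent sheaves since the $\cO_{\sX}(a)$ generate.) Applying this to the finitely many sheaves $R^q\pi_*\cF$, $0\leq q\leq n$, I would fix such an $m$ and set $A\ce mA_0$, which is again nef.

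With this $A$ in hand the rest is formal bookkeeping. Since $-\otimes\cO_{\sX}(A)$ is exact, the hypercohomology spectral sequence $E_2^{p,q}=H^p(\sX,R^q\pi_*\cF\otimes\cO_{\sX}(A))\Rightarrow H^{p+q}(\sX,\pi_*\cF\otimes\cO_{\sX}(A))$ has no terms off the bottom row, hence degenerates, so $H^k(\sX,\pi_*\cF\otimes\cO_{\sX}(A))\cong H^0(\sX,R^k\pi_*\cF\otimes\cO_{\sX}(A))$ for all $k$; in degree $k=n$ this is $H^0(\sX,\cG\otimes\cO_{\sX}(A))\neq 0$. On the other hand $R^0\pi_*\cF$ is a single sheaf, so $H^n(\sX,(R^0\pi_*\cF)\otimes\cO_{\sX}(A))=0$ because $n\geq 1$ and the higher cohomology of that twist vanishes by the choice of $A$. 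This contradicts \eqref{eq:testnef} at $p=n$ for the nef divisor $A$, forcing $Q=0$, i.e. $\pi_*\cF=R^0\pi_*\cF$.

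The one genuinely nontrivial ingredient — and the step I expect to require the most care — is the existence of the polarization $A_0$ with the stated Serre-type vanishing and global generation on the toric DM stack $\sX$ itself, rather than merely on its coarse moduli space: pushing forward to the coarse space can annihilate the twists by nontrivial characters at stacky points, so one must work on $\sX$ (either via the semiprojectivity hypothesis and relative ampleness over the affinization, or via the combinatorial cohomology of \cref{subsec:cohomology}). Everything else — the truncation triangle, exactness of twisting, and degeneration of the spectral sequence — is routine, and the hypothesis \eqref{eq:testnef} is invoked only once, in the top cohomological degree $n$.
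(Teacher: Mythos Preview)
There is a genuine gap at the step you flag as delicate: on a smooth toric DM stack with nontrivial isotropy, no single nef $A_0$ can serve as an ample bundle in the sense you need. Serre-type vanishing of higher cohomology is not the problem---that can be arranged by pulling back an ample from the coarse moduli---but your nonvanishing claim $H^0(\sX,\cG\otimes\cO_{\sX}(mA_0))\neq 0$ for all $m\gg 0$ is false. Take $\sX$ to be the root stack of $\PP^2$ along two coordinate lines, each of order $2$: the intersection point $p$ has isotropy $G_p\cong(\ZZ/2)^2$. For a skyscraper $\cG$ at $p$ carrying a character $\psi\in\widehat{G_p}$, the condition $H^0(\sX,\cG\otimes\cO_{\sX}(mA_0))\neq 0$ forces $\psi^{-1}$ to lie in the cyclic subgroup generated by the character of $A_0$ at $p$; since every cyclic subgroup of $(\ZZ/2)^2$ has index at least $2$, any choice of $A_0$ misses some $\psi$, and for the corresponding $\cG$ the $H^0$ vanishes for \emph{every} $m$. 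Your parenthetical alternative (``which then propagates to coherent sheaves since the $\cO_{\sX}(a)$ generate'') invokes precisely the missing ingredient rather than supplying it.

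The paper provides that ingredient via a generating sheaf: \cref{lem:generate_plus_pullback} (citing Kresch) produces finitely many $D_1,\ldots,D_l$ with $\pi^*(\Perf X)\otimes\bigoplus\cO_{\sX}(D_i)$ generating $D(\sX)$, and \cref{lem:generate_by_nef} then observes that for an ample $D$ on the coarse space $X$ with each $\lambda D - D_i$ nef, the nef bundles $\cO_{\sX}(\lambda D - D_i)$ already generate. Hence $\Hom^p(\cO_{\sX},(-)\otimes\cO_{\sX}(A))$ for nef $A$ detects quasi-isomorphisms, and one applies this directly to the truncation map between $\pi_*\cF$ and $R^0\pi_*\cF$. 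The finite family $\{D_i\}$ is exactly the stacky correction that a single $A_0$ cannot provide.
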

The statement is likely known to experts, but as we could not find a precise reference that implied this result, we provide a separate proof based on a pair of elementary lemmas.

\begin{lemma} \label{lem:generate_plus_pullback} 
If $\sX$ is a smooth toric DM stack with coarse moduli morphism $\pi \colon \mathcal X \to X$, where $X$ is 
quasi-projective,  
then there exist divisors $D_1,\ldots,D_l$ on $\sX$
such that $\pi^\ast (\Perf X) \otimes \bigoplus \mathcal
O_{\sX}(D_i)$ generates $D(\sX)$.
\end{lemma}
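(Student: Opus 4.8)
The plan is to take the divisors $D_1,\dots,D_l$ to be representatives of the classes of the group $\Cl(\sX)/\pi^\ast\Pic(X)$, which I will argue is finite, and then to invoke that line bundles generate $D(\sX)$. First I would record the relevant structural facts. Since $\sX$ is smooth, its defining fan $\Sigma$ is simplicial (\cref{def:BCStoricStack}), so the coarse space $X=X_\Sigma$ is a simplicial quasi-projective toric variety. On a simplicial toric variety every Weil divisor is $\QQ$-Cartier, so $\Pic(X)$ has finite index in $\Cl(X)$ (\cite{CLSToricVarieties}*{\S4.2}); in particular $\operatorname{rank}\Pic(X)=\operatorname{rank}\Cl(X)=|\Sigma(1)|-\dim N$, the last equality by the standard presentation $0\to M\to\ZZ^{\Sigma(1)}\to\Cl(X)\to 0$, and the analogous presentation~\eqref{eq:classgroup} gives $\operatorname{rank}\Cl(\sX)=|\Sigma(1)|-\dim N$ as well. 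Finally $\pi^\ast\colon\Pic(X)\to\Pic(\sX)=\Cl(\sX)$ is injective modulo torsion: if $\pi^\ast(L^{\otimes n})\cong\mathcal O_{\sX}$ for a line bundle $L$ on $X$, then the projection formula (using $\pi_\ast\mathcal O_{\sX}=\mathcal O_X$, which holds as $\pi$ is a coarse moduli morphism and $G$ is linearly reductive) gives $L^{\otimes n}\cong\mathcal O_X$. Hence $\pi^\ast\Pic(X)$ is a full-rank subgroup of $\Cl(\sX)$, so $\Cl(\sX)/\pi^\ast\Pic(X)$ is finite; choose divisors $D_1,\dots,D_l$ on $\sX$ representing its classes.

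With this choice, every line bundle on $\sX$ lies in the target subcategory. Indeed, for a divisor $D$ on $\sX$ I can write $[\mathcal O_{\sX}(D)]=\pi^\ast[\mathcal O_X(D')]+[\mathcal O_{\sX}(D_i)]$ in $\Cl(\sX)$ for some $D'\in\Pic(X)$ and some $1\le i\le l$, so that $\mathcal O_{\sX}(D)\cong\pi^\ast\mathcal O_X(D')\otimes\mathcal O_{\sX}(D_i)$; since $\mathcal O_X(D')$ is a line bundle on $X$ and hence perfect, this exhibits $\mathcal O_{\sX}(D)$ as an object of the smallest thick subcategory $\langle\pi^\ast(\Perf X)\otimes\bigoplus_i\mathcal O_{\sX}(D_i)\rangle$.

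It then remains to observe that line bundles generate $D(\sX)$. Writing $\sX=[U/G]$ with $U$ quasi-affine and $G$ diagonalizable as in \cite{BorisovChenSmith2004}*{Section~2}, and using that $G$ is linearly reductive in every characteristic, any coherent sheaf on $\sX$ — equivalently, any $G$-equivariant coherent sheaf on $U$ — is a $G$-equivariant quotient of a finite direct sum of line bundles: extend it $G$-equivariantly over an affine space containing $U$, pick an equivariant surjection from a direct sum of one-dimensional representations, and restrict. Since $\sX$ is smooth, iterating and truncating then shows line bundles generate $D(\sX)$. (Alternatively, one could quote that the finite set $\Theta_{\sX}$ of line bundles already generates $D(\sX)$ by \cite{HHL} and \cite{FH}.) Combining with the previous paragraph gives $D(\sX)\subseteq\langle\pi^\ast(\Perf X)\otimes\bigoplus_i\mathcal O_{\sX}(D_i)\rangle$, and the reverse containment is immediate, so the two coincide.

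The only genuinely non-formal ingredient is the finiteness of $\Cl(\sX)/\pi^\ast\Pic(X)$; this is precisely where the simpliciality of the coarse space (equivalently, of the fan of the smooth stack $\sX$) enters. Everything else is bookkeeping with divisor classes together with the standard fact that line bundles generate the derived category of a smooth toric DM stack.
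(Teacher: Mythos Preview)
Your argument is correct and is in fact the concrete approach the paper alludes to immediately after its own proof, where it remarks that the lemma ``amounts to the fact that $\Pic(X)$ is a finite index subgroup of $\Cl(X)$ because $\sX$ corresponds to a simplicial fan.'' The paper's formal proof, however, takes a more abstract route: it invokes Kresch's existence of a generating sheaf $\cE=\bigoplus\cO_{\sX}(D_i)$ on a DM stack with quasi-projective coarse space, then uses the defining surjection $L^0\pi^\ast\pi_\ast\mathcal Hom(\cE,\cF)\otimes\cE\to\cF$ together with resolution of $\pi_\ast\mathcal Hom(\cE,\cF)$ by vector bundles on $X$ to build resolutions of any $\cF$ by objects of the form $\pi^\ast\cV\otimes\cE$. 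Your approach is more elementary and more explicit in the toric setting---it identifies the $D_i$ as coset representatives for the finite group $\Cl(\sX)/\pi^\ast\Pic(X)$ and reduces to the fact that line bundles generate---while the paper's argument has the virtue of working verbatim for more general DM stacks with quasi-projective coarse moduli, without appealing to any toric combinatorics. Either way the content is the same finite-index statement.
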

\begin{proof}
By \cite{KreschDM}*{Section 5.2}, there exist $D_1, \ldots, D_l$
such that $\cE \ce \bigoplus \mathcal O_{\sX}(D_i)$ is a generating sheaf. By definition, 
$L^0\pi^\ast \pi_\ast \mathcal Hom(\cE,\mathcal F) \otimes \cE \to \mathcal F$ 
is surjective for all quasi-coherent sheaves $\mathcal F$. 
(Note that $\pi_\ast$ is exact.) 
There are enough line bundles on $X$, so there is a surjection $\mathcal V \to \pi_\ast \mathcal Hom(\cE,\mathcal F)$ with
$\mathcal V$ a vector bundle. 
Since $L^0\pi^\ast$ is right exact, the composition 
$
\pi^\ast \mathcal V \otimes \cE = L^0\pi^\ast \mathcal V \otimes \cE \to \mathcal F
$
is surjective. 
Now $\sX$ is smooth and $\mathcal F$ can be resolved by complexes of the 
form $\pi^\ast \mathcal V \otimes \cE$, so $D_1,\dots,D_l$ generate $D(\sX)$, up to splitting 
idempotents. 
\end{proof}

\cref{lem:generate_plus_pullback} amounts to the fact that $\Pic(X)$ is a finite index subgroup of $\Cl(X)$ because $\sX$ corresponds to a simplicial fan~\cite{CLSToricVarieties}*{Proposition 4.2.7}.  For instance, in the case of a weighted projective stack like $\PP(1,1,3)$, one can choose the bundles $\cO(i)$ for $0\leq i\leq 2$.

\begin{lemma} \label{lem:generate_by_nef}
Let $\sX$ be a smooth toric DM stack with quasi-projective coarse moduli space. A map $\phi \colon \cE \to \mathcal F$ in $D(\sX)$ is a quasi-isomorphism 
if and only the maps
\[
\operatorname{Hom}^p(\cO_{\sX}, \mathcal E \otimes \cO_{\sX}(A)) \to 
\operatorname{Hom}^p(\cO_{\sX}, \mathcal F \otimes \cO_{\sX}(A)) 
\]
are isomorphisms for all $p$ and every nef toric divisor $A$.
\end{lemma}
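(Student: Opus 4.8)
The plan is to dispose of the ``only if'' direction at once and to reduce the ``if'' direction to a generation statement built on \cref{lem:generate_plus_pullback}. The forward implication is formal: if $\phi$ is a quasi-isomorphism then so is $\phi\otimes\cO_{\sX}(A)$ for each $A$, and applying $\RHom(\cO_{\sX},-)$ to a quasi-isomorphism yields isomorphisms on all cohomology. For the converse I would set $\cC\ce\Cone(\phi)$, so that the hypothesis becomes $\Hom^{p}(\cO_{\sX},\cC\otimes\cO_{\sX}(A))=H^{p}(\sX,\cC\otimes\cO_{\sX}(A))=0$ for all $p$ and every nef toric divisor $A$, and it suffices to deduce $\cC=0$. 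Equivalently, $\RHom_{\sX}(\cO_{\sX}(B),\cC)=0$ for every \emph{anti-nef} line bundle $\cO_{\sX}(B)$ (apply the hypothesis to the nef divisor $-B$). Thus everything reduces to the claim that the anti-nef line bundles on $\sX$ generate $D(\sX)$ as a thick subcategory: granting this, $\cC$ is right-orthogonal to a thick generating subcategory, hence to all of $D(\sX)$, so $\RHom_{\sX}(\cC,\cC)=0$ and $\cC=0$.

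To prove the generation claim, write $\pi\colon\sX\to X$ for the coarse moduli morphism and apply \cref{lem:generate_plus_pullback} to get divisors $D_{1},\dots,D_{l}$ on $\sX$ with $\pi^{\ast}(\Perf X)\otimes\bigoplus_{j}\cO_{\sX}(D_{j})$ generating $D(\sX)$. I would then rely on two inputs. First, for an ample divisor $H$ on $X$, the twists $\{\cO_{X}(-nH)\}_{n\geq n_{0}}$ generate $\Perf X$ for every $n_{0}$: the family $\{\cO_{X}(mH)\}_{m\in\ZZ}$ is a set of compact generators of $D_{\mathrm{qc}}(X)$, so $\Perf X$ is its thick closure, and restricting a Koszul resolution of $\cO_{\PP^{N}}$ along a locally closed embedding $X\subseteq\PP^{N}$ with $\cO_{X}(1)=\cO_{\PP^{N}}(1)|_{X}$ trades any twist for finitely many more negative ones. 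Second, the natural $\RR$-linear isomorphism $\Cl(X)_{\RR}\xrightarrow{\sim}\Cl(\sX)_{\RR}$ carries the nef cone of $X$ onto the nef cone of $\sX$ in the sense of \cref{defn:nef} and sends $[H]$ to $[\pi^{\ast}H]$; since $X$ is semiprojective this cone is full-dimensional with $[H]$ in its interior, so $D_{j}-n\,\pi^{\ast}H$ is anti-nef for $n\gg0$. Combining, $D(\sX)=\langle\,\cO_{\sX}(D_{j}-n\,\pi^{\ast}H)\mid n\geq n_{0},\ 1\leq j\leq l\,\rangle$, and for $n_{0}$ large every such line bundle is anti-nef.

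I expect the generation claim to be the only delicate point: one must push the twists off to infinity in the single anti-nef direction while still generating $\Perf X$, and then match this against the coarse-moduli comparison of nef cones. The one thing to check carefully is that ``nef'' in the sense of \cref{defn:nef} is insensitive to replacing a divisor by a linearly equivalent one (adding a principal divisor shifts the defining concave support function by a linear form, which preserves concavity) and is compatible with the rational rescaling relating $\Cl(X)$ and $\Cl(\sX)$. The remaining ingredients---the elementary cone estimate, adjunction, and the orthogonality argument---are routine.
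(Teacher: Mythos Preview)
Your proof is correct and follows essentially the same route as the paper: reduce to a generation statement, invoke \cref{lem:generate_plus_pullback} to get finitely many $D_j$, and then use that sufficiently high (or in your case, sufficiently negative) twists of an ample line bundle generate $\Perf X$, so that the shifted $D_j$'s land in the desired cone. The only difference is a sign: you show directly that \emph{anti-nef} line bundles generate (which is what the hypothesis $\RHom(\cO_{\sX}(-A),\cC)=0$ literally requires), whereas the paper phrases the sufficient condition as ``the $\cO_{\sX}(A)$ for nef $A$ generate'' and then produces nef bundles $\cO_{\sX}(\lambda D - D_i)$---the two statements being interchangeable via the anti-autoequivalence $(-)^\vee$. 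Your formulation is arguably the more direct one.
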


\begin{proof}
It is sufficient to show that the $\mathcal O_{\sX}(A)$ from nef $A$ on 
$X$ generate $D(\sX)$.
To do this, choose an ample line bundle $\mathcal O_X(D)$ on $X$ such that 
$\mathcal O_X(\lambda D-D_i)$ are nef for each $\lambda \geq 1$ and all $i$ from 
\cref{lem:generate_plus_pullback}.
The line bundles $\mathcal O_X(\lambda D)$ for $\lambda \geq 1$ generate 
$\Perf X$ since $\mathcal O_X(D)$ is ample. Thus, the
$\mathcal O_{\sX}(\lambda D-D_i)$ generate $D(\sX)$. 
\end{proof}

\begin{proof}[Proof of \cref{lem:yonedatheta}]
Since $\pi$ is proper, $\pi_*\cF$ is a bounded complex of coherent
sheaves by \cite{olsson}*{Theorem~1.2}. Applying \cref{lem:generate_by_nef} 
to the map $\pi_\ast \mathcal F \to R^0\pi_\ast \mathcal F$ gives the result.
\end{proof}

In light of \cref{lem:yonedatheta}, we  first show that the underived pushforward gives the correct sheaf before addressing the higher direct images.

\begin{lemma} \label{lem:pushforward0}
    If $-d \in \Theta$ and has image in $\Gamma_i$, then $R^0\pi_{j \ast} \pi_i^* \cO_{\sX_i}(-d) = \cO_{\sX_j}(-d)$ for all $j$.
\end{lemma}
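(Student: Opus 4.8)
The plan is to compute $R^0\pi_{j*}\pi_i^*\cO_{\sX_i}(-d)$ chart by chart on $\sX_j$, following the template of \cref{prop:pushforwardBTcoarse} and \cref{prop:pushforwardBTvariety}. Since $R^0\pi_{j*}$ of a coherent sheaf is computed on affine charts and $\pi_j^{-1}(U_\sigma)$ is the open toric substack of $\tsX$ on the subfan $\{\tau\in\Lambda\mid\tau\subseteq\sigma\}$, it suffices to check, for each $\sigma\in\Sigma_j$, that $H^0(\pi_j^{-1}(U_\sigma),\pi_i^*\cO_{\sX_i}(-d))$ equals $H^0(U_\sigma,\cO_{\sX_j}(-d))$ as spaces of global sections, compatibly with restriction (automatic here, as on both sides restriction is inclusion of polytope section spaces). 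First I would record the right-hand side: writing $-d=-d(\theta)$ with $\theta\in M_\RR$ and using that $\Cl(X)\to\Cl(\sX_j)$ carries $-d(\theta)$ to the class $\sum_{\rho\in\Sigma_j(1)}\lfloor\langle-\theta,u_\rho\rangle\rfloor D_\rho$, the computation recalled in \cref{subsec:cohomology} gives $H^0(U_\sigma,\cO_{\sX_j}(-d)) = \Bbbk\langle m\in M\mid\langle m,u_\rho\rangle\geq\langle\theta,u_\rho\rangle\text{ for all }\rho\in\sigma(1)\rangle$, the rounding disappearing because $\langle m,u_\rho\rangle\in\ZZ$.

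For the left-hand side, I would use that $d$ lies in $\Gamma_i$, the nef cone of $\sX_i$, so $\cO_{\sX_i}(-d)$ is anti-nef and, by \cref{defn:nef}, admits a convex piecewise-linear support function $F$ on $\Sigma_i$. As $\Lambda$ refines $\Sigma_i$, the pullback $\pi_i^*\cO_{\sX_i}(-d)$ has support function $F$, now linear on every cone of $\Lambda$ — the root-stack rescalings $\beta_\Lambda(e_\rho)=c_\rho u_\rho$ cancelling in the section computation, as in \cref{prop:pushforwardBTstack}. Hence $H^0(\pi_j^{-1}(U_\sigma),\pi_i^*\cO_{\sX_i}(-d)) = \Bbbk\langle m\in M\mid\langle m,u\rangle\geq F(u)\text{ for all }u\in\sigma\rangle$. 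The convexity of $F$ is what makes this tractable: writing $F|_\sigma=\max_k\ell_k$ as a maximum of linear functions, $\langle m,\cdot\rangle\geq F$ on $\sigma$ holds iff each $\langle m,\cdot\rangle-\ell_k$ is nonnegative on $\sigma$, iff it is nonnegative on the rays of $\sigma$; so the left-hand side equals $\Bbbk\langle m\in M\mid\langle m,u_\rho\rangle\geq F(u_\rho)\text{ for all }\rho\in\sigma(1)\rangle$. In particular $R^0\pi_{j*}\pi_i^*\cO_{\sX_i}(-d)$ is already seen to be an honest line bundle, not a twist of a proper ideal sheaf — this is the ``$d$ not too positive'' half of the delicate balance discussed after \cref{ex:atiyahFlop}.

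It then remains to match the two constraint sets on $\sigma(1)$, i.e. to show that $\langle m,u_\rho\rangle\geq F(u_\rho)$ and $\langle m,u_\rho\rangle\geq\langle\theta,u_\rho\rangle$ are equivalent for all $\rho\in\sigma(1)$. When $\rho$ is also a ray of $\Sigma_i$, one has $F(u_\rho)=\lceil\langle\theta,u_\rho\rangle\rceil$ and the two conditions coincide by integrality. The hard part will be a ray $\rho$ of $\sigma\in\Sigma_j$ that is contracted in $\Sigma_i$: then $F(u_\rho)$ is only the piecewise-linear (over $\Sigma_i$) extension of $\lceil\langle\theta,\cdot\rangle\rceil$, which can strictly exceed $\lceil\langle\theta,u_\rho\rangle\rceil$, so one must argue that the $F$-constraint nonetheless carves out the same lattice points as the $\theta$-constraint once the constraints from the other rays of $\sigma$ are imposed. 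This is precisely where the hypothesis $-d\in\Theta$ is needed beyond anti-nefness: the support function of a Bondal--Thomsen class is a tight integral approximation of the linear functional $\langle-\theta,\cdot\rangle$, and I would combine this with the convexity of $F$ and the half-open zonotope description of \cref{prop:zonotheta} to force the match. I expect this contracted-ray bookkeeping to be the main obstacle, handled along the lines of \cref{prop:pushforwardBTvariety}. Once the chart-wise equalities hold for every $\sigma\in\Sigma_j$, they glue to the desired isomorphism $R^0\pi_{j*}\pi_i^*\cO_{\sX_i}(-d)\cong\cO_{\sX_j}(-d)$.
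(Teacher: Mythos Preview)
Your chart-by-chart setup and the identification of both sides via the convex support function $F_i$ of $-d$ on $\Sigma_i$ versus the function $\lceil\langle\theta,\cdot\rangle\rceil$ are correct and track the paper's argument closely. Your convexity reduction of the constraint $\langle m,\cdot\rangle\geq F_i$ from all of $\sigma$ down to $\sigma(1)$, by writing $F_i|_\sigma=\max_k\ell_k$, is a clean variant of what the paper does (the paper instead uses the global inequality $F_j\geq F_i$ together with linearity of $F_j$ on $\sigma$), and both routes land on the same pair of ray-indexed constraint sets to be matched.

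Your only misstep is at the contracted rays, where you have the inequality backwards. You fear that $F_i(u_\rho)$ may strictly \emph{exceed} $\lceil\langle\theta,u_\rho\rangle\rceil$, making the $F_i$-constraint too restrictive, and hence plan to borrow from other rays of $\sigma$. But the hypothesis $d\in\Gamma_i$ forbids this: by the description of GKZ chambers in \cref{subsec:basics}, the concave support function of $d$ on $\Sigma_i$ is $\geq -a_\rho$ at \emph{every} $\rho\in\Sigma(1)$ for the lift $a_\rho=\lceil\langle\theta,u_\rho\rangle\rceil$, so after negating one has $F_i(u_\rho)\leq\lceil\langle\theta,u_\rho\rangle\rceil$ for all $\rho\in\Sigma(1)\supseteq\sigma(1)$. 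The place where $-d\in\Theta$ is genuinely used is the \emph{lower} bound: because $F_i$ is the linear interpolation over cones of $\Sigma_i$ of the values $\lceil\langle\theta,u_\tau\rangle\rceil\geq\langle\theta,u_\tau\rangle$ at $\tau\in\Sigma_i(1)$, one has $F_i(u)\geq\langle\theta,u\rangle$ for all $u$. Thus $\langle\theta,u_\rho\rangle\leq F_i(u_\rho)\leq\lceil\langle\theta,u_\rho\rangle\rceil$, and since $\langle m,u_\rho\rangle\in\ZZ$, the two conditions $\langle m,u_\rho\rangle\geq F_i(u_\rho)$ and $\langle m,u_\rho\rangle\geq\lceil\langle\theta,u_\rho\rangle\rceil$ are equivalent \emph{ray by ray}. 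No coupling with other rays of $\sigma$ is needed, and the ``main obstacle'' you anticipate dissolves; this is exactly the mechanism the paper uses.
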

\begin{proof}
    Let $\sigma \in \Sigma_j$.
    Since the map $\pi_i$ is induced by a stacky refinement, there is a stacky fan $\tsigma \subset \tSigma$ such that $\pi_i^{-1}(\sX_{\sigma}) = \sX_{\tsigma}$, where $\beta_j, \tbeta$ are suppressed from the notation for these stacks to avoid excessive subscripts. Now 
    \begin{align*}
        H^0 \left( \sX_\sigma, \cO_{\sX_j} (-d) \right) &= \Bbbk \left \langle m \in M \mid \langle m, \beta_j(e_\rho) \rangle \geq F_j(\beta_j(e_\rho)) \text{ for all } \rho \in \sigma(1) \right \rangle \\
        & = \Bbbk \left \langle m \in M \mid \langle m, u_\rho \rangle \geq F_j(u_\rho) \text{ for all } \rho \in \sigma(1) \right \rangle
\\
    \text{and}\quad 
        H^0\big( \sX_\sigma, (\pi_j)_* \pi_i^* \cO_{\sX_i}(-d)\big) &= H^0 \big(\sX_{\tsigma}, \pi_i^* \cO_{\sX_i}(-d) \big) \\
        &= \Bbbk \left \langle m \in M \,\bigg\vert\, \langle m, \tbeta(e_\rho) \rangle \geq F_i ( \tbeta(e_\rho))  \text{ for all } \rho \in \tsigma(1) \right \rangle, 
    \end{align*}
    where $F_i$ and $F_j$ are support functions for $-d$ on $\Sigma_i$ and $\Sigma_j$, respectively.
    We now show that these sets of sections coincide.

    As in the proof of \cref{prop:welldefined}, $F_j(u) \geq F_i(u)$ for all $u \in |\Sigma_j| = |\tSigma|$ because $d \in \Gamma_i$.
    If $\langle m, u_\rho \rangle \geq F_j(u_\rho)$ for all $\rho \in \sigma(1)$, then since $F_j$ is linear on $\sigma$, $\langle m, u \rangle \geq F_j(u)$ for all $u \in |\sigma|$.
    Since $\tbeta(e_\rho) \in |\sigma|$ for all $\rho \in \tSigma(1)$, then for all $\rho \in \tSigma(1)$, 
    $\langle m, \tbeta(e_\rho) \rangle \geq F_j ( \tbeta(e_\rho)) \geq F_i ( \tbeta(e_\rho))$. 

    In the other direction, suppose $\langle m, \tbeta(e_\rho) \rangle \geq F_i ( \tbeta(e_\rho))$ for all $\rho \in \tSigma(1)$.
    Since $\tbeta(e_\rho) = b_\rho u_\rho$ for some $b_\rho > 0$ for all $\rho \in \sigma(1)$ and $F_i$ respects positive scaling, $\langle m , u_\rho \rangle \geq F_i(u_\rho)$ for all $\rho \in \Sigma(1)$.
    Now choose $\theta \in M_\RR$ such that $d = d(\theta)$.
    Then $F_i(u_\rho) = \lceil \langle \theta, u_\rho \rangle \rceil$ for all $\rho \in \Sigma_i(1)$, and the same holds for $F_j$ on $\Sigma_j(1)$.
    If $u \in |\Sigma_i| = |\Sigma_j|$, then $u$ lies in a cone of $\Sigma_i$ and it follows that $F_i(u) \geq \langle \theta, u \rangle$.
    In particular, if $u_\rho \in \Sigma_j(1)$, 
    $\lceil \langle \theta, u_\rho \rangle \rceil = F_j(u_\rho) \geq F_i(u_\rho) \geq \langle \theta, u_\rho \rangle$, 

    so $F_i(u_\rho) = \lceil \langle \theta, u_\rho \rangle \rceil$, since $F_i(u_\rho) \in \ZZ$. As a result, $\langle m, u_\rho \rangle \geq F_j(u_\rho)$ for all $\rho \in \sigma(1)$.

    In summary, $H^0 \left( \sX_j, \cO_{\sX_j} (-d)\right) =  H^0 \left(\sX_j, (\pi_j)_* \pi_i^* \cO_{\sX_i}(-d) \right)$ for all $\sigma \in \Sigma_j$ in a fashion that respects restriction, which is given by inclusion of polytopes, as desired.
\end{proof}

To get the higher vanishing, we will apply \cref{lem:yonedatheta} to $\pi_{j*}\pi_i^* \cO_{\sX_i}(-d)$, and show that for any twist by a nef line bundle on $\sX_j$, this will have the same cohomology as $\cO_{\sX_j}(-d)$ would.  The following lemma provides the baseline, computing the cohomology of the twists of $\cO_{\sX_j}(-d)$.
As in the proof of \cref{lem:pushforward0}, recall that given $\theta \in M_\RR$,  $-d(\theta) \in \Theta$ is obtained by setting
$d(\theta)_\rho = \lceil \langle \theta, u_\rho \rangle \rceil$ 
for all $\rho \in \Sigma(1)$ and that every element of $\Theta$ arises in this fashion by \cref{defn:BT}.

\begin{prop} \label{prop:homzero}
    Suppose that $-d \in \Theta$.  For any $1\leq j \leq r$, if $A = \sum a_\rho D_\rho$ is a nef line bundle on $\sX_j$ with section polytope $P_A$, then
    \[ H^p(\sX_j,\cO_{\sX_j}(A-d)) = \begin{cases} \Bbbk \langle P_A \cap (M - \theta) \rangle & \text{if } p = 0, \\
    0 & \text{if }p \neq 0. \end{cases}\]
\end{prop}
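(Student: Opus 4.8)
The plan is to split into the two cases of the statement and, for each, reduce to machinery already in hand. For $p\neq 0$, I would simply invoke \cref{lem:BTvanishingCohomology} applied to $\sX=\sX_j$. The hypotheses are met: $\Sigma_j$ is simplicial with convex support since $X$ is semiprojective; $A$ is a nef divisor on $\sX_j$ in the sense of \cref{defn:nef} by assumption; and $-d\in\Theta=\Theta_X$ maps to an element of $\Theta_{\sX_j}$ under the surjection $\Cl(X)\to\Cl(\sX_j)$ discussed in \cref{subsec:BTGKZ}. Thus $H^p(\sX_j,\cO_{\sX_j}(A-d))=0$ for all $p>0$ (and there is no negative cohomology), so this case needs no new argument.

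For $p=0$ I would compute global sections directly, chart by chart, as in the proof of \cref{lem:pushforward0}. Fix $\theta\in M_\RR$ with $-d=-d(\theta)$, so that $d$ is represented by $\sum_\rho\lceil\langle\theta,u_\rho\rangle\rceil D_\rho$ and $A-d$ by $\sum_\rho\bigl(a_\rho-\lceil\langle\theta,u_\rho\rangle\rceil\bigr)D_\rho$. On an affine chart $U_\sigma\subseteq\sX_j$ for a maximal $\sigma\in\Sigma_j$, the global sections of $\cO_{\sX_j}(A-d)$ are spanned by the characters $\chi^m$ with $m\in M$ satisfying $\langle m,u_\rho\rangle\ge\lceil\langle\theta,u_\rho\rangle\rceil-a_\rho$ for all $\rho\in\sigma(1)$ (using $\beta_j(e_\rho)=u_\rho$); since $\langle m,u_\rho\rangle$ and $a_\rho$ are integers, this condition is equivalent to $\langle m-\theta,u_\rho\rangle\ge -a_\rho$. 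Intersecting over all maximal cones, $H^0(\sX_j,\cO_{\sX_j}(A-d))$ is spanned by the $\chi^m$ with $\langle m-\theta,u_\rho\rangle\ge -a_\rho$ for every $\rho\in\Sigma_j(1)$, which is exactly the condition that $m-\theta$ lies in the section polytope $P_A=\{m'\in M_\RR\mid\langle m',u_\rho\rangle\ge -a_\rho\ \forall\rho\}$. Recording the weight of $\chi^m$ as $m-\theta$, this identifies $H^0$ with $\Bbbk\langle P_A\cap(M-\theta)\rangle$, with the restriction maps among charts being inclusions of these sets of lattice points.

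As an alternative, the whole statement also follows from the argument inside the proof of \cref{lem:BTvanishingCohomology}: via \cite{CLSToricVarieties}*{Theorem 9.1.3(a)} one has $H^p(\cO_{\sX_j}(A-d))=\bigoplus_{m\in M}\widetilde H^{p-1}(V_{A-\theta,m})$, and $V_{A-\theta,m}$ was shown there to be homotopy equivalent to the convex set $V_{F-\theta,m}=\{u\in|\Sigma_j|\mid\langle m-\theta,u\rangle<F(u)\}$ for a concave support function $F$ of $A$. This set is empty precisely when $\langle m-\theta,u_\rho\rangle\ge -a_\rho$ for all rays, i.e.\ when $m-\theta\in P_A$, contributing one copy of $\Bbbk$ to $H^0$, and is contractible otherwise, contributing nothing in any cohomological degree; summing gives both assertions at once.

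I do not expect a serious obstacle here. The $p\neq 0$ part is a direct citation, and the $p=0$ part is bookkeeping with toric weight spaces. The only points requiring care are (i) confirming that nef-ness of $A$ on $\sX_j$ in the sense of \cref{defn:nef} is exactly what is needed to feed into \cref{lem:BTvanishingCohomology}, and (ii) tracking the translation correctly: because the representing divisor is $A-d$ with $d$ approximated by pairing against $\theta$, the natural index set for the $\Bbbk$-basis of $H^0$ is $M-\theta$ rather than $M$, which is why $P_A$ (whose lattice points index $H^0$ of $\cO_{\sX_j}(A)$) appears translated by $\theta$ in the final formula.
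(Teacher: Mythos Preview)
Your proposal is correct and follows essentially the same approach as the paper: the $p\neq 0$ case is handled by citing \cref{lem:BTvanishingCohomology}, and the $p=0$ case is the same integer/ceiling bookkeeping showing that $m\in P_{A-d}\cap M$ if and only if $m-\theta\in P_A\cap(M-\theta)$. The paper works directly with the section polytope $P_{A-d}$ rather than chart by chart, but the core identification is identical.
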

\begin{proof}
    Vanishing outside of degree zero follows from \cref{lem:BTvanishingCohomology}. Thus it remains only to calculate global sections. For this, recall that $H^0(\cO_{\sX_j}(A-d))$ is generated by the monomials indexed by $P_{A-d} \cap M$.
    Now choose $\theta \in M_\RR$ so that $-d = -d(\theta)$.
    Then
    \[ 
    P_{A-d} = \{ k \in M_\RR \mid \langle k , u_\rho \rangle \geq - a_\rho + \lceil \langle \theta, u_\rho \rangle \rceil \text{ for all } \rho \in \Sigma_j (1) \}. 
    \]
    If $m \in P_{A-d} \cap M$, then $m - \theta \in M- \theta$ and
    \begin{align*}
        \langle m - \theta, u_\rho \rangle = \langle m, u_\rho \rangle - \langle \theta, u_\rho \rangle 
        \geq -a _\rho + \lceil \langle \theta, u_\rho \rangle \rceil - \langle \theta, u_\rho \rangle 
        \geq -a_\rho
    \end{align*}
    for all $\rho \in \Sigma_j (1)$.
    That is, $m - \theta \in P_A \cap (M- \theta)$.

    In the other direction, if $k \in P_A \cap (M - \theta)$, then $k + \theta \in M$ and for all $\rho \in \Sigma_j (1)$, 
    $\langle k + \theta, u_\rho \rangle \geq -a_\rho + \langle \theta, u_\rho \rangle$. 
    It follows that for all $\rho \in \Sigma_j (1)$, since $\langle k + \theta, u_\rho \rangle \in \ZZ$, 
    $\langle k + \theta, u_\rho \rangle \geq -a_\rho + \lceil \langle \theta, u_\rho \rangle \rceil$. 
    Therefore $k + \theta \in P_{A-d} \cap M$.
\end{proof}

We note that the computation of global sections ($p = 0$) in \cref{prop:homzero} does not use that $A$ is nef. 
In order to compute the other side of \eqref{eq:testnef} in our case of interest, we will use the following stacky version of \cite{altmann-immaculate}*{Theorem 3.6} (see also \cite{altmann-displaying}).

\begin{theorem} \label{thm:abkw}
    Suppose that $\sX$ is a smooth toric Deligne--Mumford stack arising from a fan $\Sigma$ on $N_\RR$ and homomorphism $\beta\colon \ZZ^{\Sigma(1)} \to N$ such that $X_\Sigma$ is semiprojective and $\beta(e_\rho$) is primitive for all $\rho \in \Sigma(1)$.
    If $A$ and $B$ are nef divisors on $\sX$ with polytopes $P_A, P_B \subset M_\RR$, then for all $m \in M$, 
        \[ 
        H^p_m(\cO_{\sX}(A-B)) \cong \widetilde{H}^{p-1} (P_B \setminus (P_A - m)).
        \]
\end{theorem}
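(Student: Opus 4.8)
The plan is to reduce the statement to a purely combinatorial fact about the topology of a subcomplex of $|\Sigma|$ — essentially \cite{altmann-immaculate}*{Theorem 3.6} for the fan $\Sigma$ — and then transport that fact to the stack via the combinatorial equivalence $\beta_\RR$ set up in \cref{subsec:cohomology}. First I would invoke the equivariant form of \cite{CLSToricVarieties}*{Theorem 9.1.3(a)}, as recalled and extended to smooth toric DM stacks in \cref{subsec:cohomology}: writing $A = \sum_\rho a_\rho D_\rho$ and $B = \sum_\rho b_\rho D_\rho$, and using that $\beta(e_\rho) = u_\rho$ is primitive, the degree-$m$ part of $H^p(\cO_{\sX}(A-B))$ equals $\widetilde H^{p-1}(V^{\sX}_{A-B,m})$, where
\[
V^{\sX}_{A-B,m} \;=\; \bigcup_{\sigma \in \Sigma}\mathrm{conv}\bigl(e_\rho \mid \rho \in \sigma(1)\text{ and }\langle m, u_\rho\rangle < b_\rho - a_\rho\bigr)\subseteq\RR^{\Sigma(1)}.
\]
Since $\beta_\RR$ restricts to a (linear) homeomorphism on $|\widehat\Sigma|$, it carries $V^{\sX}_{A-B,m}$ homeomorphically onto $V_{A-B,m} := \bigcup_{\sigma}\mathrm{conv}(u_\rho \mid \rho \in \sigma(1),\ \langle m,u_\rho\rangle < b_\rho - a_\rho)\subseteq N_\RR$. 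Thus it suffices to prove $\widetilde H^{p-1}(V_{A-B,m}) \cong \widetilde H^{p-1}(P_B \setminus (P_A - m))$ for all $p$ and $m$. This is precisely the situation of \cite{altmann-immaculate}*{Theorem 3.6} (cf.\ also \cite{altmann-displaying}); the primitivity hypothesis on $\beta(e_\rho)$ is exactly what makes the combinatorics on $\sX$ coincide with that of $X_\Sigma$, so that the cited result applies.

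For the combinatorial heart I would run the Altmann--Immaculate argument. Let $F_A, F_B$ be the support functions of $A, B$ on $\Sigma$; since $A, B$ are nef these are concave, with $F_A(u_\rho) = -a_\rho$, $F_B(u_\rho) = -b_\rho$, and $F_A(u) = \min_{k\in P_A}\langle k,u\rangle$, $F_B(u) = \min_{\ell\in P_B}\langle \ell,u\rangle$. On each cone $\sigma$ both $\langle m,-\rangle$ and $F_A - F_B$ are linear, so a cone-by-cone radial deformation retraction (in the style of the proof of \cref{lem:BTvanishingCohomology}) identifies $V_{A-B,m}$ up to homotopy with $W_m := \{u \in |\Sigma|\setminus\{0\}\mid \langle m,u\rangle < F_A(u) - F_B(u)\}$; unwinding, $u \in W_m$ iff there is $\ell \in P_B$ with $\langle m+\ell, u\rangle < \langle k,u\rangle$ for all $k \in P_A$.

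Next I would introduce the correspondence
\[
Z \;:=\; \bigl\{(u,\ell)\mid u\in |\Sigma|\setminus\{0\},\ \ell\in P_B,\ \langle m+\ell, u\rangle < F_A(u)\bigr\},
\]
and check: its first projection lands in $W_m$ (since $F_B(u)\le\langle\ell,u\rangle$) and is surjective with fibers $\{\ell\in P_B\mid \langle m+\ell,u\rangle < F_A(u)\}$, which are nonempty (precisely because $u\in W_m$) and convex (the intersection of $P_B$ with an open halfspace); its second projection lands in $P_B\setminus(P_A-m)$ (if $m+\ell\in P_A$ then $\langle m+\ell,u\rangle\ge F_A(u)$), is surjective, and has fibers $\{u\in|\Sigma|\setminus\{0\}\mid \langle m+\ell,u\rangle < F_A(u)\}$, which are nonempty (exactly when $m+\ell\notin P_A$) and convex, being a superlevel set of the concave function $F_A - \langle m+\ell,-\rangle$ intersected with $|\Sigma|$, with the origin automatically excluded. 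A standard "contractible fibers on both sides" argument for polyhedral maps then gives $W_m \simeq Z \simeq P_B\setminus(P_A-m)$, and combining with the previous step finishes the proof.

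I expect the main obstacle to be precisely this last step: gluing the cone-by-cone radial retractions into a global homotopy equivalence $V_{A-B,m}\simeq W_m$, and justifying that a semialgebraic/polyhedral map with nonempty convex fibers is a homotopy equivalence (Vietoris--Begle or Leray-type input). Since both points are carried out in \cite{altmann-immaculate}, the cleanest write-up may simply cite that theorem for $X_\Sigma$ after the $\beta_\RR$-transfer; the only genuinely new content is the routine verification that the stacky cohomology formula of \cref{subsec:cohomology} together with the combinatorial equivalence $\beta_\RR$ make the cited result applicable to $\sX$.
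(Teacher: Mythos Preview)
Your proposal is correct and matches the paper's approach: the paper's proof is the one-liner ``This follows immediately from \cref{prop:semicohomology} and \cite{altmann-immaculate}*{Theorem 3.6},'' and your argument is precisely an unwinding of both ingredients (the $\beta_\RR$-transfer is the content of \cref{prop:semicohomology} under the hypothesis $\beta(e_\rho)=u_\rho$, which makes $\beta^*$ the identity on divisors). Your final paragraph already identifies this as the cleanest write-up; the sketch of the Altmann--Immaculate argument is extra but not needed.
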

\begin{proof} This follows immediately from \cref{prop:semicohomology} and  \cite{altmann-immaculate}*{Theorem 3.6}.
\end{proof}

\begin{remark}
    While we expect that analogues of \cref{thm:abkw} hold for more general toric stacks, we do not address that here in order to simplify the proof.
\end{remark}

To apply \cref{thm:abkw} with $B = d(\theta)$, the following observation will be essential.

\begin{lemma} \label{lem:stronglystar}
    Let $\alpha \in \Sigma(1)$ and $a \in \ZZ$.
    If $\langle - \theta + m, u_\alpha \rangle \geq - a$, then there is no $k \in M_\RR$ such that $\langle k, u_\alpha \rangle \geq \lfloor - \langle \theta, u_\alpha  \rangle \rfloor $ and $\langle k +m, u_\alpha \rangle < -a $.
\end{lemma}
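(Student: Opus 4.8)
The plan is to collapse the statement to a one-variable inequality by pairing everything with $u_\alpha$. Set $t \ce \langle \theta, u_\alpha \rangle \in \RR$. The single observation that does all the work is that $\langle m, u_\alpha \rangle$ is an integer — $m \in M$ (as in the setting of \cref{thm:abkw}) and $u_\alpha \in N$ — so that $\langle m, u_\alpha \rangle + a \in \ZZ$. Rewriting the hypothesis $\langle -\theta + m, u_\alpha \rangle \geq -a$ as $\langle m, u_\alpha \rangle + a \geq t$ and invoking this integrality, I would upgrade it to $\langle m, u_\alpha \rangle + a \geq \lceil t \rceil$.

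From there the argument is a two-line contradiction. Suppose some $k \in M_\RR$ satisfied both $\langle k, u_\alpha \rangle \geq \lfloor -t \rfloor$ and $\langle k + m, u_\alpha \rangle < -a$. Using the identity $\lfloor -t \rfloor = -\lceil t \rceil$ together with the upgraded hypothesis, one has $\lfloor -t \rfloor = -\lceil t \rceil \geq -\big(\langle m, u_\alpha \rangle + a\big)$, hence $\langle k, u_\alpha \rangle \geq -\big(\langle m, u_\alpha \rangle + a\big)$; adding $\langle m, u_\alpha \rangle$ to both sides gives $\langle k + m, u_\alpha \rangle \geq -a$, contradicting the assumed strict inequality. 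This proves the lemma.

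There is no substantive obstacle here; the only point requiring any care — and the conceptual heart of the statement — is the integrality step passing from $\langle m, u_\alpha \rangle + a \geq t$ to $\langle m, u_\alpha \rangle + a \geq \lceil t \rceil$. This is precisely the reason the floor $\lfloor -\langle \theta, u_\alpha \rangle \rfloor$ occurring as the $\alpha$-coefficient of $d(\theta)$ interacts cleanly with the polytope conditions, and it is exactly what will make \cref{thm:abkw} applicable with $B = d(\theta)$ in the argument that follows.
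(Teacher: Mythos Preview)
Your proof is correct and essentially identical to the paper's: both reduce to the observation that $-a - \langle m, u_\alpha \rangle$ is an integer bounded above by $-\langle \theta, u_\alpha \rangle$, hence by $\lfloor -\langle \theta, u_\alpha \rangle \rfloor$, which immediately gives the conclusion. The only cosmetic difference is that you phrase the integrality step via $\lceil t \rceil$ and the identity $\lfloor -t \rfloor = -\lceil t \rceil$, while the paper works directly with the floor.
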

\begin{proof} It is enough to show that if $\langle - \theta + m, u_\alpha \rangle \geq - a$, then
$-a - \langle m , u_\alpha \rangle \leq \lfloor - \langle \theta, u_\alpha \rangle \rfloor$, 
since then $\langle k, u_\alpha \rangle \geq \lfloor - \langle \theta, u_\alpha \rangle \rfloor$ will imply that $\langle k + m, u_\alpha \rangle \geq - a$.
But, $ -a - \langle m , u_\alpha \rangle \leq - \langle \theta, u_\alpha \rangle $ implies that
$-a - \langle m , u_\alpha \rangle \leq \lfloor - \langle \theta, u_\alpha \rangle \rfloor$ because $a$ and $\langle m , u_\alpha \rangle$ are integers.
\end{proof}

We are now prepared to prove \cref{lem:thetatransform2}.

\begin{proof}[Proof of \cref{lem:thetatransform2}]
Choose $\theta \in M_\RR$ so that $-d = -d(\theta)$.
From \cref{lem:yonedatheta}, \cref{lem:pushforward0}, and \cref{prop:homzero}, it is enough to show that
\begin{equation} \label{eq:thetatransproof}
H^p(\cO_{\sX_j}(A) \otimes (\pi_j)_* \pi_i^* \cO_{\sX_i}(-d)) = \begin{cases} \Bbbk \langle P_{A} \cap (M-\theta) \rangle & \text{if } p = 0, \\ 0 & \text{if } p \neq 0,
\end{cases} 
\end{equation}
where $A$ is any nef divisor on $\sX_j$ with section polytope $P_A$. Now note that
\[ 
H^p(\cO_{\sX_j}(A) \otimes (\pi_j)_* \pi_i^* \cO_{\sX_i}(-d)) 
= H^p (\pi_j^*\cO_{\sX_j}(A) \otimes \pi_i^* \cO_{\sX_i}(-d)) 
\]
by adjunction.
Further observe that $\pi_j^* A$ and $\pi_i^* d$ are nef on $\tsX$ and their section polytopes are preserved under pullback: $P_{\pi_j^* A} = P_A$ and $P_{\pi_i^* d} = P_d$.
Thus, by applying \cref{thm:abkw}, 
$
    H^p_m(\pi_j^*\cO_{\sX_i}(A) \otimes \pi_i^* \cO_{\sX_i}(-d)) \cong \widetilde{H}^{p-1} (P_d \setminus (P_A -m))
$
for all $m \in M$.
Since $d$ has image in $\Gamma_i$, 
\begin{equation} \label{polytoped}
    P_d = \{ k \in M_\RR \mid \langle k , u_\rho \rangle \geq \lfloor - \langle \theta, u_\rho \rangle \rfloor \text{ for all } \rho \in \Sigma(1) \}.
\end{equation}
Therefore 
\begin{equation} \label{eq:polytopedifference}
P_d \setminus (P_A - m) 
= \left\{ k \in M_\RR 
\, \bigg\vert\,  
\begin{array}{l} \langle k, u_\rho \rangle \geq \lfloor - \langle \theta, u_\rho \rangle \rfloor \text{ for all } \rho \in \Sigma(1) \text{ and } \\ \langle k + m, u_\alpha \rangle < - a_\alpha \text{ for some } \alpha \in \Sigma_j(1) \end{array} \right\}.
\end{equation}

Now we check the cohomology in degree zero.
Note that $-\theta \in P_d$. Thus, $P_d \setminus (P_A - m) = \emptyset$ implies that $m - \theta \in P_A$.
The converse is an immediate consequence of \cref{lem:stronglystar}, which implies that $-\theta \in P_d \setminus (P_A - m)$ if $P_d \setminus (P_A - m) \neq \emptyset$.  Therefore \eqref{eq:thetatransproof} holds for $p = 0$.

Finally, we check that the higher cohomology vanishes.
We will show that  $P_d \setminus (P_A - m)$ is star-shaped around $-\theta$ and thus contractible when it is nonempty.
Suppose that $y \in P_d \setminus (P_A - m)$ so that there exists $\alpha \in \Sigma_j(1)$ such that $\langle y +m, u_\alpha \rangle < - a_\alpha$.
As a consequence of \cref{lem:stronglystar}, $\langle - \theta +m, u_\alpha \rangle < -a_\alpha$ so the straight-line path from $y$ to $-\theta$ is contained in $P_d \setminus (P_A - m)$, as claimed.
\end{proof}

\begin{figure}
\begin{center}
\begin{tikzpicture}[scale=0.7]
    \draw[step=3cm, gray, very thin, opacity =0.2] (-12,-6) grid (6,3);
    \draw[gray, very thin, opacity =0.2] (-12,3) -- (-12,-6) -- (6, -6);
    \foreach \x [count=\xi] in {0,1,2,3}
    \draw[gray, very thin,opacity =0.2] (6,\xi-1) -- (-12,\xi-7);
    \draw[black, very thick] (3,0) -- (6,0) --(6,-3) -- (-6, -3) -- (3,0);
    \draw[gray, very thin,opacity =0.2] (6,-1) -- (-9,-6);
    \draw[gray, very thin,opacity =0.2] (6,-2) -- (-6,-6);
    \draw[gray, very thin,opacity =0.2] (6,-3) -- (-3,-6);
    \draw[gray, very thin,opacity =0.2] (6,-4) -- (0,-6);
    \draw[gray, very thin,opacity =0.2] (6,-5) -- (3,-6);
    \draw[gray, very thin,opacity =0.2] (-12,-2) -- (3,3);
    \draw[gray, very thin,opacity =0.2] (-12,-1) -- (0,3);
    \draw[gray, very thin,opacity =0.2] (-12,0) -- (-3,3);
    \draw[gray, very thin,opacity =0.2] (-12,1) -- (-6,3);
    \draw[gray, very thin,opacity =0.2] (-12,2) -- (-9,3);
    \filldraw[gray, opacity=0.3] (3,0) -- (6,0) --(6,-3) -- (-6, -3) -- (3,0);
    \filldraw[black] (3.5,-0.3) circle (2pt);
    \node at (4, -0.3) {$-\theta$};
    \node at (5.5, -0.45) {$P_{d_5}$};
    \node at (2, -5) {$P_{4}$};
    \filldraw[black, opacity=0.35]  (3,-2) --(3, -6) -- (-9,-6) --(3,-2);
    \filldraw[blue, opacity=0.5]  (6,-1) --(6, -3) -- (0,-3) --(6,-1);
    \filldraw[red, opacity=0.2] (3,2) --(-12, -3) -- (3,-3) --(3,2);
\end{tikzpicture}
\end{center}

\caption{This figure illustrates some of the polytopes that appear in the proof of \cref{lem:thetatransform2} in the case that $\sX_i=\cH_3$ is the Hirzebruch surface $\cH_3$ from \Cref{fig:HirzSecondaryFan}.  See \cref{ex:hirzPolytopes} for a detailed description.
}
\label{fig:pushpullproof}
\end{figure}
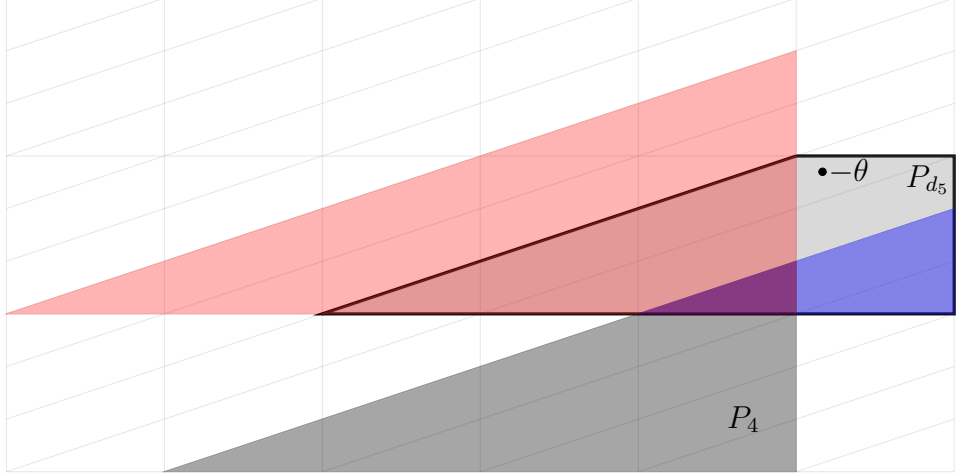

\begin{example}\label{ex:hirzPolytopes}
\Cref{fig:pushpullproof} illustrates some of the polytopes that appear in the proof of \cref{lem:thetatransform2} in the case that $\sX_i=\cH_3$ is the source of the Fourier--Mukai transform and $\sX_j=\PP(1,1,3)$ is the target.  
We choose $-d(\theta) =-d_5=(-1,-1)$, which comes from the chamber corresponding to the Hirzebruch surface $\cH_3$ (see \Cref{fig:HirzSecondaryFan}).  The polytope $P_{d_5}$ on $\cH_3$ is the outlined trapezoid.  The triangles represent $P_A$ for different ample divisors $A$ on $\PP(1,1,3)$, with the bottom triangle $P_4$ corresponding to $\cO_{\PP(1,1,3)}(4)$.
Note that $P_{d_5} \setminus P_4$ is not convex, but is nevertheless star-shaped around $-\theta$, yielding vanishing of higher cohomology.
\end{example}

\begin{remark}
    The fact that the difference of polytopes in \eqref{eq:polytopedifference} is star-shaped appears in \cite{FH2}*{Proposition 5.5} in the case that  $-A$ additionally lies in $\Theta$ by a different argument involving exit paths in stratified spaces.
    That case is enough for many of the computations in the following subsection, but it does not imply the entirety of the $\Theta$-Transform Lemma.
\end{remark}

\begin{remark} \label{rem:borisov}
After this paper was initially posted, Lev Borisov graciously explained to us how to prove \cref{lem:thetatransform2} more directly as we now outline.
As in the proof above, \cref{lem:pushforward0} gives a direct calculation of $R^0 \pi_{j \ast} \pi_i^\ast \cO_{\sX_i}(-d)$, and given that, we need only show that the higher cohomology vanishes. 
A natural approach is to proceed similarly to \cref{prop:pushforwardBTcoarse} and prove that the higher cohomology of $\pi_i^\ast \cO_{\sX_i}(-d)$ vanishes on the open toric substacks of $\tsX$ lying over the toric charts of $\sX_j$. 

To do that, one would like to produce a homotopy equivalence similar to the proof of \cref{lem:BTvanishingCohomology}.
More precisely, if $F_i$ is the support function for $-d = -d(\theta)$ on $\sX_i$ and $\sigma$ is any cone of $\Sigma_j$, we should show that the set 
\[ V_{F_i, m} = \{ u \in \sigma :  \langle m, u \rangle < F_i(u) \} \] 
retracts onto the convex hull of the set of rays of $\sigma(1)$ where $\langle \theta - m, \cdot \rangle$ is positive, which coincides with the rays where $F_i(\cdot) - \langle m, \cdot \rangle$ is positive.
We were unable to find a direct argument as $V_{F_i, m}$ is not convex.
However, one can replace $V_{F_i,m}$ with the complementary set $V_{F_i,m}^c$ where $F_i(\cdot) - \langle m, \cdot \rangle$ is non-positive by an alternative description of the cohomology of a toric line bundle given in \cite{borisov-hua}*{Proposition 4.1}, which is Alexander dual to the description in \cref{subsec:cohomology}.
Then, a linear retraction is possible since the convexity of $F_i$ implies that $V_{F_i,m}^c$ is convex.

Although this argument circumvents the need for \cref{lem:yonedatheta} and \cref{thm:abkw}, we have kept our original argument as it hews closer to how we originally understood the statement to be true through mirror symmetry, and we hope it gives additional insight into the cohomology of the Bondal-Thomsen collection.
\end{remark}

Heuristics and computations lead us to believe that the following also holds.

\begin{conjecture}[Uniform higher $\Theta$ vanishing]
If $-d \in\Theta$ and $i,j\in\{1,\dots,r\}$, then 
$
R^{>0}\pi_{j\ast}\pi_i^\ast \mathcal O_{\sX_i}(-d) = 0
$; 
that is, the Fourier--Mukai transform of any Bondal--Thomsen element from any chamber is always a sheaf.
\end{conjecture}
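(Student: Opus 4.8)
The plan is to prove the sheafiness of $\pi_{j\ast}\pi_i^\ast\cO_{\sX_i}(-d)$ locally on $\sX_j$, reducing to a combinatorial–topological statement in the spirit of the proofs of \cref{lem:pushforward0} and \cref{lem:thetatransform2}, but now without the hypothesis that $d$ lies in the chamber $\Gamma_i$. Fix $\theta\in M_\RR$ with $-d=-d(\theta)$, let $F_i$ be the support function of $\cO_{\sX_i}(-d)$ on $\Sigma_i$, and for each cone $\sigma\in\Sigma_j$ let $\tilde\sigma\subseteq\tSigma$ be the subfan of $\tsX$ lying over $\sX_\sigma$, so that $\pi_j^{-1}(\sX_\sigma)=\sX_{\tilde\sigma}$. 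Since the $\sX_\sigma$ are quotients of affine schemes by diagonalizable groups, the Leray spectral sequence for $\pi_j|_{\sX_{\tilde\sigma}}$ degenerates, and the vanishing $R^p\pi_{j\ast}\pi_i^\ast\cO_{\sX_i}(-d)=0$ for all $p>0$ is equivalent to $H^p(\sX_{\tilde\sigma},\pi_i^\ast\cO_{\sX_i}(-d))=0$ for all $p>0$ and all $\sigma\in\Sigma_j$.

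By the toric cohomology formula recalled in \cref{subsec:cohomology}, this group is $\bigoplus_{m\in M}\widetilde H^{p-1}(W_m)$, where $W_m$ is the union, over those cones $\tilde\tau$ of $\tilde\sigma$ on which $\langle m,\,\cdot\,\rangle< F_i$ throughout $\tilde\tau\setminus\{0\}$, of the sets $\tilde\tau\setminus\{0\}$; so the conjecture is equivalent to the assertion that each $W_m$ is empty or contractible. The one structural input from $-d\in\Theta$ is the inequality $F_i(u)\ge\langle\theta,u\rangle$ for all $u\in|\Sigma_i|=|\Sigma_j|$, exactly as in the proof of \cref{lem:pushforward0}; from this one deduces that the convex ``half-cone'' $\{\,u\in|\sigma| : \langle m-\theta,u\rangle<0\,\}$ is contained in $\{\,u\in|\sigma| : \langle m,u\rangle< F_i(u)\,\}$, and a \cref{lem:stronglystar}-type argument should show that the former is nonempty whenever $W_m$ is. The remaining, and decisive, point is to produce a deformation retraction of $W_m$ onto this convex half-cone.

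The step I expect to be the main obstacle is precisely this contractibility claim, because when $d\notin\Gamma_i$ the line bundle $\cO_{\sX_i}(-d)$ is neither nef nor anti-nef, so $F_i$ is neither concave nor convex and $\{\langle m,\,\cdot\,\rangle< F_i\}$ need not be convex---this is exactly where the concavity used in \cref{lem:BTvanishingCohomology}, and the convexity of the complementary set exploited in Borisov's argument in \cref{rem:borisov}, both break down. I see three routes. First, a direct sweeping argument: order the cones $\tilde\tau$ of $\tilde\sigma$ by a shelling compatible with $\theta$ and show, using $F_i\ge\langle\theta,\,\cdot\,\rangle$, that each newly attached convex half-cone $\{u\in\tilde\tau : \langle m,u\rangle< F_i(u)\}$ meets the previously built region in a contractible set, so that $W_m$ is assembled by contractible attachments. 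Second, an Alexander-duality route extending \cite{borisov-hua}*{Proposition 4.1} past the convex case: the dual description lives on the link of $0$ in $|\sigma|$ and is cut out by $F_i-\langle m,\,\cdot\,\rangle\le 0$; although it is not geodesically convex, it is star-shaped toward any ray on which $F_i-\langle m,\,\cdot\,\rangle$ is maximal, and the $\Theta$-proximity of $F_i$ to $\langle\theta,\,\cdot\,\rangle$ should force such a ray and rule out extra topology. Third, a wall-crossing induction on the combinatorial distance in $\Sigma_{GKZ}$ between $\Gamma_i$ and a chamber $\Gamma_k$ containing $d$ (so that $\cO_{\Cox}(-d)=\pi_k^\ast\cO_{\sX_k}(-d)$ by \cref{defn:ThetaCox}): the base case is the $\Theta$-Transform Lemma (\cref{lem:PushPull}), and each elementary wall-crossing changes $\pi_i^\ast\cO_{\sX_i}(-d)$ only by the cone of a map between line bundles supported on the flipped, flopped, or contracted locus, whose contribution to $R^{>0}\pi_{j\ast}$ is controlled because $-d\in\Theta$ keeps the relevant restricted degrees in the range $(-1,0]$, as the Atiyah-flop analysis after \cref{ex:atiyahFlop} already illustrates. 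In every route the crux is the same: extracting from membership in $\Theta$ enough rigidity to substitute for the missing concavity of $F_i$.
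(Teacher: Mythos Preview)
The statement you are attempting to prove is explicitly labeled a \emph{conjecture} in the paper and is left open: the authors write that ``heuristics and computations lead us to believe that the following also holds,'' but they provide no proof. There is therefore no proof in the paper to compare your proposal against.

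Your proposal is not a proof but an outline of possible strategies, and you are candid about this: each of your three routes contains a step flagged with ``should,'' and you correctly identify the central obstruction---that once the hypothesis $d\in\Gamma_i$ is dropped, the support function $F_i$ is neither concave nor convex, so neither the retraction argument of \cref{lem:BTvanishingCohomology} nor the Alexander-dual convexity argument of \cref{rem:borisov} applies. Your reduction to the contractibility of the sets $W_m$ is sound and matches the paper's own local-cohomology framework. However, none of the three routes is close to complete. The shelling argument would need a shelling compatible with $\theta$ that also respects the piecewise-linear sign pattern of $F_i-\langle m,\cdot\rangle$, and it is unclear such a shelling exists in general. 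The star-shapedness claim in the dual picture is precisely the kind of statement that fails when $F_i$ has mixed convexity; you would need an argument specific to support functions arising from $\Theta$, and none is supplied. The wall-crossing induction is the most structured approach, but the assertion that each elementary modification changes $\pi_i^\ast\cO_{\sX_i}(-d)$ ``only by the cone of a map between line bundles'' on the exceptional locus is not obviously true for arbitrary $-d\in\Theta$ and arbitrary walls---the Atiyah-flop computation in \cref{ex:atiyahFlop} shows exactly that such cones can carry nontrivial higher cohomology once $d$ leaves the range $\{-1,0\}$, and for a general $\sX_i$ there is no reason the restricted degrees stay in $(-1,0]$ after several wall crossings. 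In short, you have correctly located the difficulty the paper leaves open, but have not resolved it.
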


\subsection[Computations in the Cox category]{Computations in the Cox category using the $\Theta$-Transform Lemma} \label{subsec:thetacomputations}
We now turn to some consequences of \cref{lem:thetatransform2} for $D_{\Cox}$.
In particular, we compute morphisms in $D_{\Cox}$ as follows.
If $-d,-d' \in \Theta$, $d$ has image in $\Gamma_i$, and $d'$ has image in $\Gamma_j$, then \cref{lem:thetatransform2} combined with adjunction yield 
\begin{align*}
        \RHom(\cO_{\Cox}(-d), \cO_{\Cox}(-d')) &= \RHom(\pi_i^* \cO_{\sX_i}(-d), \pi_j^* \cO_{\sX_j}(-d')) \\
        &= \RHom( \cO_{\sX_i}(-d), (\pi_{i})_* \pi_j^* \cO_{\sX_j}(-d')) \\
        &= \RHom( \cO_{\sX_i}(-d), \cO_{\sX_i}(-d')).
\end{align*}
Thus, applying \cref{prop:homzero}, we immediately obtain the following.  This shows that sheaf cohomology computations in $D_{\Cox}$ are combinatorial in nature.

\begin{cor} \label{cor:partialorder}
    Let $-d, -d' \in \Theta$.  For any $\theta'$ such that $d' = d(\theta')$, 
    \begin{equation} \label{eq:Coxhom} \RHom (\cO_{\Cox}(-d), \cO_{\Cox}(-d')) = \Bbbk \langle P_{d} \cap (M-\theta') \rangle,
    \end{equation}
    concentrated in degree zero.
    In particular, if there are any nonzero morphisms from $\cO_{\Cox}(-d)$ to $\cO_{\Cox}(-d')$, then $d - d'$ is effective.
    \hfill$\square$
\end{cor}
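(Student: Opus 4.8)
The plan is to deduce the corollary formally from the computation displayed just before its statement, together with \cref{prop:homzero}.

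First I would make the reduction precise. By \cref{defn:ThetaCox} we may write $\cO_{\Cox}(-d)=\pi_i^\ast\cO_{\sX_i}(-d)$, where $\Gamma_i$ is a maximal chamber containing the image of $d$, and $\cO_{\Cox}(-d')=\pi_j^\ast\cO_{\sX_j}(-d')$, where $\Gamma_j$ contains the image of $d'$. Applying the $(\pi_i^\ast,\pi_{i\ast})$-adjunction and then \cref{lem:thetatransform2} (to $-d'\in\Theta$, whose image lies in $\Gamma_j$, so that $\pi_{i\ast}\pi_j^\ast\cO_{\sX_j}(-d')=\cO_{\sX_i}(-d')$) gives
\begin{align*}
\RHom_{\tsX}(\cO_{\Cox}(-d),\cO_{\Cox}(-d'))
&\cong \RHom_{\sX_i}\bigl(\cO_{\sX_i}(-d),\cO_{\sX_i}(-d')\bigr)\\
&\cong \textstyle\bigoplus_p H^p\bigl(\sX_i,\cO_{\sX_i}(d-d')\bigr)[-p].
\end{align*}
Because the image of $d$ lies in $\Gamma_i$, the chamber corresponding to $\sX_i$, the line bundle $\cO_{\sX_i}(d)$ is nef, and its section polytope is exactly the polytope $P_d$ appearing in \eqref{polytoped} (this is the same identification used in the proof of \cref{lem:thetatransform2}). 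Applying \cref{prop:homzero} with $\sX_i$ in place of $\sX_j$, with $A=d$, and with Bondal--Thomsen element $-d'=-d(\theta')$, yields $H^p(\sX_i,\cO_{\sX_i}(d-d'))=0$ for $p\neq0$ and $H^0(\sX_i,\cO_{\sX_i}(d-d'))=\Bbbk\langle P_d\cap(M-\theta')\rangle$; this is \eqref{eq:Coxhom}.

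For the ``in particular'' I would argue directly with polytopes. If the right-hand side of \eqref{eq:Coxhom} is nonzero, choose $m'\in M$ with $m'-\theta'\in P_d$. By the description of $P_d$ in \eqref{polytoped} this says $\langle m',u_\rho\rangle-\langle\theta',u_\rho\rangle\geq\lfloor-\langle\theta,u_\rho\rangle\rfloor=-\lceil\langle\theta,u_\rho\rangle\rceil$ for all $\rho\in\Sigma(1)$; since the left-hand side is an integer, rounding up gives
\[
\langle m',u_\rho\rangle\;\geq\;\lceil\langle\theta',u_\rho\rangle\rceil-\lceil\langle\theta,u_\rho\rangle\rceil\;=\;-(d-d')_\rho
\qquad\text{for all }\rho\in\Sigma(1),
\]
using $d=d(\theta)$ and $d'=d(\theta')$. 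Hence $\sum_\rho\bigl((d-d')_\rho+\langle m',u_\rho\rangle\bigr)D_\rho$ is an effective torus-invariant divisor, and since the character term $\sum_\rho\langle m',u_\rho\rangle D_\rho$ contributes $0$ in $\Cl(X)$, its class is $d-d'$; thus $d-d'$ is effective.

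I do not expect a genuine obstacle: the statement is essentially a formal consequence of the $\Theta$-Transform Lemma and \cref{prop:homzero}. The only points requiring care are bookkeeping. One must compute on the particular model $\sX_i$ whose nef cone contains $d$, so that $\cO_{\sX_i}(d)$ is nef and \eqref{polytoped} applies — this is precisely the hypothesis that powers \cref{lem:thetatransform2}. And one must phrase the final effectivity claim in $\Cl(X)$ rather than merely as the existence of a global section on $\sX_i$ (the class group $\Cl(X)\to\Cl(\sX_i)$ need not be injective), which is why the argument keeps track of an honest monomial of degree $d-d'$ rather than just invoking $H^0(\sX_i,\cO_{\sX_i}(d-d'))\neq0$.
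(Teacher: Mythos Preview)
Your proposal is correct and follows exactly the paper's approach: the displayed computation just before the corollary (adjunction plus \cref{lem:thetatransform2}) reduces to $H^*(\sX_i,\cO_{\sX_i}(d-d'))$, and then \cref{prop:homzero} with $A=d$ nef on $\sX_i$ gives \eqref{eq:Coxhom}. Your explicit polytope argument for the ``in particular'' (working with all rays of $\Sigma(1)$ via \eqref{polytoped} to produce an honest monomial in $S_{d-d'}$, rather than merely a section on $\sX_i$) is a useful point of care that the paper leaves implicit; note only that when you write ``since the left-hand side is an integer'' you should first move $\lceil\langle\theta,u_\rho\rangle\rceil$ to the other side so that the integer quantity is $\langle m',u_\rho\rangle+\lceil\langle\theta,u_\rho\rangle\rceil$.
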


These morphism spaces are functorially identified with morphisms of $S$-modules.

\begin{cor}\label{cor:SmodHom}
 If $-d,-d',-d''\in\Theta$, then there are isomorphisms $\phi_{d-d'}$ from $S_{d-d'}$ to $ \RHom(\cO_{\Cox}(-d),\cO_{\Cox}(-d'))$ such that
 \vspace*{-2mm}
    \begin{enumerate}[noitemsep]
        \item Composition of morphisms $ \cO_{\Cox}(-d)\to \cO_{\Cox}(-d')
        \to \cO_{\Cox}(-d'')$ is identified with multiplication of polynomials $S_{d-d'}\otimes S_{d'-d''}\to S_{d-d''}$, and
        \item For all $i$, $(\pi_i)_* \circ \phi_{d-d'}$ coincides with restriction from $S$ to $\sX_i$; that is, there is a commutative diagram where $\rho$ is the natural restriction functor from $D(S)$ to $D(\sX_i)$: 
        \[
        \xymatrix{
        S_{d-d'}\ar[rrr]^-{\phi_{d-d'}}\ar[rrrd]_{\rho}&&&\RHom(\cO_{\Cox}(-d),\cO_{\Cox}(-d'))\ar[d]^{\pi_{i*}}\\
        &&&\RHom(\cO_{\sX_i}(-d),\cO_{\sX_i}(-d')).
        }
        \]
    \end{enumerate}
\end{cor}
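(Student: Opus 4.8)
The plan is to build $\phi_{d-d'}$ from the identification already used in the proof of \cref{cor:partialorder}, and then to verify (1) and (2) by pushing forward and restricting to the dense torus. Assume $d$ has image in the chamber $\Gamma_i$, so that $\cO_{\Cox}(-d)=\pi_i^\ast\cO_{\sX_i}(-d)$ by \cref{defn:ThetaCox}. Exactly as in the display preceding \cref{cor:partialorder}, adjunction together with \cref{lem:thetatransform2} (applied with the two chambers interchanged, which is legitimate since $-d'\in\Theta$ has image in its own chamber) shows that $\pi_{i\ast}$ gives an isomorphism $\RHom_{D_{\Cox}}(\cO_{\Cox}(-d),\cO_{\Cox}(-d'))\xrightarrow{\ \sim\ }\RHom_{\sX_i}(\cO_{\sX_i}(-d),\cO_{\sX_i}(-d'))$; by \cref{prop:homzero} (with $A=d$, which is nef on $\sX_i$ as $d\in\Gamma_i$) the target is concentrated in degree $0$ and equals $H^0(\sX_i,\cO_{\sX_i}(d-d'))$, which in turn equals $S_{d-d'}$ because the irrelevant locus $V(B_i)$ has codimension $\geq 2$ in $\Spec S$. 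Under this last identification the restriction functor $\rho\colon D(S)\to D(\sX_i)$ of the statement sends the multiplication-by-$f$ map $S(-d)\to S(-d')$ back to $f\in S_{d-d'}$. I would then define $\phi_{d-d'}$ as the inverse of the composite isomorphism $S_{d-d'}=H^0(\sX_i,\cO_{\sX_i}(d-d'))\xleftarrow{\ \pi_{i\ast}\ }\RHom_{D_{\Cox}}(\cO_{\Cox}(-d),\cO_{\Cox}(-d'))$. By construction it is an isomorphism onto a group in degree $0$ with $\pi_{i\ast}\circ\phi_{d-d'}=\rho$; that is, property (2) holds for the chamber containing $d$.

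Next I would prove property (2) for every $k$, which is the crux. Fix $k$; by \cref{lem:thetatransform2}, $\pi_{k\ast}\cO_{\Cox}(-d)=\cO_{\sX_k}(-d)$ and $\pi_{k\ast}\cO_{\Cox}(-d')=\cO_{\sX_k}(-d')$, so $\pi_{k\ast}$ carries $\Hom_{D_{\Cox}}(\cO_{\Cox}(-d),\cO_{\Cox}(-d'))$ into $\Hom_{\sX_k}(\cO_{\sX_k}(-d),\cO_{\sX_k}(-d'))=S_{d-d'}$, and it suffices to show $\pi_{k\ast}\circ\phi_{d-d'}$ is the identity of $S_{d-d'}$. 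I would check this on monomials: for $f=x^a$, both $\pi_{k\ast}\phi_{d-d'}(x^a)$ and $\rho(x^a)=x^a$ lie in $H^0(\sX_k,\cO_{\sX_k}(d-d'))$, which injects into $H^0(T,\cO_T)$ since $T$ is dense. Each $\pi_\bullet$ restricts to an isomorphism over the dense torus, so restriction to $T$ commutes with each pushforward; on $T$ all the line bundles involved acquire compatible trivializations from the localization of the Cox ring at all the variables, and under these the restriction to $T$ of $\rho(x^a)$---computed on $\sX_i$ or on $\sX_k$---and of $\phi_{d-d'}(x^a)$---computed on $\tsX$ via $\phi_{d-d'}(x^a)|_T=(\pi_{i\ast}\phi_{d-d'}(x^a))|_T=\rho(x^a)|_T$---is multiplication by the same Laurent monomial determined by $x^a$. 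Hence $\pi_{k\ast}\phi_{d-d'}(x^a)$ and $x^a$ agree after restriction to $T$, so they are equal, which gives property (2) for all $k$.

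Finally, for property (1), let $-d\in\Theta$ have image in $\Gamma_i$ and take $f\in S_{d-d'}$, $g\in S_{d'-d''}$. Since $\pi_{i\ast}$ is injective on $\RHom_{D_{\Cox}}(\cO_{\Cox}(-d),\cO_{\Cox}(-d''))$, it is enough to compare $\pi_{i\ast}$ of the two sides; using that $\pi_{i\ast}$ is a functor and property (2) for the chamber $\Gamma_i$ applied to $\phi_{d-d'}$, $\phi_{d'-d''}$ and $\phi_{d-d''}$ alike, the composition $\phi_{d'-d''}(g)\circ\phi_{d-d'}(f)$ pushes forward to $\rho(g)\circ\rho(f)$ and $\phi_{d-d''}(fg)$ pushes forward to $\rho(fg)$; these coincide because $\rho$ is a functor and the composite $S(-d)\xrightarrow{f}S(-d')\xrightarrow{g}S(-d'')$ is multiplication by $fg$. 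I expect the main obstacle to be precisely property (2) for a chamber $\Gamma_k$ with $d\notin\Gamma_k$: there is no morphism directly relating $\sX_i$ and $\sX_k$, and $\pi_i^\ast\cO_{\sX_i}(-d')$ need not agree with $\cO_{\Cox}(-d')$ once $d'\notin\Gamma_i$, so the identification cannot simply be transported along a map of stacks. Restricting to the dense torus---where every $\pi_\bullet$ is an isomorphism and every Bondal--Thomsen line bundle is canonically trivial---is what forces chamber-independence; everything else is bookkeeping with adjunction and the $\Theta$-Transform Lemma.
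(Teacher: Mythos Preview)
Your argument is correct and takes a genuinely different route from the paper. The paper works combinatorially throughout: it identifies $\RHom(\cO_{\Cox}(-d),\cO_{\Cox}(-d'))$ with $S_{d-d'}$ by showing directly that the lattice points $P_d\cap(M-\theta')$ biject with monomials in the polytope $Q_{d-d'}$, then verifies (1) by observing that composition in $D_{\Cox}$ and multiplication in $S$ are both given by addition of lattice points in section polytopes (which are preserved under pullback to $\tsX$), and verifies (2) by noting that $Q_{d-d'}$ sits inside the section polytope of $d-d'$ on every $\sX_i$ and that pushforward respects the monomial grading. You instead define $\phi_{d-d'}$ via adjunction and the $\Theta$-Transform Lemma, then establish chamber-independence (the general case of (2)) by restricting to the dense torus---where every $\pi_\bullet$ is an isomorphism and every Bondal--Thomsen bundle is canonically trivial---and finally deduce (1) from (2) by functoriality of $\pi_{i\ast}$ together with its injectivity on the relevant $\Hom$ group.

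Both arguments are short; the paper's is more explicit and keeps the combinatorial bookkeeping visible, while yours is more categorical and isolates the one nontrivial point (chamber-independence of $\phi$) cleanly. Your torus-restriction trick is in fact the same mechanism the paper uses later in the proof of \cref{cor:strongResDiag}, so your approach has the virtue of foreshadowing that argument. One small remark: in your proof of (1) you invoke property (2) for $\Gamma_i$ applied to $\phi_{d'-d''}$ and $\phi_{d-d''}$, whose defining chambers need not be $\Gamma_i$; this is fine because you have already established (2) for every chamber, but it is worth saying explicitly that the general case of (2), not just the defining-chamber case, is what is being used.
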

\begin{proof}
    Suppose that $\cO_{\Cox}(-d) = \pi_i^* \cO_{\sX_i}(-d)$.
    From \cref{cor:partialorder}, it is enough to show that $P_d \cap (M-\theta')$ is in bijection with monomial generators of $S_{d-d'}$, where $P_d$ is given by \eqref{polytoped}.
    By definition, the monomial generators of $S_{d-d'}$ are in bijection with integral points of
    \[ Q_{d-d'} = \left\{ k \in M_\RR \mid \langle k , u_\rho \rangle \geq - \lceil \langle \theta, u_\rho \rangle \rceil + \lceil \langle \theta', u_\rho \rangle \rceil \text{ for all } \rho \in \Sigma (1) \right\}. \]
    However, $Q_{d-d'} \cap M = P_d \cap (M-\theta')$ by the same argument as in \cref{prop:homzero}.  Thus
    \[ 
    \RHom(\cO_{\Cox}(-d),\cO_{\Cox}(-d')) \cong S_{d-d'}.
    \]

    To see that these identifications respect composition, note that composition in $D_{\Cox}$ is given by addition of integral points in the section polytopes of line bundles on $\widetilde{\sX}$. 
    Similarly, multiplication of polynomials can be realized by multiplying monomials, which corresponds to addition of lattice points.
    Since section polytopes are preserved under pullback, we have shown above that the section polytope of $\cO_{\Cox}(d-d')$ is $Q_{d-d'}$, which demonstrates that composition is identified with multiplication of polynomials.

    Checking the final statement amounts to unraveling the definitions. The polytope $Q_{d-d'}$, whose lattice points index the monomial basis of $S_{d-d'}$, is a subset of the section polytope of $d-d'$ on any $\sX_i$, and the pushforward functor respects the monomial grading.
\end{proof}

\begin{example}\label{ex:CoxHoms}
We illustrate \cref{cor:partialorder} for a pair of Bondal--Thomsen elements $-d(\theta)$ and $-d(\theta')$ on $\mathcal H_3$ in \Cref{fig:BThom}. In \Cref{fig:HirzSecondaryFan}, these correspond to $d(\theta) = d_5=(1,1)$ and $d(\theta') = d_2=(-2,1)$.  
\begin{figure}
\begin{center}
\begin{tikzpicture}
    \draw[step=3cm, gray, very thin, opacity =0.2] (-6,-3 ) grid (6,0);
    \draw[gray, very thin, opacity =0.2] (-6,0) -- (-6,-3) -- (6, -3);
    \foreach \x in {0,1,2,3}
    \draw[gray, very thin,opacity =0.2] (6,\x-3) -- (6-3*\x,-3)
    (3-\x-\x-\x,0) -- (-6, -3 +\x);
    \filldraw[gray, opacity=0.3] (3,0) -- (6,0) --(6,-3) -- (-6, -3) -- (3,0);
    \filldraw[white] (3.5,-0.3) circle (2pt);
    \node at (3.9, -0.3) {$-\theta$};
    \node at (5.5, -0.7) {$P_{d_5}$};
    \foreach \x in {0,1,2,3}
    \filldraw[black] (5.5-3*\x,-2.7) circle (2pt);
    \node at (5.5,-2.3) {$-\theta'$};
\end{tikzpicture}
\end{center}

\caption{This figure demonstrates a sample computation of $\Hom(\cO_{\Cox}(-d),\cO_{\Cox}(-d'))$.  See \cref{ex:CoxHoms} for a detailed description.}
\label{fig:BThom}
\end{figure}
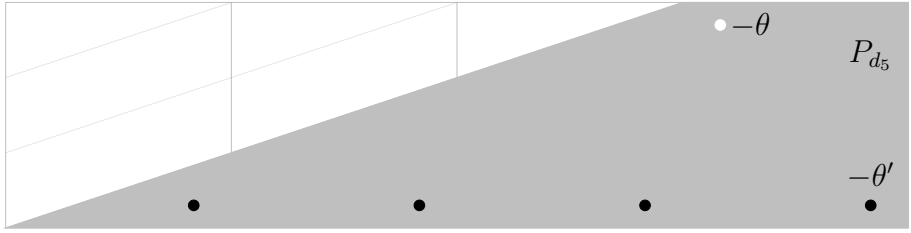
The gray trapezoid is $P_{d_5}$, which contains four translates of $-\theta'$.  Thus 
\begin{equation*} \RHom_{D_{\Cox}} (\cO_{\Cox}(-d(\theta)), \cO_{\Cox}(-d(\theta'))) = \Bbbk \langle P_{d} \cap (M-\theta') \rangle \cong \Bbbk^{4}.
\qedhere
    \end{equation*}
\end{example}

Rephrasing \cref{cor:partialorder}, we obtain:

\begin{cor}\label{cor:strongExceptional}
    If $\mathcal T_{\Cox}\ce\bigoplus_{-d\in \Theta} \cO_{\Cox}(-d)$ as in the statement of Theorem~\ref{thm:main1}, then $\RHom_{D_{\Cox}}(\mathcal T_{\Cox},\mathcal T_{\Cox})$ is concentrated entirely in homological degree $0$.  If $X$ is projective and the order on $\Theta$ is taken as in Definition~\ref{defn:ordering}, then $\Theta$ is a strong exceptional collection.
    \hfill $\square$
\end{cor}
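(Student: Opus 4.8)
The first assertion will follow at once from \cref{cor:partialorder}. Since $\Theta$ is a finite set,
\[
\RHom_{D_{\Cox}}(\mathcal T_{\Cox},\mathcal T_{\Cox}) \;=\; \bigoplus_{-d,\,-d'\in\Theta}\RHom_{D_{\Cox}}(\cO_{\Cox}(-d),\cO_{\Cox}(-d')),
\]
and \cref{cor:partialorder} shows each summand on the right is concentrated in homological degree $0$; hence so is the whole group. So for the first sentence I would simply unwind the definition of $\mathcal T_{\Cox}$ and cite that corollary.

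For the second sentence, suppose $X$ is projective and fix a total order $-d_1,\dots,-d_t$ on $\Theta$ refining the partial order of \cref{defn:ordering}. I would verify the three conditions defining a strong exceptional collection for $(\cO_{\Cox}(-d_1),\dots,\cO_{\Cox}(-d_t))$. The vanishing of all higher Ext-groups between any pair of the objects is exactly the assertion just established. For semiorthogonality I need $\RHom_{D_{\Cox}}(\cO_{\Cox}(-d_i),\cO_{\Cox}(-d_j))=0$ whenever $i>j$: by \cref{cor:partialorder} a nonzero morphism would force $d_i-d_j = -d_j-(-d_i)$ to be effective, which is precisely what the remark following \cref{defn:ordering} forbids for $i>j$. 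Finally, for exceptionality of each $\cO_{\Cox}(-d)$ I must show $\RHom_{D_{\Cox}}(\cO_{\Cox}(-d),\cO_{\Cox}(-d))=\Bbbk$; by \cref{cor:SmodHom} this group is isomorphic to $S_0$, and since $X$ is projective the fan $\Sigma$ is complete, so a monomial $\prod_\rho x_\rho^{a_\rho}$ of $\Cl(X)$-degree $0$ must come from some $m\in M$ with $\langle m,u_\rho\rangle = a_\rho \geq 0$ for every ray $\rho$; completeness then forces $m=0$ and all $a_\rho=0$, whence $S_0=\Bbbk$. (Equivalently, one reads $S_0=\Bbbk$ off \cref{cor:partialorder}, since the polytope appearing there degenerates to a single point when $\Sigma$ is complete.)

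I do not anticipate a genuine obstacle here: the real work has all been done by the $\Theta$-Transform Lemma and the corollaries \cref{cor:partialorder} and \cref{cor:SmodHom}. The only step requiring care is the bookkeeping around the ordering — one must confirm that the direction of the partial order in \cref{defn:ordering} is compatible with the sign convention being used for strong exceptional collections, so that ``a morphism exists only when $d-d'$ is effective'' translates into ``all morphisms point forward along the chosen total order.'' It is also worth recording explicitly where projectivity enters: it guarantees that $\Sigma$ is complete (which forces $S_0=\Bbbk$, hence exceptionality) and that the effective cone is pointed (so that \cref{defn:ordering} genuinely defines a partial order admitting a total refinement). For merely semiprojective $X$ exceptionality can fail — for instance on $\AA^n$ one has $\RHom_{D_{\Cox}}(\cO_{\Cox},\cO_{\Cox}) = \Bbbk[x_1,\dots,x_n]$ — which is why the stronger conclusion is confined to the projective case.
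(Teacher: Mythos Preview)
Your proposal is correct and matches the paper's approach: the paper simply records this corollary as a rephrasing of \cref{cor:partialorder} with no further argument, and your expanded verification (decomposing $\RHom(\mathcal T_{\Cox},\mathcal T_{\Cox})$ into summands, invoking \cref{cor:partialorder} for degree-zero concentration and semiorthogonality, and using $S_0=\Bbbk$ via completeness of $\Sigma$ for exceptionality) fills in exactly the details the paper leaves implicit.
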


As the morphisms in \eqref{eq:Coxhom} are entirely characterized by the polytope $P_d$ and $\theta'$, it is worth making a few comments on the combinatorial nature of $P_d$.
Let $\theta$ be such that $d = d(\theta)$ and set $a(\theta)_\rho = \lfloor - \langle \theta, u_\rho \rangle \rfloor$ for $\rho \in \Sigma(1)$.
Then since $d \in \sigma$, 
\[ 
P_d = \{ k \in M_\RR \mid \langle k, u_\rho \rangle \geq a(\theta)_\rho \text{ for all } \rho \in \Sigma(1) \}. 
\]
Further, $-\theta \in P_d$ as noted previously and enjoys the special property that, in addition, for all $\rho \in \Sigma(1)$, 
$\langle - \theta, u_\rho \rangle < a(\theta)_\rho + 1$; that is, $a(\theta)_\rho$ is the best possible integral lower bound for $\langle - \theta, u_\rho \rangle$ simultaneously for all $\rho \in \Sigma(1)$.
This provides yet  another combinatorial description of $\Theta$; namely, the Bondal--Thomsen collection corresponds to the set of polytopes with normal fan a subset of $\Sigma(1)$ that contain a point for which the defining hyperplanes are optimal lower bounds simultaneously for all $\rho \in \Sigma(1)$ (including virtual facets).

\section[Generating the Cox category]
{Generating $D_{\Cox}$}\label{sec:generation}

In this section, we prove that $D_{\Cox}$ is generated by the $\cO_{\Cox}(-d)$ for $-d \in \Theta$.
As this is the final ingredient for \cref{thm:main1}, we then record the proof of that result in \cref{sec:proofsMainResults}.

\subsection[Proof that Theta generates the Cox category]{Proof that $\Theta$ generates $D_{\Cox}$}\label{subsec:generation}

\begin{prop}\label{prop:generation}
    The Cox category $D_{\Cox}(X)$ is generated by $\cO_{\Cox}(-d)$ with $-d \in \Theta$.
\end{prop}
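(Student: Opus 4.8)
The plan is to realize $D_{\Cox}$ as a Verdier quotient of $D(\tsX)$ and to transport a known generating set through that quotient. Write $\mathcal{C} \coloneqq \langle \cO_{\Cox}(-d) : -d \in \Theta\rangle \subseteq D_{\Cox}$ for the thick subcategory that we must show equals $D_{\Cox}$. The first ingredient is that the Bondal--Thomsen collection of the single smooth toric DM stack $\tsX$ already generates $D(\tsX)$, by \cite{HHL}*{Corollary D} (see also \cite{FH}) applied to $\tsX$ itself. The second is that, by \cref{lem:semiwarmup}, the subcategory $\mathcal{Q}$ of \cref{defn:kernelCategory} is the right orthogonal $D_{\Cox}^{\perp}$; in particular $D_{\Cox} \subseteq {}^{\perp}\mathcal{Q}$, so the Verdier localization $q \colon D(\tsX) \to D(\tsX)/\mathcal{Q}$ restricts to a fully faithful exact functor on $D_{\Cox}$ (the standard fact that the left orthogonal of a thick subcategory embeds in the quotient). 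Since $q$ is exact and essentially surjective, it carries the generating set $\{\cO_{\tsX}(-e) : -e \in \Theta_{\tsX}\}$ of $D(\tsX)$ onto a generating set of $D(\tsX)/\mathcal{Q}$.

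Granting these, it suffices to prove that each $q(\cO_{\tsX}(-e))$ lies in the fully faithful image of $\mathcal{C}$: then $D(\tsX)/\mathcal{Q}$ is generated by objects of $\mathcal{C}$, so $q(\mathcal{C}) = q(D_{\Cox})$, and full faithfulness of $q$ on $D_{\Cox}$ forces $\mathcal{C} = D_{\Cox}$. Fix $-e \in \Theta_{\tsX}$, write $-e = -d(\theta)$ for $\theta \in M_{\RR}$, let $-d \in \Theta_X$ be the class associated to the same $\theta$, and let $\Gamma_k$ be the GKZ chamber containing the image of $d$, so that $\cO_{\Cox}(-d) = \pi_k^{*}\cO_{\sX_k}(-d)$. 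By \cref{prop:pushforwardBTstack}, $\pi_{k*}\cO_{\tsX}(-e) = \cO_{\sX_k}(-d)$, hence by adjunction
\[
\Hom_{D(\tsX)}\!\left(\cO_{\Cox}(-d),\, \cO_{\tsX}(-e)\right) \cong \Hom_{\sX_k}\!\left(\cO_{\sX_k}(-d),\, \cO_{\sX_k}(-d)\right) \cong \Bbbk,
\]
concentrated in degree $0$ since $\sX_k$ is toric. Let $\psi \colon \cO_{\Cox}(-d) \to \cO_{\tsX}(-e)$ be the map of line bundles corresponding to the identity. If $\pi_{i*}\psi$ is an isomorphism for every $i$, then the cone of $\psi$ lies in $\mathcal{Q}$, so $q(\psi)$ is an isomorphism and $q(\cO_{\tsX}(-e)) \cong q(\cO_{\Cox}(-d)) \in \mathcal{C}$, as needed.

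It remains to check that $\pi_{i*}\psi$ is an isomorphism for all $i$. The $\Theta$-Transform Lemma (\cref{lem:PushPull}) gives $\pi_{i*}\cO_{\Cox}(-d) = \cO_{\sX_i}(-d)$, while \cref{prop:pushforwardBTstack} gives $\pi_{i*}\cO_{\tsX}(-e) = \cO_{\sX_i}(-d)$ as well, both via the same $\theta$ and the surjection $\Theta_X \twoheadrightarrow \Theta_{\sX_i}$ from \cref{subsec:BTGKZ}. Thus $\pi_{i*}\psi$ is an endomorphism of $\cO_{\sX_i}(-d)$, hence a scalar, because $\Hom_{\sX_i}(\cO_{\sX_i}(-d), \cO_{\sX_i}(-d)) = H^{0}(\cO_{\sX_i}) = \Bbbk$. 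This scalar is nonzero: $\psi$ is a nonzero torus-equivariant map between line bundles that are trivial on the dense torus of $\tsX$, so its restriction there is a nonzero constant, and this restriction agrees with that of $\pi_{i*}\psi$ to the torus of $\sX_i$ under the birational toric morphism $\pi_i$. Hence $\pi_{i*}\psi$ is an isomorphism for every $i$.

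The one substantive step is the last one: that $\pi_{i*}\psi$ is an isomorphism for \emph{all} $i$, not merely for $i = k$. This is precisely where the $\Theta$-Transform Lemma does the real work, since it is what makes $\pi_{i*}\cO_{\Cox}(-d)$ equal to the expected Bondal--Thomsen line bundle on $\sX_i$ rather than a genuinely more complicated Fourier--Mukai transform; everything else is formal manipulation with the Verdier quotient and the pushforward computations for Bondal--Thomsen bundles. As a byproduct the argument also identifies $D_{\Cox}$ with $D(\tsX)/\mathcal{Q}$, which underlies the semiorthogonal decomposition of $D(\tsX)$ by $\mathcal{Q}$ and $D_{\Cox}$.
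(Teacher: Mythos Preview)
Your proof is correct and follows essentially the same route as the paper's: produce, for each $-e\in\Theta_{\tsX}$, a map $\psi\colon\cO_{\Cox}(-d)\to\cO_{\tsX}(-e)$ adjoint to the identity, use the $\Theta$-Transform Lemma together with \cref{prop:pushforwardBTstack} to see that $\pi_{i*}\psi$ is an endomorphism of $\cO_{\sX_i}(-d)$ that agrees with the identity over the torus, conclude $\on{cone}(\psi)\in\mathcal Q$, and then pass to the Verdier quotient $D(\tsX)/\mathcal Q$ to transport Bondal--Thomsen generation from $\tsX$ to $D_{\Cox}$.

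One small imprecision: you assert $\Hom_{\sX_i}(\cO_{\sX_i}(-d),\cO_{\sX_i}(-d))=H^0(\cO_{\sX_i})=\Bbbk$, but in the semiprojective (non-projective) case $H^0(\cO_{\sX_i})$ can be larger than $\Bbbk$ (e.g.\ the Atiyah flop in \cref{ex:atiyahFlop}). This does not damage the argument, since the torus comparison you give next is the real reason $\pi_{i*}\psi$ is an isomorphism: an endomorphism of a line bundle on an integral scheme that restricts to a nonzero constant on a dense open is that constant globally. The paper phrases this step the same way, without the spurious $H^0=\Bbbk$ claim.
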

\begin{proof}
Let $\theta \in M_{\RR}$, and let $-\widetilde{d}\ce
\widetilde{d}(\theta) \in \Cl(\tsX)$ and $-d\ce -d(\theta)\in \Cl(X)$ be the
corresponding elements of the class groups.  Suppose that the image of $-d$ is
in $\Gamma_i$, the chamber of $\Sigma_{GKZ}$ corresponding to $\sX_i$ so that,
by Definition~\ref{defn:ThetaCox}, 
$\cO_{\Cox}(-d)=\pi_i^*\cO_{\sX_i}(-d)$.  By adjunction and
 \cref{prop:pushforwardBTstack}, 
$\RHom_{\tsX}(\cO_{\Cox}(-d), \cO_{\tsX}(-\widetilde{d}))\cong \RHom_{\sX_i}(\cO_{\sX_i}(-d), \cO_{\sX_i}(-d))$.
The latter $\Hom$ group contains an identity morphism, so write $f\colon \cO_{\Cox}(-d) \to \cO_{\tsX}(-\widetilde{d})$ for the corresponding morphism.
Let $C(f)$ be the cone of $f$.  We next observe that $\pi_{j*}(C(f))=0$ for all $j$.  Namely, by the $\Theta$-Transform Lemma and \cref{prop:pushforwardBTstack} as referenced above,  $\pi_{j*}(C(f))$ is
\[
 \cO_{\sX_j}(-d) \xrightarrow{\ \pi_{j*}(f)\ }\cO_{\sX_j}(-d).
\]
Since $f$ was defined as the adjoint to the identity on $\cO_{\sX_i}(-d)$, and since the identity, $f$, and $\pi_{j*}(f)$ all agree over the torus, it follows that $\pi_{j*}(C(f))=0$.

Now, recall that $\mathcal Q\subseteq D(\tsX)$  was defined in
\cref{subsec:BTdefinitions} as the subcategory of $\cE\in D(\tsX)$ such that 
$\pi_{i*}\cE =0$ for all $i$, and consider the Verdier quotient 
$\pi \colon D(\tsX) \to D(\tsX)/\mathcal Q$. 
Since $D_{\Cox} \subset {}^\perp \mathcal Q$, the composition 
$D_{\Cox} \to D(\tsX) \to D(\tsX)/\mathcal Q$ 
is fully faithful. 
The line bundles $\cO_{\tsX}(-\widetilde{d})$ 
for $-\widetilde{d} \in \Theta_{\tsX}$ generate $D(\tsX)$ by~\cite{HHL}*{Corollary D} or \cite{FH}, and 
hence they also generate $D(\tsX)/\mathcal Q$. From the above, 
$\pi (\mathcal O_{\Cox}(-d)) = 
\pi (\mathcal O_{\tsX}(-\widetilde{d}))$.

Thus both functors
$\langle \mathcal O_{\Cox}(-d) \rangle \subset D_{\Cox} \to 
D(\widetilde{X}) / \mathcal Q$ 
are equivalences.
\end{proof}

We derive some immediate corollaries including a tilting 
equivalence for $D_{\Cox}$ and associated semiorthogonal decompositions.

\begin{defn} \label{def:BT_category}
Let $\mathcal P_{\Theta}$ be the category whose objects are 
$-d \in \Theta$ and whose morphisms are given by inclusion
$\psi_{S} \colon \cP_\Theta \to D(S)$, where 
$-d \mapsto S(-d)$.
\end{defn}

\begin{defn} \label{def:Psimap}
\cref{cor:SmodHom} implies that there is a functor $\psi_{\Cox}\colon
\mathcal P_{\Theta}\to D_{\Cox}$ that sends $S(-d)\mapsto \cO_{\Cox}(-d)$. 
This naturally extends to a functor $\Psi\colon K_{\Theta}(S) \to D_{\Cox}$, where we recall from \cref{conv:functors} that $K_\Theta(S)$ denotes the homotopy category of $\cP_{\Theta}$.
\end{defn}

\begin{cor} \label{cor:tilting_equiv}
The functor $\Psi$ is an equivalence.
\end{cor}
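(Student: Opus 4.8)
The plan is to prove that $\Psi$ is both fully faithful and essentially surjective, using exactly two inputs assembled just above: the morphism computation in Corollaries~\ref{cor:SmodHom}, \ref{cor:partialorder} and \ref{cor:strongExceptional}, and the generation statement in Proposition~\ref{prop:generation}. First note that $\Psi$ is a triangulated functor: it is defined termwise on complexes, so it commutes with the shift and takes mapping cones to mapping cones. All of what follows is then a formal dévissage.

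For full faithfulness I would run the usual two-variable argument. Fix $-d\in\Theta$ and let $\mathcal C\subseteq K_\Theta(S)$ be the full subcategory of objects $B$ for which the map $\Hom_{K_\Theta(S)}(S(-d),B[n])\to\Hom_{D_{\Cox}}(\cO_{\Cox}(-d),\Psi B[n])$ induced by $\Psi$ is an isomorphism for all $n\in\ZZ$. The five lemma shows $\mathcal C$ is triangulated, and it is closed under direct summands since $\Hom$ functors split over direct sums; so it suffices to check $B=S(-d')$. There the left side is $S_{d-d'}$ concentrated in homological degree $0$ --- the complexes $S(-d)$ and $S(-d')[n]$ are each supported in a single degree, so a nonzero chain map can only occur for $n=0$ and there are no nonzero homotopies --- while the right side is $\RHom_{D_{\Cox}}(\cO_{\Cox}(-d),\cO_{\Cox}(-d'))\cong S_{d-d'}$ concentrated in degree $0$ by Corollary~\ref{cor:partialorder}, with the isomorphism being precisely the one induced by $\Psi$ and compatible with composition by Corollary~\ref{cor:SmodHom}. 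Hence $\mathcal C=K_\Theta(S)$, and repeating the argument in the first variable (against a general object of $K_\Theta(S)$) gives that $\Psi$ is fully faithful.

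For essential surjectivity, observe that the essential image of $\Psi$ is a triangulated subcategory of $D_{\Cox}$ containing $\cO_{\Cox}(-d)$ for every $-d\in\Theta$. Since $\Psi$ is fully faithful and $K_\Theta(S)$ is idempotent-complete --- it is $\Perf(A_\Theta)$, the homotopy category of bounded complexes of finitely generated projective modules over $A_\Theta=\End_{D_{\Cox}}(\mathcal T_{\Cox})$ as identified in Corollary~\ref{cor:SmodHom} --- any split idempotent on an object $\Psi B$ of $D_{\Cox}$ pulls back (by full faithfulness) to an idempotent on $B$, which splits; so the essential image is closed under direct summands, hence is a thick subcategory. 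By Proposition~\ref{prop:generation} the $\cO_{\Cox}(-d)$ generate $D_{\Cox}$ as a thick subcategory, so the essential image is all of $D_{\Cox}$ and $\Psi$ is an equivalence.

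Conceptually this is just the tilting theorem: Corollary~\ref{cor:strongExceptional} together with Proposition~\ref{prop:generation} says $\mathcal T_{\Cox}$ is a tilting object for $D_{\Cox}$ with endomorphism algebra $A_\Theta$, and $\Psi$ is the resulting comparison functor for the natural dg-enhancement of $D_{\Cox}\subseteq D(\tsX)$; formality of $\RHom(\mathcal T_{\Cox},\mathcal T_{\Cox})$ (concentration in degree $0$) upgrades the abstract equivalence $D_{\Cox}\simeq\Perf\bigl(\RHom(\mathcal T_{\Cox},\mathcal T_{\Cox})\bigr)$ to the stated one. I expect the only delicate point to be the last step of the second-to-last paragraph --- reflecting split idempotents through $\Psi$, i.e. knowing that $K_\Theta(S)$ is genuinely idempotent-complete (equivalently, that it really is $\Perf(A_\Theta)$ rather than a proper triangulated subcategory of it); everything else is formal given the cited results.
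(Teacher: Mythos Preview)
Your proposal is correct and follows exactly the paper's approach: the paper's proof is the two-line ``\cref{cor:SmodHom} implies that $\Psi$ is fully faithful; \cref{prop:generation} implies it is essentially surjective,'' and you have simply unpacked both steps. Your flagging of idempotent-completeness of $K_\Theta(S)$ as the only subtle point is apt---the paper silently uses it (since \cref{prop:generation} only gives generation as a \emph{thick} subcategory)---and your identification $K_\Theta(S)\simeq\Perf(A_\Theta)$ via \cref{cor:SmodHom} is the right way to justify it.
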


\begin{proof}
\cref{cor:SmodHom} implies that $\Psi$ is fully faithful. 
\cref{prop:generation} implies it is essentially surjective.
\end{proof}

\begin{cor}\label{cor:semiorthogonalDCox}
There are semiorthogonal decompositions 
$
D(\tsX) = \langle \mathcal Q ,D_{\Cox}\rangle = \langle D_{\Cox}, \mathcal Q^\vee \rangle.
$
\end{cor}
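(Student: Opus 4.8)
The plan is to prove the first decomposition $D(\tsX)=\langle\mathcal Q,D_{\Cox}\rangle$ by hand, then obtain the second one formally via the duality functor. Semiorthogonality is free: by \cref{lem:semiwarmup} we have $\mathcal Q=D_{\Cox}^{\perp}$, so $\Hom_{\tsX}(D_{\Cox},\mathcal Q)=0$, which is exactly the orthogonality condition needed for $\langle\mathcal Q,D_{\Cox}\rangle$ (with $D_{\Cox}$ the right-hand, right-admissible factor). What remains is to show that every $X\in D(\tsX)$ fits into a triangle $D\to X\to Q\to D[1]$ with $D\in D_{\Cox}$ and $Q\in\mathcal Q$; equivalently, that $D_{\Cox}\hookrightarrow D(\tsX)$ admits a right adjoint.

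I would check this on a generating set and then spread it out by dévissage. The proof of \cref{prop:generation} already supplies the triangles on generators: for $-\widetilde d=-\widetilde d(\theta)\in\Theta_{\tsX}$ it produces a map $f\colon\cO_{\Cox}(-d)\to\cO_{\tsX}(-\widetilde d)$ whose cone $C(f)$ satisfies $\pi_{j\ast}C(f)=0$ for all $j$, i.e.\ $C(f)\in\mathcal Q$; and the $\cO_{\tsX}(-\widetilde d)$ generate $D(\tsX)$ by \cite{HHL} or \cite{FH}. So it suffices to show that the full subcategory $\mathcal A$ of objects admitting such a triangle is thick. Stability under shifts is clear. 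For cones: given $X_1\to X_2$ with triangles $D_i\to X_i\to Q_i$, the vanishing $\Hom(D_1,Q_2)=0$ lets the composite $D_1\to X_1\to X_2$ lift to a map $D_1\to D_2$, and the $3\times 3$ lemma applied to the resulting morphism of triangles presents $\cone(X_1\to X_2)$ as an extension of $\cone(Q_1\to Q_2)\in\mathcal Q$ by $\cone(D_1\to D_2)\in D_{\Cox}$. For direct summands: $D(\tsX)$ is idempotent complete, and since $\Hom(D_{\Cox},\mathcal Q)=0$ the triangle $D\to X\to Q$ is unique up to unique isomorphism, so an idempotent on $X$ induces compatible idempotents on $D$ and $Q$ along which the triangle splits. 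Thus $\mathcal A=D(\tsX)$, proving the first decomposition; equivalently $D_{\Cox}$ is right admissible and $D(\tsX)=\langle D_{\Cox}^{\perp},D_{\Cox}\rangle=\langle\mathcal Q,D_{\Cox}\rangle$.

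For the second decomposition, I would apply the contravariant autoequivalence $(-)^{\vee}$ of $D(\tsX)$ (see before \cref{lem:self_dual}). A contravariant equivalence sends a semiorthogonal decomposition $\langle\mathcal A_1,\mathcal A_2\rangle$ to $\langle\mathcal A_2^{\vee},\mathcal A_1^{\vee}\rangle$, reversing the order, and $D_{\Cox}^{\vee}=D_{\Cox}$ by \cref{lem:self_dual}. Hence $D(\tsX)=\langle D_{\Cox}^{\vee},\mathcal Q^{\vee}\rangle=\langle D_{\Cox},\mathcal Q^{\vee}\rangle$, where $\mathcal Q^{\vee}={}^{\perp}D_{\Cox}$.

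The main obstacle I anticipate is precisely the dévissage step — the standard-but-delicate fact that right admissibility of $D_{\Cox}$ can be tested on generators. The cone case hinges on the lifting of $D_1\to X_1\to X_2$ made possible by semiorthogonality together with the $3\times 3$ lemma, and the summand case additionally uses Karoubi-completeness of $D(\tsX)$. The remaining ingredients — semiorthogonality, the triangles on generators, and the duality bookkeeping — are already established or purely formal.
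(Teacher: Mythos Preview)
Your argument is correct, and the second half (obtaining $\langle D_{\Cox},\mathcal Q^\vee\rangle$ from $\langle\mathcal Q,D_{\Cox}\rangle$ via $(-)^\vee$ and \cref{lem:self_dual}) matches the paper verbatim. The first half, however, takes a genuinely different route. The paper does not run a d\'evissage on triangles; instead it exploits the equivalence $D_{\Cox}\xrightarrow{\sim} D(\tsX)/\mathcal Q$ already established inside the proof of \cref{prop:generation}. Since $D_{\Cox}\subseteq{}^\perp\mathcal Q$, the localization $\pi$ is fully faithful on $D_{\Cox}$, so composing $\pi$ with the inverse of this equivalence yields a right adjoint $r$ to the inclusion $i\colon D_{\Cox}\hookrightarrow D(\tsX)$ in one stroke; the counit triangle $i r(F)\to F\to C$ then has $C\in\mathcal Q$ automatically. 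Your approach trades this Verdier-quotient argument for an elementary thickness check (TR3 plus the $3\times3$ lemma for cones, uniqueness of the decomposition triangle for summands), which is more self-contained but repeats work that the paper has packaged into the quotient equivalence. Both arguments ultimately rest on the same input from \cref{prop:generation}: the explicit triangles $\cO_{\Cox}(-d)\to\cO_{\tsX}(-\widetilde d)\to C(f)$ with $C(f)\in\mathcal Q$ on a generating set.
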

\begin{proof}
Since $D_{\Cox} \subseteq {}^\perp \mathcal Q$, the map 
$\pi \colon  \operatorname{Hom}_{D(\tsX)}(E,F) \to 
\operatorname{Hom}_{D(\tsX)/\mathcal Q}(\pi(E),\pi(F))$ 
is an isomorphism for $E \in D_{\Cox}$. Since the composition 
$D_{\Cox} \to D(\tsX)/\mathcal Q$ is an equivalence, 
there is a right adjoint $r$ to the inclusion $i$ of $D_{\Cox}$. 
The counit of the adjunction gives a functorial triangle $(i\circ r )(F) \to F \to C$, 
with $(i\circ r )(F) \in D_{\Cox}$ and $C \in \mathcal Q$ for any 
$C \in D(\tsX).$
Applying $(-)^\vee$ to $\langle \mathcal Q, D_{\Cox}\rangle$ 
produces a new semiorthogonal decomposition 
$
D(\tsX) = \langle D_{\Cox}^\vee, \mathcal Q^\vee \rangle = 
\langle D_{\Cox}, \mathcal Q^\vee \rangle.
$
The last equality is \cref{lem:self_dual}.
\end{proof}

\subsection{Proof of Theorem~\ref{thm:main1}}\label{sec:proofsMainResults}
Having assembled the pieces, we now prove \cref{thm:main1} and \cref{prop:dgThm}.

\begin{proof}[Proof of \cref{thm:main1}] 
By Proposition~\ref{prop:generation}, 
$\Theta$ generates $D_{\Cox}$.
Corollary~\ref{cor:partialorder} shows that if $\mathcal T\ce\bigoplus_{-d\in \Theta} \cO_{\Cox}(-d)$, then $\Hom_{D_{\Cox}}(\mathcal T, \mathcal T)$ is concentrated in degree $0$ and thus $\mathcal T$ is a tilting bundle.
Moreover, that corollary shows that the only morphisms arise from when $d-d'$ is effective.
In the projective case, if we therefore order the elements of $\Theta$ as in Definition~\ref{defn:ordering}, then we immediately deduce the exceptional collection condition.
\end{proof}

\begin{proof}[Proof of \cref{prop:dgThm}]
\cref{lem:self_dual} has already shown that $D_{\Cox}(X)$ is 
self-dual.
The fact that $D_{\Cox}(X)$ is homologically smooth follows from the fact that it is an admissible subcategory of $D(\tsX)$ by \cref{cor:semiorthogonalDCox}. 
This observation also implies that $D_{\Cox}(X)$ has Rouquier dimension equal to $\dim X$ because $D(\tsX)$ has Rouquier dimension equal to $\dim \tsX$ by \cites{HHL, FH}, and $\dim \tsX=\dim X$.
If $X$ is projective, then $D_{\Cox}(X)$ is also proper because it is a subcategory of $D(\tsX)$.
\end{proof}

\section[A diagonal resolution for the Cox category]{A diagonal resolution for $D_{\Cox}$} \label{sec:diagonal}
In this section, we prove that the Hanlon--Hicks--Lazarev resolutions naturally lift to a resolution of the diagonal for the Cox category after recalling the main result of \cite{HHL}. 
In \cref{subsec:uniformityHHL}, we will establish some key properties of the resolutions from \cite{HHL} and the closely related Fourier--Mukai transforms.
Then we construct a resolution of the diagonal for $D_{\Cox}$ in \cref{subsec:variousFunctors} and deduce a few consequences such as \cref{prop:dgThm}.

\subsection{Uniformity properties of the HHL resolution} \label{subsec:uniformityHHL}
\begin{thm}[Theorem A of \cite{HHL}] \label{thm:hhlmain}
    If $\sY$ is a closed toric substack of a toric stack $\sX$, and $\sX$ is covered by smooth stacky charts, then there is a resolution of $\cO_{\sY}$ by direct sums of elements $\cO_{\sX}(-d)$ with $-d\in \Theta$.
\end{thm}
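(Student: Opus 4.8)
The statement is Theorem~A of \cite{HHL}; here is the strategy I would follow to prove it. The aim is to exhibit a bounded complex $\cK$ of coherent sheaves on $\sX$, all of whose terms are finite direct sums of line bundles $\cO_{\sX}(-d)$ with $-d\in\Theta$, together with a quasi-isomorphism $\cK\xrightarrow{\ \sim\ }\cO_{\sY}$. Since $\sX$ is covered by smooth stacky charts it is locally a simplicial toric stack $\sX_{\Sigma,\beta}$, and the construction should be run chart-locally with $\beta(e_\rho)$ playing the role of the ray generator $u_\rho$ (this is exactly what the hypothesis on $\sX$ provides); granting this reduction, I will describe the construction as if $\sX=X_{\Sigma}$ is a semiprojective toric variety and $\sY=X_{\Sigma'}$ for a subfan $\Sigma'\subseteq\Sigma$ — the general closed toric substack combines this with a sublattice quotient of $M$ and introduces no essentially new difficulty.

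\textbf{Construction of $\cK$.} Work in $\RR^{\Sigma(1)}$ with the cubical tiling $\mathcal T$ by all integer translates of $[0,1]^{\Sigma(1)}$ and their faces. The grading map of \eqref{eq:classgroup} has kernel $M_{\RR}$; choose a \emph{generic} affine translate $W$ of $M_{\RR}$ inside $\RR^{\Sigma(1)}$, so that $W$ is transverse to $\mathcal T$, and let $\mathcal T|_{W}$ be the induced polyhedral subdivision of $W\cong M_{\RR}$. A cell $\tau=W\cap F$ records, on the coordinates $x_\rho$ that $F$ pins to integer values, a subset $I_\tau\subseteq\Sigma(1)$, and on the remaining coordinates a floor; packaging these integers gives a class in $\Cl(X)$, and genericity of $W$ arranges that this class is exactly $-d(\theta)=-\sum_\rho\lfloor\langle-\theta,u_\rho\rangle\rfloor D_\rho\in\Theta$ for a suitable $\theta\in M_{\RR}$ (cf.\ Definition~\ref{defn:BT} and \cref{prop:zonotheta}). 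Let $\cK$ be the cellular chain complex of the subcomplex of $\mathcal T|_{W}$ consisting of cells that meet $|\Sigma'|$: one summand $\cO_{X}(-d_\tau)$ per cell $\tau$, differentials given by signed incidence numbers of $\mathcal T|_{W}$, and augmentation to $\cO_{\sY}$ coming from the top-dimensional cells. Contracted rays and lineality are absorbed exactly as in \cref{subsec:BTGKZ}, and boundedness is automatic because $\Theta$ is finite.

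\textbf{Exactness.} It then remains to check that $\cK\to\cO_{\sY}$ is a quasi-isomorphism, which can be tested on the affine charts $U_{\sigma}$, $\sigma\in\Sigma$. Over $U_{\sigma}$ the complex $\cK$ is the cellular chain complex of the part of $\mathcal T|_{W}$ lying over the cone $\sigma^{\vee}$ (intersected with the slice defining $\sY$), and one must show this polyhedral region deformation retracts onto a convex subset, so that its reduced homology vanishes in positive degrees while $H_0$ is freely spanned precisely by the monomials of $\Gamma(U_{\sigma},\cO_{\sY})$. This retraction is of the same flavor as the one built in the proof of \cref{lem:BTvanishingCohomology} — push a point toward the maximal face of its minimal containing cone on which the governing linear inequalities hold — but it must now be performed through polyhedral subcomplexes of $\mathcal T|_{W}$ and compatibly with cellular orientations, so that it lifts to a contracting homotopy of $\cK|_{U_\sigma}$ and not merely of the underlying space.

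\textbf{Main obstacle.} The crux is precisely this last step: producing the chart-local deformation retraction so that it is simultaneously (i) through subcomplexes of the sliced tiling, (ii) onto a convex set, and (iii) orientation-compatible, hence induces an honest nullhomotopy of the cellular complex. A secondary point requiring care is the identification of the cell labels with elements of $\Theta$: one needs genericity of $W$ (no degenerate cells, so that the differentials are genuine morphisms of sheaves), with the half-open zonotope $Z_{\sX}$ of Definition~\ref{defn:zonotope} serving as the bookkeeping device, and in the stacky case one must consistently interchange $u_\rho$ with $\beta(e_\rho)$ throughout. As a consistency check, the line bundles that appear in $\cK$ are exactly the summands of a toric Frobenius pushforward $(F_\ell)_*\cO_{\sX}$ (see \cref{rem:frobenius}), which is the expected source of $\Theta$-generation.
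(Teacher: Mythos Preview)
The paper does not prove this theorem; it is stated as Theorem~A of \cite{HHL} and used as a black box (see \cref{subsec:uniformityHHL}). There is therefore no proof in the paper to compare your proposal against.

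That said, your sketch is broadly in the spirit of the construction in \cite{HHL}: slicing the integer cubical tiling of $\RR^{\Sigma(1)}$ by a generic affine translate of $M_{\RR}$, labeling cells by elements of $\Theta$, and assembling a cellular complex whose exactness is checked chart-locally. The paper's only engagement with the internals of the \cite{HHL} argument is through \cite{HHL}*{Lemma~3.3} on functoriality under open restriction (used in \cref{prop:uniformHHL}) and the remark that the characteristic assumption in \cite{HHL} is unnecessary (\cref{rmk:characteristic}). If you wish to actually carry out the proof, you should consult \cite{HHL} directly; your identified ``main obstacle'' --- lifting the polyhedral retraction to a contracting homotopy of the cellular complex --- is indeed where the work lies, and the details are more delicate than a direct analogy with \cref{lem:BTvanishingCohomology} suggests.
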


We denote the complex from \cref{thm:hhlmain} by $\HHL_{\cY,\cX}$.
We will focus on the case that $\Delta_{ij} \subset \sX_i \times \sX_j$ is the
graph of the birational morphism from $\sX_i$ to $\sX_j$. Note that, when $i=j$,
$\Delta_{ii}\subseteq \sX_i\times \sX_i$ is simply equal to the diagonal.  

\begin{remark} \label{rmk:characteristic}
In \cite{HHL}*{Appendix B.1.1}, an assumption on the characteristic of the field
was employed to split a 
representation into irreducible representations. However, 
the result stated there holds whenever the group $G$ is 
linearly reductive; see, e.g.,~\cite{Waterhouse}. Its only 
application in \cite{HHL}*{Lemma 2.22} is in this setting.
\end{remark}

\begin{defn}\label{defn:HHLnotation}
    To simplify notation, write $\HHL_{i,j}$ for the Hanlon--Hicks--Lazarev line bundle complex that resolves $\Delta_{ij}$ in $\sX_i\times \sX_j$.
\end{defn}

Solely within this section, we consider an auxiliary stack $\mfX$ that is obtained by gluing the $\sX_i$ along the common birational loci.  A bit more formally, we define $\mfX$ as follows:  recall that each $\sX_i$ has the form $[\Spec(S) - V(B_i) / G]$ for an appropriate irrelevant ideal $B_i\subseteq S$.  We define $B \ce  \sum_i B_i$ and set $\mfX \ce  [\Spec (S) - V(B) / G]$.  We note that each $\sX_i \subseteq \mfX$ is an open subset, and thus $\mfX$ is covered by smooth stacky charts.

We define $\HHL_{S,S}$ to be the free complex of $S\otimes S$-modules that corresponds to the resolution $\HHL_{\mfX, \mfX\times \mfX}$ of the toric subscheme supported on the closure of the diagonal torus. 
(The diagonal itself may fail to be closed as $\mfX$ may fail to be separated.) 
We observe a key uniformity result about these resolutions that was implicit in~\cite{HHL}.
\begin{prop}\label{prop:uniformHHL}
For any $i,j$, $\HHL_{S,S}|_{\sX_i\times \sX_j}$
is homotopy equivalent to $\HHL_{i,j}$  as complexes of sheaves on $\sX_i\times \sX_j$.  Moreover, if both chambers $\Gamma_i$ and $\Gamma_j$ for $\sX_i$ and $\sX_j$ are in the moving cone of $\Sigma_{GKZ}(X)$, then these complexes are equal.
\end{prop}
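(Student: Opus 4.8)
The plan is to trace through the construction of the Hanlon--Hicks--Lazarev resolution to observe that it depends only on the data of the rays (equivalently, on the Cox ring $S$) together with a choice of smooth stacky charts, and that restriction of the resolution is compatible with restriction to open substacks. First I would recall from \cite{HHL} that the resolution $\HHL_{\mfX, \mfX \times \mfX}$ of the diagonal torus closure is built from a $\Cl(X)$-graded (or bigraded) free complex of $S \otimes S$-modules whose terms are sums of $\cO_\mfX(-d) \boxtimes \cO_\mfX(d')$ with $-d, d' \in \Theta$, assembled out of a polyhedral/zonotopal combinatorial recipe that uses only the rays $\Sigma_X(1)$ and their primitive generators $u_\rho$. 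Crucially, the same free complex of $S \otimes S$-modules sheafifies on any open substack $[\Spec(S) - V(B') / G]$ obtained from an irrelevant ideal $B' \supseteq B_i$; since $\sX_i \times \sX_j$ is the open substack of $\mfX \times \mfX$ cut out by the larger irrelevant ideal and the sheafification of a free $S\otimes S$-module is just the restriction of its sheafification on $\mfX \times \mfX$, we get that $\HHL_{S,S}|_{\sX_i \times \sX_j}$ \emph{is} the sheafification on $\sX_i \times \sX_j$ of the defining free complex.

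The second step is to identify this restricted complex with $\HHL_{i,j}$ up to homotopy. Here I would invoke that $\HHL_{i,j}$, by \cref{thm:hhlmain} applied to $\Delta_{ij} \subset \sX_i \times \sX_j$, is \emph{also} produced by the same combinatorial recipe applied to the rays of $\sX_i$ and $\sX_j$ — and since $\sX_i$, $\sX_j$, $\tsX$ and $\mfX$ all share the same set of rays (the Cox ring is unchanged), the recipe produces literally the same $S \otimes S$-module complex. The only subtlety is that the resolution in \cite{HHL} involves some non-canonical choices (orderings, generic perturbations of the zonotope, choices of splitting maps); two such choices yield homotopy equivalent complexes by the usual comparison-of-resolutions argument, since both resolve the same sheaf and consist of sums of line bundles $\cO_{\sX_i \times \sX_j}(-d) \boxtimes (\cdots)$ for which the relevant $\Ext$-computations are governed by \cref{cor:partialorder}-type vanishing. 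This gives the homotopy equivalence in general.

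For the "moreover" clause, when both $\Gamma_i$ and $\Gamma_j$ lie in the moving cone, the relevant birational maps $\sX_i \dashrightarrow \sX_j$ and $\mfX$ itself have no contracted divisors, so the closure of the diagonal torus in $\sX_i \times \sX_j$ equals the (closed) graph $\Delta_{ij}$ and the ambient open immersion $\sX_i \times \sX_j \hookrightarrow \mfX \times \mfX$ is such that $\Delta_{\mfX}$ restricts exactly to $\Delta_{ij}$; in particular no rays get "deleted" in passing between the charts. In that case the combinatorial recipe, being insensitive to the higher cones, returns the identical complex without any reindexing, so $\HHL_{S,S}|_{\sX_i \times \sX_j}$ and $\HHL_{i,j}$ agree on the nose rather than merely up to homotopy. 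I expect the main obstacle to be the bookkeeping in the first paragraph: making precise that the HHL recipe is genuinely defined at the level of the Cox ring and commutes with restriction to open substacks cut out by enlarging the irrelevant ideal — this requires looking carefully at how \cite{HHL} constructs the complex (via the zonotopal/tropical combinatorics of the rays) and verifying it never references the maximal cones of any particular fan, only $\Sigma_X(1)$. Once that is in hand, both the homotopy equivalence and the sharper equality in the moving-cone case follow from standard arguments.
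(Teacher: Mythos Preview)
Your central claim that ``$\sX_i$, $\sX_j$, $\tsX$ and $\mfX$ all share the same set of rays'' is false outside the moving cone: when $\Gamma_i$ is not in the moving cone, the fan $\Sigma_i$ has strictly fewer rays than $\Sigma_X$ (in the running Hirzebruch example, the chamber for $\PP(1,1,3)$ has three rays while $\cH_3$ has four). Consequently the HHL recipe applied directly to $\Delta_{ij}\subset \sX_i \times \sX_j$ does \emph{not} sheafify the same free $S\otimes S$-complex as $\HHL_{S,S}$, and your primary argument for the general homotopy equivalence breaks at exactly this point.

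Your fallback comparison-of-resolutions argument does rescue the conclusion, though the justification you give is beside the point. Both $\HHL_{S,S}|_{\sX_i \times \sX_j}$ and $\HHL_{i,j}$ are bounded complexes of line bundles on the smooth DM stack $\sX_i \times \sX_j$ resolving the same sheaf $\cO_{\Delta_{ij}}$; they are therefore quasi-isomorphic, and on such a stack quasi-isomorphic bounded complexes of vector bundles are automatically homotopy equivalent. The appeal to \cref{cor:partialorder}-type vanishing is not needed. The paper takes a different, more constructive route: it iterates \cite{HHL}*{Lemma~3.3}, a functoriality statement for the HHL complex under the open inclusion obtained by removing a single toric divisor (extended trivially to removing higher-codimension strata), to pass from $\mfX \times \mfX$ down to $\sX_i \times \sX_j$. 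This handles the dropped rays explicitly rather than by abstract comparison. Your argument for the ``moreover'' clause matches the paper's: in the moving cone no rays are contracted, and since the construction in \cite{HHL} depends only on the rays, the two complexes literally coincide.
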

\begin{proof}
   The key idea is to iteratively apply \cite{HHL}*{Lemma 3.3}, which provides a statement about the functorial behavior of $\HHL_{\cY,\cX}$
   with respect to pullback along an open inclusion.  While \cite{HHL}*{Lemma 3.3} is only stated for removal of a single toric divisor, it can clearly be iterated.  Moreover, the statement also holds when the map $\alpha$ from the cited lemma involves removing higher codimension toric strata, since in that case the complex $\alpha^*\HHL_{\cY,\cX}$ coincides with $\HHL_{\cY',\cX'}$, making the statement trivially true.
   Now, since $\sX_i\times \sX_j$ can be obtained from $\mfX\times \mfX$ by removing the toric strata $V(B_i)\times \mfX \cup \mfX \cup V(B_j)$, both statements follow immediately from these observations. In particular, the latter statement corresponds to the fact that the construction in~\cite{HHL} only depends on the rays of the stacky fan.
\end{proof}

\subsection[An HHL resolution for the Cox category]{An HHL resolution for
$D_{\Cox}$} \label{subsec:variousFunctors}
We now define a version of the Hanlon--Hicks--Lazarev resolution, denoted $\HHL_{\Cox,\Cox}$, that lives in the Cox category, use this to define a corresponding Fourier--Mukai transform, and then prove that this transform is naturally isomorphic to the identity.

Recall that we defined a complex $\HHL_{S,S}$ of $S\otimes S$-modules in
\cref{subsec:uniformityHHL} with terms of the form $S(-d)\boxtimes S(-d')$,
where $-d,-d'\in \Theta$.\footnote{The main result of \cite{BCHSY} shows that $\HHL_{S,S}$ is homotopy equivalent to the free resolution construction in~\cite{short-HST}.  In particular, $\HHL_{S,S}$ is acyclic as a complex of $S\otimes S$-modules.}

\begin{defn}\label{defn:HCoxCox}
Define $\HHL_{\Cox,\Cox}\ce (\Psi \times \Psi)(\HHL_{S,S})$, with $\Psi\colon K_{\Theta}\to D_{\Cox}$ as in \cref{def:Psimap}. 
\end{defn}

\begin{example}
    If $X=\PP^1$, then $\HHL_{S,S}$ is the complex $S\boxtimes  S \longleftarrow S(-1)\boxtimes S(-1)$ given by multiplication by $x_0y_1 - x_1y_0$, and $\HHL_{\Cox,\Cox}$ is simply $\cO_{\Cox}\boxtimes \cO_{\Cox}\longleftarrow  \cO_{\Cox}(-1)\boxtimes \cO_{\Cox}(-1)$ with the same map.
\end{example}

\begin{defn}\label{defn:PhiCoxCox}
Let $\Phi_{\Cox,\Cox}\colon D_{\Cox}\to D_{\Cox}$ be the integral transform with kernel $\HHL_{\Cox,\Cox}$.
\end{defn}

We separate the proof of \cref{thm:resDiagCoxVague} into two steps, proving the following lemma and corollary, each of which strengthens part of that theorem.
\begin{lemma}\label{lem:PhiCoxCD}
For each pair $i,j$, the following holds:
\vspace*{-2mm}
\begin{enumerate}[noitemsep]
    \item  The complex $(\pi_i\times \pi_j)_\ast(\HHL_{\Cox,\Cox})$ is homotopy equivalent to $\HHL_{i,j}$.
    \item  This induces a commutative diagram
\begin{equation}\label{eqn:CDfunc}
\xymatrix{
D(\sX_i)\ar[d]_-{\pi_i^*}\ar[r]^-{\Phi_{i,j}}&D(\sX_j)\\
D_{\Cox}\ar[r]^-{\Phi_{\Cox,\Cox}}&D_{\Cox}.\ar[u]_-{\pi_{j*}}
}
\end{equation}
\end{enumerate}
\end{lemma}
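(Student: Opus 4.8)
The two parts are intertwined, and the strategy is to first establish (1) by a pushforward-along-products computation and then deduce (2) by unwinding the definition of the integral transform. For part (1), I would start from the definition $\HHL_{\Cox,\Cox} = (\Psi\times\Psi)(\HHL_{S,S})$, so that term-by-term $\HHL_{\Cox,\Cox}$ is built from the objects $\cO_{\Cox}(-d)\boxtimes\cO_{\Cox}(-d')$ with $-d,-d'\in\Theta$. Since derived pushforward is exact and commutes with the formation of totalizations of complexes, $(\pi_i\times\pi_j)_\ast\HHL_{\Cox,\Cox}$ is computed termwise: I need $(\pi_i\times\pi_j)_\ast\big(\cO_{\Cox}(-d)\boxtimes\cO_{\Cox}(-d')\big)$. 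By the Künneth formula for the external product and base change, this factors as $(\pi_{i\ast}\cO_{\Cox}(-d))\boxtimes(\pi_{j\ast}\cO_{\Cox}(-d'))$, and then the $\Theta$-Transform Lemma (in the form recorded after Definition~\ref{defn:ThetaCox}: $\pi_{j\ast}\cO_{\Cox}(-d) = \cO_{\sX_j}(-d)$) identifies this with $\cO_{\sX_i}(-d)\boxtimes\cO_{\sX_j}(-d')$, which is exactly the $(d,d')$-term of $\HHL_{i,j}$ as a complex on $\sX_i\times\sX_j$. One must check the differentials match: the differentials in $\HHL_{S,S}$ are given by multiplication by elements of $S\otimes S$, and by Corollary~\ref{cor:SmodHom}(2) the functor $\pi_{i\ast}\circ\phi_{d-d'}$ is precisely restriction of polynomials from $S$ to $\sX_i$; applying this on both factors shows the pushed-forward differentials of $\HHL_{\Cox,\Cox}$ coincide with the differentials of $\HHL_{S,S}|_{\sX_i\times\sX_j}$. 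Finally, invoke Proposition~\ref{prop:uniformHHL}, which says $\HHL_{S,S}|_{\sX_i\times\sX_j}$ is homotopy equivalent to $\HHL_{i,j}$, giving part (1).

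For part (2), I would unwind the two composites in the diagram. Going right then up: $\pi_{j\ast}\circ\Phi_{\Cox,\Cox}\circ\pi_i^\ast$ applied to $\cE\in D(\sX_i)$ is, by definition of the integral transform with kernel $\HHL_{\Cox,\Cox}$, the composite $\pi_{j\ast}\big(p_{2\ast}(p_1^\ast\pi_i^\ast\cE \otimes \HHL_{\Cox,\Cox})\big)$ where $p_1,p_2$ are the projections from $\tsX\times\tsX$. Using the commutativity of pushforward/pullback with the relevant projections (i.e., writing this in terms of the correspondence $\tsX \leftarrow \tsX\times\tsX \to \tsX$ and comparing with the correspondence $\sX_i \leftarrow \sX_i\times\sX_j \to \sX_j$ via the map $\pi_i\times\pi_j$), together with projection formula and flat/proper base change, one reduces the composite to the integral transform on $D(\sX_i)\to D(\sX_j)$ whose kernel is $(\pi_i\times\pi_j)_\ast\HHL_{\Cox,\Cox}$. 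By part (1) this kernel is $\HHL_{i,j}$, and the integral transform with kernel $\HHL_{i,j}$ is by construction (the structure sheaf of the graph $\Delta_{ij}$ resolved by $\HHL_{i,j}$) exactly the Fourier--Mukai transform $\Phi_{i,j}\colon D(\sX_i)\to D(\sX_j)$ induced by the birational map. This gives the commutativity $\pi_{j\ast}\circ\Phi_{\Cox,\Cox}\circ\pi_i^\ast \cong \Phi_{i,j}$, and since $\pi_i^\ast$ is fully faithful with $\pi_{i\ast}\pi_i^\ast\cong\Id$ by Lemma~\ref{lem:fullyFaithful} and Proposition~\ref{prop:constructionX}, one can also read this as the stated square commuting up to natural isomorphism.

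The main obstacle I anticipate is the bookkeeping in part (2): carefully tracking the various product stacks and their projection maps, and justifying the base-change and projection-formula identities that relate the integral transform on $\tsX\times\tsX$ (restricted to $D_{\Cox}$) with the one on $\sX_i\times\sX_j$. One subtlety is that $\Phi_{\Cox,\Cox}$ is defined as an endofunctor of $D_{\Cox}\subseteq D(\tsX)$, so I must check that the relevant intermediate objects stay in (or can be compared within) $D_{\Cox}$, using that $\pi_i^\ast$ lands in $D_{\Cox}$ and that $D_{\Cox}$ is the essential image under the various pullbacks. A clean way to handle this is to first prove the identity of kernels (part (1)) and then cite the standard composition-of-integral-transforms formalism — pushforward of a kernel along a morphism of correspondences induces the corresponding relation between the transforms — rather than recomputing everything by hand; the only genuinely new input beyond that formalism is the $\Theta$-Transform Lemma and Corollary~\ref{cor:SmodHom}, which feed into part (1).
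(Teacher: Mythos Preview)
Your proposal is correct and follows essentially the same approach as the paper. For part (1) the paper likewise uses the $\Theta$-Transform Lemma and Corollary~\ref{cor:SmodHom} to identify $\pi_{i\ast}\circ\Psi$ with restriction $(-)|_{\sX_i}$ (which packages your termwise object-and-differential argument into a single functor identity), then invokes Proposition~\ref{prop:uniformHHL}; for part (2) the paper simply cites the standard Fourier--Mukai formalism (Huybrechts, Section~5.1), exactly the ``clean way'' you identify at the end of your discussion.
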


\begin{cor}\label{cor:strongResDiag}
The functor $\Phi_{\Cox,\Cox}\colon D_{\Cox}(X)\to D_{\Cox}(X)$, which is
the Fourier--Mukai transform with kernel $\Psi(\HHL_{S,S})$, is naturally
isomorphic to the identity functor.
\end{cor}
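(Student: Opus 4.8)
\textbf{Proof proposal for \cref{cor:strongResDiag}.}

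The plan is to deduce the corollary from \cref{lem:PhiCoxCD} by combining the detection-by-pushforward principle of \cref{lemma:checkpush} with the fact that $\Phi_{\Cox,\Cox}$ carries $D_{\Cox}$ into $D_{\Cox}$. First I would check that $\Phi_{\Cox,\Cox}$ really does land in $D_{\Cox}$: its kernel $\HHL_{\Cox,\Cox} = (\Psi\times\Psi)(\HHL_{S,S})$ is a bounded complex whose terms are of the form $\cO_{\Cox}(-d)\boxtimes\cO_{\Cox}(-d')$ with $-d,-d'\in\Theta$, so for any $\cE\in D_{\Cox}$ the object $\Phi_{\Cox,\Cox}(\cE)$ is built out of finitely many copies of the $\cO_{\Cox}(-d')$, hence lies in $D_{\Cox}$ by \cref{prop:generation}. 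Thus $\Phi_{\Cox,\Cox}$ is an endofunctor of $D_{\Cox}$, and the statement to prove is that the natural map from the identity functor is an isomorphism on every object of $D_{\Cox}$.

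Next I would produce the natural transformation itself. The complex $\HHL_{S,S}$ is a resolution of the structure sheaf of (the closure of) the diagonal torus in $\mfX\times\mfX$; concretely, $\HHL_{S,S}$ is acyclic away from its top term and augments to $\cO_{\Delta}$ (see the footnote citing \cite{BCHSY}). Applying $(\Psi\times\Psi)$ and the usual Fourier--Mukai formalism, this augmentation produces a morphism of kernels, hence a natural transformation $\eta\colon \Phi_{\Cox,\Cox}\Rightarrow \Id_{D_{\Cox}}$ (or $\Id\Rightarrow\Phi_{\Cox,\Cox}$, depending on the direction of the augmentation — I would fix the convention so that $\eta$ is induced by the map from the last term of $\HHL_{S,S}$ down to $S$ along the diagonal). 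It then suffices to show $\eta$ is an isomorphism.

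The crux is then to verify that $\pi_{j*}$ of $\eta_{\cE}$ is an isomorphism for every $j$ and every $\cE\in D_{\Cox}$, after which \cref{lemma:checkpush} (applied to the cone of $\eta_\cE$, which lies in $D_{\Cox}$) finishes the argument. For this I would take $\cE = \pi_i^*\cF$ with $\cF\in D(\sX_i)$ — these generate $D_{\Cox}$ — and invoke the commutative square \eqref{eqn:CDfunc} of \cref{lem:PhiCoxCD}(2): $\pi_{j*}\Phi_{\Cox,\Cox}\pi_i^* \cong \Phi_{i,j}$. Since $\Phi_{i,j}$ is the Fourier--Mukai transform with kernel $\HHL_{i,j}$, which by \cref{thm:hhlmain} resolves the graph $\Delta_{ij}$ of the birational morphism $\sX_i\dashrightarrow\sX_j$, and since $\pi_{j*}\pi_i^*$ is itself the Fourier--Mukai transform along that same graph, the square identifies $\pi_{j*}\eta_{\pi_i^*\cF}$ with the canonical comparison $\Phi_{i,j}(\cF)\to \pi_{j*}\pi_i^*\cF$ coming from the augmentation $\HHL_{i,j}\to\cO_{\Delta_{ij}}$ — which is an isomorphism because $\HHL_{i,j}$ is a genuine resolution of $\cO_{\Delta_{ij}}$ (the derived restriction of $\HHL_{S,S}$ by \cref{prop:uniformHHL}). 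Hence $\pi_{j*}\eta_\cE$ is an isomorphism for all generators $\cE$, and since $\pi_{j*}$ and $\Phi_{\Cox,\Cox}$ are exact, it is an isomorphism for all $\cE\in D_{\Cox}$; \cref{lemma:checkpush} upgrades this to $\eta_\cE$ being an isomorphism in $D_{\Cox}$.

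The step I expect to be the main obstacle is the bookkeeping in matching the natural transformation $\eta$ with the Fourier--Mukai comparison maps through the square \eqref{eqn:CDfunc}: one must be careful that the equivalence $\pi_{j*}\Phi_{\Cox,\Cox}\pi_i^*\cong\Phi_{i,j}$ from \cref{lem:PhiCoxCD} is compatible with the augmentation maps on kernels (i.e.\ that it is an isomorphism of \emph{augmented} complexes, not merely of the underlying Fourier--Mukai functors), and that $\pi_i^*\pi_{i*}$ versus $\pi_{j*}\pi_i^*$ are correctly identified as transforms along the appropriate graph. This is essentially a compatibility-of-adjunctions check and should follow from tracking the construction in \cref{subsec:variousFunctors} and \cref{prop:uniformHHL}, but it is where the real content lies.
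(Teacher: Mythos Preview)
Your strategy shares the endgame with the paper's --- both reduce to \cref{lemma:checkpush} after pushing forward by each $\pi_{j*}$ --- but differs in how the comparison map is produced and on which generators it is checked, and the obstacle you flag at the end is real.

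Two remarks on your construction. First, ``applying $\Psi\times\Psi$ to the augmentation $\HHL_{S,S}\to\cO_\Delta$'' is not literally well-defined, since the target lies outside $K_{\Theta\times\Theta}(S\otimes S)$. One can instead build a map $\HHL_{\Cox,\Cox}\to\cO_{\Delta_{\tsX}}$ directly in $D(\tsX\times\tsX)$: the degree-$0$ term is $\cO_{\tsX\times\tsX}$, and the differentials are (via \cref{cor:SmodHom}) given by elements of $I_\Delta\subset S\otimes S$, hence vanish on $\Delta_{\tsX}$. Second, and more seriously, the compatibility you flag is not just bookkeeping: you need the homotopy equivalence $(\pi_i\times\pi_j)_*\HHL_{\Cox,\Cox}\simeq\HHL_{i,j}$ of \cref{lem:PhiCoxCD}(1) to respect the augmentations, and you need $(\pi_i\times\pi_j)_*\cO_{\Delta_{\tsX}}\cong\cO_{\Delta_{ij}}$ compatibly. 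The remark immediately following the paper's proof points out that a closely related kernel-level approach requires functoriality statements for HHL resolutions under pushforward that go beyond what \cite{HHL}*{Lemma 3.3} provides; this is the reason a different argument was chosen.

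The paper avoids the kernel-level compatibility entirely by working on the smaller generating set $\{\cO_{\Cox}(-d)\mid -d\in\Theta\}$ (available by \cref{thm:main1}) rather than on all of $\pi_i^*D(\sX_i)$, and by building the comparison map object-by-object via adjunction rather than from an augmentation. For each $-d$, the $i=j$ case of \eqref{eqn:CDfunc} gives $\pi_{i*}\Phi_{\Cox,\Cox}(\cO_{\Cox}(-d))\cong\cO_{\sX_i}(-d)$, and adjunction turns the identity there into a map $f\colon\cO_{\Cox}(-d)\to\Phi_{\Cox,\Cox}(\cO_{\Cox}(-d))$. Then for each $j$, the $\Theta$-Transform Lemma identifies both source and target of $\pi_{j*}f$ with $\cO_{\sX_j}(-d)$, and since $\pi_{j*}f$ agrees with the identity after restricting to the dense torus (where Hom of line bundles injects), it is the identity. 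Morphisms between the $\cO_{\Cox}(-d)$ are handled by the same restrict-to-the-torus trick. This replaces the augmented-kernel compatibility with a concrete injectivity statement on the torus.
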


As noted in the introduction, this ``explains'' the uniformity properties of the Hanlon--Hicks--Lazarev resolution: the resolution only depends on the Cox ring (or the rays of the fan)--and not on the irrelevant ideal (or the higher dimensional cones of the fan)--because the Hanlon--Hicks--Lazarev resolution comes from a resolution of the diagonal for $D_{\Cox}$, and $D_{\Cox}$ is an invariant of the Cox ring $S$.

\begin{proof}[Proof of \cref{lem:PhiCoxCD}]
For claim (1), 
the $\Theta$-Transform Lemma shows that
$\pi_{i \ast}(\cO_{\Cox}(-d))=\cO_{\sX_i}(-d)$ for all $i$ and $-d$.  Using \cref{cor:SmodHom}, it follows
that, as functors from $K_\Theta(S)\to D(\sX_i)$,  the composition $\pi_{i \ast}\circ
\Psi$ is naturally isomorphic to $(-)|_{\sX_i}$.  By combining with
\cref{prop:uniformHHL}, we obtain the desired statement.
Claim (2) is a standard consequence; see, e.g.,
\cite{huybrechts06}*{Section 5.1}.
\end{proof}

\begin{proof}[Proof of \cref{cor:strongResDiag}]
By Theorem~\ref{thm:main1}, every object in $D_{\Cox}$ can be represented as a complex $C_{\bullet}$, where each $C_i$ is a direct sum of $\cO_{\Cox}(-d)$ with $-d\in \Theta$.  We will prove that $\Phi_{\Cox}$ is the identity on $\cO_{\Cox}(-d)$ for all $-d\in \Theta$ and that it is the identity on all morphisms between  these bundles.
Since $\Phi_{\Cox}$ is a functor of triangulated categories -- so $\Phi_{\Cox}$ commutes with cones -- it will follow that $\Phi_{\Cox}(C_\bullet)$ is naturally equivalent to $C_\bullet$ itself.

Starting with objects, let $G=\Phi_{\Cox}(\cO_{\Cox}(-d))\in D_{\Cox}$.  We want to show that $G= \cO_{\Cox}(-d)$.  Let $i$ be such that $\cO_{\Cox}(-d) = \pi_i^*\cO_{\sX_i}(-d)$.  Consider Lemma~\ref{lem:PhiCoxCD} in the case $i=j$.  In this case, the Fourier--Mukai transform $\Phi_{i,i}$ is just the identity, and so  $\pi_{i*}(G)$ is isomorphic to $\cO_{\sX_i}(-d)$.
By adjunction, the identity morphism from $\cO_{\sX_i}(-d)$ to $\pi_{i*}(G)$ induces a morphism $f\colon \cO_{\Cox}(-d) \to G$.

Now choose some other $j$, and consider $\pi_{j*}(f)\colon  \pi_{j*}\cO_{\Cox}(-d) \to \pi_{j*}G$.  By \Cref{lem:PhiCoxCD} and the $\Theta$-Transform Lemma, this is simply a map
\[
\pi_{j*}(\cO_{\Cox}(-d)) = \cO_{\sX_j}(-d)\xrightarrow{\ \pi_{j*}f\ }\cO_{\sX_{j}}(-d)=  \pi_{j*}G.
\]
Finally, the morphisms $\pi_{i*}f=\Id$ and $\pi_{j*}f$ must agree when we pass to the open torus inside of $\sX_i$ and $\sX_j$.
Since the restriction from $\sX_j$ to the torus $T$ is injective on homomorphisms of line bundles, this implies that $\pi_{j*}f$ must also equal the identity.
We conclude that the map $f\colon \cO_{\Cox}(-d)\to G$ satisfies $\pi_{j*}f = \Id_{\cO_{\sX_j}(-d)}$ for all $j$, and it follows that $G=\cO_{\Cox}(-d)$ and $f$ is a quasi-isomorphism by \cref{lemma:checkpush}.

Now consider what $\Phi_{\Cox,\Cox}$ does to morphisms.
Let $f\in \Hom(\cO_{\Cox}(-d), \cO_{\Cox}(-d')) = S_{d-d'}$.
The Fourier--Mukai transform $\Phi_{\Cox}$ commutes with the open immersion $\iota\colon T \subseteq \tsX$, yielding a diagram:
\[
\xymatrix{
D_{\Cox}\ar[r]^-{\Phi_{\Cox,\Cox}}\ar[d]_-{\iota^*}&D_{\Cox}\ar[d]_-{\iota^*}\\
D(T)\ar[r]^-{\Phi_{T,T}}&D(T),
}
\]
where $\Phi_{T,T}$ is the Fourier--Mukai transform with respect to the restriction of the Hanlon--Hicks--Lazarev resolution to $T$.  We note that $\iota^*$ is injective on homomorphisms of line bundles, and so it suffices to prove that  $\iota^* f$ equals $\Phi_{T,T}\circ \iota^*(f)$, but this follows from the fact that $\Phi_{T,T}$ is naturally isomorphic to the identity functor.  In particular, $\Phi_{\Cox,\Cox}$ is naturally the identity on morphisms as well.

As noted at the beginning of the proof, this implies that $\Phi_{\Cox,\Cox}$ is naturally isomorphic to the identity functor.
\end{proof} 

\begin{remark}
An alternative natural approach to \cref{cor:strongResDiag} is to project the
Hanlon--Hicks--Lazarev complex resolving the diagonal on $\tsX$ to $D_{\Cox}$. Using the
self-duality of $D_{\Cox}$, one can check that the projected complex resolves
the diagonal of $D_{\Cox}$, and one could prove \cref{cor:strongResDiag} by
showing that $\Psi(\HHL_{S,S})$ is homotopic to this projected complex. However,
to complete that argument, we require stronger functoriality statements about
pushforwards along stacky refinements than can be obtained from
\cite{HHL}*{Lemma 3.3}. Therefore, we take the more direct approach here and leave the interesting
functoriality statements for future work.

On the other hand, the desired homotopy indeed must hold a posteriori as a
consequence of \cref{cor:strongResDiag} and strong exceptionality of $\Theta$ in
$D_{\Cox}$.
\end{remark}

\begin{remark}
Another perspective on Corollary~\ref{cor:strongResDiag} is that
${\HHL}_{\Cox,\Cox}$ is the image of the diagonal bimodule
$\cO_{\Delta_{\Cox}}$ under the equivalence $D_{\Cox}(X\times X)\to
K_{\Theta\times \Theta}(S\otimes S)$.  
Work in progress of Berkesch--Cranton
Heller--Smith--Yang \cite{BCHSY} will show that the Hanlon--Hicks--Lazarev
resolutions are homotopy equivalent to the Brown--Erman resolutions
from~\cite{short-HST}.   The resolutions of Anderson~\cite{anderson}, however, are distinct.
\end{remark}

\begin{remark}
Let $A_\Theta$ be the endomorphism algebra
as in the statement of \cref{thm:noncomm}.
 Theorem~\ref{thm:main1} also implies that $D_{\Cox}(X) \simeq  K_{\Theta}(S) \simeq D(A_\Theta)$ and $D_{\Cox}(X\times X)\to K_{\Theta\times \Theta}(S\otimes S) \simeq D(A_\Theta \otimes_k A_\Theta^{op})$.  Hence, resolutions of $\cO_{\Delta_{\Cox}}$ can also be obtained from resolutions of the diagonal bimodule  $A_\Theta \in D(A_\Theta \otimes_k A_\Theta^{op})$. 
The subtleties of the comparison between noncommutative resolutions and the Hanlon--Hicks--Lazarev resolution are explained in \cite{FS}.
 
 Other resolutions can also be obtained by altering the equivalence and/or tilting object.
 Each such choice yields a different uniform resolution of the diagonal.
 For instance, $\Theta$ is defined in terms of combinations $\sum a_iD_i$, where $a_i\in (-1,0]$.
 However, from the symplectic viewpoint, it is equally natural to allow $a_i \in [-1,0)$.
 Writing $\omega = \sum -\deg(x_i)$ as the canonical degree, the corresponding integral divisors are
 \[
 \Theta^* \ce \omega - \Theta = \{\omega + d \mid -d \in \Theta\}.
 \]
 This yields an equivalence $\Psi'\colon D_{\Cox}(X\times X) \to K_{\Theta^* \times \Theta^*}(S\otimes S)$ and $\Psi'(\cO_{\Delta_{\Cox}})$ is a uniform resolution of the diagonal with terms $S(d)\otimes S(d')$ with $d,d'\in \Theta^*$.
\end{remark}

\section{Applications}\label{sec:applications}

In this section, we explore how our main results about $D_{\Cox}$ connect to prior work on related questions, and we obtain a number of applications. 

\subsection{Representations of the Bondal--Thomsen category}
\label{subsec:BT_reps}

The Bondal--Thomsen collection can be viewed in different categories; for example, $-d\in \Theta$ yields $\cO_{\sX_i}(-d)$, $S(-d)$, and $\cO_{\Cox}(-d)$. Similarly, there are resolutions of the diagonal in $D(\sX_i\times \sX_i)$~\cite{HHL}, $D(S\otimes S)$~\cite{short-HST},  $D(A_{\Theta}\otimes A_{\Theta}^{op})$~\cite{FS}, and the Cox category (as in \cref{sec:diagonal}).
 In this brief subsection, we provide some basic language for comparing these objects and the associated Fourier--Mukai transforms.

Recall from \cref{def:BT_category} that $\cP_\Theta$ denotes the category of graded free $S$-modules whose generators have degrees lying in $\Theta$.  
There are natural functors with $\cP_{\Theta}$ as the source.

\begin{example}\label{ex:BTfunctors}
There is a canonical functor $\psi_S \colon \cP_{\Theta}\to D(S)$; for each $1\leq i \leq r$, there is $\psi_{\sX_i}\colon \cP_{\Theta}\to D(\sX_i)$; and there is the Cox functor $\psi_{\Cox}\colon \cP_{\Theta}\to D_{\Cox}$. 
For all $-d \in \Theta$, each of these functors sends $S(-d)$ to the natural corresponding object.
\end{example}
For each pair of functors $\psi_A, \psi_B$ from \cref{ex:BTfunctors}, there is also a corresponding product category.  For instance, for $\psi_{S}, \psi_{\sX_i}$, consider the  category of graded $S\otimes_k \cO_{\sX_i}$-modules, meaning the category $D(\Spec(S)\times \sX_i)$.  There is some subtlety for pairs like $\psi_{\Cox}, \psi_{\sX_i}$, but one can consider the subcategory of $D(\tsX \times \sX_i)$ generated by $\cE\boxtimes \cF$ where $\cE \in D_{\Cox}$ and $\cF\in D(\sX_i)$.  What we primarily care about are the following Fourier--Mukai transforms. 

\begin{defn}\label{defn:functorialHHLandFMs}
Recall that $\HHL_{S,S}$, defined in \cref{subsec:uniformityHHL}, is a complex of free $S\otimes S$-modules with generators $S(-d)\otimes S(-d')$ for $-d,-d'\in \Theta$.  
For any pair of functors $\psi_A, \psi_B$ from \cref{ex:BTfunctors}, define
$\HHL_{A,B}:= (\psi_A\times \psi_B)(\HHL_{S,S})$, and let 
$\Phi_{A,B}\colon D(A)\to D(B)$ be the corresponding Fourier--Mukai transform.
\end{defn}

For example, the above definition of $\HHL_{i,j}$ is consistent with \cref{defn:HHLnotation}, and the above definition of 
$\Phi_{i,j}$ recovers the Fourier--Mukai transforms  studied throughout this paper.  The functor $\Phi_{\Cox,\Cox}$ was the focus of \cref{sec:diagonal}.  This definition also yields new transforms, such as  $\Phi_{i,S}\colon D(\sX_i)\to D(S)$ and $\Phi_{\Cox,S}\colon D_{\Cox}\to D(S)$.  These functors will be useful for studying window categories and monads in Sections~\ref{sec:windows} and~\ref{subsec:BTmonads}.  

Since all of these functors are transforms with respect to some variant of $\HHL_{S,S}$, they are closely related to one another.  There are two statements that we need in the sequel.

\begin{lemma}\label{lem:PsiInverse}
There is an equivalence $\Psi\circ \Phi_{\Cox,S} \cong \Phi_{\Cox,\Cox}$; in particular, $\Phi_{\Cox,S} \cong \Psi^{-1}$.
\end{lemma}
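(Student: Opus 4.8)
The plan is to unwind the definitions of the functors involved and reduce everything to the identity $\Phi_{\Cox,\Cox}\cong \Id$ established in \cref{cor:strongResDiag}. Recall from \cref{defn:functorialHHLandFMs} that $\HHL_{\Cox,S}=(\psi_{\Cox}\times\psi_S)(\HHL_{S,S})$ and that $\Phi_{\Cox,S}\colon D_{\Cox}\to D(S)$ is the associated Fourier--Mukai transform, while $\HHL_{\Cox,\Cox}=(\psi_{\Cox}\times\psi_{\Cox})(\HHL_{S,S})=(\Psi\times\Psi)(\HHL_{S,S})$. First I would observe that $\psi_{\Cox}=\Psi\circ\psi_S$ as functors $\cP_\Theta\to D_{\Cox}$ (both send $S(-d)\mapsto \cO_{\Cox}(-d)$, and on morphisms they agree by \cref{cor:SmodHom}, since $\psi_S$ is just the inclusion $S(-d)\mapsto S(-d)$ from \cref{def:BT_category} and $\Psi$ is the extension of $\psi_{\Cox}$ to $K_\Theta(S)$ from \cref{def:Psimap}). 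Hence on the kernel level $\HHL_{\Cox,\Cox}=(\Psi\circ\psi_S\times\psi_{\Cox})(\HHL_{S,S})$ only differs from $\HHL_{\Cox,S}$ by postcomposing the second factor with $\Psi$.

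The key step is then the compatibility of Fourier--Mukai transforms with postcomposition of the kernel by a functor in the second variable: for a kernel $\cK$ in a product category and an (exact, cocontinuous) functor $\Psi$ applied to the second factor, the transform with kernel $(\Id\times\Psi)(\cK)$ is $\Psi$ composed with the transform with kernel $\cK$. This is the standard composition/functoriality property of integral transforms (see e.g.~\cite{huybrechts06}*{Section 5.1}, as already cited in the proof of \cref{lem:PhiCoxCD}), adapted to the triangulated subcategory $D_{\Cox}$ and the equivalence $\Psi\colon K_\Theta(S)\xrightarrow{\sim}D_{\Cox}$ of \cref{cor:tilting_equiv}. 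Applying it with $\cK=\HHL_{\Cox,S}$ (viewed appropriately as a kernel for a transform out of $D_{\Cox}$) gives $\Phi_{\Cox,\Cox}\cong\Psi\circ\Phi_{\Cox,S}$. Since \cref{cor:strongResDiag} identifies $\Phi_{\Cox,\Cox}$ with the identity functor on $D_{\Cox}$, we get $\Psi\circ\Phi_{\Cox,S}\cong\Id_{D_{\Cox}}$, and because $\Psi$ is an equivalence (\cref{cor:tilting_equiv}) with inverse $\Psi^{-1}$, precomposing with $\Psi^{-1}$ yields $\Phi_{\Cox,S}\cong\Psi^{-1}$, as claimed.

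The main obstacle I anticipate is purely bookkeeping: making precise the ambient product categories (as flagged in the discussion before \cref{defn:functorialHHLandFMs}, the category of $\cE\boxtimes\cF$ with $\cE\in D_{\Cox}$, $\cF\in D(\sX_i)$ or $D(S)$) so that the kernel $\HHL_{\Cox,S}$ and the functor $\Psi$ applied to one factor genuinely live where they need to, and so that the standard integral-transform composition law applies without subtlety. One should note that $\Psi$ is exact and commutes with direct sums (being an equivalence of triangulated categories arising from a tilting object, or simply termwise since $\HHL_{S,S}$ is a finite complex of finite free modules), so no convergence issues arise. Once the correct target categories are fixed, the proof is a one-line invocation of the cited composition property together with \cref{cor:strongResDiag} and \cref{cor:tilting_equiv}; I would keep the written proof short and point to those three facts.
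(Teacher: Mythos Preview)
Your proposal is correct and follows essentially the same approach as the paper's proof: both identify $\psi_{\Cox}=\Psi\circ\psi_S$, deduce that $(\Id\times\Psi)(\HHL_{\Cox,S})=\HHL_{\Cox,\Cox}$, invoke the standard compatibility of integral transforms with postcomposition on the kernel (citing \cite{huybrechts06}), and then combine \cref{cor:strongResDiag} with the fact that $\Psi$ is an equivalence. The paper's version is terser and does not dwell on the bookkeeping concerns you raise, but the argument is the same.
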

\begin{proof}
    The key point is that $\Psi \circ \psi_S = \psi_{\Cox}$, and thus
    \[
    (\Id\times \Psi) \circ \HHL_{\Cox,S} 
    = \left((\Id \times \Psi) \circ (\psi_{\Cox} \times \psi_{S})\right)(\HHL_{S,S}) 
    = (\psi_{\Cox}\times \psi_{\Cox})(\HHL_{S,S}).
    \]
The equivalence of transforms is then a standard consequence; see, e.g., \cite{huybrechts06}*{Chapter 5}.  Since $\Phi_{\Cox,\Cox}$ is equivalent to the identity by \cref{cor:strongResDiag}, it follows that $\Phi_{\Cox,S}\cong \Psi^{-1}$.
\end{proof}
\begin{lemma}\label{lem:PhiIS}
There is an equivalence $\Psi\circ \Phi_{i,S} \cong \Phi_{i,\Cox}$. 
\end{lemma}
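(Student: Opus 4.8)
The plan is to mimic the proof of \cref{lem:PsiInverse} almost verbatim, using that all the transforms in play are built from the single complex $\HHL_{S,S}$ and that $\Psi$ is a functor coming from $\psi_{\Cox} = \Psi \circ \psi_S$. First I would recall the defining relations among the functors of \cref{ex:BTfunctors}: by \cref{def:Psimap} the functor $\Psi\colon K_\Theta(S) \to D_{\Cox}$ satisfies $\Psi\circ\psi_S = \psi_{\Cox}$ on objects and morphisms, since $\psi_{\Cox}$ sends $S(-d)\mapsto \cO_{\Cox}(-d)$ and $\Psi$ is precisely the induced functor on homotopy categories of $\cP_\Theta$-complexes.

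Next I would unwind the kernels. By \cref{defn:functorialHHLandFMs}, $\HHL_{i,\Cox} = (\psi_{\sX_i}\times\psi_{\Cox})(\HHL_{S,S})$ and $\HHL_{i,S} = (\psi_{\sX_i}\times\psi_S)(\HHL_{S,S})$. Applying $\Id\times\Psi$ to the latter and using $\Psi\circ\psi_S = \psi_{\Cox}$ gives
\[
(\Id\times\Psi)\circ\HHL_{i,S} = \bigl((\Id\times\Psi)\circ(\psi_{\sX_i}\times\psi_S)\bigr)(\HHL_{S,S}) = (\psi_{\sX_i}\times\psi_{\Cox})(\HHL_{S,S}) = \HHL_{i,\Cox}.
\]
That is, the kernel of $\Phi_{i,\Cox}$ is obtained from the kernel of $\Phi_{i,S}$ by post-composing the target factor with $\Psi$. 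The standard compatibility of Fourier--Mukai transforms with composition along the target — as cited in the proof of \cref{lem:PsiInverse} via \cite{huybrechts06}*{Chapter 5} — then yields the natural isomorphism $\Psi\circ\Phi_{i,S}\cong\Phi_{i,\Cox}$. One should note, as in \cref{subsec:BT_reps}, that $\Phi_{i,\Cox}$ lands in $D_{\Cox}$ (its image lies in the subcategory generated by the $\cO_{\Cox}(-d)$, since the kernel's $\Theta$-graded target factor forces this), so that $\Psi^{-1}\circ\Phi_{i,\Cox}$ makes sense as a functor to $K_\Theta(S)$ and the comparison of transforms is over genuinely defined categories.

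I expect essentially no obstacle here: the only point requiring a word of care is that $\Psi$ is an equivalence (\cref{cor:tilting_equiv}), so that the statement $\Psi\circ\Phi_{i,S}\cong\Phi_{i,\Cox}$ can equivalently be phrased as $\Phi_{i,S}\cong\Psi^{-1}\circ\Phi_{i,\Cox}$, exactly parallel to \cref{lem:PsiInverse}. The abstract nonsense of composing integral kernels is entirely standard once one has identified $(\Id\times\Psi)\circ\HHL_{i,S}\cong\HHL_{i,\Cox}$, which is immediate from the functoriality built into \cref{defn:functorialHHLandFMs}.
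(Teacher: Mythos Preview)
Your proposal is correct and takes essentially the same approach as the paper: the paper simply says the proof is nearly identical to that of \cref{lem:PsiInverse} and omits it, and you have carried out precisely that identical argument, using $\Psi\circ\psi_S=\psi_{\Cox}$ to identify $(\Id\times\Psi)\circ\HHL_{i,S}$ with $\HHL_{i,\Cox}$ and then invoking the standard compatibility of Fourier--Mukai transforms with composition.
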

\begin{proof}
The proof is nearly identical to that of \cref{lem:PsiInverse} and is omitted.
\end{proof}

\subsection{Windows}\label{sec:windows}

Window categories provide a way to use explicit subcategories of $S$-modules to model the derived category of a toric variety.  
\cref{cor:strongResDiag} provides some straightforward yet interesting applications to window categories.  
We begin with a general definition of a window category. 
Recall from \cref{subsec:basics} that $G$ is the group acting on $\Spec(S)$, as induced by its $\Cl(X)$-grading.

\begin{defn}
A full subcategory $W \subset D(S)$ is a \defi{window} for $U \subset
\operatorname{Spec} S$ if the restriction
$r_U^\ast \colon W \to D([U / G])$
is an equivalence. 
\end{defn}

\begin{theorem} \label{thm:HHL_window}
The image of $\Phi_{i,S}$ is a window for $\mathcal X_i$, so
$\Phi_{i,S}\colon D(\mathcal X_i) \to D(S)$ is fully faithful and $r_i^\ast
\circ \Phi_{i,S} \cong \Id$.
\end{theorem}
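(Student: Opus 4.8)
The plan is to deduce \cref{thm:HHL_window} from the structural results already in hand, principally \cref{cor:strongResDiag} and the compatibility \cref{lem:PhiIS}. First I would recall that $\Psi\colon K_\Theta(S)\to D_{\Cox}$ is an equivalence (\cref{cor:tilting_equiv}) and that $\Psi^{-1}\cong\Phi_{\Cox,S}$ (\cref{lem:PsiInverse}), so $\Phi_{\Cox,S}\colon D_{\Cox}\to D(S)$ is fully faithful with essential image the subcategory $K_\Theta(S)$ of complexes of free modules with degrees in $\Theta$. Next, \cref{lem:PhiIS} gives $\Psi\circ\Phi_{i,S}\cong\Phi_{i,\Cox}$, which is precisely the pullback functor $\pi_i^*\colon D(\sX_i)\to D_{\Cox}$ composed with nothing extra once one unwinds the definition of $\Phi_{i,\Cox}$ via the Hanlon--Hicks--Lazarev kernel: indeed by \cref{lem:PhiCoxCD} (with the roles of $i,j$ matched) and the $\Theta$-Transform Lemma, $\Phi_{i,\Cox}$ on objects of $\Theta$ sends $\cO_{\sX_i}(-d)$ to $\cO_{\Cox}(-d)$, and one checks via \cref{cor:SmodHom} that it agrees with $\pi_i^*$ on morphisms; since both are exact functors of triangulated categories determined by their action on $\Theta$ (as $\Theta$ generates $D(\sX_i)$ by \cite{HHL}*{Corollary D} or \cite{FH}), they coincide. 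Because $\pi_i^*$ is fully faithful by \cref{lem:fullyFaithful} and $\Psi$ is an equivalence, $\Phi_{i,S}\cong\Psi^{-1}\circ\pi_i^*$ is fully faithful, which is the first assertion.

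For the window statement proper, I would then identify the coarse restriction functor $r_i^*\colon D(S)\to D(\sX_i)=D([\,\Spec S - V(B_i)/G\,])$ as the Verdier localization sending $S(-d)$ to $\cO_{\sX_i}(-d)$ for $-d\in\Theta$. The composite $r_i^*\circ\Phi_{i,S}$ is then an exact endofunctor of $D(\sX_i)$; on generators $\cO_{\sX_i}(-d)$ it is the identity, since $\Phi_{i,S}$ sends $\cO_{\sX_i}(-d)$ to $S(-d)$ (by \cref{defn:functorialHHLandFMs}, as $\HHL_{S,S}$ has terms $S(-d)\boxtimes S(-d')$ and the transform with kernel $S(-d)\boxtimes S(-d')$ applied to a sheaf $\cG$ is $R\Hom(S(-d),\cG)\otimes S(-d')$; applied via $\psi_i\times\psi_S$ this is the Fourier--Mukai kernel whose transform sends $\cO_{\sX_i}(-d)$ to $S(-d)$, and then $r_i^*$ undoes this), and on morphisms it is likewise the identity by \cref{cor:SmodHom}(2) which identifies $(\pi_i)_*\circ\phi_{d-d'}$ with restriction. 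Hence $r_i^*\circ\Phi_{i,S}$ agrees with $\Id$ on the full strong exceptional collection $\Theta$, which generates $D(\sX_i)$, so $r_i^*\circ\Phi_{i,S}\cong\Id_{D(\sX_i)}$. Combined with full faithfulness of $\Phi_{i,S}$, this shows the image of $\Phi_{i,S}$ is a window for $\sX_i$.

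The main obstacle I expect is the careful bookkeeping in identifying the transform $\Phi_{i,S}$ on objects and morphisms and checking the triangle $r_i^*\circ\Phi_{i,S}\cong\Id$ is natural (not merely objectwise on generators): one must either invoke that a triangulated functor restricting to a natural isomorphism on a strong exceptional collection of generators is naturally isomorphic to the identity, or argue through the equivalence $K_\Theta(S)\simeq D(\sX_i)$ of \cite{HHL} directly so that everything is literally the identity at the chain level. I would route through the latter, using that $\HHL_{S,S}|_{\sX_i\times\sX_i}\simeq\HHL_{i,i}$ resolves the diagonal of $\sX_i$ (\cref{prop:uniformHHL}), so that $r_i^*\Phi_{i,S}$ is computed by a kernel homotopy equivalent to $\cO_{\Delta_{\sX_i}}$, giving the natural isomorphism with $\Id$ cleanly; the full faithfulness of $\Phi_{i,S}$ then follows formally from $r_i^*\circ\Phi_{i,S}\cong\Id$ together with $r_i^*$ being a localization whose restriction to the image of $\Phi_{i,S}$ is split.
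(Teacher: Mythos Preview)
Your proposal is correct and lands on essentially the same argument as the paper. The paper's proof is two sentences: from \cref{lem:PhiIS} (combined with \cref{lem:PsiInverse}) one has $\Phi_{i,S}\cong\Phi_{\Cox,S}\circ\pi_i^\ast$, a composition of two fully faithful functors; and since $\HHL_{S,S}$ restricts to a resolution of the diagonal on $\sX_i$ (\cref{prop:uniformHHL}), the composite $r_i^\ast\circ\Phi_{i,S}$ has kernel homotopy equivalent to $\cO_{\Delta_{\sX_i}}$, hence is the identity. Your third paragraph is exactly this; the detour in your first two paragraphs through checking $\Phi_{i,\Cox}\cong\pi_i^\ast$ and $r_i^\ast\circ\Phi_{i,S}\cong\Id$ on generators, and the naturality worry you flag, are unnecessary once you go straight to the kernel-level argument.
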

\begin{proof}
From \cref{lem:PhiIS},  $\Phi_{\Cox,S} \circ \pi_i^\ast \cong
\Phi_{i,S}$. Thus $\Phi_{i,S}$ is the composition of two fully faithful functors.
Since the Hanlon--Hicks--Lazarev complex restricts to a resolution of the diagonal on $\sX_i$, the latter statement holds.
\end{proof}

\begin{theorem} \label{thm:windows_glued}
There is an equality 
    $\langle \operatorname{Im} \Phi_{1,S}, \ldots, \operatorname{Im} \Phi_{r,S} \rangle = K_\Theta(S)$.
\end{theorem}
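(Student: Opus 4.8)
The containment $\langle \operatorname{Im}\Phi_{1,S},\dots,\operatorname{Im}\Phi_{r,S}\rangle \subseteq K_\Theta(S)$ is the easy direction: by \cref{defn:functorialHHLandFMs}, $\HHL_{i,S}=(\psi_{\sX_i}\times\psi_S)(\HHL_{S,S})$ has terms of the form $\cO_{\sX_i}(-d)\boxtimes S(-d')$ with $-d,-d'\in\Theta$, so for any object the Fourier--Mukai transform $\Phi_{i,S}$ produces a complex built from the $S(-d')$, $-d'\in\Theta$; hence $\operatorname{Im}\Phi_{i,S}\subseteq K_\Theta(S)$ for each $i$, and the thick subcategory they generate lies in $K_\Theta(S)$ as well.

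For the reverse containment, the plan is to show that each generator $S(-d)$ of $K_\Theta(S)$ (with $-d\in\Theta$) lies in $\langle\operatorname{Im}\Phi_{1,S},\dots,\operatorname{Im}\Phi_{r,S}\rangle$. First I would use \cref{lem:PhiIS}, which gives $\Psi\circ\Phi_{i,S}\cong\Phi_{i,\Cox}$, together with \cref{lem:PsiInverse}, which identifies $\Phi_{\Cox,S}\cong\Psi^{-1}$; composing, $\Phi_{i,S}\cong\Phi_{\Cox,S}\circ\pi_i^*$ as in the proof of \cref{thm:HHL_window}. Since $\Psi\colon K_\Theta(S)\to D_{\Cox}$ is an equivalence by \cref{cor:tilting_equiv}, it is equivalent to prove the corresponding statement inside $D_{\Cox}$, namely that $\langle\operatorname{Im}(\pi_1^*),\dots,\operatorname{Im}(\pi_r^*)\rangle = D_{\Cox}$, which is precisely the definition of $D_{\Cox}$ (\cref{defn:DCox}). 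Transporting back along the equivalence $\Psi^{-1}=\Phi_{\Cox,S}$, we get $\langle\Phi_{\Cox,S}(\operatorname{Im}\pi_1^*),\dots,\Phi_{\Cox,S}(\operatorname{Im}\pi_r^*)\rangle = \Phi_{\Cox,S}(D_{\Cox}) = K_\Theta(S)$, and the left side equals $\langle\operatorname{Im}\Phi_{1,S},\dots,\operatorname{Im}\Phi_{r,S}\rangle$ by the composition identity above.

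Concretely, one should be a little careful that $\langle-\rangle$ commutes with the equivalence $\Psi$: since $\Psi$ is an equivalence of triangulated categories it sends a generating set to a generating set, so $\Psi\langle\mathcal A_1,\dots,\mathcal A_r\rangle = \langle\Psi\mathcal A_1,\dots,\Psi\mathcal A_r\rangle$, and dually for $\Psi^{-1}$. Applying this with $\mathcal A_i=\operatorname{Im}\pi_i^*$ inside $D_{\Cox}$ and using $\Psi^{-1}\cong\Phi_{\Cox,S}$ finishes the argument.

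**Main obstacle.** I do not anticipate a serious obstacle here — the statement is essentially a repackaging of \cref{cor:tilting_equiv}, \cref{cor:strongResDiag}, \cref{lem:PsiInverse}, and \cref{lem:PhiIS}, all of which are available. The only point requiring a moment's care is verifying that the Fourier--Mukai transform identity $\Phi_{i,S}\cong\Phi_{\Cox,S}\circ\pi_i^*$ holds on the nose (it is the content of the proof of \cref{thm:HHL_window}, via the standard composition-of-kernels fact cited to \cite{huybrechts06}*{Chapter 5}), so that images of the generating subcategories match up exactly. Given that, the equality of thick subcategories is immediate.
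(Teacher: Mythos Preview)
Your proposal is correct and follows essentially the same approach as the paper: the paper's proof is the single line ``Apply $\Psi$ to deduce this from the definition of $D_{\Cox}$,'' which is exactly your argument of transporting the defining equality $D_{\Cox}=\langle\pi_1^*D(\sX_1),\dots,\pi_r^*D(\sX_r)\rangle$ through the equivalence $\Psi^{-1}\cong\Phi_{\Cox,S}$ using $\Phi_{i,S}\cong\Phi_{\Cox,S}\circ\pi_i^*$. Your additional discussion of the easy containment and of why $\langle-\rangle$ commutes with an equivalence is fine but unnecessary, since the transport via $\Psi$ already gives equality directly.
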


\begin{proof}
Apply $\Psi$ to deduce this from the definition of $D_{\Cox}$.
\end{proof}

\begin{lemma}[Strong $\Theta$-Transform Lemma]\label{lem:strongThetaTransform}
Fix $-d\in \Theta$. If the image of $d$ lies in the chamber of $\Sigma_{GKZ}$ corresponding to $\sX_i$, then
$\Phi_{i,S}(\cO_{\sX_i}(-d)) = S(-d)$.
\end{lemma}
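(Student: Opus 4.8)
The plan is to deduce the Strong $\Theta$-Transform Lemma from the results already assembled, essentially by unwinding the definitions of $\Phi_{i,S}$ and $\HHL_{i,S}$ and invoking the $\Theta$-Transform Lemma together with the fact that $\HHL_{S,S}$ restricts to a resolution of the diagonal on each $\sX_i$. First I would recall from \cref{defn:functorialHHLandFMs} that $\Phi_{i,S}$ is the Fourier--Mukai transform whose kernel $\HHL_{i,S} = (\psi_{\sX_i}\times\psi_S)(\HHL_{S,S})$ is a complex with terms $\cO_{\sX_i}(-e)\boxtimes S(-e')$ for $-e,-e'\in\Theta$. By \cref{lem:PhiIS} we have $\Phi_{i,S}\cong \Phi_{\Cox,S}\circ\pi_i^*$, and by \cref{lem:PsiInverse}, $\Phi_{\Cox,S}\cong\Psi^{-1}$. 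Since the image of $d$ lies in $\Gamma_i$, \cref{defn:ThetaCox} gives $\cO_{\Cox}(-d)=\pi_i^*\cO_{\sX_i}(-d)$, so
\[
\Phi_{i,S}(\cO_{\sX_i}(-d)) \cong \Psi^{-1}\big(\pi_i^*\cO_{\sX_i}(-d)\big) = \Psi^{-1}\big(\cO_{\Cox}(-d)\big).
\]
Now $\Psi\colon K_\Theta(S)\to D_{\Cox}$ is the equivalence of \cref{cor:tilting_equiv}, sending $S(-d)$ to $\cO_{\Cox}(-d)$ by \cref{def:Psimap}. Hence $\Psi^{-1}(\cO_{\Cox}(-d)) \cong S(-d)$, which is the desired conclusion.

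An alternative, more hands-on route that avoids chaining the abstract equivalences is worth spelling out as well, since it reveals why the hypothesis that $d$ lies in $\Gamma_i$ is exactly what is needed. Here one computes $\Phi_{i,S}(\cO_{\sX_i}(-d))$ directly: applying the Fourier--Mukai transform with kernel $\HHL_{i,S}$ to $\cO_{\sX_i}(-d)$ amounts to computing $\pi_{2*}\big(\pi_1^*\cO_{\sX_i}(-d)\otimes \HHL_{i,S}\big)$ where $\pi_1,\pi_2$ are the projections from $\sX_i\times\Spec(S)$. Because $\HHL_{i,S}$ is the restriction of $\HHL_{S,S}$ — whose pushforward to $\sX_i\times\sX_i$ is $\HHL_{i,i}$, a resolution of the diagonal $\Delta_{ii}$ by \cref{prop:uniformHHL} — one expects $\Phi_{i,S}$ to behave like a ``lift'' of the identity, with the key input being that pushing $\Phi_{i,S}(\cO_{\sX_i}(-d))$ forward along $r_i^*$ recovers $\cO_{\sX_i}(-d)$ (this is exactly \cref{thm:HHL_window}), and that the terms of $\HHL_{i,S}$ involve only twists in $\Theta$, so the relevant cohomology computations are governed by \cref{prop:homzero} and the $\Theta$-Transform Lemma (\cref{lem:thetatransform2}). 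The hypothesis $d\in\Gamma_i$ enters precisely because it is what makes \cref{defn:ThetaCox} assign $\cO_{\Cox}(-d)$ the representative $\pi_i^*\cO_{\sX_i}(-d)$, so that $\Psi^{-1}$ returns $S(-d)$ on the nose rather than some other complex representing the same object in $D_{\Cox}$.

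I would present the first route as the main proof, since it is the cleanest: it is essentially a three-line chase through \cref{lem:PhiIS}, \cref{lem:PsiInverse}, \cref{cor:strongResDiag} (implicitly, via $\Phi_{\Cox,\Cox}\cong\Id$), \cref{defn:ThetaCox}, and \cref{def:Psimap}. The main (and only real) obstacle is a bookkeeping one: one must make sure the identification $\Phi_{\Cox,S}\cong\Psi^{-1}$ from \cref{lem:PsiInverse} is applied to the correct object and that the equivalence $\Psi$ is genuinely inverse to $\Phi_{\Cox,S}$ — i.e., that $\Psi\circ\Phi_{\Cox,S}\cong\Phi_{\Cox,\Cox}\cong\Id_{D_{\Cox}}$ holds functorially, not merely on objects — so that $\Phi_{\Cox,S}\circ\Psi\cong\Id_{K_\Theta(S)}$ as well. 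This is already contained in \cref{lem:PsiInverse} and \cref{cor:strongResDiag}, so no genuinely new argument is required; the proof is a short deduction. If one instead wanted to avoid relying on the full strength of \cref{cor:strongResDiag}, the second route sketched above would be the fallback, at the cost of a more involved cohomological computation that essentially re-derives the relevant special case.
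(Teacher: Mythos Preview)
Your proposal is correct and takes essentially the same approach as the paper: both arguments chain \cref{lem:PhiIS} and \cref{lem:PsiInverse} to identify $\Phi_{i,S}(\cO_{\sX_i}(-d))$ with $\Psi^{-1}(\cO_{\Cox}(-d))$, then invoke \cref{defn:ThetaCox} (since $d\in\Gamma_i$) and the definition of $\Psi$ to conclude. The paper's proof is the same three-line chase you describe as your ``first route.''
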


\begin{proof}
Using \cref{lem:PhiIS}, 
$\Psi \circ \Phi_{i,S} (\mathcal O_{\sX_i}(-d)) = 
\pi_i^\ast \mathcal O_{\sX_i}(-d) = \mathcal O_{\Cox}(-d)$. 
Since $\Psi$ is an equivalence and takes $S(-d)$ to $\mathcal O_{\Cox}(-d)$, 
$\Phi_{i,S}(\cO_{\sX_i}(-d)) = S(-d)$.
\end{proof}

\begin{remark}
In
\cites{halpern2015derived, BFK_GIT}, it is shown that windows exist for the derived categories of GIT quotients, e.g., for all $\sX_i$.  It would be interesting to know if the image of $\Phi_{i,S}$ agrees with the grade restriction associated to any stratification of the unstable locus of $\sX_i$ with weight intervals 
$(-\omega_\lambda^-,0]$, where $-\omega_\lambda^- \ce \sum_{\deg_\lambda (x_i)
<0}\deg_{\lambda}(x_i)$.  Similarly, the image of
$\Phi_{\Cox,S}$ appears to agree with the grade restriction window with weight intervals taken over all one-parameter subgroups $\lambda$, where the $\lambda$ are partially ordered by containment of the corresponding contracting loci; the appropriate interval is $(-\omega_\lambda^-, -\omega_\lambda^+)$ for $\lambda$ where $\omega_\lambda^+ \ce \sum_{\deg_\lambda (x_i)
>0}\deg_{\lambda}(x_i)\ne 0$, or $(-\omega_\lambda^-, 0]$ when $ \omega_\lambda^+=0$.
\end{remark}

\subsection{Noncommutative desingularization}\label{subsec:noncomm}
We now discuss some implications of our results from the perspective of noncommutative algebraic geometry. Recall that $X \ce X_{\Sigma}$ is a semiprojective toric variety; for each Weil divisor class $-d\in \Cl(X)$, write $\mathcal O_{X}(-d)$ for the corresponding reflexive sheaf.  Choose a simplicial refinement $\Sigma'$ of the fan $\Sigma$ with the same rays, which always exists by \cite{CLSToricVarieties}*{Proposition 11.1.7}, and let $X'$ be the corresponding toric variety. Further, let $\sX$ be the toric DM stack corresponding to the stacky fan $(\Sigma', \beta)$, where $\beta \colon \ZZ^{\Sigma'(1)} \to N$ satisfies $\beta(e_\rho) = u_\rho$ for all $\rho \in \Sigma'(1)$.
Since $\Theta$ depends only on the rays of $\Sigma$ (see \cref{subsec:BTdefinitions}), there is a natural bijection between the Bondal--Thomsen collection for $X$ and the Bondal--Thomsen collection of $\sX$.  
By a minor abuse of notation, we will refer to both as $\Theta$.  We define $\mathcal T_{X}\ce\bigoplus_{-d\in \Theta} \cO_{X}(-d)$ and similarly for $\mathcal T_{\sX}$.  We set $T_{\Cox}\ce\bigoplus_{-d\in \Theta} \cO_{\Cox}(-d)$

\begin{lemma}\label{lem:Atheta}
    For any torus invariant divisors $D, D'$, there is an isomorphism
    \[
    \Hom^0(\cO_{X}(D), \cO_{X}(D'))= \Hom^0(\cO_{\sX}(\beta^*D), \cO_{\sX}(\beta^*D')). 
    \]
    Thus there is an isomorphism of underived endomorphism algebras
    $
    \End(\mathcal T_X)=\End(\mathcal T_{\sX}).
    $ 
\end{lemma}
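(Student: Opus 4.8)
The plan is to reduce both Hom spaces to spaces of global sections of line bundles attached to the difference $D'-D$, compute those combinatorially, and observe that the relevant polytope depends only on the rays. First I would note that the reflexive sheaves $\cO_X(D), \cO_X(D')$ are rank one and torsion free, hence subsheaves of the constant sheaf $\Bbbk(X)$, so any morphism $\cO_X(D)\to\cO_X(D')$ is multiplication by a unique rational function $f$, and the condition that this define a map of sheaves is exactly $\mathrm{div}(f)+D'-D\geq 0$, i.e. $f\in H^0(X,\cO_X(D'-D))$. Thus $\Hom^0(\cO_X(D),\cO_X(D'))\cong H^0(X,\cO_X(D'-D))$, and crucially composition of morphisms corresponds under this identification to ordinary multiplication of rational functions. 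On the smooth stack $\sX$ the sheaves $\cO_\sX(\beta^* D)$ are invertible, so the same discussion gives $\Hom^0(\cO_\sX(\beta^* D),\cO_\sX(\beta^* D'))\cong H^0(\sX,\cO_\sX(\beta^*(D'-D)))$, again with composition $=$ multiplication of rational functions, using $\beta^* D'-\beta^* D=\beta^*(D'-D)$.

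Next I would compute both section spaces as $M$-graded subspaces of the group algebra $\Bbbk[M]$. Writing $E=D'-D=\sum_\rho a_\rho D_\rho$, the standard description of global sections of a torus-invariant Weil divisor (e.g.\ \cite{CLSToricVarieties}*{Proposition 4.3.3}) gives $H^0(X,\cO_X(E))=\bigoplus_{m\in P_E\cap M}\Bbbk\,\chi^m$, where $P_E=\{m\in M_\RR \mid \langle m,u_\rho\rangle\geq -a_\rho \text{ for all } \rho\in\Sigma(1)\}$. For the stack side, one applies \cref{prop:semicohomology} to the simplicial toric variety $X_{\Sigma'}$ together with the same description of $H^0$ on $X_{\Sigma'}$ (alternatively, one uses the explicit cohomology computation recalled in \cref{subsec:cohomology}, noting that $\widetilde H^{-1}$ of a polytope-like set is $\Bbbk$ exactly when that set is empty). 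Either route yields $H^0(\sX,\cO_\sX(\beta^* E))=\bigoplus_{m\in P'_E\cap M}\Bbbk\,\chi^m$ with $P'_E=\{m\mid \langle m,u_\rho\rangle\geq -a_\rho \text{ for all } \rho\in\Sigma'(1)\}$. Since $\Sigma'$ is, by construction, a simplicial refinement of $\Sigma$ with the \emph{same} rays, $\Sigma'(1)=\Sigma(1)$, so $P_E=P'_E$, and the two section spaces are literally the same subspace of $\Bbbk[M]$. This proves the first isomorphism of the lemma, and moreover identifies $\Hom^0(\cO_X(D),\cO_X(D'))$ with $\Hom^0(\cO_\sX(\beta^* D),\cO_\sX(\beta^* D'))$ as subspaces of $\Bbbk[M]$ compatibly with restriction to the dense torus $T$.

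To promote this to an isomorphism of algebras $\End(\mathcal T_X)\cong\End(\mathcal T_{\sX})$, I would use the matrix-algebra descriptions $\End(\mathcal T_X)=\bigoplus_{-d,-d'\in\Theta}\Hom^0(\cO_X(-d),\cO_X(-d'))$ and $\End(\mathcal T_{\sX})=\bigoplus_{-d,-d'\in\Theta}\Hom^0(\cO_\sX(-d),\cO_\sX(-d'))$, with composition in each being the matrix multiplication induced by the composition pairings on Hom spaces. Since, as observed above, on both sides each Hom space is canonically the same subspace $\bigoplus_{m\in P_{d-d'}\cap M}\Bbbk\,\chi^m\subset\Bbbk[M]$ and composition on both sides is multiplication of Laurent monomials (addition of exponents), these identifications assemble into a ring isomorphism $\End(\mathcal T_X)\cong\End(\mathcal T_{\sX})$. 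The only point requiring genuine care — and the only content beyond routine bookkeeping — is exactly this compatibility with composition; it works because every morphism between these toric (reflexive, resp.\ invertible) sheaves is pinned down by its restriction to $T$, where composition is multiplication of characters, and that restriction is insensitive to the choice of fan supported on a fixed set of rays.
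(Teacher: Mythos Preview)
Your proof is correct and follows essentially the same route as the paper: both reduce each $\Hom^0$ to $H^0$ of the line bundle or reflexive sheaf associated to the difference $D'-D$, compute these as lattice points in a polytope depending only on the rays (using \cref{prop:semicohomology} and \cite{CLSToricVarieties}*{Proposition 4.3.3}), and conclude since $\Sigma'(1)=\Sigma(1)$. Your reduction on the $X$ side via embedding reflexive rank-one sheaves in $\Bbbk(X)$ is equivalent to the paper's use of adjunction and \cite{CLSToricVarieties}*{Proposition 8.0.6}, and you are more explicit than the paper about the compatibility with composition needed for the algebra isomorphism.
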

\begin{proof}
Since $\cO_{\sX}(\beta^*D)$ and $\cO_{\sX}(\beta^*D')$ are line bundles on $\mathcal X$, the right hand side is the 0-th cohomology $H^0(\cX, \mathcal O(\beta^*(D'-D)))$. On the other hand, 
\begin{align*}
        \Hom^0(\cO_X(D), \cO_X(D'))
         & = \Hom^0(\cO_X, \cO_X(D)^\vee \otimes \cO_X(D')) & \text{by adjunction,}\\
        & = \Hom^0(\cO_X , \cO_X(D'-D)^{\vee\vee}) & \text{by \cite{CLSToricVarieties}*{Proposition 8.0.6},}\\
        & = \Hom^0(\cO_X, \cO_X(D'-D)) & \text{since $\mathcal O(D'-D)$ is reflexive,}\\
        & = H^0(X, \cO_X (D'-D)).     
    \end{align*}
By \cite{CLSToricVarieties}*{Proposition 4.3.3},
$H^0(X, \cO_X(D'-D))=H^0(X', \cO_{X'}(D'-D))=  \Bbbk \left\langle P_{D'-D}\cap M \right\rangle$.
The statement then follows from \cref{prop:semicohomology}.
\end{proof}

The following will imply Theorem~\ref{thm:noncomm}.  We define $\pi_{\Cox}^*\colon \Perf(X)\to D_{\Cox}(X)$ to be the pullback functor induced by $\tsX \to X$.
\begin{thm}\label{thm:noncommMorePrecise}
The global dimension of $A_\Theta$ is $\dim X$ and 
the functor $\Perf(X) \to D(A_\Theta)$ given by $\cE \mapsto \Hom_{\Cox}(\cT_{\Cox},\pi_{\Cox}^*\cE)$ is fully faithful.  
The algebra $A_{\Theta}$ is uniform for any $X$ with the same Cox ring, that is, for any semiprojective toric variety whose fan has the same rays as $X$. 
\end{thm}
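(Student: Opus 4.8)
\textbf{Proof proposal for Theorem~\ref{thm:noncommMorePrecise}.}

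The plan is to deduce all three assertions from the machinery already assembled, especially \cref{cor:tilting_equiv}, \cref{cor:SmodHom}, and \cref{lem:Atheta}. First I would observe that by \cref{cor:SmodHom} (or directly via \cref{cor:tilting_equiv}), the endomorphism algebra of $\cT_{\Cox}=\bigoplus_{-d\in\Theta}\cO_{\Cox}(-d)$ in $D_{\Cox}$ is concentrated in degree zero and equals the path algebra $\operatorname{End}_{D_{\Cox}}^0(\cT_{\Cox})$; moreover \cref{lem:Atheta} identifies this with $A_\Theta=\operatorname{Hom}_X^0(\cT,\cT)=\operatorname{End}(\cT_X)$. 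Since $\Theta$ is a tilting object for $D_{\Cox}$ by \cref{thm:main1}, the functor $\operatorname{Hom}_{D_{\Cox}}(\cT_{\Cox},-)\colon D_{\Cox}\xrightarrow{\sim} D(A_\Theta)$ is an equivalence. Then the global dimension of $A_\Theta$ is finite and equals the Rouquier dimension of $D(A_\Theta)\simeq D_{\Cox}$, which \cref{prop:dgThm} (proved via \cref{cor:semiorthogonalDCox}) identifies with $\dim X$. (One should be slightly careful that for a finite-dimensional algebra the global dimension and Rouquier dimension of $D^b(A_\Theta)$ agree; since $\cT_{\Cox}$ is a tilting \emph{bundle} and $D_{\Cox}$ is smooth and—when $X$ is projective—proper, this is standard, and in the semiprojective case one argues using generation by projectives.)

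Next, for the fully faithfulness statement, I would factor the functor $\Perf(X)\to D(A_\Theta)$ as $\Perf(X)\xrightarrow{\pi_{\Cox}^*} D_{\Cox}(X)\xrightarrow{\operatorname{Hom}_{D_{\Cox}}(\cT_{\Cox},-)} D(A_\Theta)$. The second arrow is the tilting equivalence, hence fully faithful. So it suffices to show $\pi_{\Cox}^*\colon\Perf(X)\to D_{\Cox}(X)$ is fully faithful. Recall $\pi_{\Cox}^*$ is pullback along the composite $\tsX\to X_\Lambda\to X$ (or $\tsX\to X'\to X$) followed by inclusion into $D_{\Cox}\subseteq D(\tsX)$; here $X'$ is a simplicial refinement of $X$ with the same rays and $\sX$ its associated stack. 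The key point is that pullback $\Perf(X)\to D(X')\to D(\sX)$ is fully faithful: for the first map this is the standard fact that a proper birational morphism of varieties with $R\pi_*\cO=\cO$ induces a fully faithful pullback \emph{on the perfect subcategory} (projection formula for perfect complexes, as in \cref{lem:fullyFaithful} adapted to $\Perf$), and the second is fully faithful by \cref{lem:fullyFaithful} since $\sX\to X'$ is a coarse moduli/good-moduli morphism with $\pi_*\cO_\sX=\cO_{X'}$ (\cref{prop:semigoodmoduli}). Finally, $\pi_i^*D(\sX)\hookrightarrow D(\tsX)$ lands in $D_{\Cox}$ by definition and is fully faithful by \cref{lem:fullyFaithful}, and pulling a perfect complex from $X$ to $\sX_i$ and then to $\tsX$ agrees (up to the identifications of \cref{prop:welldefined}-type compatibilities) with $\pi_{\Cox}^*$; composing fully faithful functors gives the result.

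For the uniformity claim, I would note that $A_\Theta$ was just identified with $\operatorname{End}_{D_{\Cox}}^0(\cT_{\Cox})$, and both $\Theta$ and the category $D_{\Cox}$ depend only on the Cox ring $S$—equivalently, only on the rays $\Sigma(1)$—by \cref{subsec:BTGKZ} and \cref{cor:DCoxIndOfX}. Concretely, \cref{cor:SmodHom} gives a functorial identification $\operatorname{Hom}_{D_{\Cox}}(\cO_{\Cox}(-d),\cO_{\Cox}(-d'))\cong S_{d-d'}$ compatible with composition (multiplication of polynomials), so $A_\Theta$ is literally reconstructed from the graded pieces $S_{d-d'}$ for $-d,-d'\in\Theta$, with no reference to any irrelevant ideal $B_i$ or to the higher-dimensional cones of any fan. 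Hence two semiprojective toric varieties whose fans share the same rays have the same Cox ring, the same $\Theta$, and the same $A_\Theta$.

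\textbf{Main obstacle.} I expect the delicate point to be the full faithfulness of $\pi_{\Cox}^*$ on $\Perf(X)$ when $X$ is merely semiprojective and possibly singular: the projection-formula argument of \cref{lem:fullyFaithful} is stated for \emph{smooth} stacks, where every complex is perfect, so one must instead run the projection-formula computation only against perfect complexes on $X$ (where $L\pi^*$ of a perfect complex is still perfect and $R\pi_*$ commutes appropriately), and verify $R\pi_*\cO_{\tsX}=\cO_X$ by composing \cref{prop:constructionX} (giving $\pi_{i*}\cO_{\tsX}=\cO_{\sX_i}$) with the good-moduli pushforward \cref{prop:semigoodmoduli} and the refinement pushforward $R r_*\cO_{X'}=\cO_X$ for the toric refinement with the same rays. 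This bookkeeping—tracking which intermediate spaces are smooth, which maps are good-moduli versus birational refinements, and keeping everything inside $\Perf$—is where the real care is needed; the rest is formal consequence of the tilting equivalence already established.
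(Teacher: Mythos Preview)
Your fully-faithfulness and uniformity arguments match the paper's proof closely; the paper likewise factors the functor as $\pi_{\Cox}^*$ followed by the tilting equivalence, and invokes \cref{lem:Atheta} for uniformity. Your version actually supplies more detail than the paper on why $\pi_{\Cox}^*$ is fully faithful on $\Perf(X)$ in the singular case, which the paper leaves implicit.

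There is, however, a genuine gap in your global-dimension argument. The claim that the global dimension of $A_\Theta$ equals the Rouquier dimension of $D(A_\Theta)$ is not standard and is false for general algebras: Rouquier dimension is only a \emph{lower} bound for global dimension (the generator $A_\Theta$ builds any bounded complex in at most $\operatorname{gldim}(A_\Theta)$ cones, giving $\operatorname{rdim}\leq\operatorname{gldim}$, but the reverse inequality can fail). So your argument only yields $\operatorname{gldim}(A_\Theta)\geq\dim X$, not equality. Your parenthetical acknowledges the issue but does not resolve it, and the appeal to smoothness/properness of $D_{\Cox}$ does not supply the missing upper bound. The paper sidesteps this entirely by citing \cite{FH}*{Theorem 2.14} for the exact value. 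If you want a self-contained route, you could instead observe that the complex $\HHL_{\Cox,\Cox}$ from \cref{sec:diagonal} has length $\dim X$ and, under $\Psi^{-1}\times\Psi^{-1}$, gives a projective $A_\Theta$-bimodule resolution of the diagonal bimodule of that length; this yields $\operatorname{gldim}(A_\Theta)\leq\dim X$, and combined with your lower bound gives equality.
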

\begin{proof}
The fact that the global dimension of $A_{\Theta}$ is $\dim X$ follows from
\cite{FH}*{Theorem 2.14}.
Since $\cT_{\Cox}$ is a tilting bundle for $D_{\Cox}(X)$ by \Cref{thm:main1}, $D(A_{\Theta})$ is equivalent to $D_{\Cox}(X)$; see, e.g., \cite{keller1994deriving}. 
The pullback $\pi^*_{\Cox} \colon \Perf(X)\to D(\tsX)$ is fully faithful and factors through $D_{\Cox}(X)$.
Finally, the uniformity statement about $A_{\Theta}$ follows from \cref{lem:Atheta}.
\end{proof}

\begin{remark} \label{rem: Sapronov}
Using Grothendieck-Verdier duality for the proper morphism $\pi\colon \tsX \to X$, the fully faithful functor in \Cref{thm:noncommMorePrecise} can alternatively be described as tilting by the complex
$
\pi_*(\mathcal T_{\Cox} \otimes \omega_{\tsX})\otimes \omega_X^\vee$.
\end{remark}
As noted in the introduction, ~\cref{thm:noncommMorePrecise} shows that the algebra $A_\Theta$ is a noncommutative resolution of $X$ in an appropriate sense: the smooth algebra $A_\Theta$ is the {\em underived} endomorphism algebra of the corresponding reflexive sheaf $\mathcal T_X$ on $X$. When $X$ is affine--which is the domain where noncommutative resolutions are generally defined--our result immediately recovers \cite{spenko-van-den-bergh-inventiones}*{Proposition 1.3.6} and the main result of \cite{faber-muller-smith}.
\begin{cor}\label{cor:affineNCR}
    If $X=\Spec R$ is affine, then $A_\Theta$ is a noncommutative resolution of $R$.
\end{cor}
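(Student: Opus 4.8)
The plan is to check directly that, when $X=\Spec R$ is affine, $A_\Theta$ satisfies the defining conditions of a noncommutative resolution of the normal domain $R$ in the sense of \cite{van-den-bergh-nccr} and \cite{spenko-van-den-bergh-inventiones}: that $A_\Theta\cong\End_R(M)$ for a nonzero finitely generated reflexive $R$-module $M$ with $\gldim\End_R(M)<\infty$. Recall that an affine semiprojective toric variety has coordinate ring $R=\Bbbk[\sigma^\vee\cap M]$ for a cone $\sigma$, which is a normal affine semigroup ring, hence a normal Noetherian domain, so this is the appropriate setting.

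First I would reduce the statement to a computation with $R$-modules. By \cref{subsec:noncomm}, $A_\Theta=\Hom^0_X(\mathcal T_X,\mathcal T_X)$ with $\mathcal T_X=\bigoplus_{-d\in\Theta}\mathcal O_X(-d)$. Since $X=\Spec R$ is affine, the global sections functor is exact and induces an equivalence of $\mathcal O_X$-module categories with $R$-module categories; in particular $\Hom^0_X(\mathcal F,\mathcal G)\cong\Hom_R(\Gamma(X,\mathcal F),\Gamma(X,\mathcal G))$ for coherent $\mathcal F,\mathcal G$. Setting $M\ce\Gamma(X,\mathcal T_X)=\bigoplus_{-d\in\Theta}\Gamma(X,\mathcal O_X(-d))$, this yields $A_\Theta\cong\End_R(M)$.

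Next I would verify the hypotheses on $M$. Each $\mathcal O_X(-d)$ is, by construction, a rank-one reflexive (divisorial) sheaf on the normal variety $X$; as $X$ is affine and $R$ is normal, each $\Gamma(X,\mathcal O_X(-d))$ is a finitely generated rank-one reflexive $R$-module (this is the standard combinatorial picture of divisorial modules on affine toric varieties; cf.\ \cite{CLSToricVarieties}*{Proposition 8.0.6}, and see also \cref{lem:Atheta}). A finite direct sum of finitely generated reflexive modules is again finitely generated and reflexive, and $\Theta$ is finite, so $M$ is finitely generated and reflexive. Moreover $0\in\Theta$ (take $\theta=0$ in \cref{defn:BT}), so $R=\mathcal O_X(0)$ is a direct summand of $M$; hence $M$ is nonzero and faithful, which handles the faithfulness condition appearing in some formulations of the definition. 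Finally, finite global dimension is immediate from \cref{thm:noncommMorePrecise}, which gives $\gldim A_\Theta=\dim X<\infty$ for every semiprojective toric $X$, in particular in the affine case.

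Combining these three points shows that $A_\Theta=\End_R(M)$ is a noncommutative resolution of $R$, and unwinding the identification of $M$ with the relevant divisorial module recovers the noncommutative resolutions of \cite{spenko-van-den-bergh-inventiones}*{Proposition 1.3.6} and \cite{faber-muller-smith}. There is essentially no obstacle here: the only point requiring any care is the bookkeeping identification $A_\Theta\cong\End_R(M)$ together with the reflexivity of $M$ over $R$, and both become routine once one observes that affineness of $X$ trivializes the sheaf-$\Hom$ computation. The one thing to be mindful of is simply matching conventions between the general notion of ``noncommutative resolution for $X$'' used elsewhere in \cref{sec:applications} and the classical affine definition of a noncommutative resolution of $R$.
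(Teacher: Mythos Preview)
Your proposal is correct and follows essentially the same approach as the paper: identify $A_\Theta$ as $\End_R(M)$ for the reflexive $R$-module $M=\Gamma(X,\mathcal T_X)$ (the paper cites \cref{lem:Atheta} for this), then invoke \cref{thm:noncommMorePrecise} for the finite global dimension bound $\gldim A_\Theta=\dim X$. You spell out a bit more of the bookkeeping (finite generation, nonzeroness via $0\in\Theta$, faithfulness), but the structure and key inputs are identical.
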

\begin{proof}
    \cref{lem:Atheta} shows that $A_\Theta$ is the underived endomorphism algebra of the reflexive $R$-module $\cT$.   By Theorem~\ref{thm:noncommMorePrecise}, the global dimension of $A_\Theta$ is $\dim X<\infty$. Thus $A_\Theta$ is a noncommutative resolution of $R$ in the sense of Van den Bergh~\cite{van-den-bergh-nccr}.
\end{proof}
Our proof of \cref{cor:affineNCR} has a different flavor than previous proofs. The~\cite{faber-muller-smith} proof is explicit and direct, based on properties of the conic modules corresponding to (what we refer to as) elements of the Bondal--Thomsen collection $\Theta$.  The proof in \cite{spenko-van-den-bergh-inventiones} positions the result as a corollary of a more general result about reductive group actions.  Our work derives the result from $D_{\Cox}$ using the birational geometry perspective offered by $\Sigma_{GKZ}$.

\begin{remark}
    When $X =\Spec(R)$ is affine, one can directly check that for any $\theta \in M_{\RR}$, $\cO_{X}(-d(\theta))$ corresponds to the conic module $A_{\theta}$, in the notation of Faber--Muller--Smith, and hence our $\mathcal T_X$ lines up directly with the module $\AA$ from their work.
\end{remark}

Theorem~\ref{thm:main1} can be articulated in terms of categorical resolutions as defined 
in~\cite{kuzentsov-lefschetz}. 

\begin{theorem}\label{thm:categoricalResolutions}
If $X$ is a semiprojective normal toric variety, then $D_{\Cox}(X)$ is a categorical resolution of $X$.
\end{theorem}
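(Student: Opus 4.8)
The plan is to verify that $D_{\Cox}(X)$ satisfies the definition of a categorical resolution in the sense of \cite{kuzentsov-lefschetz}. Recall that a categorical resolution of a (possibly singular) variety $X$ consists of a smooth, cocomplete (or suitably finite-type) triangulated category $\mathcal D$ together with an adjoint pair of functors $\pi_* \colon \mathcal D \to D(X)$ and $\pi^* \colon \Perf(X) \to \mathcal D$ such that $\pi^*$ is fully faithful, $\pi_* \circ \pi^* \cong \Id_{\Perf(X)}$, and $\mathcal D$ is homologically smooth (and proper when $X$ is proper). So the job is to produce these functors for $\mathcal D = D_{\Cox}(X)$ and check the required properties, most of which are already in hand.

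First I would recall the setup of \cref{subsec:noncomm}: choose a simplicial refinement $\Sigma'$ of the fan of $X$ with the same rays, giving $X'$ and the toric DM stack $\sX$, and let $\tsX$ be the common refinement from \cref{subsec:constructionOfWX}, which comes with proper birational toric morphisms to every $\sX_i$ as well as to $X$ (via the composite $\tsX \to X_\Lambda \to X$). I would take $\pi \colon \tsX \to X$ to be this proper morphism, define $\pi^*_{\Cox} \colon \Perf(X) \to D_{\Cox}(X)$ as the pullback $\pi^*$ (which lands in $D_{\Cox}$ because $\pi$ factors through each $\sX_i$ — more precisely $\cO_{\tsX} = \pi_i^*\cO_{\sX_i}$, and $\Perf(X)$ pulls back into $\pi_i^* D(\sX_i)$ since it pulls back to $\Perf$ of the coarse moduli and then up the stack), and define the pushforward functor $\mathcal D \to D(X)$ as $\pi_* \circ i$, where $i \colon D_{\Cox} \hookrightarrow D(\tsX)$ is the inclusion. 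Full faithfulness of $\pi^*_{\Cox}$ follows from \cref{lem:fullyFaithful} (or directly from the projection formula, since $\pi_*\cO_{\tsX} = \cO_X$ as $X$ is normal), and $\pi_* \pi^* \cong \Id$ on $\Perf(X)$ is the same computation. Homological smoothness of $D_{\Cox}(X)$, and properness when $X$ is projective, are exactly \cref{prop:dgThm}.

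The main work is then just to package these ingredients against whichever precise axioms of \cite{kuzentsov-lefschetz} one adopts — for instance, checking that the relevant dg-enhancements exist (inherited from $D(\tsX)$ via the admissibility statement \cref{cor:semiorthogonalDCox}, so $D_{\Cox}$ is an admissible subcategory of a smooth proper dg-category when $X$ is projective, and more generally a smooth dg-category) and that the adjunction $\pi^*_{\Cox} \dashv \pi_*$ restricts correctly from the ambient adjunction $\pi^* \dashv \pi_*$ on $D(\tsX)$. I would also remark, as in \cref{rem: Sapronov}, that the fully faithful embedding $\Perf(X) \hookrightarrow D_{\Cox}(X)$ together with $\pi_*$ exhibits $D_{\Cox}(X)$ as a (weakly crepant only in special cases — see \cref{rmk:crepant}) categorical resolution, and note the compatibility with the noncommutative resolution $A_\Theta$: under the tilting equivalence $D_{\Cox}(X) \simeq D(A_\Theta)$ of \cref{thm:noncommMorePrecise}, this is precisely the categorical resolution structure on $D(A_\Theta)$. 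The anticipated obstacle is purely bookkeeping: making sure the functors $\pi^*_{\Cox}, \pi_*$ land in the categories claimed and are genuinely adjoint at the level of dg-enhancements, rather than any new geometric input — every substantive geometric fact (fully faithfulness, $\pi_*\pi^* \cong \Id$, smoothness, properness, admissibility) has already been established above.
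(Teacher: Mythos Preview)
Your proposal is correct and follows essentially the same route as the paper: factor $\tsX \to X$ through the simplicial DM stack $\sX$, use the projection formula (via \cref{lem:fullyFaithful}) to get full faithfulness of $(\pi_0\circ\pi)^*$ on $\Perf(X)$ and hence $(\pi_0\circ\pi)_*(\pi_0\circ\pi)^*\cong\Id$, and invoke \cref{prop:dgThm} for homological smoothness. The paper's proof is terser but uses exactly these ingredients; your additional remarks about dg-enhancements, admissibility, and the tilting comparison with $A_\Theta$ are consonant with what the paper records elsewhere (\cref{cor:semiorthogonalDCox}, \cref{thm:noncommMorePrecise}).
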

\begin{proof}
As above, we let $\sX$ be the toric DM stack corresponding to a simplicial refinement of $\Sigma$ with the same rays. We write $\pi_0\colon \mathcal X\to X$ for the induced map.  We take $\pi\colon \tsX \to \sX$ as usual.  The pairs $(\pi_0 \circ \pi)^*$ and $(\pi_0 \circ \pi)_*$ form a pseudoadjunction, and $(\pi_0 \circ \pi)^*$  is fully faithful by Lemma~\ref{lem:fullyFaithful}.
It follows that $(\pi_0 \circ \pi)_* \circ (\pi_0 \circ \pi)^* \cong \Id_{\Perf(X)}$. 
Since $D_{\Cox}(X)$ is homologically smooth by Proposition~\ref{prop:dgThm}, this yields the desired result.
\end{proof}

\begin{remark}\label{rmk:crepant}
Faber--Muller--Smith \cite{faber-muller-smith}*{Propositions 7.5 and 7.9} showed that when $X=\Spec(R)$ is affine, the resulting $D(A_\Theta)$ is a noncommutative {\em crepant} resolution if and only if $X$ is simplicial.  When $X$ is not affine, $D_{\Cox}$ is rarely a categorical crepant resolution in the sense of~\cite{kuzentsov-lefschetz}.  For instance, this fails even in for the Hirzebruch surface $\cH_3$. However, with notation as in the proof of \cref{thm:categoricalResolutions}, $D_{\Cox}$ is a categorical crepant resolution of $D(X)$ when $\sX \to X$ is crepant and $\sX$ has Bondal--Ruan type in the sense of~\cite{FH2}. 
\end{remark}

\subsection{Cox rings and Bondal--Thomsen free monads}\label{subsec:BTmonads}
In this section, we show that $D_{\Cox}$ can be used to provide uniform free monads across all toric birational models of a given toric variety.
Recall from \cref{subsec:BTGKZ} that, given an
$S$-module $P$ and a cone $\Gamma\in \Sigma_{GKZ}(X)$, there is a corresponding
sheaf $P|_{X_{\Gamma}}$ on the variety $X_\Gamma$. 
This assignment is functorial and hence extends to complexes and derived categories.

For a sheaf $\cE$ on a toric variety or toric stack $\sY$, a \defi{free monad} is a graded complex $F$ of free modules over the Cox ring such that $F|_{\sY}\cong \cE$; by a standard, minor abuse of notation, we  will refer to $F|_{\sY}$ as a free monad as well.  The Beilinson collection produces free monads for $\PP^n$ that have been used in many contexts; see, e.g.,~\cite{EFS}*{\S8} and related citations, or the complexes arising from one of the Beilinson spectral sequences, e.g., \cite{caldararu}*{\S3.7}.  Similar monads for weighted projective stacks also appear in~\cite{BE-nonstandard}*{Corollary 1.8}.

Recall from \cref{conv:functors} that $K_\Theta(S)$ is the homotopy category of free $S$-modules whose generators have degrees lying in $\Theta$.  
From \cref{cor:tilting_equiv}, the functor $\Psi\colon K_{\Theta}(S)\to D_{\Cox}$ is an equivalence, with inverse $\bM\colon D_{\Cox}\to K_{\Theta}(S)$ (see \cref{lem:PsiInverse}).
We call the free complex of $S$-modules $\bM(\cE)\in K_\Theta(S)$ a \defi{Bondal--Thomsen monad} for $\cE$.

On $\PP^n$, these monads provide an algebraic presentation that allows for a direct computation of sheaf cohomology.  
For instance, for $\cO_{\PP^1}(-3)$, the minimal free resolution is $S(-3)$, whereas the above monad is $\bM(\cO_{\PP^1}(-3)) = [S^2 \gets S(-1)^3]$.  
From the latter (but not the former), the cohomology can be computed by taking the degree $0$ strand:
\vspace*{-1mm}
\[
[S^2\gets S(-1)^3]_0 \cong [k^2 \gets 0] \cong H^*(\PP^1, \cO_{\PP^1}(-3)).
\]
Generalizing to the Bondal--Thomsen collection on 
$X$, it turns out that this structure is available across all cones in $\Sigma_{GKZ}(X)$.  
For each $\Gamma\in \Sigma_{GKZ}$, write $(\pi_{\Gamma})_*$ for the map $D_{\Cox}\to D(X_\Gamma)$ induced by $\pi_\Gamma\colon\tsX\to X_\Gamma$.  From the monad, we show that one can immediately compute the Fourier--Mukai transforms and higher direct images to any of the $X_{\Gamma}$.

\begin{thm}\label{thm:pushforwardGKZ}
Let $\cE\in D_{\Cox}$ with associated  Bondal--Thomsen monad $\bM(\cE)$.   
For each $\Gamma\in \Sigma_{GKZ}$, 
$\bM(\cE)|_{X_\Gamma}$ is a free monad for $(\pi_{\Gamma})_*(\cE)$ on $X_\Gamma$. 
\end{thm}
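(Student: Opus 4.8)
The strategy is to reduce the statement to the case $\Gamma=\Gamma_i$ a maximal chamber plus a refinement-functoriality argument, and then to the already-established facts about how the Bondal--Thomsen collection behaves under pushforward. First I would recall the setup: by \cref{cor:tilting_equiv} the functor $\Psi\colon K_\Theta(S)\to D_{\Cox}$ is an equivalence with inverse $\bM$, and $\bM(\cE)$ is a complex of free $S$-modules with generators in degrees $-d\in\Theta$; applying $(-)|_{X_\Gamma}$ term by term (which is exact, as it is a localization of each free module, so it passes to homotopy categories and hence to $K_\Theta(S)$) produces the complex $\bM(\cE)|_{X_\Gamma}$ of coherent sheaves on $X_\Gamma$. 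The content of the theorem is the identification of this complex with $(\pi_\Gamma)_*(\cE)$ in $D(X_\Gamma)$.

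The key reduction is the following: for each term $S(-d)$ of $\bM(\cE)$ with $-d=-d(\theta)\in\Theta$, \cref{lem:ThetaRestrict} tells us that $S(-d)|_{X_\Gamma}$ is $\cO_{X_\Gamma}(-d)$ when $\theta\in L_\Gamma^\perp+M$ and $0$ otherwise, where $L_\Gamma$ is the lineality space of $\overline\Sigma_\Gamma$. On the other hand, $(\pi_\Gamma)_*$ applied to the corresponding term $\cO_{\Cox}(-d)$ of $\Psi(\bM(\cE))\cong\cE$ should give the same answer: when $\Gamma=\Gamma_i$ is maximal, $L_{\Gamma_i}=0$ and this is exactly the $\Theta$-Transform Lemma (\cref{lem:thetatransform2}), giving $(\pi_i)_*\cO_{\Cox}(-d)=\cO_{\sX_i}(-d)$; for general $\Gamma$, one factors $\pi_\Gamma\colon\tsX\to X_\Gamma$ through $\pi_\Gamma = q_\Gamma\circ\pi_i$ for a suitable maximal chamber $\Gamma_i\supseteq\Gamma$, where $q_\Gamma\colon\sX_i\to X_\Gamma$ is the toric morphism induced by the refinement $\overline\Sigma_{\Gamma_i}$ of $\overline\Sigma_\Gamma$, and then applies \cref{prop:pushforwardBTvariety} (the ``pushforwards of $\Theta$ along refinements'' proposition, in its generalized-fan form, together with the coarse-moduli pushforward \cref{prop:pushforwardBTcoarse} to get from the stack $\sX_i$ down to $X_i$) to compute $(q_\Gamma)_*\cO_{\sX_i}(-d)$. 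Both \cref{lem:ThetaRestrict} and \cref{prop:pushforwardBTvariety} have the \emph{same} case distinction ($\theta\in L_\Gamma^\perp+M$ or not) and give the \emph{same} sheaf, so the terms match; crucially the higher direct images vanish in both computations by \cref{lem:BTvanishingCohomology} (applied with $A=0$), so $(\pi_\Gamma)_*$ of a direct sum of $\cO_{\Cox}(-d)$'s is computed term by term and there is no spectral-sequence degeneration issue.

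Having matched the individual terms, I would then upgrade to a statement of complexes: the point is that the identifications above are \emph{natural in the differentials}. This follows from \cref{cor:SmodHom}, which says that the isomorphisms $S_{d-d'}\xrightarrow{\sim}\Hom(\cO_{\Cox}(-d),\cO_{\Cox}(-d'))$ are compatible with composition and that $\pi_{i*}$ of such a morphism is just the restriction map on $S$-modules; the analogous compatibility for $(-)|_{X_\Gamma}$ is built into the construction of $P|_{X_\Gamma}$ in \cref{subsec:BTGKZ} (sections are $P[(x^{\overline\sigma^{\mathsf c}})^{-1}]_0$ and the differentials act by the same polynomial multiplications). So the chain map $\bM(\cE)$, under term-wise restriction, goes to the chain map $(\pi_\Gamma)_*(\Psi(\bM(\cE)))=(\pi_\Gamma)_*(\cE)$, i.e.\ $\bM(\cE)|_{X_\Gamma}\cong(\pi_\Gamma)_*(\cE)$ in $D(X_\Gamma)$, which is exactly the assertion that $\bM(\cE)|_{X_\Gamma}$ is a free monad for $(\pi_\Gamma)_*(\cE)$.

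\textbf{Main obstacle.} The delicate point is \emph{uniformity of the case distinction}: one must check that the dichotomy ``$\theta\in L_\Gamma^\perp+M$ or not'' appearing in \cref{lem:ThetaRestrict} is literally the same dichotomy governing $(q_\Gamma)_*\cO_{\sX_i}(-d)$ in \cref{prop:pushforwardBTvariety}, \emph{with $L$ equal to the lineality space of the generalized fan $\overline\Sigma_\Gamma$} and not some larger or smaller subspace; this requires care because $\Gamma$ is a non-maximal cone of $\Sigma_{GKZ}$, $X_\Gamma$ is described by a generalized fan (possibly with contracted rays), and one has to track through Definition~\ref{defn:cox}-style data that the contracted rays and the lineality space contribute exactly the variables $x_\rho$ that are inverted in forming $P|_{X_\Gamma}$. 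Once that bookkeeping is pinned down, the rest is a formal assembly of results already in the paper ($\Theta$-Transform Lemma, the pushforward propositions for $\Theta$, \cref{lem:ThetaRestrict}, \cref{cor:SmodHom}, and the equivalence $\Psi$).
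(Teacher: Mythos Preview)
Your proposal is correct and follows essentially the same approach as the paper: reduce to checking the diagram commutes on the generators $\cO_{\Cox}(-d)$ and on morphisms between them, using the $\Theta$-Transform Lemma plus \cref{prop:pushforwardBTcoarse} for the maximal-chamber case, then factoring through some $X_i\to X_\Gamma$ and applying \cref{prop:pushforwardBTvariety} for lower-dimensional $\Gamma$, with \cref{lem:ThetaRestrict} handling the $(-)|_{X_\Gamma}$ side and \cref{cor:SmodHom} handling naturality in morphisms. One small cosmetic point: your map $q_\Gamma$ should have source the variety $X_i$ rather than the stack $\sX_i$ (so the factorization is $\tsX\to\sX_i\to X_i\to X_\Gamma$), since \cref{prop:pushforwardBTvariety} is stated for varieties; you already have \cref{prop:pushforwardBTcoarse} in hand to bridge $\sX_i\to X_i$, so this is only a matter of ordering.
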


In particular, a Bondal--Thomsen monad encodes sheaf cohomology: 
\[
H^*(X,\cE) 
= \bM(\cE)|_{\Spec S_0} 
= [\bM(\cE)]_0.
\]
Thus Theorem~\ref{thm:pushforwardGKZ} demonstrates how a complex of $S$-modules can simultaneously encode geometric information for all of the toric varieties in $\Sigma_{GKZ}$.

\begin{example}\label{ex:TwistedCubicMonads}
Let $X$ be the blowup of $\PP^3$ at two points as in Example~\ref{ex:Bl2points}.  Let $C\subseteq X$ be the curve defined by
\[
C = V( x_1x_3-x_2^2x_4, x_0x_2-x_1^2x_5, x_0x_3-x_1x_2x_4x_5).
\]
This is the strict transform of the twisted cubic; namely, $V(x_4)$ and $V(x_5)$ are the exceptional divisors, and setting $x_4=x_5=1$ recovers the traditional equations of the twisted cubic. 
The Bondal--Thomsen monad for $\cO_C$ is the free complex
\[
S \gets
\begin{matrix}S(-2,-1,0)\\ \oplus \\ S(-2,0,-1)\\ \oplus \\ S(-2,-1,-1)  \end{matrix}
\gets S(-3,-1,-1)^2 \gets 0.
\]
This is in fact a free resolution of $S/I$, where $I$ is the ideal defined by the equations above.  The GKZ fan includes five maximal chambers and a number of boundary faces (see \Cref{fig:Bl2Pts}). 
\Cref{thm:pushforwardGKZ} implies that the free monad for $\cO_C$ under the associated pushforwards and Fourier--Mukai transforms can be computed immediately; geometrically, these monads compute the image of $C$ under the various morphisms and rational maps from $\Sigma_{GKZ}$.

The recipe comes from \Cref{lem:ThetaRestrict}.  For instance, to compute the monad for the twisted cubic (the image of the curve in $\PP^3$), \Cref{lem:ThetaRestrict} implies that  $S(a,b,c)|_{\PP^3}\cong \cO_{\PP^3}(a)$ and thus \Cref{thm:pushforwardGKZ} implies that the pushforward of $\cO_C$ from $X$ to $\PP^3$ yields
\[
\cO_{\PP^3}\gets \cO_{\PP^3}(-2)^3 \gets \cO_{\PP^3}(-3)^2\gets 0, 
\]
which is the classical resolution of the twisted cubic.  

Next consider the Hirzebruch surface $\cH_1$ on the bottom row of Figure~\ref{fig:Bl2Pts}. \Cref{lem:ThetaRestrict} implies that only the factors $S$ and $S(-2,-1,0)$ will be nonzero when restricted to this $\cH_1$.  Thus, the monad for the Fourier--Mukai of $\cO_C$ from $X\dashrightarrow X'$ followed by the pushforward to $X'\to \cH_1$ is 
\[
\cO_{\cH_1}\xleftarrow{\ x_1x_3-x_2^2x_4 \ }\cO_{\cH_1}(-2,-1)\gets 0.
\]
Equivalently, this is a locally-free resolution of the structure sheaf of the defining image of $C$ under the rational map from $X$ to $\cH_1$.

In a similar manner, consider the $\PP^2$ from the bottom row of Figure~\ref{fig:Bl2Pts}.  There is a map $f\colon X\to \PP^2$ and the resolution for $f_*\cO_C$ is
\[
\cO_{\PP^2} \xleftarrow{\ x_1x_3-x_2^2 \ }\cO_{\PP^2}(-2)\gets 0,
\]
where $\PP^2$ has coordinates $[x_1\mathbin:x_2\mathbin:x_3]$.  In particular, the image of $C$ is defined by the conic $x_1x_3-x_2^2$.  A resolution can similarly be computed for the pushforward to either copy of $\operatorname{Bl}_1(\PP^3)$ or either copy of $\PP^2$ and so on.
\end{example}

\begin{example}\label{ex:HirzMonads}
Let $Z$ be a set of $5$ generic points in the Hirzebruch surface $\cH_3$.  The Bondal--Thomsen monad $\bM(\cO_Z)$ is the free complex
\[
S^5 \gets \begin{matrix} S(-1,0)^5 \\ \oplus \\ S(0,-1)^5\end{matrix} \gets S(-1,-1)^5 \gets 0.
\]
Theorem~\ref{thm:pushforwardGKZ} implies that the free monad for the various pushforwards and Fourier--Mukai transforms of $\cO_Z$ from this monad can be computed immediately.  
Namely, by \Cref{lem:ThetaRestrict} shows how elements of $\Theta$ behave under restriction to the various $X_{\Gamma}$.  In the following chart, the entry in row $S(-d)$ and column $X_{\Gamma}$ is the sheaf $S(-d)|_{X_{\Gamma}}$.
\begin{center}
    \begin{tabular}{c|c|c|c|c|}
    &$\cH_3$&$\PP(1,1,3)$&$\PP^1$&$\Spec(\Bbbk)$\\ \hline \hline
    $S$&$\cO_{\cH_3}$&$\cO_{\PP(1,1,3)}$&$\cO_{\PP^1}$&$\Bbbk$\\ \hline
   $S(-1,0)$&$\cO_{\cH_3}(-1,0)$ &$\cO_{\PP(1,1,3)}(-1)$ & $\cO_{\PP^1}(-1)$&$0$\\ \hline
   $S(2,-1)$&$\cO_{\cH_3}(2,-1)$ &$\cO_{\PP(1,1,3)}(-1)$ & $0$&$0$\\ \hline
      $S(1,-1)$&$\cO_{\cH_3}(1,-1)$ &$\cO_{\PP(1,1,3)}(-2)$ & $0$&$0$\\ \hline
            $S(0,-1)$&$\cO_{\cH_3}(0,-1)$ &$\cO_{\PP(1,1,3)}(-3)$ & $0$&$0$\\ \hline
            $S(-1,-1)$&$\cO_{\cH_3}(-1,-1)$ &$\cO_{\PP(1,1,3)}(-4)$ & $0$&$0$\\ \hline
    \end{tabular}
\end{center}
For instance, the pushforward of $\cO_Z$ to $\PP^1$ will be
$\cO_{\PP^1}^5 \gets \cO_{\PP^1}(-1)^5$, which is
a monad for $5$ points in $\PP^1$.  
The Fourier--Mukai transform to $\PP(1,1,3)$ will be
\[
\cO_{\PP(1,1,3)}^5 \gets \begin{matrix}\cO_{\PP(1,1,3)}(-1)^5 \\ \oplus \\ \cO_{\PP(1,1,3)}(-3)^5 \end{matrix} \gets \cO_{\PP(1,1,3)}(-4)^5 \gets 0,
\]
which is a monad for $5$ points in $\PP(1,1,3)$ (none of which is the stacky point). 
And, the pushforward to $\Spec(\Bbbk)$ will be
$\Bbbk^5\gets 0$, 
which computes the cohomology of $\cO_Z$.
\end{example}

\begin{proof}[Proof of Theorem~\ref{thm:pushforwardGKZ}]
The theorem amounts to showing that the following diagram of functors commutes: 
\[
\xymatrix{
D_{\Cox}\ar[rr]^-{\bM(-)}\ar[rrd]_{(\pi_{\Gamma})_*}&&K_{\Theta}(S)\ar[d]^-{(-)|_{X_{\Gamma}}}\\
&&D(X_{\Gamma}).
}
\]
It suffices to prove this for $\cO_{\Cox}(-d)$ with $-d\in \Theta$.  Assume first that $\Gamma$ is a chamber, so that $X_{\Gamma}=X_i$ is the coarse moduli space of $\sX_i$.  The $\Theta$-Transform Lemma implies that $(\pi_i)_*\cO_{\Cox}(-d)=\cO_{\sX_i}(-d)$.  Further pushing forward onto $X_i$ then yields $\cO_{X_i}(-d)$ by \cref{prop:pushforwardBTcoarse}.  This gives commutativity on generators.  For morphisms, \cref{cor:partialorder} shows that morphisms $\Hom(\cO_{\Cox}(-d),\cO_{\Cox}(-d')))$ are in bijection with $S_{d-d'}$.  The functor $\Phi_{\Cox,S}$ sends each such morphism to its obvious counterpart based on the fact that $\Phi_{\Cox,S}=\Psi^{-1}$ as proven in \cref{lem:PsiInverse}.  The proof of \cref{lem:ThetaRestrict} then shows that each monomial in $S_{d-d'}$ induces a natural morphism on the corresponding sheaves on $X_{\Gamma}$.    Similarly, the proof of \cref{prop:pushforwardBTvariety} shows that each monomial in $S_{d-d'}$ induces the same natural morphism on the corresponding sheaves on $X_{\Gamma}$

Now, if $\Gamma$ is a lower dimensional cone of $\Sigma_{GKZ}$, then $\pi_{\Gamma}$ factors through a morphism $X_i \to X_{\Gamma}$ for some $i$, then  \cref{prop:pushforwardBTvariety} implies that $(\pi_{\Gamma})_*\cO_{\Cox}(-d) = \cO_{X_\Gamma}(-d)$.
Going the other way around the diagram, \cref{lem:ThetaRestrict} shows that $S(-d)|_{X_{\Gamma}} = \cO_{X_{\Gamma}}(-d)$.  This proves the commutativity on objects.  For morphisms, apply the same argument as in the previous paragraph.
\end{proof}

Working with Bondal--Thomsen monads also yields vanishing results that hold uniformly for all varieties indexed by the secondary fan, echoing ~\cite{BETate}*{Remark 3.16}.
For instance:
\begin{cor}\label{cor:vanishing}
Let $\cE \in D(X)$.  
If $\bM(\cE)$ only has terms in negative cohomological degrees,\footnote{This is equivalent to asking that $\bM(\cE)$ be a virtual resolution of $\cE$ in the sense of~\cite{BES}.} so that $\bM(\cE)$ has the form
\[
0\gets \bM(\cE)^0 \gets \bM(\cE)^{-1} \gets \cdots,
\]
then for any $\pi_{\Gamma}\colon X\to X_\Gamma$ from $\Sigma_{GKZ}(X)$,  $R^i\pi_{\Gamma,*}(\cE)=0$ for $i>0$.
\end{cor}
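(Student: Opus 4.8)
The statement relates a cohomological vanishing (higher direct images) to a purely combinatorial/homological condition on the Bondal--Thomsen monad $\bM(\cE)$. The natural approach is to reduce to Theorem~\ref{thm:pushforwardGKZ}, which identifies $\bM(\cE)|_{X_\Gamma}$ with a free monad for $(\pi_\Gamma)_*(\cE)$ on $X_\Gamma$, and then extract the vanishing of $R^i\pi_{\Gamma,*}$ from the shape of that complex. First I would recall that, by \cref{thm:pushforwardGKZ}, the complex $\bM(\cE)|_{X_\Gamma}$ is a complex of sheaves on $X_\Gamma$ that is quasi-isomorphic (as an object of $D(X_\Gamma)$) to $(\pi_\Gamma)_*\cE$. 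By \cref{lem:ThetaRestrict}, each term $S(-d)|_{X_\Gamma}$ is either $\cO_{X_\Gamma}(-d)$ or $0$; in particular every nonzero term is a reflexive (in fact, on the simplicial locus, a line) sheaf, hence has no higher cohomology after pushing to an affine, and more to the point each $\cO_{X_\Gamma}(-d)$ here is a direct image of a Bondal--Thomsen bundle from $\sX_i$ with vanishing higher cohomology on affine charts (see the proofs of \cref{prop:pushforwardBTcoarse} and \cref{prop:pushforwardBTvariety}).

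\textbf{Key steps.} (1) By hypothesis $\bM(\cE)$ is concentrated in cohomological degrees $\le 0$, i.e.\ it has the form $0 \gets \bM(\cE)^0 \gets \bM(\cE)^{-1}\gets\cdots$ with each $\bM(\cE)^{-j}$ a sum of $S(-d)$, $-d\in\Theta$. Restricting to $X_\Gamma$ via \cref{lem:ThetaRestrict} preserves this: $\bM(\cE)|_{X_\Gamma}$ is a complex $0\gets F^0\gets F^{-1}\gets\cdots$ with each $F^{-j}$ a direct sum of sheaves $\cO_{X_\Gamma}(-d)$ for various $-d\in\Theta$ (some summands possibly dropping to $0$). (2) For any affine toric open $U_\sigma\subseteq X_\Gamma$, each $\cO_{X_\Gamma}(-d)|_{U_\sigma}$ has no higher cohomology (it is a direct summand of the structure sheaf pushed forward from a chart of a smooth toric DM stack; concretely this is exactly the higher-vanishing computation carried out in the proof of \cref{prop:pushforwardBTvariety} with $A=0$, together with \cref{lem:BTvanishingCohomology}). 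Hence on $U_\sigma$ the complex $F^\bullet|_{U_\sigma}$ is a bounded-above complex of $\Gamma(U_\sigma,-)$-acyclic sheaves placed in nonpositive degrees, so $R\Gamma(U_\sigma, F^\bullet)$ is computed by the complex of sections $\Gamma(U_\sigma, F^\bullet)$, which again sits in cohomological degrees $\le 0$. (3) Therefore the hypercohomology sheaves $\mathcal H^i(F^\bullet)$, i.e.\ the cohomology sheaves of $(\pi_\Gamma)_*\cE$, vanish for $i>0$ on every affine chart, hence globally; that is, $R^i\pi_{\Gamma,*}\cE = \mathcal H^i\big((\pi_\Gamma)_*\cE\big) = 0$ for $i>0$.

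\textbf{Subtlety / main obstacle.} The one point that requires care is step (2) in the case that $X_\Gamma$ is \emph{not} simplicial (a lower-dimensional cone $\Gamma$), since then the chart $U_\sigma$ need not be smooth and the relevant $\cO_{X_\Gamma}(-d)$ is only reflexive. Here I would factor $\pi_\Gamma$ through some chamber, $\tsX \to X_i \to X_\Gamma$, and argue affine-locally on $X_\Gamma$: pulling back to $X_i$ reduces the higher-vanishing of $\cO_{X_\Gamma}(-d)$ restricted to a (possibly singular) affine chart to the higher-vanishing of the corresponding Bondal--Thomsen bundle on the smooth toric variety $X_i$ over that chart, which is precisely the content of the computation in the proof of \cref{prop:pushforwardBTvariety} (the higher direct images vanish because of the analogue of \cref{lem:BTvanishingCohomology} with $A=0$ applied on each $X_{\sigma'}$). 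Once the acyclicity of the terms of $\bM(\cE)|_{X_\Gamma}$ on affine opens is in hand, the spectral-sequence/hypercohomology argument is formal: a complex of acyclics in nonpositive degrees has no hypercohomology in positive degrees. I do not expect any genuinely hard estimate here; the work is in correctly invoking \cref{thm:pushforwardGKZ} and \cref{lem:ThetaRestrict} and then tracking the cohomological range.
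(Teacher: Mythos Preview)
Your approach via \cref{thm:pushforwardGKZ} is exactly the paper's, but you add an unnecessary detour. Once you know $\bM(\cE)|_{X_\Gamma}$ represents $(\pi_\Gamma)_*\cE$ in $D(X_\Gamma)$, the vanishing $R^i\pi_{\Gamma,*}\cE = \mathcal H^i\bigl(\bM(\cE)|_{X_\Gamma}\bigr) = 0$ for $i>0$ is immediate: the cohomology \emph{sheaves} of a complex concentrated in degrees $\le 0$ are trivially zero in positive degrees, since there are no terms there. Your step (2) and the ``subtlety'' about acyclicity on affine opens would be relevant if you were computing hypercohomology $R\Gamma(U_\sigma,F^\bullet)$, but that is not what is at stake; the sheaf $\mathcal H^i(F^\bullet)$ is the sheafification of $U\mapsto \ker(d^i|_U)/\operatorname{im}(d^{i-1}|_U)$, which involves no derived functors of the terms. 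The paper's proof is accordingly two lines.
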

\begin{proof}   
Writing $\mathrm H^i$ for the $i$-th cohomology of a complex of sheaves, Theorem~\ref{thm:pushforwardGKZ} implies that $R^i(\pi_{\Gamma})_*(\cE)=\mathrm H^i(\bM(\cE)|_{X_\Gamma})$.  But $\bM(\cE)|_{X_\Gamma}$ has the form
\[
\left( \bM(\cE)^0\right)|_{X_{\Gamma}}
\gets
\left( \bM(\cE)^{-1}\right)|_{X_{\Gamma}}
\gets\cdots
\]
and thus equals zero in cohomological degrees $>0$. 
 It follows that $\mathrm H^i=0$ for $i>0$.
\end{proof}

\begin{example}
If $H^i(X,\cE(-d))=0$ for all $i>0$ and $-d\in \Theta$, then by a direct computation involving Demazure Vanishing and the equality $\Phi_{\Cox,S}=\bM$, $\bM(\cE)$ will be a virtual resolution of $\cE$ and \cref{cor:vanishing} will apply.
This is a consequence of the fact that $\bM (\cE)$ can be explicitly computed as a Fourier--Mukai transform with respect to the Hanlon--Hicks--Lazarev resolution of the diagonal and that resolution only involves terms of degree $-d \in\Theta$.

For instance, if $L$ is a nef line bundle on $X$, then $H^j(X, L(-d))=0$ for all $j>0$ and $-d\in \Theta$ by Demazure Vanishing for $\QQ$-divisors as noted before \cref{lem:BTvanishingCohomology}. 
Therefore, Corollary~\ref{cor:vanishing} applies to $L$. 
\end{example}

\begin{remark}
Corollary~\ref{cor:vanishing} has connections with notions of multigraded regularity, such as the one introduced by Maclagan and Smith in~\cite{MS}.  For instance, let $X=\cH_3$ be the Hirzebruch surface from our running example with canonical degree $w_X = (1,-2)$. 
Let $M$ be a graded $S$-module that is $w_X$-regular with respect to the Maclagan--Smith definition of multigraded regularity as in the introduction of~\cite{MS}.
A direct computation confirms that $M|_{X}$ satisfies the hypotheses of \cref{cor:vanishing}.  In other words, by combining multigraded regularity with monads as in \cref{cor:vanishing}, one can obtain new vanishing results for higher direct images under the morphisms and Fourier--Mukai transforms from $\Sigma_{GKZ}$.
\end{remark}

\subsection{Sharpened Bondal--Thomsen generation}\label{sec:sharpenedBondal}
In addition to their influence on mirror symmetry, the ideas from Bondal's Oberwolfach report~\cite{Bondal} were also influential on studies of exceptional collections and generation. The statement that the Bondal-Thomsen collection generates the derived category has been referred to as Bondal's Conjecture in the literature, as it was inspired
by~\cite{Bondal}, however Bondal never made such a conjecture.  The question
nevertheless attracted
interest~\cites{BallardEtAl2019ToricFrobenius,prabhu-naik,ohkawa2013frobenius,uehara} and was proven
in~\cites{FH,HHL}.  Our methods allow us to strengthen those results, proving
that one only needs a subset of the bundles from $\Theta$ to generate $D(X)$.

Throughout this section, assume that $X$ is a simplicial toric variety with corresponding toric DM stack $\sX$ given by $(\Sigma, \beta)$ where $\beta(e_\rho) = u_\rho$ for all $\rho \in \Sigma(1)$.
Further, let $\Gamma$ be a wall of the nef cone of $X$ that is an interior wall of $\Sigma_{GKZ}(X)$.\footnote{If $\Sigma_{GKZ}$ has no interior walls, then we are in the situation of Example~\ref{ex:singleMaximal} where $\Theta$ is a full strong exceptional collection for $D(\sX)$.}
There is a corresponding primitive collection $P=P_{\Gamma}\subseteq \Sigma(1)$, see~\cite{CLSToricVarieties}*{Definition 5.1.15}, and we summarize a few key properties.
Each primitive collection $P$ also determines a circuit relation $\sum_\rho b_\rho u_\rho = 0$, where $b_\rho \in \QQ$~\cite{CLSToricVarieties}*{(6.4.8)}.  
This relation can be used to define a degree function $\deg_\Gamma\colon S\to \ZZ$, where $\deg_\Gamma(x_\rho) \ce b_\rho$.  Again by ~\cite{CLSToricVarieties}*{(6.4.8)},  $\deg_\Gamma(x_\rho)>0$ if and only if $\rho \in P$, and thus the rays can be partitioned into two sets: $P =\{ \rho\mid\deg_\Gamma(x_\rho)>0\}$ and $P^{\mathsf{c}} =\{ \rho\mid\deg_\Gamma(x_\rho)\leq 0\}$.

We define the closed zonotope $\overline{Z_+}=\left\{ \sum_{\rho \in P} a_\rho D_\rho\ \Big\vert\ a_\rho \in [-1,0] \right\} $ and the open zonotope $Z_{-}^\circ \ce \left\{ \sum_{\rho \in P^{\mathsf{c}}} a_\rho D_\rho\ \Big\vert\ a_\rho \in (-1,0) \right\}$.  Note that $Z_{-}^\circ$ is full dimensional because the interior wall of $\Sigma_{GKZ}(X)$ was chosen so there must be a full-dimensional chamber of $\Sigma_{GKZ}(X)$ on the non-positive side of $\Gamma$.

\begin{example}\label{ex:HirzPrimitive}
In our running example of the Hirzebruch surface, let $\Gamma$ be the wall on the $y$-axis in \Cref{fig:HirzSecondaryFan}.  The corresponding primitive collection  is $P=\{\rho_0,\rho_2\}$, with circuit relation $u_0 - 3u_1 +u_2=0$ and degree function $\deg_\Gamma$ where $\deg_\Gamma(x_0)=\deg_\Gamma(x_2) =1, \deg_\Gamma(x_1)=-3$ and $\deg_\Gamma(x_3)=0$.
In this case, $\overline{Z_+}$ is the closed interval from $(0,0)$ to $(-2,0)$, while $Z_-^\circ$ is the open parallelogram spanned by $(3,-1)$ and $(0,-1)$.  See Figure~\ref{fig:zonotopesan}.
\end{example}

\begin{lemma}\label{lem:zonotopes}
There is a containment $\overline{Z_+} + Z_-^\circ \subseteq Z$.
\end{lemma}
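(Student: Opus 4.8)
\textbf{Plan for the proof of \cref{lem:zonotopes}.}

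The plan is to show that an arbitrary element of $\overline{Z_+} + Z_-^\circ$ lies in the zonotope $Z = Z_X$, using the characterization of $Z$ from \cref{defn:zonotope} as the image of $(-1,0]^{\Sigma(1)}$ under the projection $\RR^{\Sigma(1)}\to \Cl(X)_\RR$, or equivalently (via \cref{prop:zonotheta}) as the set of divisor classes representable by $\sum a_\rho D_\rho$ with $a_\rho\in(-1,0]$. So suppose we are given a class $\xi = \eta_+ + \eta_-$ with $\eta_+\in\overline{Z_+}$ and $\eta_-\in Z_-^\circ$. By definition $\eta_+$ is the class of $\sum_{\rho\in P} a_\rho D_\rho$ with each $a_\rho\in[-1,0]$, and $\eta_-$ is the class of $\sum_{\rho\in P^{\mathsf c}} a_\rho D_\rho$ with each $a_\rho\in(-1,0)$. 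Adding these representatives gives a representative $\sum_{\rho\in\Sigma(1)} a_\rho D_\rho$ of $\xi$, and the only issue is that the coefficients $a_\rho$ for $\rho\in P$ might equal $-1$, which is excluded from the half-open box $(-1,0]^{\Sigma(1)}$.

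The key step is to correct the boundary coefficients using the circuit relation. Recall the circuit relation $\sum_\rho b_\rho u_\rho = 0$ with $b_\rho = \deg_\Gamma(x_\rho) > 0$ for $\rho\in P$ and $b_\rho\le 0$ for $\rho\in P^{\mathsf c}$. This relation means that the divisor $\sum_\rho b_\rho D_\rho$ is linearly equivalent (in $\Cl(X)_\RR$, and in fact comes from a character) to $0$, so adding $t\sum_\rho b_\rho D_\rho$ to any representative does not change the class, for any $t\in\RR$. First I would dispose of the trivial fact that some such representative works when all $a_\rho\in(-1,0]$ already. Otherwise, let $S_{-1}\subseteq P$ be the (nonempty) set of $\rho\in P$ with $a_\rho=-1$; I would add a small positive multiple $t\sum_\rho b_\rho D_\rho$ of the circuit relation. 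For $\rho\in P$ this increases $a_\rho$ by $tb_\rho > 0$, moving it strictly above $-1$ and—since the original $a_\rho\le 0$ and $tb_\rho$ is small—keeping it $\le 0$; we may even need to be slightly careful to keep $a_\rho \le 0$ for $\rho \in P$ with $a_\rho$ already equal to $0$, but since $tb_\rho>0$ this would push it above $0$. This is the point requiring care: adding the relation helps the coefficients that are at $-1$ but hurts those at $0$.

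To resolve this, I would split into contributions more carefully: write $\eta_+$ as a convex-type combination or, better, observe that it suffices to treat the case where $\eta_+$ is a vertex of $\overline{Z_+}$, i.e.\ $a_\rho\in\{-1,0\}$ for $\rho\in P$, since $\overline{Z_+}$ is the convex hull of such vertices and $Z$ is convex (so $\overline{Z_+}+Z_-^\circ$ is contained in $Z$ as soon as every $(\text{vertex})+Z_-^\circ$ is). For a vertex, partition $P = P_0 \sqcup P_{-1}$ according to the value of $a_\rho$, and add $t\sum_\rho b_\rho D_\rho$ with $t>0$ chosen small enough that: (i) for $\rho\in P_{-1}$, the new coefficient $-1 + tb_\rho$ lies in $(-1,0]$ (possible since $b_\rho>0$, taking $t < 1/\max_{\rho\in P}b_\rho$ and also $t \le 1/b_\rho$ so it stays $\le 0$, i.e.\ $t b_\rho \le 1$); (ii) for $\rho\in P_0$, the new coefficient $tb_\rho$ is positive—this is the bad case. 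So instead, I would add a \emph{negative} multiple? That pushes $P_{-1}$ coefficients below $-1$. The genuine fix: do not add a uniform multiple of the relation; instead, note that what we really need is that the class $\xi$ has \emph{some} representative in $(-1,0]^{\Sigma(1)}$, and for this it is enough to exhibit any lift $a\in\RR^{\Sigma(1)}$ of $\xi$ together with an element of the lattice-translated kernel placing it in the box. Concretely, the map $\RR^{\Sigma(1)}\to\Cl(X)_\RR$ has kernel equal to the image of $M_\RR$, and for the box $(-1,0]$ the relevant fact—used repeatedly in \cref{subsec:BTdefinitions}—is that a class lies in $Z$ iff it equals $-d(\theta)$ for some $\theta\in M_\RR$, where $d(\theta)_\rho = \lceil\langle\theta,u_\rho\rangle\rceil$. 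So the cleanest route is: from the representative $\sum a_\rho D_\rho$ of $\xi$ with $a_\rho\in(-1,0]$ for $\rho\in P^{\mathsf c}$ and $a_\rho\in[-1,0]$ for $\rho\in P$, produce $\theta\in M_\RR$ with $\lceil\langle-\theta,u_\rho\rangle\rceil = a_\rho$ for all $\rho$, after perturbing $\theta$ generically within $M_\RR$ so that no pairing $\langle\theta,u_\rho\rangle$ is an integer—this perturbation exists precisely because $Z_-^\circ$ is full-dimensional (the contribution of the $P^{\mathsf c}$ coefficients was chosen open), giving room to nudge $\theta$ and turn each $a_\rho=-1$ into a value strictly in $(-1,0)$ while keeping it equal to its ceiling. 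I expect this perturbation argument—exploiting the openness of $Z_-^\circ$ and full-dimensionality to move $\theta$ off the walls where ceilings jump—to be the main obstacle, and it is exactly why the statement is phrased with a closed zonotope on the $P$ side and an open one on the $P^{\mathsf c}$ side.
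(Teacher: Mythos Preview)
Your approach is considerably more complicated than the paper's, and contains a genuine error along the way.

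First, the error: you claim that the circuit relation $\sum_\rho b_\rho u_\rho = 0$ means the divisor $\sum_\rho b_\rho D_\rho$ is linearly equivalent to $0$ in $\Cl(X)_\RR$ (``comes from a character''). This is false. The circuit relation says $(b_\rho)_\rho$ lies in the kernel of $\RR^{\Sigma(1)} \to N_\RR$, $e_\rho \mapsto u_\rho$; equivalently, $(b_\rho)$ defines a linear functional on $\Cl(X)_\RR$---this is precisely why $\deg_\Gamma$ is well-defined on $\Cl(X)$. Linear triviality of $\sum b_\rho D_\rho$ would instead require $(b_\rho)$ to lie in the \emph{image} of $M_\RR \to \RR^{\Sigma(1)}$, $m \mapsto (\langle m, u_\rho\rangle)$, a different subspace. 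So adding $t\sum b_\rho D_\rho$ \emph{does} change the class. You abandon this route for other reasons, but its premise was already wrong.

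Second, your final perturbation idea---use full-dimensionality of $Z_-^\circ$ to nudge the representative into the half-open box---has the right instinct but is incomplete and muddled: the $a_\rho$ are real, so ``$\lceil \langle -\theta, u_\rho\rangle\rceil = a_\rho$'' is meaningless, and the explicit construction you sketch would require finding $m \in M_\RR$ with prescribed signs of $\langle m, u_\rho\rangle$ on two subsets of $P$, a separability condition on the rays that you never verify.

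The paper's proof bypasses all of this with a three-line topological argument. Observe that $\overline{Z_+} + \overline{Z_-}$ is the image of $[-1,0]^P \times [-1,0]^{P^{\mathsf c}} = [-1,0]^{\Sigma(1)}$, which is $\overline{Z}$. Since $Z_-^\circ$ is open and full-dimensional, the Minkowski sum $\overline{Z_+} + Z_-^\circ$ is an open subset of $\Cl(X)_\RR$ contained in $\overline{Z}$, hence lies in the interior of $\overline{Z}$, which is contained in $Z$. You correctly identified that full-dimensionality of $Z_-^\circ$ is the crux; the point is to use it to conclude openness of the Minkowski sum directly, rather than attempting a coordinate-by-coordinate perturbation.
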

\begin{proof}
After taking closures, both sides are equal.  Since $Z_-^\circ$ is full-dimensional and open, for every point $z$ in $Z_-^\circ$, there is an open disc surrounding $z$ and contained in $\overline{Z_-}$.  It follows that, around any point $\widehat{z}$ in the Minkowski sum $\overline{Z_+} + Z_-^\circ$, there is an open disc contained in $\overline{Z_+}+\overline{Z_-}=\overline{Z}$.  In particular, it follows that $\widehat{z}$ cannot lie on the boundary of $\overline{Z}$.
\end{proof}

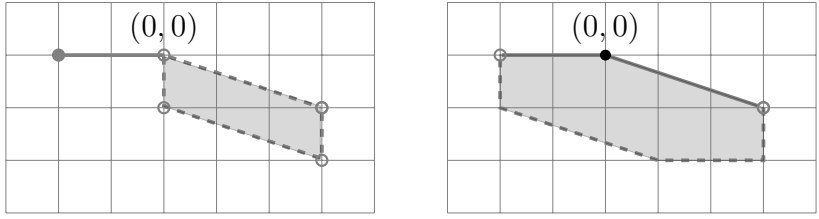
\begin{figure} \centering
\begin{tikzpicture}[scale = .6]
\draw[step=1cm,gray,very thin] (4,1) grid (-3,-3);
\draw[gray, dashed, line width = .5mm] (.09,-.03)--(2.91,-.97);
\draw[gray, dashed, line width = .5mm] (0,-.03)--(0,-.97)--(3,-1.97)--(3,-1);
\draw[fill = gray, opacity = .3] (0,-.03) -- (2.97,-.97) --(2.97,-1.97) -- (0,-.97);
\draw[thick, gray] (0,0) circle (3pt);
\draw[thick, gray] (3,-2) circle (3pt);
\draw[thick, gray] (0,-1) circle (3pt);
\draw[thick, gray, fill = gray] (-2,0) circle (3pt);
\draw[thick, gray] (3,-1) circle (3pt);
\draw[thick,gray] (3,-1) circle (3pt);
\draw[gray, line width = .5mm] (0,0)--(-2,0);
\filldraw[black, very thick]  (-1,1) circle (0pt);
\filldraw[black] (0,0) circle (0pt) node[anchor=south]{$(0,0)$};
\end{tikzpicture}
\qquad
\begin{tikzpicture}[scale = .6]
\draw[step=1cm,gray,very thin] (4,1) grid (-3,-3);
\draw[gray, line width = .5mm] (2.91,-.97)--(0,0)--(-1.97,0);
\draw[gray, dashed, line width = .5mm] (3,-1) -- (3,-2)--(1,-2)--(-2,-1)--(-2,0);
\draw[fill = gray, opacity = .3] (0,0) --(3,-1) -- (3,-2)--(1,-2)--(-2,-1)--(-2,-0)--(0,0);
\draw[thick, gray] (3,-1) circle (3pt);
\draw[thick, gray] (-2,0) circle (3pt);
\draw[thick,gray] (3,-1) circle (3pt);
\filldraw[black, very thick]  (-1,1) circle (0pt);
\filldraw[black, very thick]  (0,0) circle (2pt);
\draw[black, very thick]  (0,0) circle (2pt);
\filldraw[black] (0,0) circle (0pt) node[anchor=south]{$(0,0)$};
\end{tikzpicture}
\caption{In Example~\ref{ex:HirzZonotope}, the region $Z_-^\circ $ is the parallelogram on the left, while $\overline{Z_+}$ is the closed interval $[-2,0]$ on the $x$-axis. The sum $\overline{Z_+} + Z_-^\circ$ is contained in the standard half-open zonotope $Z$ on the right.
}
\label{fig:zonotopesan}
\end{figure}

\begin{defn}\label{defn:ThetaGammaCirc}
Set $\Theta_\Gamma^\circ\ce Z_-^\circ \cap \Cl(X)$.
\end{defn}

We show that these elements are not needed when generating $D(\sX)$. Specifically:
\begin{thm}\label{thm:thetaWgenerates}
The elements in $\Theta \setminus \Theta^\circ_\Gamma$ generate $D(\sX)$; more specifically, 
\[
D(\sX) = \langle \cO_X(-d) \mid -d\in \Theta \setminus \Theta^\circ_\Gamma \rangle.
\]
As a consequence, elements in $\Theta \setminus \Theta^\circ_\Gamma$ generate $D(X)$.
\end{thm}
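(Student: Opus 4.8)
The plan is to deduce the statement from the already known fact that $\Theta$ generates $D(\sX)$ (by \cite{HHL}*{Corollary D} or \cite{FH}): it then suffices to show that for every $-d\in\Theta_\Gamma^\circ$ the line bundle $\cO_\sX(-d)$ lies in the thick subcategory $\langle \cO_\sX(-e)\mid -e\in\Theta\setminus\Theta_\Gamma^\circ\rangle$. I would prove this by induction, using a twisted Koszul complex attached to the primitive collection $P=P_\Gamma$ and the functional $\deg_\Gamma$ as the inductive measure.

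The key tool is the Koszul complex on the sections $x_\rho\in H^0(\sX,\cO_\sX(D_\rho))$, $\rho\in P$. Since $P$ is a primitive collection of $\Sigma$, no cone of $\Sigma$ contains $P$, so the common vanishing locus $V(x_\rho\mid\rho\in P)$ meets no point of $\sX$; equivalently, at each point of $\sX$ some $x_\rho$ ($\rho\in P$) is a unit. Hence the Koszul complex on these sections, twisted by $\cO_\sX(-d)$, is exact:
\[
0\to\cO_\sX(-d-D_P)\to\cdots\to\bigoplus_{\rho\in P}\cO_\sX(-d-D_\rho)\to\cO_\sX(-d)\to 0,
\]
where $D_S\ce\sum_{\rho\in S}D_\rho$. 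Reading this complex in $D(\sX)$ gives $\cO_\sX(-d)\in\langle\cO_\sX(-d-D_S)\mid\emptyset\neq S\subseteq P\rangle$. Now I would feed in two observations. First, for $\emptyset\neq S\subseteq P$ the class $-D_S$ lies in $\overline{Z_+}$ (its coefficients on $P$ are $-1$ on $S$ and $0$ elsewhere, all in $[-1,0]$), so \cref{lem:zonotopes} yields $-d-D_S\in Z_-^\circ+\overline{Z_+}\subseteq Z$; being a lattice point, $-d-D_S\in\Theta$. Second, $\deg_\Gamma$ is a well-defined linear functional on $\Cl(X)$ — its value on a class is independent of the divisor representative since $\sum_\rho b_\rho u_\rho=0$ — taking only finitely many values on the finite set $\Theta$, and $\deg_\Gamma(-D_S)=-\sum_{\rho\in S}b_\rho<0$ because $b_\rho>0$ for $\rho\in P$ by \cite{CLSToricVarieties}*{(6.4.8)}; thus $\deg_\Gamma(-d-D_S)<\deg_\Gamma(-d)$. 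Inducting on the value of $\deg_\Gamma$: for each $S$, the class $-d-D_S$ lies in $\Theta$ and is either already in $\Theta\setminus\Theta_\Gamma^\circ$ — in which case $\cO_\sX(-d-D_S)$ is one of the allowed generators — or in $\Theta_\Gamma^\circ$ with strictly smaller $\deg_\Gamma$, hence covered by the inductive hypothesis. Either way $\cO_\sX(-d-D_S)\in\langle\cO_\sX(-e)\mid -e\in\Theta\setminus\Theta_\Gamma^\circ\rangle$, and therefore so is $\cO_\sX(-d)$; at the minimal value of $\deg_\Gamma$ on $\Theta_\Gamma^\circ$ only the first alternative occurs, which is the base of the induction. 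This proves the displayed equality for $D(\sX)$.

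I do not expect a serious obstacle here: the only real content is that the twisted Koszul complex never leaves $\Theta$, which is exactly the zonotope containment of \cref{lem:zonotopes}, while $\deg_\Gamma$ supplies a strictly decreasing — hence well-founded — inductive measure. The most delicate point is simply keeping the bookkeeping straight (that every intermediate class stays in $\Theta$, and that the recursion into $\Theta_\Gamma^\circ$ always strictly drops $\deg_\Gamma$). Finally, the statement for $D(X)$ follows by pushing the same argument forward along the coarse moduli morphism $\pi\colon\sX\to X$: by \cref{prop:pushforwardBTcoarse}, $\pi_*\cO_\sX(-d)=\cO_X(-d)$ with vanishing higher direct images, so $\pi_*$ sends the exact Koszul complexes above to exact complexes of reflexive sheaves $\cO_X(-d-D_S)$ on $X$, and the identical induction — together with the generation of $D(X)$ by $\{\cO_X(-e)\mid -e\in\Theta\}$ — gives $D(X)=\langle\cO_X(-e)\mid -e\in\Theta\setminus\Theta_\Gamma^\circ\rangle$.
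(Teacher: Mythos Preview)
Your proposal is correct and follows essentially the same approach as the paper: both use the twisted Koszul complex $K_P(-d)$ on the primitive collection $P$, invoke \cref{lem:zonotopes} (packaged in the paper as \cref{cor:zonotopes}) to keep all terms in $\Theta$, and induct via the strictly monotone functional $\deg_\Gamma$. The only cosmetic difference is in the passage to $D(X)$: the paper simply notes that the coarse moduli morphism realizes $D(X)$ as a quotient of $D(\sX)$, whereas you push forward the Koszul complexes directly---both arguments are valid.
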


Recall that the angled bracket notation used in \cref{thm:thetaWgenerates} was introduced in \cref{sec:CoxCategory}, and it indicates the smallest thick subcategory of $D(\sX)$ containing the given elements.

To prove \cref{thm:thetaWgenerates}, we use an algebraic consequence of \cref{lem:zonotopes}. Write $I_P \ce\langle x_\rho\mid\rho \in P\rangle$.  
Since $P$ is a primitive collection, $I_P$ is a minimal prime of the irrelevant ideal of $X$.  
Thus if $K_P$ is the Koszul complex on $I_P$, then $(K_P)|_X\cong (S/I_P)|_{X}=0$ in $D(X)$.

\begin{cor}\label{cor:zonotopes}
If $-d \in \Theta_\Gamma^\circ$, then the twisted Koszul complex $K_P(-d)$ has generators lying entirely in $\Theta$.
\end{cor}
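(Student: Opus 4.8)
The plan is to write down the free summands of $K_P(-d)$ explicitly and then verify that each of their degrees is a lattice point of the half-open zonotope $Z$, so that membership in $\Theta$ follows from \cref{prop:zonotheta}.

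First I would recall that $\deg(x_\rho) = D_\rho$ in $\Cl(X)$ for every $\rho$, so the Koszul complex $K_P$ on the variables $\{x_\rho \mid \rho \in P\}$ has, in homological degree $k$, the free module $\bigoplus_{T \subseteq P,\, |T| = k} S\big(-\sum_{\rho \in T} D_\rho\big)$. Hence $K_P(-d)$ has free summands $S\big(-d - \sum_{\rho \in T} D_\rho\big)$ indexed by the subsets $T \subseteq P$, and the assertion is exactly that each class $-d - \sum_{\rho \in T} D_\rho$ lies in $\Theta$.

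Next, by \cref{prop:zonotheta}(3) it is enough to check that the image of $-d - \sum_{\rho \in T} D_\rho$ in $\Cl(X)_\QQ$ is a lattice point of $Z$. It is plainly a lattice point, so only membership in $Z$ remains. For this I would use the decomposition in $\Cl(X)_\RR$
\[
-d - \sum_{\rho \in T} D_\rho = \Big( \sum_{\rho \in T} (-1)\, D_\rho \Big) + (-d).
\]
The first term lies in $\overline{Z_+}$ since its coefficients on $P$ are $-1$ on $T$ and $0$ on $P \setminus T$, all in $[-1,0]$; the second term $-d$ lies in $Z_-^\circ$ by the hypothesis $-d \in \Theta_\Gamma^\circ = Z_-^\circ \cap \Cl(X)$. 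Therefore $-d - \sum_{\rho \in T} D_\rho \in \overline{Z_+} + Z_-^\circ \subseteq Z$ by \cref{lem:zonotopes}, which gives $-d - \sum_{\rho \in T} D_\rho \in \Theta$ for every $T \subseteq P$, as desired.

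I do not expect a genuine obstacle here: all of the geometric content has already been isolated in \cref{lem:zonotopes}, and what remains is the bookkeeping above. The only point worth stressing is the compatibility with the half-open structure of $Z$ (coefficients in $(-1,0]$): the closed endpoint $-1$ appears solely through the first-order Koszul differentials on the $P$-variables, which is why the \emph{closed} zonotope $\overline{Z_+}$ is the relevant one, while the strict inequalities on the $P^{\mathsf c}$-coordinates are supplied precisely by $-d$ lying in the \emph{open} zonotope $Z_-^\circ$. This is the structural reason the twist must be taken from $\Theta_\Gamma^\circ$ rather than from an arbitrary element of $\Theta$.
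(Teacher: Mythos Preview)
Your proof is correct and follows essentially the same approach as the paper: both write the generators of $K_P(-d)$ as $-d - e_I$ with $e_I = \sum_{\rho \in I} \deg(x_\rho)$ for $I \subseteq P$, observe that $-e_I \in \overline{Z_+}$ and $-d \in Z_-^\circ$, and invoke \cref{lem:zonotopes} together with the zonotope characterization of $\Theta$ from \cref{prop:zonotheta}.
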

\begin{proof}
The statement is a corollary of Lemma~\ref{lem:zonotopes}.  The free summands of the Koszul complex $K_P$ are of the form $S(-e_I)$, where $I \subseteq P$ and $e_I = \sum_{\rho \in I} \deg(x_\rho)$.  In particular, each $-e_I$ lies in $\overline{Z_+}$.
Since $-d\in Z_-^\circ$, it follows that $-d-e_I$ lies in $\overline{Z_+}+Z_-^\circ$. By Lemma~\ref{lem:zonotopes}, this lies in $Z$, and hence $-d-e_I \in \Theta$.  Thus, $K_P(-d)$ is a Koszul complex where all summands are of the form $S(-d-e_I)$, with $-d-e_I\in \Theta$ and with $-e_I\in \overline{Z_+}$.
\end{proof}

\begin{example}
In our Hirzebruch surface example, if $d=(-2,1)$, then $K_P(-d)$ is the complex
$
S(2,-1) \gets S(1,-1)^2 \gets S(0,-1) \gets 0.
$
\end{example}

\begin{proof}[Proof of Theorem~\ref{thm:thetaWgenerates}]
Since $\Theta$ generates $D(\sX)$, it suffices to prove that each $\cO_{\sX}(-d)$ with $-d\in \Theta_\Gamma^\circ$ can be generated by $\Theta \setminus  \Theta_\Gamma^\circ$. 
Continuing with notation from the proof of ~\cref{cor:zonotopes}, since $\deg_\Gamma(x_\rho)>0$ for all $\rho\in P$, $\deg_\Gamma(d+e_I)> \deg_\Gamma(d)$ for $\emptyset \ne I\subseteq P$.  
Thus, for each $-d\in \Theta_\Gamma^\circ$, the line bundle $\cO_{\sX}(-d)$ can be generated by the elements in $\Theta$ of strictly larger degree with respect to $\deg_\Gamma$.  
In particular, iterating this argument eventually generates all elements of $\Theta_\Gamma^\circ$ by those in $\Theta \setminus \Theta_\Gamma^\circ$. The final claim of the theorem follows from the fact that the coarse moduli morphism realizes $D(X)$ as a quotient of $D(\sX)$.
\end{proof}

\begin{remark}
The results in this section give an indication of both what can go right and what go wrong in the search for full strong exceptional collections for $D(\sX)$ on its own.  In the case of the Hirzebruch surface, the elements of $\Theta \setminus \Theta_\Gamma^\circ$ all lie in the chamber corresponding to the Hirzebruch surface because there is only one wall. Therefore, the generators from \cref{thm:thetaWgenerates} coincide with a well-known full strong exceptional collection of line bundles for the Hirzebruch surface.

On the other hand, imagine that the nef cone is bounded on both sides by interior walls of $\Sigma_{GKZ}$.  Then we will have to consider a pair of primitive collections $P$ and $P'$.  The corresponding reductions could interfere with one another.  For instance, for $-d \in \Theta_\Gamma^\circ$, the Koszul complex $K_P(-d)$ might spill into $\Theta_{\Gamma'}^\circ$, and we might then be unable to generate with only elements from $\Theta \setminus (\Theta_\Gamma^\circ \cup \Theta_{\Gamma'}^\circ)$.
\end{remark}

\begin{appendices}

\section[An invariant description of the Cox category]
{An invariant description of the Cox category}\label{sec:grothendieckconstr}

This appendix provides a 2-categorical perspective on our geometric 
construction of $D_{\Cox}$, via the Grothendieck construction \cite{SGA1}*{\S\.VI.8}. 
Let $\operatorname{Cat}$ be the $2$-category of (small) categories, functors, and natural transformations. We recall the definition of a (contravariant) lax functor from a category $I$ to $\on{Cat}$. Such a lax functor $\cC \colon I^{op} \to \operatorname{Cat}$  is the data of functions
$\cC \colon \operatorname{Obj} I \to \operatorname{Cat}$ and 
$\cC \colon \operatorname{Hom}_{I^{op}}(i,j) \to \operatorname{Fun}(\cC(i), \cC(j))$ 
along with natural transformations $\alpha_i \colon   \Id_{\cC(i)} \to \cC(1_i)$ and, 
for each pair of composable morphisms $u \colon i \to j, v \colon j \to l$ in $I$, a 
natural transformation 
$\gamma_{u,v} \colon \cC(u) \circ \cC(v) \to \cC(u \circ v)$ 
such that the following diagrams commute for all triples of composable morphisms $u \colon i \to j, v \colon j \to l$, and $w \colon l \to m$ in $I$:

\[
\begin{tikzcd}
\cC(u) = \operatorname{Id}_{\cC(i)} \circ\, \cC(u) 
\ar[r, "\alpha_i \cC(u)"] \ar[dr, "1"'] & 
\cC(1_i) \circ \cC(u) \ar[d, "\gamma_{1_i,u}"] \\
& \cC(u),
\end{tikzcd}
\quad
\quad
\begin{tikzcd}
\cC(u) = \cC(u) \circ \operatorname{Id}_{\cC(j)} 
\ar[r, "\cC(u) \alpha_j"] \ar[dr, "1"'] & 
\cC(u) \circ \cC(1_j) \ar[d, "\gamma_{u,1_j}"] \\
& \cC(u),
\end{tikzcd}
\]
\begin{equation} \label{eqn:GC_cocycle}
\begin{tikzcd}
\cC(u) \circ \cC(v) \circ \cC(w) \arrow[rr, "\gamma_{u,v}(\cC(w))"] \arrow[d, "\cC(u) \gamma_{v,w}"'] && \cC(u \circ v) \circ \cC(w) \arrow[d, "\gamma_{u \circ v, w}"] \\
\cC(u) \circ \cC(v \circ w) \arrow[rr, "\gamma_{u, v \circ w}"] && \cC(u \circ v \circ w).
\end{tikzcd}
\end{equation}

The \defi{1-skeleton} of $\cC$ is the data of the categories and 
functors $(\cC(i),\cC(u))$. A \defi{morphism of lax functors} $\nu\colon \cC \to \cD$ consists of functors 
$\nu_i \colon \cC(i) \to \cD(i)$ and natural transformations $\nu_u \colon \cC(u) 
\to \cC(v)$ making the appropriate diagrams commute.

Given $\cC \colon I^{op} \to \operatorname{Cat}$, there is a
category, commonly called the \defi{Grothendieck construction} 
and denoted by $\int \cC$, associated to $\cC$.
The objects and morphisms are as follows:
\begin{align*}
\operatorname{Obj} \int \cC &\ce 
\{ (i,A) \mid A \in \operatorname{Obj} \cC(i) \},  \\
\operatorname{Hom}_{\int\cC}((i,A),(j,B)) &\ce 
\{ (u,\phi) \mid u \colon i \to j,
\phi \colon A \to \cC(u) B \}.
\end{align*}

Composition of morphisms is given by
$(v, \psi) \circ (u, \phi) \ce (v \circ u, \gamma_{u,v} \circ \cC(u) \psi \circ \phi)$,
which is associative by \eqref{eqn:GC_cocycle}.
The category $\int \cC$ is characterized by a universal property \cite{JohnsonYau_2d_cats}*{Theorem 10.2.3}, which realizes it as a lax colimit of $\cC$. 
For algebraic geometers, the Grothendieck construction is 
perhaps most familiar from its use in passing between 
prestacks and fibered categories; see, e.g., \cite{StacksOlsson}*{Chapter~3}. 
It provides a flexible framework for gluing diagrams of 
categories. In particular, it is straightforward to 
realize a semiorthogonal decomposition, interpreted as 
two categories glued along a bimodule, as the 
pretriangulated envelope of 
a Grothendieck construction, where the indexing category 
is the $A_2$-quiver.

A natural source of lax functors are common 
rational resolutions. Before making this precise, we 
record an observation. 
Let $\pi_1 \colon \sX \to \sY_1$ and $\pi_2 \colon \sX \to \sY_2$ be 
maps of concentrated algebraic stacks. Denote the counits of adjunction by 
$
\epsilon_i \colon (\pi_i)^\ast \pi_{i \ast} \to \Id.
$

\begin{lemma} \label{lem:counits_commute}
The following diagram commutes: 
\[
\begin{tikzcd}
(\pi_1)^\ast \pi_{1 \ast} (\pi_2)^\ast \pi_{2 \ast} 
\arrow[r, "\epsilon_1 (\pi_2)^\ast \pi_{2 \ast}"] 
\arrow[d, "(\pi_1)^\ast \pi_{1 \ast} \epsilon_2"'] & 
(\pi_2)^\ast \pi_{2 \ast} \arrow[d, "\epsilon_2"] \\
(\pi_1)^\ast \pi_{1 \ast} \arrow[r, "\epsilon_1"] & 
\Id. 
\end{tikzcd}
\]
\end{lemma}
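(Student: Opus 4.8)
The statement asserts that the two counits $\epsilon_1$ and $\epsilon_2$, coming from the adjunctions $(\pi_1)^\ast \dashv \pi_{1\ast}$ and $(\pi_2)^\ast \dashv \pi_{2\ast}$, commute in the sense of the displayed square. The plan is to reduce this to a formal consequence of the naturality of counits together with the interchange law for the horizontal and vertical composition of $2$-cells (the Godement/Eckmann--Hilton type argument), so that essentially nothing about the specific morphisms $\pi_1, \pi_2$ or the category of stacks enters at all.

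First I would observe that $\epsilon_1$ is a natural transformation $(\pi_1)^\ast \pi_{1\ast} \to \Id$, hence for \emph{any} morphism $\eta\colon F \to G$ of endofunctors of $D(\sX)$, the naturality square expresses that the two composites $(\pi_1)^\ast\pi_{1\ast} F \to (\pi_1)^\ast\pi_{1\ast} G \to G$ and $(\pi_1)^\ast\pi_{1\ast} F \to F \to G$ agree. Apply this with $\eta = \epsilon_2\colon (\pi_2)^\ast\pi_{2\ast} \to \Id$: the naturality of $\epsilon_1$ at the object $(\pi_2)^\ast\pi_{2\ast}$ of the functor category gives precisely that
\[
\epsilon_2 \circ \bigl(\epsilon_1\,(\pi_2)^\ast\pi_{2\ast}\bigr) \;=\; \epsilon_1 \circ \bigl((\pi_1)^\ast\pi_{1\ast}\,\epsilon_2\bigr)
\]
as natural transformations $(\pi_1)^\ast\pi_{1\ast}(\pi_2)^\ast\pi_{2\ast} \to \Id$. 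Here $\epsilon_1\,(\pi_2)^\ast\pi_{2\ast}$ and $(\pi_1)^\ast\pi_{1\ast}\,\epsilon_2$ denote whiskering, which matches the labels on the arrows in the diagram. Thus the square commutes.

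The only point requiring (minor) care is to confirm that the arrows in the diagram are indeed these whiskered counits and not something subtler: the top and right arrows are $\epsilon_1$ whiskered by $(\pi_2)^\ast\pi_{2\ast}$ on the right and then $\epsilon_2$, while the left and bottom arrows are $\epsilon_2$ whiskered by $(\pi_1)^\ast\pi_{1\ast}$ on the left and then $\epsilon_1$ — which is exactly the content of naturality of $\epsilon_1$ evaluated at the component $(\pi_2)^\ast\pi_{2\ast}$. I expect no genuine obstacle here; this is a purely $2$-categorical identity valid for any pair of adjunctions with a common source, and the mention of concentrated algebraic stacks is only there to guarantee that the derived functors $(\pi_i)^\ast$ and $\pi_{i\ast}$ exist and are adjoint. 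I would therefore keep the proof to two or three lines, citing naturality of the counit and, if desired, the interchange law.
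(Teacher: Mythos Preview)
Your argument is correct: the square is precisely the interchange law (equivalently, the naturality of $\epsilon_1$ evaluated at the morphism $\epsilon_2$), and this holds for any pair of natural transformations $\epsilon_1\colon (\pi_1)^\ast\pi_{1\ast}\to\Id$ and $\epsilon_2\colon (\pi_2)^\ast\pi_{2\ast}\to\Id$ between endofunctors, with no input from the geometry. Your proof is genuinely shorter and more general than the paper's.

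The paper takes a different, more concrete route. It realizes $\pi_{i\ast}$ and $\pi_i^\ast$ as Fourier--Mukai transforms with kernels $\cO_{\Gamma_{\pi_i}}$ and its transpose, rewrites the square as a diagram of convolutions of kernels, observes that the counits factor through the map from derived to underived tensor products, and finally reduces to the affine case $\sX_i=\Spec(B_i)$, $\sY=\Spec(A)$, where the diagram becomes the evident commutativity of the two multiplications $A\otimes_{B_1}A\otimes_{B_2}A\to A$. What this buys the paper is consistency with the kernel-level bookkeeping used elsewhere in the appendix (where the $\gamma_{u,v}$ are built from these same counits and one wants to track everything through Fourier--Mukai kernels); what your approach buys is a one-line proof that makes clear the lemma has nothing to do with stacks, kernels, or derived categories at all. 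Either is fine, but yours is the cleaner argument for this particular lemma.
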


\begin{proof}
Recall that $\pi_{i \ast}$ and $\pi_i^{\ast}$ are given as Fourier--Mukai 
transforms with kernels equal to the structure sheaf of the graph $\Gamma_{\pi}$ of $\pi_i$,
$\mathcal O_{\Gamma_{\pi_i}} \in D(\sY \times \sX)$, and its transpose 
$\mathcal O_{\Gamma_{\pi_i}}^t \in D(\sX \times \sY)$, respectively. 
Convolution of these kernels underlies the composition of functors.  
To check the diagrams, we verify that the maps of kernels commute. Here, this is 
\[
\begin{tikzcd}
\mathcal O_{\Gamma_{\pi_1}}^t \ast \mathcal O_{\Gamma_{\pi_1}} \ast 
\mathcal O_{\Gamma_{\pi_2}}^t \ast \mathcal O_{\Gamma_{\pi_2}} 
\arrow[r, "\epsilon_1 \ast 1"] 
\arrow[d, "1 \ast \epsilon_2"'] & 
\mathcal O_{\Gamma_{\pi_2}}^t \ast \mathcal O_{\Gamma_{\pi_2}} 
\arrow[d, "\epsilon_2"] \\
\mathcal O_{\Gamma_{\pi_1}}^t \ast \mathcal O_{\Gamma_{\pi_1}}
\arrow[r, "\epsilon_1"] & 
\mathcal O_{\Delta \sY}.
\end{tikzcd}
\]
The upper left convolution is a pushforward from $\sY \times \sX_1 \times 
\sY \times \sX_2 \times \sY$, and one can check that the rest of the diagram 
is also pushed forward from there. 

The counits $\epsilon_i$ are the composition of the natural map from the derived 
tensors to the underived tensors and the natural maps of tensor products of
sheaves. 

Since the maps from the derived to underived tensor products commute, we must 
only check the natural map of sheaves. 
This can be done locally,
assuming $\sX_i=\Spec(B_i)$ and $\sY=\Spec(A)$ are affine.
The latter reduces to
\[
\begin{tikzcd}
A \otimes_{B_1} A \otimes_{B_2} A \arrow[r] \arrow[d] & 
A \otimes_{B_2} A \arrow[d] \\
A \otimes_{B_1} A \arrow[r] & A, 
\end{tikzcd}
\qquad \quad
\begin{tikzcd}
a_1 \otimes a_2 \otimes a_3 \arrow[r, |->] \arrow[d, |->] & 
a_1a_2 \otimes a_3 \arrow[d, |->] \\
a_1 \otimes a_2a_3 \arrow[r, |->] & a_1a_2a_3, 
\end{tikzcd}
\]
where commutativity is straightforward.
\end{proof}

Given cohomologically-proper maps $\pi_i \colon \tsX \to \sX_i$ of concentrated algebraic stacks of finite Tor-dimension for which the natural map
$\mathcal O_{\sX_i} \to \pi_{i \ast} \mathcal O_{\tsX}$
is a quasi-isomorphism for all $i \in I$, 
promote $I$ to a category by taking the trivial 
groupoid on the set $I$, and set 
$\operatorname{Hom}_I(i,j) = \lbrace u_{ij} \rbrace$ 
for each $i,j$ with $u_{ii} = 1_i$. 

\begin{lemma} \label{lem:pullback_lax}
There is a lax functor
$\cC_{\tsX} \colon I^{\operatorname{op}} \to \operatorname{Cat}$, where 
\begin{align*}
\cC_{\widetilde{\sX}}(i)  
&\ce D(\sX_i), 
& \cC_{\widetilde{\sX}}(u_{ij})  \ce 
\pi_{i \ast}\pi_j^\ast 
&\colon D(\sX_j) \to D(\sX_i),
\\ 
\alpha_i  \ce \eta_i 
&\colon \Id \to \pi_{i \ast} \pi_i^\ast, 
& \gamma_{u_{ij}, u_{jl}}  \ce 
\pi_{l\ast} \epsilon_j \pi_i^\ast 
&\colon \pi_{i \ast}\pi_j^\ast \pi_{j \ast} \pi_l^\ast \to \pi_{i \ast} \pi_l^\ast. 
\end{align*} 
\end{lemma}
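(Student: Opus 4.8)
The plan is to verify the three coherence diagrams (the two unit laws and the cocycle/associativity pentagon) directly, reducing each to a statement about (co)units of the adjunctions $(\pi_i^\ast, \pi_{i\ast})$, and then invoking \cref{lem:counits_commute} for the only nontrivial compatibility. First I would check that $\cC_{\widetilde{\sX}}$ is well-defined: each $\pi_i$ is cohomologically proper of finite Tor-dimension, so $\pi_{i\ast}$ and $\pi_i^\ast$ are adjoint functors on the bounded derived categories, and hence $\cC_{\widetilde{\sX}}(u_{ij}) = \pi_{i\ast}\pi_j^\ast$ is a well-defined functor $D(\sX_j)\to D(\sX_i)$. The natural transformation $\alpha_i = \eta_i$ is the unit of $(\pi_i^\ast, \pi_{i\ast})$, and $\gamma_{u_{ij},u_{jl}} = \pi_{l\ast}\,\epsilon_j\,\pi_i^\ast$ is obtained by whiskering the counit $\epsilon_j \colon \pi_j^\ast\pi_{j\ast}\to \Id$ with $\pi_{i\ast}$ on the left and $\pi_l^\ast$ on the right; wait, I should be careful about the order of composition in $I^{\mathrm{op}}$, so let me just say that with the conventions of the statement the source and target of $\gamma_{u_{ij},u_{jl}}$ are $\cC(u_{ij})\circ\cC(u_{jl}) = \pi_{i\ast}\pi_j^\ast\pi_{j\ast}\pi_l^\ast$ and $\cC(u_{il}) = \pi_{i\ast}\pi_l^\ast$, which matches.

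Next I would dispatch the two unit laws. For the left unit law, $\cC(1_i) = \pi_{i\ast}\pi_i^\ast$, and the composite $\gamma_{1_i, u_{ij}} \circ (\alpha_i\,\cC(u_{ij}))$ is, after unwinding, the map $\pi_{i\ast}\pi_j^\ast \xrightarrow{\eta_i\,\pi_{i\ast}\pi_j^\ast} \pi_{i\ast}\pi_i^\ast\pi_{i\ast}\pi_j^\ast \xrightarrow{\pi_{i\ast}\epsilon_i\,\pi_j^\ast} \pi_{i\ast}\pi_j^\ast$; wait, actually since one of the indices agrees this is exactly the triangle identity $(\pi_{i\ast}\epsilon_i)\circ(\eta_i\pi_{i\ast}) = \mathrm{id}_{\pi_{i\ast}}$ whiskered by $\pi_j^\ast$, hence the identity. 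The right unit law is symmetric, using the other triangle identity $(\epsilon_i\pi_i^\ast)\circ(\pi_i^\ast\eta_i) = \mathrm{id}_{\pi_i^\ast}$. Each of these is a one-line chase once the definitions are spelled out.

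The main obstacle — and the reason \cref{lem:counits_commute} was isolated — is the cocycle diagram \eqref{eqn:GC_cocycle} for a triple $u_{ij}, u_{jl}, u_{lm}$. Both composites $\pi_{i\ast}\pi_j^\ast\pi_{j\ast}\pi_l^\ast\pi_{l\ast}\pi_m^\ast \to \pi_{i\ast}\pi_m^\ast$ are built by applying the two counits $\epsilon_j$ (acting on the middle factor $\pi_j^\ast\pi_{j\ast}$) and $\epsilon_l$ (acting on $\pi_l^\ast\pi_{l\ast}$) in the two possible orders, all whiskered by $\pi_{i\ast}$ on the far left and $\pi_m^\ast$ on the far right. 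So the pentagon reduces, after stripping the outer whiskerings, to the commutativity of the square of counits for the pair of maps $\pi_j, \pi_l$ out of a common source — but here the common source is not $\widetilde{\sX}$ itself but rather arises after the intervening pullback-pushforward, so I would first reduce to the situation of \cref{lem:counits_commute} by using naturality of $\epsilon_j$ and $\epsilon_l$ (they commute with applying further functors), and then quote \cref{lem:counits_commute} verbatim. Writing this reduction carefully — tracking which factor each $\epsilon$ acts on and confirming that the two whiskered composites literally match the two sides of the square in \cref{lem:counits_commute} — is the only place where real care is needed; everything else is a routine application of the triangle identities.
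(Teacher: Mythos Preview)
Your proposal is correct and follows essentially the same approach as the paper: the unit laws reduce to the triangle identities for the adjunctions $(\pi_i^\ast,\pi_{i\ast})$, and the cocycle diagram reduces to \cref{lem:counits_commute}. Your worry in the last paragraph is slightly overblown---after stripping the outer whiskerings $\pi_{i\ast}(-)\pi_m^\ast$, the remaining square is literally the diagram of \cref{lem:counits_commute} with $\pi_1=\pi_j$ and $\pi_2=\pi_l$, so no further ``naturality'' argument is needed.
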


\begin{proof}
The commutativity of the first two diagrams follows from $\epsilon_i \circ
\eta_i = 1$, and for the final diagram this reduces to \Cref{lem:counits_commute}. 
\end{proof}

By the projection formula, the maps $\alpha_i$ in Lemma~\ref{lem:pullback_lax} are natural isomorphisms since $\mathcal O_{\sX_i} \cong \pi_{i \ast} \mathcal O_{\tsX}$, that is, $\cC_{\tsX}$ is a unitary lax functor. The lax functor obtained from a common rational resolution 
is independent of the resolution, up to isomorphism. 

\begin{lemma} \label{lem:lax_func_unique}
If $\tsX$ is as above and 
$f \colon \tsX^\prime \to \tsX$ is a morphism 
where $\mathcal O_{\tsX} \to f_\ast \mathcal O_{\tsX^\prime}$ is a 
quasi-isomorphism, then there is an isomorphism of lax functors $\cC_{\tsX^\prime} \cong
\cC_{\tsX}$. 
\end{lemma}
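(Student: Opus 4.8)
The goal is to produce an isomorphism of lax functors $\cC_{\tsX'} \cong \cC_{\tsX}$ given $f\colon \tsX' \to \tsX$ with $\cO_{\tsX}\xrightarrow{\sim} f_*\cO_{\tsX'}$. Write $\pi_i\colon \tsX\to\sX_i$ and $\pi_i'\ce \pi_i\circ f\colon \tsX'\to\sX_i$, so that by functoriality of pushforward and pullback, $\pi_i'^* = f^*\pi_i^*$ and $\pi_{i*}' = \pi_{i*}f_*$. First I would observe that $f$ is itself cohomologically proper of finite Tor-dimension (a composition of such maps over the cohomologically proper $\pi_i$, or directly; in the toric setting of the paper $f$ is a proper birational toric morphism so this is clear), so $\cC_{\tsX'}$ is a well-defined lax functor by Lemma~\ref{lem:pullback_lax}. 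The natural candidate for the morphism of lax functors $\nu\colon \cC_{\tsX'}\to\cC_{\tsX}$ is the \emph{identity} on objects, $\nu_i \ce \Id_{D(\sX_i)}$, since both lax functors assign $D(\sX_i)$ to $i\in I$. This is where the hypothesis $\cO_{\tsX}\cong f_*\cO_{\tsX'}$ does its work: it forces the $1$-skeleton functors to agree.

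\textbf{Key steps.} (1) Show that the unit $\eta\colon \Id \to f_*f^*$ on $D(\tsX)$ is a natural isomorphism. This is exactly the argument of Lemma~\ref{lem:fullyFaithful}: by the projection formula $f_*f^*\cF \cong \cF\otimes f_*\cO_{\tsX'}\cong \cF$, using smoothness (or just finite Tor-dimension) of $\tsX$ so that $\cF$ is locally a bounded complex of flats. (2) Deduce that for each $i,j$ the $1$-skeleton functors agree: $\cC_{\tsX'}(u_{ij}) = \pi_{i*}'\pi_j'^* = \pi_{i*}f_*f^*\pi_j^* \xrightarrow{\ \pi_{i*}(\eta^{-1})\pi_j^*\ } \pi_{i*}\pi_j^* = \cC_{\tsX}(u_{ij})$, a natural isomorphism which I take as $\nu_{u_{ij}}$. (3) Check the coherence conditions for a morphism of lax functors: that $\nu_{u_{ij}}$ is compatible with the structure $2$-cells $\alpha_i$ (the units $\eta_i$ for $\pi_i$ versus $\pi_i'$) and $\gamma_{u_{ij},u_{jl}}$ (built from the counits $\epsilon_j$, $\epsilon_j'$). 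For the units: one must see that the unit $\Id\to \pi_{i*}'\pi_i'^*$ factors through $\nu_{u_{ii}}$ and the unit $\Id\to\pi_{i*}\pi_i^*$; this is the triangle identity together with $f_*f^*$-coherence. For the $\gamma$'s: the counit $\epsilon_j'\colon \pi_j'^*\pi_{j*}' = f^*\pi_j^*\pi_{j*}f_* \to \Id$ factors as $f^*(\epsilon_j)(\cdots)$ composed with the counit of $(f^*,f_*)$; compatibility of $\gamma_{u_{ij},u_{jl}}$ with $\nu$ then reduces to the interchange of counits, which is the content of Lemma~\ref{lem:counits_commute} applied with the relevant morphisms (or a direct triangle-identity manipulation). (4) Since each $\nu_i$ and each $\nu_{u_{ij}}$ is an equivalence, $\nu$ is an isomorphism of lax functors.

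\textbf{Main obstacle.} Steps (1), (2), and (4) are essentially immediate from the projection formula and the triangle identities. The real bookkeeping is in step (3): verifying that the chosen $2$-cells $\nu_{u_{ij}}$ intertwine the lax-unitor data $\alpha$ and the compositor data $\gamma$ of the two lax functors. The difficulty is not conceptual but combinatorial — one is chasing pasting diagrams of units and counits for the adjunctions $(\pi_i^*,\pi_{i*})$, $(\pi_i'^*,\pi_{i*}')$, and $(f^*,f_*)$, together with the compatibility $\pi_{i*}' = \pi_{i*}f_*$, $\pi_i'^* = f^*\pi_i^*$. The cleanest way to organize this, as in the proof of Lemma~\ref{lem:counits_commute}, is to pass to Fourier--Mukai kernels: all the functors in sight are convolutions, all the unit/counit $2$-cells come from maps of kernels supported on graphs, and the required coherences become commuting squares of kernel morphisms that can be checked (by the same reduction to the affine case $a\otimes b\otimes c\mapsto$ products) to hold on the nose. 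I would therefore carry out step (3) at the level of kernels, invoking Lemma~\ref{lem:counits_commute} for the interchange that underlies the $\gamma$-compatibility, and leaving the remaining diagrams as routine triangle-identity checks.
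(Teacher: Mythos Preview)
Your proposal is correct and matches the paper's approach essentially verbatim: the paper also takes $\nu_i=\Id$, defines the $2$-cells via the unit $\eta_f$ of the adjunction $(f^*,f_*)$ (so $\nu_{ij}\colon \pi_{i\ast}\pi_j^\ast \xrightarrow{\pi_{i\ast}\eta_f\pi_j^\ast}\pi'_{i\ast}\pi_j'^{\ast}$, the inverse of your choice), and verifies the two coherence diagrams by the same reduction to the affine case used in Lemma~\ref{lem:counits_commute}. Your identification of step~(3) as the only substantive check, and your plan to carry it out at the level of kernels, is exactly what the paper does.
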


\begin{proof}
Set 
$\nu_i \ce \Id \colon D(\sX_i) \to D(\sX_i)$. 
For the other component, use the composition 
\[
\nu_{ij} \colon \pi_{i \ast} \pi_j^\ast \xrightarrow{\ \pi_{i\ast} \eta_f \pi_j^\ast\ } 
\pi_{i\ast} f_{\ast} f^\ast \pi_j^\ast \cong (\pi_i \circ f)_\ast (\pi_i \circ f)^\ast
= \pi_{i \ast}^\prime \pi_j^{\prime \ast}. 
\]
The diagrams 
\[
\begin{tikzcd}
\Id \arrow[r, "\eta_i"] \arrow[dr, "\eta_i^\prime"'] & 
\pi_{i\ast} \pi_i^\ast 
\arrow[d, "\nu_{ii}"] \\ 
& \pi_{i \ast}^\prime \pi_i^{\prime \ast}
\end{tikzcd} \qquad \text{and} \qquad 
\begin{tikzcd}
\pi_{i \ast} \pi_j^\ast \pi_{j \ast} \pi_l^\ast 
\arrow[r, "\pi_{i\ast} \epsilon_j \pi_l^\ast"] 
\arrow[d, "\nu_{ij}\nu_{jl}"'] & 
\pi_{i \ast}\pi_l^\ast \arrow[d,"\nu_{il}"] \\
\pi_{i\ast}^\prime \pi_j^{\prime \ast} \pi_{j \ast}^\prime \pi_l^{\prime\ast}
\arrow[r, "\pi_{i\ast}^\prime \epsilon_j^\prime \pi_l^{\prime\ast}"] &
\pi_{i\ast}^\prime \pi_l^{\prime \ast}
\end{tikzcd}
\]
commute since, similar to the arguments in the proof of
\cref{lem:counits_commute}, we can reduce 
to the affine case, where the necessary arguments amount to standard facts about tensor products.
\end{proof}

\begin{prop} \label{prop: general GC}
The full subcategory of $D(\tsX)$ consisting of 
objects from $\pi_i^\ast D(\sX_i)$ is equivalent to 
$\int \cC_{\tsX}$.
\end{prop}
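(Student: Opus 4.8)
The goal is to identify $D_{\Cox}$, defined as the full subcategory of $D(\tsX)$ generated by (the images of) the $\pi_i^\ast D(\sX_i)$, with the Grothendieck construction $\int \cC_{\tsX}$ of the lax functor from \cref{lem:pullback_lax}. I would construct a functor $\Phi\colon \int \cC_{\tsX} \to D(\tsX)$ explicitly on objects and morphisms, show it is fully faithful with essential image exactly the full subcategory on objects of the form $\pi_i^\ast A$, and then observe that this full subcategory is all of $D_{\Cox}$ once we check it is already thick — or, alternatively, phrase $D_{\Cox}$ at the outset as this full subcategory and argue the thick-closure is automatic.

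\textbf{Key steps.} First, define $\Phi$ on objects by $\Phi(i,A) = \pi_i^\ast A \in D(\tsX)$. On a morphism $(u_{ij},\phi)\colon (i,A)\to (j,B)$, where $\phi\colon A \to \pi_{i\ast}\pi_j^\ast B$ in $D(\sX_i)$, set $\Phi(u_{ij},\phi)$ to be the composite
\[
\pi_i^\ast A \xrightarrow{\ \pi_i^\ast \phi\ } \pi_i^\ast \pi_{i\ast} \pi_j^\ast B \xrightarrow{\ \epsilon_i \pi_j^\ast B\ } \pi_j^\ast B,
\]
using the counit $\epsilon_i\colon \pi_i^\ast\pi_{i\ast}\to\Id$ of the adjunction $(\pi_i^\ast,\pi_{i\ast})$. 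Second, check functoriality: that $\Phi$ respects identities (using $\epsilon_i\circ \eta_i = 1$, exactly the triangle identity, matching the unit $\alpha_i = \eta_i$ of $\cC_{\tsX}$) and composition. Composition is the crux: unwinding $(v_{jl},\psi)\circ(u_{ij},\phi) = (u_{il}, \pi_{i\ast}\epsilon_j\pi_l^\ast \circ \pi_{i\ast}\pi_j^\ast\psi \circ \phi)$ and applying $\Phi$, one must match it against $\Phi(v_{jl},\psi)\circ\Phi(u_{ij},\phi)$; this is a diagram chase whose only non-formal input is precisely \cref{lem:counits_commute} (the compatibility of the two counits $\epsilon_i,\epsilon_j$ when both $\pi_i,\pi_j$ emanate from $\tsX$). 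Third, prove $\Phi$ is fully faithful: by adjunction,
\[
\Hom_{D(\tsX)}(\pi_i^\ast A, \pi_j^\ast B) \cong \Hom_{D(\sX_i)}(A, \pi_{i\ast}\pi_j^\ast B),
\]
and this identification carries the morphism set $\Hom_{\int\cC_{\tsX}}((i,A),(j,B)) = \{(u_{ij},\phi)\}$ (which is in bijection with the $\phi$'s since $\Hom_I(i,j)$ is a singleton) bijectively onto the right-hand side; one then checks the bijection is compatible with composition, which again reduces to \cref{lem:counits_commute}. Fourth, identify the essential image: $\Phi$ hits exactly the objects $\pi_i^\ast A$, and since $\pi_i^\ast$ is fully faithful (\cref{lem:fullyFaithful}), these comprise the full subcategory of $D(\tsX)$ named in the statement. (If one wants $D_{\Cox}$ in the thick-subcategory sense, note $\int\cC_{\tsX}$ may need to be replaced by its idempotent completion, or one argues $\langle \pi_i^\ast D(\sX_i)\rangle$ already equals this full subcategory because each $D(\sX_i)$ is idempotent-complete and the $\pi_i^\ast$ are fully faithful — but per the statement's phrasing, proving the equivalence with the full-subcategory-on-those-objects suffices.)

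\textbf{Main obstacle.} The genuine content is entirely concentrated in the functoriality-and-faithfulness compatibility with composition, i.e.\ in verifying that the associativity datum $\gamma_{u,v}$ of $\cC_{\tsX}$ — built from $\pi_{l\ast}\epsilon_j\pi_i^\ast$ — is transported by $\Phi$ to honest composition in $D(\tsX)$. Everything reduces to \cref{lem:counits_commute}, which is already granted, so the remaining work is a careful but routine chase through the definition of the Grothendieck construction's composition law and the adjunction isomorphisms; I would present this as two commuting pentagon/square diagrams and cite \cref{lem:counits_commute} at the single non-formal node. A secondary, purely bookkeeping point is making sure the singleton $\Hom_I(i,j)=\{u_{ij}\}$ convention is used consistently so that a morphism $(i,A)\to(j,B)$ in $\int\cC_{\tsX}$ is literally the same data as a morphism $A\to \pi_{i\ast}\pi_j^\ast B$ in $D(\sX_i)$.
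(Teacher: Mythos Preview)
Your proposal is correct and follows essentially the same approach as the paper: define the functor $(i,A)\mapsto \pi_i^\ast A$ with morphisms sent via the adjunction bijection $\Hom_{\sX_i}(A,\pi_{i\ast}\pi_j^\ast B)\cong \Hom_{\tsX}(\pi_i^\ast A,\pi_j^\ast B)$, then read off essential surjectivity by definition and full faithfulness from the adjunction isomorphism. The paper compresses your functoriality check into the phrase ``thanks to their naturality,'' but the content (reducing to the counit compatibility of \cref{lem:counits_commute}) is the same.
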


\begin{proof}
Denote the full subcategory by $\Pi$.  
Thanks to their naturality, the adjunctions 
$\beta_{ij} \colon \operatorname{Hom}_{\sX_i}(E, \pi_{i \ast} \pi_j^\ast F) \cong 
\operatorname{Hom}_{\tsX}(\pi_i^\ast E, \pi_j^\ast F)$ 
induce a functor 
\begin{align*}
    \beta \colon \int \cC_{\tX} & \to \Pi, 
    \quad\text{where \ }
    (E, i) 
    \mapsto \pi_i^*E 
    \text{ \ and \ }
    (\phi, u_{ij}) 
    \mapsto \beta_{ij}(\phi),
\end{align*}
which is essentially surjective by definition and fully faithful since $\beta_{ij}$
is an isomorphism. 
\end{proof}

Now we turn to our situation, where the $\sX_i$ correspond to the chambers of a GKZ fan. 
Choose $\tsX$ as in \cref{subsec:constructionOfWX} and 
set $\cC_{GKZ} \ce \cC_{\tsX}$. 
From \cref{lem:lax_func_unique}, $\cC_{GKZ}$ is independent of the choice of $\tsX$, up to isomorphism.
Recall that for $\sX_i, \sX_j$, 
the closure $\Delta_{ij}$ of the diagonal torus in $\sX_i \times \sX_j$ provides a correspondence with $ p_{j \ast} p_i^* = \Phi_{ij}$: 
\[
\begin{tikzcd}
  & \Delta_{ij} \arrow[ld, " p_i"'] \arrow[rd, " p_j"] & \\
\sX_i & & \sX_j. 
\end{tikzcd}
\] 

\begin{lemma} \label{lem:gkz_lax_functor_skele} 
The 1-skeleton of $\cC_{GKZ}$ is isomorphic to $(D(\sX_i),\Phi_{ij})$.
\end{lemma}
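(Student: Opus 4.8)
The plan is to produce, for every ordered pair $(i,j)$, a natural isomorphism between the transition functor $\cC_{GKZ}(u_{ij}) = \pi_{i\ast}\pi_j^\ast\colon D(\sX_j)\to D(\sX_i)$ of \cref{lem:pullback_lax} and the Fourier--Mukai transform $\Phi_{ji}$ attached to the graph correspondence. Since the $1$-skeleton of a lax functor records only the underlying families of categories and functors, with no coherence data, an isomorphism of $1$-skeleta with $(D(\sX_i),\Phi_{ij})$ is exactly the identity on the objects $D(\sX_i)$ together with such natural isomorphisms; equivalently, after relabeling, it suffices to show $\pi_{j\ast}\pi_i^\ast \cong \Phi_{ij}$ for all $i,j$.

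First I would compute the Fourier--Mukai kernel of $\pi_{j\ast}\pi_i^\ast$. Writing $r \ce (\pi_i,\pi_j)\colon \tsX\to \sX_i\times\sX_j$ and letting $p_i,p_j$ be the two projections of $\sX_i\times\sX_j$, we have $p_i\circ r=\pi_i$ and $p_j\circ r=\pi_j$, so the projection formula gives $\pi_{j\ast}\pi_i^\ast F = p_{j\ast}\,r_\ast r^\ast p_i^\ast F = p_{j\ast}(p_i^\ast F\otimes r_\ast\mathcal O_{\tsX})$ for all $F\in D(\sX_i)$; thus $\pi_{j\ast}\pi_i^\ast$ is the Fourier--Mukai transform with kernel $r_\ast\mathcal O_{\tsX}$. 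By construction $\Phi_{ij}=p_{j\ast}p_i^\ast$ is the Fourier--Mukai transform with kernel $\iota_\ast\mathcal O_{\Delta_{ij}}$, where $\iota\colon \Delta_{ij}\hookrightarrow \sX_i\times\sX_j$ is the inclusion of the closure of the diagonal torus. Since $r$ restricts on the dense torus to the diagonal embedding, its scheme-theoretic image is $\Delta_{ij}$, and $r$ factors as $\iota\circ\tilde r$ for a proper (because $r$ is and $\iota$ is a closed immersion), birational, toric morphism $\tilde r\colon \tsX\to\Delta_{ij}$. So the statement reduces to the identification $R\tilde r_\ast\mathcal O_{\tsX}\cong\mathcal O_{\Delta_{ij}}$, after which one pushes forward along $\iota$ and invokes uniqueness of Fourier--Mukai kernels; the resulting natural transformation $\pi_{j\ast}\pi_i^\ast\Rightarrow\Phi_{ij}$ is then an isomorphism, simultaneously for all $(i,j)$, which is what is required.

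I expect the identification $R\tilde r_\ast\mathcal O_{\tsX}\cong\mathcal O_{\Delta_{ij}}$ to be the one point requiring care. Factoring $\tilde r$ through the coarse space of $\tsX$ (a normal toric variety, hence with rational singularities by \cite{CLSToricVarieties}*{Theorem 11.4.2}) reduces the vanishing $R^{>0}\tilde r_\ast\mathcal O_{\tsX}=0$ to that standard fact, while $R^0\tilde r_\ast\mathcal O_{\tsX}=\mathcal O_{\Delta_{ij}}$ follows from Zariski's main theorem once one knows the graph closure $\Delta_{ij}$ is normal (the pushforward is a finite, birational, hence integral, extension of $\mathcal O_{\Delta_{ij}}$). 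So the crux is the normality of the closure of the diagonal torus in $\sX_i\times\sX_j$; alternatively, one may take $\Delta_{ij}$ to denote the normalization of that closure throughout, which carries the same correspondence data and for which the identification is automatic. I would also point out a shortcut: the convention used already in the body of the paper, for instance in the restatement of \cref{lem:thetatransform2}, is that $\Phi_{ij}$ denotes precisely $\pi_{j\ast}\pi_i^\ast$ computed via a common resolution, and with that reading the lemma is immediate from \cref{lem:pullback_lax}; the argument above simply reconciles this with the explicit graph-closure description of $\Phi_{ij}$.
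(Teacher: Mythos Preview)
Your proposal is correct and follows essentially the same route as the paper's proof. Both arguments factor $\pi_i$ and $\pi_j$ through a map $f_{ij}\colon \tsX\to\Delta_{ij}$ (your $\tilde r$) and then identify $\Phi_{ij}=p_{j\ast}p_i^\ast$ with $\pi_{j\ast}\pi_i^\ast$ via the unit of the $(f_{ij}^\ast,f_{ij\ast})$ adjunction, exactly as in \cref{lem:lax_func_unique}; your Fourier--Mukai kernel computation is just a repackaging of that same projection-formula step. The paper's proof is terser and simply invokes \cref{lem:lax_func_unique}, whereas you explicitly isolate the crux $R(f_{ij})_\ast\mathcal O_{\tsX}\cong\mathcal O_{\Delta_{ij}}$ and discuss why it holds (normality of $\Delta_{ij}$, rational singularities of toric varieties); this is a point the paper leaves implicit.
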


\begin{proof}
Since $\pi_i$ factors as $ p_i \circ f_{ij}$ for some 
$f_{ij}\colon \tsX \to \Delta_{ij}$, similar to the proof of
\cref{lem:lax_func_unique}, there are isomorphisms 
$\Phi_{ij} \overset{\sim}{\to} \pi_{j\ast}\pi_i^\ast$ 
 identifying the functor components of the 1-skeleton.
\end{proof}

One could take another space $\tsX$ lying over the $\sX_i$ to produce natural transformations satisfying \eqref{eqn:GC_cocycle}, as above; however, the choice of $\tsX$ will, in general, change the underlying 1-skeleton 
unless 
$\mathcal O_{\sX_i} \cong f_\ast \mathcal O_{\tsX}$.
This is reminiscent of the fact that rational resolutions yield categorical resolutions in the sense of \cite{kuznetsov-lunts},  but general resolutions do not.

\begin{theorem} \label{thm:cox_groth}
The Cox category is equivalent to the pretriangulated
envelope of (a dg enhancement of) the Grothendieck construction 
on the lax functor $\cC_{GKZ}$.
\end{theorem}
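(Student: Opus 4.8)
The plan is to chain together three facts already established in the excerpt: (i) the abstract characterization in \Cref{prop: general GC}, which says that the full subcategory $\Pi \subseteq D(\tsX)$ on objects of $\bigcup_i \pi_i^\ast D(\sX_i)$ is equivalent to the Grothendieck construction $\int \cC_{\tsX}$; (ii) the identification $\cC_{GKZ} \ce \cC_{\tsX}$ together with \Cref{lem:lax_func_unique}, which makes this lax functor independent of the auxiliary $\tsX$; and (iii) the definition of $D_{\Cox}$ (\Cref{defn:DCox}) as the \emph{thick} (pretriangulated) subcategory of $D(\tsX)$ generated by the $\pi_i^\ast D(\sX_i)$. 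First I would observe that $\Pi$ as in the proof of \Cref{prop: general GC} is not yet triangulated --- it is only the full subcategory on the listed objects --- whereas $D_{\Cox}$ is its pretriangulated closure inside $D(\tsX)$. Since taking the pretriangulated envelope (at the level of a dg enhancement, then passing to homotopy categories and splitting idempotents) is functorial with respect to equivalences of the underlying dg categories, the equivalence $\beta \colon \int \cC_{\tsX} \xrightarrow{\sim} \Pi$ of \Cref{prop: general GC} induces an equivalence
\[
\operatorname{Pretr}\!\left( \textstyle\int \cC_{GKZ} \right) \;\simeq\; \operatorname{Pretr}(\Pi) \;=\; D_{\Cox},
\]
where the last equality is the definition of $D_{\Cox}$ (using \Cref{cor:DCoxIndOfX} to drop $\tsX$ from the notation).

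Concretely, the steps I would carry out are: (1) Fix a dg enhancement $\mathcal{D}(\tsX)$ of $D(\tsX)$ (e.g. the dg category of h-injective complexes, or any standard enhancement), and let $\mathcal{D}(\sX_i)$ be the corresponding enhancements; lift the lax functor $\cC_{GKZ}$ to a lax functor valued in dg categories, using the fact that the functors $\pi_{i\ast}$, $\pi_i^\ast$, and the natural transformations $\eta_i, \epsilon_i$ all have canonical dg/derived lifts. (2) Form the dg Grothendieck construction (dg lax colimit) $\int^{\mathrm{dg}} \cC_{GKZ}$, whose homotopy category recovers $\int \cC_{GKZ}$ --- here I would cite the standard comparison between the dg/$\infty$-categorical lax colimit and the ordinary Grothendieck construction on $\mathrm{h}^0$. (3) Upgrade $\beta$ to a dg quasi-equivalence $\int^{\mathrm{dg}} \cC_{GKZ} \xrightarrow{\sim} \boldsymbol{\Pi}$ onto the full dg subcategory $\boldsymbol{\Pi} \subseteq \mathcal{D}(\tsX)$ on the relevant objects: fullness and faithfulness at the chain level follow from the same adjunction isomorphisms $\beta_{ij}$ used in \Cref{prop: general GC}, now read as quasi-isomorphisms of mapping complexes. (4) Apply $\operatorname{Pretr}$ (twisted-complex / pretriangulated envelope) to both sides; since $\operatorname{Pretr}$ preserves quasi-equivalences and the pretriangulated closure of $\boldsymbol{\Pi}$ inside $\mathcal{D}(\tsX)$ has homotopy category $D_{\Cox}$ by \Cref{defn:DCox}, we conclude.

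I expect the main obstacle to be a bookkeeping issue rather than a deep one: pinning down precisely which notion of "Grothendieck construction on a lax functor valued in (dg) categories" one uses, and checking that it is compatible with passage to homotopy categories and with the pretriangulated envelope. The subtlety is that the universal property cited for $\int \cC$ (\cite{JohnsonYau_2d_cats}*{Theorem 10.2.3}) is $2$-categorical, while the theorem statement refers to "a dg enhancement" and its pretriangulated envelope; one must be a little careful that the dg lax colimit chosen (a) has $\mathrm{h}^0$ equal to the ordinary $\int \cC_{GKZ}$ on the $1$-skeleton identified in \Cref{lem:gkz_lax_functor_skele}, and (b) that its pretriangulated envelope really is the closure of $\boldsymbol{\Pi}$ and not something larger. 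Both are true, but a careful writeup should either invoke a standard reference for the dg/stable Grothendieck construction or spell out the twisted-complex model explicitly. One clean way to sidestep the universal-property gymnastics is to take step (3)'s dg quasi-equivalence as the \emph{definition} of what "$\int^{\mathrm{dg}} \cC_{GKZ}$" means for the purposes of this theorem and simply check it has the expected $1$-skeleton and $\mathrm{h}^0$ --- which is already done in \Cref{lem:gkz_lax_functor_skele} and \Cref{prop: general GC} --- so that the remaining content is purely the observation that $D_{\Cox} = \operatorname{Pretr}(\Pi)$ by construction.
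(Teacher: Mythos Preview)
Your proposal is correct and follows essentially the same approach as the paper. In fact, the paper's proof is precisely the ``sidestep'' you describe in your final paragraph: it fixes any dg enhancement of $D(\tsX)$, takes the full dg subcategory on objects homotopic to those of the $\pi_i^\ast D(\sX_i)$ as the enhancement of $\int \cC_{GKZ}$ (justified by \Cref{prop: general GC}), and observes that its pretriangulated envelope is $D_{\Cox}$ by definition --- bypassing any need to independently construct a dg lax colimit and compare it.
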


\begin{proof}
By \Cref{prop: general GC}, $\int \cC_{GKZ}$ is the full
subcategory of $D(\tsX)$ consisting of objects from  $\pi_i^\ast D(\sX_i)$.  For
any dg enhancement of $D(\tsX)$, taking the full subcategory 
of objects homotopic to objects from $\pi_i^\ast D(\sX_i)$ 
gives an enhancement of $\int \cC_{GKZ}$.
By definition, its pretriangulated envelope consists of objects 
homotopic to those from $D_{\Cox}$.
\end{proof}

\end{appendices}

\subsection*{Acknowledgments}
This material is based upon work supported by the National Science Foundation
under Grants No. DMS-2001101 (Berkesch), DMS-2048055 (Ganatra), DMS-2200469
(Erman), DMS-2302262 (Favero), DMS-2302263 (Ballard), DMS-2302373 (Brown),
DMS-2549013 \newline (Hanlon, previously as DMS-2404882). We thank the NSF for its support. 

Early discussions among our group occurred at the workshop ``Syzygies and mirror symmetry" at the American Institute of Mathematics in 2023.
Part of this research was performed 
while the authors were visiting 
the Simons Laufer Mathematical Sciences Institute (SLMath)
(formerly the Mathematical Sciences Research
Institute; MSRI), 
which is supported by the National Science Foundation (Grant No. DMS-1928930),
during the SLMath programs on ``Commutative Algebra'' and ``Noncommutative algebraic geometry'' in 2024. We thank both AIM and SLMath for their support and productive research environments.

We are also grateful to many colleagues for many helpful conversations: 
Arend Bayer, 
Lev Borisov,
Andrei C\u{a}ld\u{a}raru, 
David Eisenbud, 
Milena Hering, 
Jeff Hicks,
Oleg Lazarev,
Brian Lehmann,
Martin Olsson,  
Mahrud Sayrafi,
Hal Schenck,
Karl Schwede,
Greg Smith, 
Mark Walker, 
and others.  
In addition, we would like to express particular gratitude to Lev Borisov for explaining an alternative proof of \cref{lem:PushPull} as described in \cref{rem:borisov} as well as Mykola Sapronov for pointing out an error in an earlier version of this manuscript which lead to \Cref{rem: Sapronov}.
The computer algebra system {\texttt Macaulay2}~\cite{M2} provided
valuable assistance.

\bibliographystyle{amsalpha}
\bibliography{refs}
\Addresses
\end{document}